\definecolor{azzurro}{rgb}{0.13, 0.67, 0.8}
\definecolor{rosso}{rgb}{0.85,0,0}
\def\Accorpa #1#2 #3 {\gdef #1{\eqref{#2}--\eqref{#3}}%
  \wlog{}\wlog{\string #1 -> #2 - #3}\wlog{}}
\newcommand{\tr}{\operatorname{tr}}
\renewcommand{\hat}[1]{\widehat{#1}}
\newcommand{\eps}{\varepsilon}
\newcommand{\pd}{\partial}
\newcommand{\dn}{\partial_{\bnn}}
\def\bnn{{\boldsymbol n}}
\newcommand{\inn}[1]{\langle #1 \rangle}
\def\norma #1{\mathopen \| #1\mathclose \|}
\def\<#1>{\mathopen\langle #1\mathclose\rangle}
\def\iO {\int_\Omega}
\renewcommand{\tilde}{\widetilde}
\def\erre{{\mathbb{R}}}
\theoremstyle{plain}
\newtheorem{thm}{Theorem}[section]
\newtheorem{lem}{Lemma}[section]
\newtheorem{remark}{Remark}[section]
\newtheorem{defn}{Definition}[section]
\numberwithin{equation}{section}
\def\genspazio #1#2#3#4#5{#1^{#2}(#5,#4;#3)}
\def\spazio #1#2#3{\genspazio {#1}{#2}{#3}T0}
\def\L {\spazio L}
\def\Lx #1{L^{#1}(\Omega)}
\def\Hx #1{H^{#1}(\Omega)}
\def\multibold #1{\def\arg{#1}%
  \ifx\arg\pto \let\next\relax
  \else
  \def\next{\expandafter
    \def\csname #1#1\endcsname{{\bf #1}}%
    \multibold}%
  \fi \next}
\def\pto{.}
\def\multimathbb #1{\def\arg{#1}%
  \ifx\arg\pto \let\next\relax
  \else
  \def\next{\expandafter
    \def\csname #1#1#1\endcsname{{\mathbb #1}}%
    \multimathbb}%
  \fi \next}
\def\multical #1{\def\arg{#1}%
  \ifx\arg\pto \let\next\relax
  \else
  \def\next{\expandafter
    \def\csname cal#1\endcsname{{\cal #1}}%
    \multical}%
  \fi \next}
\def\multimathop #1 {\def\arg{#1}%
  \ifx\arg\pto \let\next\relax
  \else
  \def\next{\expandafter
    \def\csname #1\endcsname{\mathop{\rm #1}\nolimits}%
    \multimathop}%
  \fi \next}
\title{Complex pattern formation governed by a Cahn--Hilliard--Swift--Hohenberg system: 
\\
Analysis and numerical simulations}
\author{Harald Garcke \footnotemark[1] \and Kei Fong Lam \footnotemark[2] \and Robert N\"urnberg \footnotemark[3] \and Andrea Signori \footnotemark[4]}
\date{ }
\begin{document}
\maketitle

\begin{abstract} 
\noindent
This paper investigates a Cahn--Hilliard--Swift--Hohenberg system, focusing on a three-species chemical mixture subject to physical constraints on volume fractions. The resulting system leads to complex patterns involving a separation into phases as typical of the Cahn--Hilliard equation and small scale stripes and dots as seen in the Swift--Hohenberg equation. We introduce singular potentials of logarithmic type to enhance the model's accuracy in adhering to essential physical constraints. The paper establishes the existence and uniqueness of weak solutions within this extended framework. The insights gained contribute to a deeper understanding of phase separation in complex systems, with potential applications in materials science and related fields. We introduce a stable finite element approximation based on an obstacle formulation. Subsequent numerical simulations demonstrate that the model allows for complex structures as seen in pigment patterns of animals and in porous polymeric materials.
\end{abstract}

\noindent {\bf Keywords:}
Cahn--Hilliard--Swift--Hohenberg equation,
phase separation, 
pattern formation,
materials science,
singular potentials,
well-posedness,
numerical simulations.

\vskip3mm
\noindent {\bf AMS (MOS) Subject Classification:} {
35K55, % Nonlinear parabolic equations
35K61, % Nonlinear initial, boundary and initial-boundary value problems for nonlinear parabolic equations 
%35Q35, %PDEs in connection with fluid mechanics
74N05, %Crystals in solids
82D25. %Crystals and polycrystals
}

\renewcommand{\thefootnote}{\fnsymbol{footnote}}
\footnotetext[1]{Fakult{\"a}t f\"ur Mathematik, Universit{\"a}t Regensburg, 93040 Regensburg, Germany
({\texttt harald.garcke@ur.de}).}
\footnotetext[2]{Department of Mathematics, Hong Kong Baptist University, Kowloon Tong, Hong Kong ({\texttt akflam@math.hkbu.edu.hk}).}
\footnotetext[3]{Department of Mathematics, University of Trento, Trento, Italy
({\texttt robert.nurnberg@unitn.it}).}
\footnotetext[4]{Department of Mathematics, Politecnico di Milano, 20133 Milano, Italy ({\texttt andrea.signori@polimi.it}), Alexander von Humboldt Research Fellow.}

\renewcommand{\thefootnote}{\arabic{footnote}}

\section{Introduction}
Pattern formation, particularly in biological systems, often arises through complex interactions between various processes occurring at multiple length and time scales. The seminal works of Turing \cite{Turing}, Meinhardt and co-workers \cite{Gierer,Meinhardt} provided a methodology to study pattern formation via reaction-diffusion systems. In a multitude of subsequent works, see, e.g.,~ \cite{Meinhardt12,Morales,Morales_JTB,Murray,Page,Rossi} and the references cited therein, inclusion of chemical and mechanical effects through coupling with nonlinear systems permits more accurate descriptions of the chemical-physical processes driving skin pigmentation.

In this work we are interested in a model proposed by Mart\'inez-Agust\'in et al.~\cite{Martinez} that couples a Cahn--Hilliard equation \cite{CahnHilliard} with a Swift--Hohenberg equation \cite{Swift}. The former is a well-known model in the theory of phase separation and the latter arises as a model in the study of patterns driven by Rayleigh--B\'enard convection in fluid thermodynamics.  While differing in origin from the reaction-diffusion systems mentioned above, the solution dynamics to both the Cahn--Hilliard and Swift--Hohenberg models generate spatial-temporal patterns under appropriate conditions. The intended application for this coupled model in \cite{Martinez} is directed towards capturing the spinodal decomposition of a (charged) polymer-polymer-solvent mixture for the purpose of designing porous polymeric materials with specialized morphologies and pore sizes. In particular, by leveraging the competition between the Cahn--Hilliard and Swift--Hohenberg dynamics, a number of complex morphologies ranging from labyrinth-like patterns, mixtures of dotted and striped phases, to well-packed laminate sheets, as well as tubular hexagonal structures can be realized.

The development of these hierarchically porous structures, with their high surface areas, high pore volume ratios, and high storage capacities, serves to enable new designs for energy storage \cite{Xiao}, catalysis \cite{Sun}, sensors \cite{Bai}, separation \cite{Ma} and adsorption processes \cite{Meng}, see also \cite{Yang} for an overview. With recent advances in additive manufacturing and 3D printing technologies, such complex and hierarchically structured designs can be rapidly prototyped and deployed in areas such as bone engineering tissues \cite{Donate}, fiber-reinforced composites \cite{Dong} and organic solar cells \cite{Gusain}. We remark that the the Cahn--Hilliard--Swift--Hohenberg model studied here can also be used for pattern formation in the biological context and our numerical simulations produced configurations similar to those in \cite{Morales_JTB} resembling complex patterns on fish skins.

Let us introduce the model to be studied. In a bounded domain $\Omega \subset \RRR^d$, $d \in \{2, 3\}$, with boundary $\pd \Omega$, we consider a mixture of two chemical species and a solvent, whose volume fractions are denoted by $\phi_1$, $\phi_2$, and $\psi$, respectively. In accordance with their role as volume fractions, we expect the following physically relevant constraints
\begin{align}\label{constraint}
0 \leq \phi_1, \; \phi_2, \; \psi \leq 1, \quad \phi_1 + \phi_2 + \psi = 1 \quad \text{ for a.e.~}(x,t) \in \Omega \times [0,T],
\end{align}
where $T> 0$ denote a fixed but arbitrary terminal time. It is more convenient to introduce the auxiliary variable $\phi := \phi_1 - \phi_2$, which in turn allows us to express $\phi_1$ and $\phi_2$ as linear combinations of $\psi$ and $\phi$ as
\[
\phi_1= \tfrac{1}{2}(1-\psi + \phi), \quad \phi_2 = \tfrac{1}{2}(1-\psi - \phi).
\]
Then, the physically relevant constraints in \eqref{constraint} can be equivalently expressed as $(\phi, \psi) \in \mathcal{K}$ for a.e.~$(x,t) \in \Omega \times [0,T]$, with the convex admissible set
\begin{align}\label{K:ad}
\mathcal{K} := \{ (r,s) \in \RRR^2 \, : \, r \in [-1,1], \, r+s \leq 1, \, s- r \leq 1 \}
\end{align}
consisting of the triangular region of $\RRR^2$ enclosed by the vertices at $(-1,0)$, $(1,0)$ and $(0,1)$ {(see Figure \ref{fig:adm:K})}. %

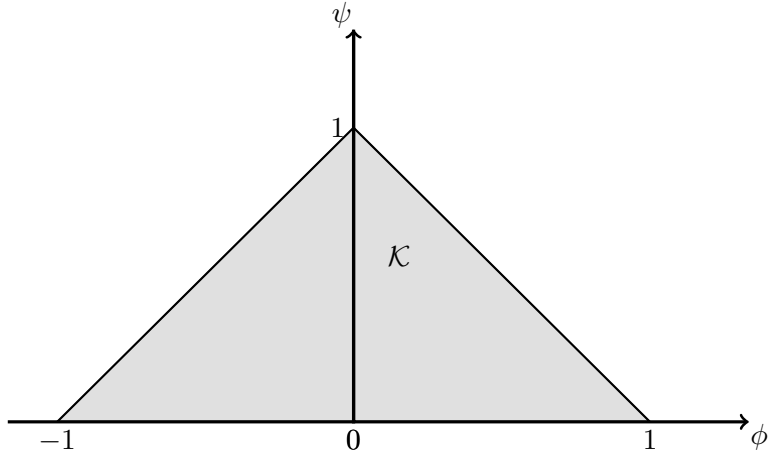
\begin{figure}
\centering
\begin{tikzpicture}[inner sep=0pt,label distance = 1em, scale=1.3]
\filldraw[color=black!100, fill=black!12, thick]
(-3,0) -- (0,3)  --(3,0) -- cycle;
\draw[very thick,<->] (4,0) node[anchor=north west] {$\phi$} -| (0,4) node[anchor=south east] {$\psi$} ;
\draw[very thick,-] (-3.5,0)  -| (0,4);
\node (K) [xshift=.6cm, yshift=2.2cm] {$\cal K$}; %%
\draw (0,0) node[below=3pt] {$0$} -- (1.5,0) --
(3,0) node[below=3pt] {$1$} -- (0,3) node[left=3pt] {$1$} -- cycle;
\draw (0,0) node[below=3pt] {$0$} -- (1.5,0) --
(-3,0) node[below=3pt] {$-1$} -- (0,3) -- cycle;
\end{tikzpicture}
\caption{Schematics for the  set $\cal K$ of admissible pairs.}
\label{fig:adm:K}
\end{figure}

The total {\it free energy} of the system is given by 
\begin{align} \label{eq:Energy}
E(\phi, \psi) = \int_\Omega \frac{\eps}{2}|\nabla \phi|^2 + \frac{\lambda}{2}|(\Delta + \omega^2) (\psi-\tfrac{1}{2})|^2 + F(\phi, \psi) + \sigma  \phi \Delta \psi \, dx,
\end{align}
where $\eps > 0$, $\omega \in \RRR$, $\lambda > 0$ and $\sigma \in \RRR$ are fixed constants, $\Delta$ denotes the Neumann-Laplacian operator and $F$ denotes a potential function. It will be convenient to split $F$ into a sum of a convex part $F_0$ and a non-convex part $F_1$. One example used in previous contributions \cite{Martinez,Morales,Morales_JTB} is
\begin{equation}\label{poly}
\begin{aligned}
F_0(\phi, \psi) & = \frac{1}{4} \phi^4 + \frac{1}{4} (\psi- \tfrac{1}{2})^4,\\
F_1(\phi, \psi) & =  - \frac{\alpha}{2} \phi^2 - \frac{g}{3} (\psi-\tfrac{1}{2})^3 - \frac{\gamma}{2} (\psi-\tfrac{1}{2})^2 + \frac{\delta}{2} \phi^2 (\psi-\tfrac{1}{2}),
\end{aligned}
\end{equation}
with constants $\alpha \geq 0$, $g \in \RRR$, $\gamma \geq 0$ and $\delta \in \RRR$. In contrast to other papers, we use $|(\Delta + \omega^2) (\psi-\tfrac{1}{2})|^2$ instead of $|(\Delta + \omega^2) \psi|^2$ because we take $\psi=\frac 12$ as the center for the Swift--Hohenberg variable $\psi$, instead of $\psi=0$ in other papers. 

Setting $Q := \Omega \times (0,T)$ {and $\Sigma:=\partial \Omega \times (0,T),$} we consider the following Cahn--Hilliard--Swift--Hohenberg system that was proposed in \cite{Martinez}:
\begin{subequations}\label{CHSH}
\begin{alignat}{2}
\pd_t \phi & = \Delta \mu
\quad && \text{ in } Q, \\
{\mu} & = - \eps \Delta \phi + F_{,\phi}(\phi, \psi) + \sigma \Delta \psi 
\quad && \text{ in } Q, \label{CHSH:w} \\
\pd_t \psi & = - z \quad && \text{ in } Q, \\
z & = \lambda (\Delta + \omega^2)^2 (\psi-\tfrac{1}{2}) + F_{,\psi}(\phi, \psi) + \sigma \Delta \phi 
\quad && \text{ in } Q,\label{CHSH:z}
\\ 
\dn \phi &  = \dn \mu =  \dn \psi = \dn \Delta \psi = 0 \quad && \text{ on } \Sigma,\label{CHSH:bc}
\\  \label{CHSH:ic}
\phi(0) &  =\phi_0,
\quad \psi(0)= \psi_0 \quad && \text{ in } \Omega,
\end{alignat}
\end{subequations}
where $\dn f = \nabla f \cdot \bm{n}$ denotes the normal derivative of the function $f$ at the boundary $\pd \Omega$ with outer unit normal $\bm{n}$, and $F_{,\phi}{:=\frac {\pd F}{\pd \phi}}$ and $F_{,\psi}{:=\frac {\pd F}{\pd \psi}}$ indicate the partial derivatives of $F$. We remark that \eqref{CHSH} as presented here slightly differs from the model in \cite{Martinez} and can be recovered by performing a shift of $\psi - \frac{1}{2} \mapsto \psi$ and setting $\omega = 1$.

We now present some modeling aspects to compare our work with the existing literature. The free energy $E$ can be viewed as a sum of three energetic contributions: the first energetic contribution is a Ginzburg--Landau functional encoding short-range interactions and phase separation of the polymers and one example is
\[
E_{\mathrm{GL}}(\phi) = \int_\Omega \frac{\eps}{2} |\nabla \phi|^2 + \frac{1}{4} |\phi|^4 - \frac{\alpha}{2} |\phi|^2 \, dx,
\]
the second energetic contribution is a solvent free energy functional accounting for short-range interactions and Coulomb's electrostatic interactions between small charged particles:
\begin{align*}
E_{\mathrm{Solvent}}(\phi, \psi) & = \int_\Omega -\frac{\lambda_0}{2}
|\nabla (\psi - \tfrac{1}{2})|^ 2 +  \frac{\delta}{2} \phi^2 (\psi - \tfrac{1}{2}) \\
& \qquad + \Big ( \frac{\lambda \omega^4}{2} (\psi - \tfrac{1}{2})^2 - \frac{g}{3} (\psi - \tfrac{1}{2})^3 - \frac{\gamma}{2}(\psi - \tfrac{1}{2})^2 + \frac{1}{4} (\psi - \tfrac{1}{2})^4 \Big )  \, dx,
\end{align*}
where the second term represents a coupling between the two scalar fields $\phi$ and $\psi$ and the last term is a fourth order series expansion of the electrostatic interactions. The third energetic contribution accounts for the immiscibility between the polymers and the solvent, as well as superficial deformations like bending and stretching:
\[
E_{\mathrm{Stretch}}(\phi, \psi) = \int_\Omega 
\frac{\Lambda}{2} 
|\nabla (\psi - \tfrac{1}{2})|^2 + \frac{\lambda}{2} |\Delta (\psi - \tfrac{1}{2})|^2 +  \sigma \phi \Delta (\psi - \tfrac{1}{2})\, dx
\]
with the last term providing a coupling between the polymer order parameter $\phi$ and a measure of the local curvature $\Delta \psi$, see \cite{Martinez,Morales} for further details. Depending on the phase given by the value parameter $\phi$, a different sign of the ``spontaneous curvature" is preferred. We consider setting $\lambda_0 = \Lambda + 2 \lambda$ to obtain the energy functional \eqref{Energy} used for the subsequent mathematical analysis, as well as to recover the setting considered in \cite{Martinez,Morales}. A significant drawback of smooth potentials such as \eqref{poly} is their inability to ensure that the solutions adhere to the essential physical constraint $(\phi, \psi) \in \mathcal{K}$. One main aim of this work is to address this limitation through the use of suitable singular potentials ensuring the physical validity of the solutions. In particular, instead of the quartic polynomial function $F_0$ in \eqref{poly}, we suggest the singular form
\begin{equation}\label{log}
F_0(\phi, \psi) = \frac{\theta}{2} \Big [ \Pi \Big (\tfrac{1}{2}(1-\psi+\phi) \Big )  + \Pi \Big (\tfrac{1}{2}(1-\psi-\phi) \Big ) + \Pi(\psi) \Big ],
\end{equation}
where $\Pi(s) := s \ln s$ and $\theta$ plays the role of the absolute temperature, so that finite values are attained when
\[
1-\psi + \phi \geq 0, \quad 1- \psi - \phi \geq 0, \quad \psi \geq 0 \quad \Longleftrightarrow \quad (\phi, \psi) \in \mathcal{K}.
\]
In the formal deep quench limit $\theta \to 0$, we arrive at
\begin{equation}\label{obs}
F_0(\phi, \psi) = \mathbb{I}_{\mathcal{K}}(\phi,\psi) = \begin{cases} 0 & \text{ if } (\phi,\psi) \in \mathcal{K}, \\
+\infty & \text{ otherwise},
\end{cases}
\end{equation}
where $\mathbb{I}_X$ denotes the indicator function of the set $X$. Our theoretical investigations address potentials $F = F_0 + F_1$ where the convex part $F_0$ is taken either as the logarithmic function \eqref{log} or the indicator function \eqref{obs}, while the non-convex perturbation $F_1$ can be taken as in \eqref{poly}. We shall henceforth refer to $F$ as the logarithmic potential if $F_0$ is of the form \eqref{log} and as the obstacle potential if $F_0$ is of the form \eqref{obs}.

Despite both the Cahn--Hilliard and Swift--Hohenberg are fourth order equations, the former is a $H^{-1}$-gradient flow while the latter is a $L^2$-gradient flow of their respective energy functionals.  Hence, \eqref{CHSH} can be interpreted as a $H^{-1} \times L^2$ gradient flow of a suitable energy functional as shown in Section~\ref{sec:model:der}.  In particular, this differs from the conventional multicomponent Cahn--Hilliard systems \cite{EllGar,EllLuck}, and it turns out that the situation when considering Swift--Hohenberg equation with singular terms is more involved compared to the second order case with the Allen--Cahn equation. Similarly observed in \cite{CMP}, a weak solution to \eqref{CHSH} is defined based on a variational inequality for \eqref{CHSH:z}. While this is natural if $F_0$ is the indicator function \eqref{obs}, for the logarithmic function \eqref{log} it is weaker than the conventional variational solutions for similar systems. Nevertheless, our chief result ensures well-posedness to the coupled system \eqref{CHSH} with either \eqref{log} or \eqref{obs}.

The rest of the paper is organized as follows: in Section~\ref{sec:main} we provide a derivation of \eqref{CHSH} and list the main results, whose proofs can be found in Section~\ref{sec:analysis}. We introduce a fully discrete and unconditionally stable numerical scheme in Section~\ref{sec:numerics} and present various numerical simulations showing that the Cahn--Hilliard--Swift--Hohenberg system models complex pattern formation scenarios.

\section{Model derivation, main assumptions and results}\label{sec:main}

\subsection{Notation}
Let $\Omega$ be a bounded domain in $\erre^d$, where $d \in \{2,3\}$.
The Lebesgue measure of $\Omega$ and the Hausdorff measure of $\pd \Omega$ are denoted by $|\Omega|$ and $|\pd \Omega|$, respectively.

For any Banach space $X$, the norm of $X$ is represented as $\norma{\cdot}_X$, its dual space is denoted as $X^*$, and the duality pairing between $X^*$ and $X$ is given by $\<\cdot,\cdot>_X$.  In the case where $X$ is a Hilbert space, the inner product is denoted by $(\cdot,\cdot)_X$.

For each $1 \leq p \leq \infty$, $k \geq 0$, and $s>0$, the standard Lebesgue and Sobolev spaces defined on $\Omega$ are denoted as $L^p(\Omega)$, and $W^{k,p}(\Omega)$, with their respective norms $\norma{\cdot}_{L^p(\Omega)}$, and $\norma{\cdot}_{W^{k,p}(\Omega)}$, respectively. In some instances, we use $\norma{\cdot}_{L^p}$ instead of $\norma{\cdot}_{L^p(\Omega)}$, and employ a similar shorthand for other norms. We adopt the standard convention $H^k(\Omega):= W^{k,2}(\Omega)$ for all $k\in\mathbb{N}$, and denote the mean value of a function $f \in L^1(\Omega)$ and a functional $h \in H^1(\Omega)^*$ as 
\[
\inn{f}_\Omega := \frac{1}{|\Omega|} \int_\Omega f \,dx, \quad \inn{h}_\Omega := \frac{1}{|\Omega|} \inn{h,1}_{H^1}.
\]
\subsection{Model derivation}\label{sec:model:der}
As previously mentioned, we consider the following energy functional
\begin{align}\label{Energy}
E(\phi, \psi) = \int_\Omega \frac{\eps}{2}|\nabla \phi|^2 + \frac{\lambda}{2}|(\Delta + \omega^2) (\psi-\tfrac{1}{2})|^2 + F(\phi, \psi) - \sigma \nabla \phi \cdot \nabla \psi \, dx
\end{align}
with fixed constants $\omega \in \RRR$, $\lambda > 0$ and $\sigma \in \RRR$, and in the subsection we take a potential function $F$ which is differentiable. For $m \in \RRR$, we define
\[
H^1(\Omega)_{m} := \{ f \in H^1(\Omega) \, : \, \inn{f}_\Omega = m \}, \quad H^1(\Omega)^*_{0} := \{ h \in H^1(\Omega)^* \, : \, \inn{h}_{\Omega} = 0 \}
\]
and the operator $\mathcal{N} : H^1(\Omega)^*_{0} \to H^1(\Omega)_{0}$ defined as the map $h \mapsto \mathcal{N}(h)$ with $\mathcal{N}(h)$ as the solution to the variational equality
\[
\int_\Omega \nabla \mathcal{N}(h) \cdot \nabla \zeta \,dx = \inn{h,\zeta}_{H^1} \quad \forall \zeta \in H^1(\Omega),
\]
which can be interpreted as the inverse Neumann--Laplacian operator. Based on this operator, we consider the inner product
\begin{align}\label{innerprod}
\inn{(f,h),(\gamma,v)}_{H^1(\Omega)^*_{0} \times L^2(\Omega)} := \int_\Omega \nabla \mathcal{N}(f) \cdot \nabla \mathcal{N}(\gamma) + h v \, dx
\end{align}
for $f, \gamma \in H^1(\Omega)^*_0$ and $h,v \in L^2(\Omega)$. 
We now consider $E$ to be defined on $H^{1}(\Omega)_m \times H^2_{\bnn}(\Omega)$ with $ H^2_{\bnn}(\Omega) := \{ f \in H^2(\Omega) \, : \, {\dn} f = 0 \text{ on } \pd \Omega\}.$
Then, for arbitrary $\zeta \in H^1(\Omega)_{0}$ and $\eta \in H^2_{\bnn}(\Omega)$, we compute the first variation of $E$ with respect to $(\phi, \psi)$ in the direction $(\zeta, \eta)$ as
\begin{align*}
\frac{\delta E}{\delta (\phi,\psi)}((\phi, \psi))[(\zeta,\eta)] & = \int_\Omega \eps \nabla \phi \cdot \nabla \zeta + F_{,\phi}(\phi,\psi) \zeta - \sigma \nabla \zeta \cdot \nabla \psi \, dx \\
& \quad + \int_\Omega \lambda (\Delta + \omega^2) (\psi-\tfrac{1}{2}) \, (\Delta +\omega^2) \eta - \sigma {\nabla \phi \cdot  \nabla \eta }+ F_{,\psi}(\phi, \psi) \eta \, dx.
\end{align*}
We aim to identify the gradient of $E$, expressed as the pair $(p,q)$, with respect to the inner product \eqref{innerprod}. Namely, we are looking for the pair $(p,q) \in H^1(\Omega)^*_{0} \times L^2(\Omega)$ fulfilling
\[
\inn{(p,q),(\zeta, \eta)}_{H^1(\Omega)^*_{0} \times L^2(\Omega)} = \frac{\delta E}{\delta (\phi,\psi)}(\phi, \psi)[(\zeta,\eta)] \quad \text{ for every } \zeta \in H^1(\Omega)_0,  \eta \in H^2_{\bnn}(\Omega).
\]  
Then, from the definition of $\mathcal{N}$, we find that 
\begin{align*}
\int_\Omega \mathcal{N}(p) \zeta + q \eta \, dx & = \int_\Omega \eps \nabla \phi \cdot \nabla \zeta + F_{,\phi}(\phi,\psi) \zeta - \sigma \nabla \psi \cdot \nabla \zeta \, dx \\
& \quad + \int_\Omega \lambda (\Delta + \omega^2) (\psi-\tfrac{1}{2}) \, (\Delta +\omega^2) \eta - \sigma {\nabla \phi \cdot  \nabla \eta }+ F_{,\psi}(\phi, \psi) \eta \, dx.
\end{align*}
For arbitrary $\tilde{\zeta} \in H^1(\Omega)$ we insert $\zeta = \tilde{\zeta} - \inn{\tilde{\zeta}}_\Omega \in H^1(\Omega)_0$ into the above identity, and then performing integration by parts on the right-hand side yields
\begin{align*}
& \int_\Omega (\mathcal{N}(p) + \inn{F_{,\phi}(\phi, \psi)}_\Omega )\tilde{\zeta} + q \eta \, dx \\
& \quad = \int_\Omega \Big ( - \eps \Delta \phi + F_{,\phi}(\phi, \psi) + \sigma \Delta \psi \Big ) \tilde{\zeta} \, dx + \int_{\pd \Omega} (\eps \dn \phi - \sigma \dn \psi) \tilde{\zeta} \, ds \\
& \qquad + \int_\Omega \Big ( \lambda (\Delta + \omega^2)^2 (\psi-\tfrac{1}{2}) + \sigma \Delta \phi + F_{,\psi}(\phi, \psi) \Big ) \eta \, dx 
\\ & \qquad 
- \int_{\pd \Omega} \Big (\lambda \dn((\Delta + \omega^2) \psi + \sigma \dn \phi \Big ) \eta \, dS,
\end{align*}
for arbitrary $\tilde{\zeta} \in H^1(\Omega)$, and $\eta \in H^2_{\bnn}(\Omega)$, and we noted that a boundary term of the form $\lambda (\Delta + \omega^2) (\psi-\tfrac{1}{2}) \dn \eta$ vanished due to the fact that $\eta \in H^2_{\bnn}(\Omega)$. By the fundamental theorem of calculus of variations we have the identifications
\begin{align*}
\mu & := \mathcal{N}(p) + \inn{F_{,\phi}(\phi, \psi)}_\Omega = -\eps \Delta \phi + F_{,\phi}(\phi, \psi) + \sigma \Delta \psi, \\
z & := q = \lambda (\Delta + \omega^2)^2 (\psi-\tfrac{1}{2})  + F_{,\psi}(\phi, \psi) + \sigma \Delta \phi,
\end{align*}
along with the boundary conditions
\[
\dn \phi = 0, \quad \dn \psi = 0, \quad \dn \Delta \psi = 0.
\]
Thus, the gradient flow
\[
\inn{(\pd_t \phi, \pd_t \psi), (\zeta, \eta)}_{H^1(\Omega)^*_0 \times L^2(\Omega)} = - \frac{\delta E}{\delta (\phi, \psi)} (\phi, \psi) [(\zeta, \eta)],\]
for arbitrary $\zeta \in H^1(\Omega)_0$ and $\eta \in H^2_{\bnn}(\Omega)$ can be expressed as
\begin{align*}
& \int_\Omega \nabla \mathcal{N}(\pd_t \phi) \cdot \nabla \mathcal{N}(\zeta) + \pd_t \psi \eta \, dx \\
& \quad = \int_\Omega \mathcal{N}(\pd_t \phi) \zeta + \pd_t \psi \eta \, dx = \int_\Omega -(\mu - \inn{F_{,\phi}(\phi, \psi)}_\Omega) \zeta - z \eta \, dx.
\end{align*}
Substituting $\eta = \tilde{\eta} - \inn{\tilde{\eta}}_\Omega$ for arbitrary $\tilde{\eta} \in H^1(\Omega)$ then leads to the identification
\[
\mathcal{N}(\pd_t \phi) =  - \mu + \inn{\mu}_\Omega , \quad \pd_t \psi = - z,
\]
where by the definition of $\mathcal{N}$ we infer from the first equation:
\[
\pd_t \phi = \Delta \mu \quad \text{ in } \Omega, \qquad \dn \mu = 0 \quad \text{ on } \pd \Omega.
\]
\begin{remark}
Alternate choices of boundary conditions to \eqref{CHSH:bc} for similar types of equations can be found in \cite[Sec.~9]{Espath}.
\end{remark}

\subsection{Assumptions and main results}
We make the following assumptions:
\begin{enumerate}[label=$(\mathrm{A \arabic*})$, ref = $\mathrm{A \arabic*}$]
\item \label{ass:dom} $\Omega \subset \RRR^{d}$, $d \in \{2,3\}$, is a bounded domain which is either convex or has $C^{1,1}$ boundary $\pd \Omega$.
\item \label{ass:F} The potential function $F = F_0 + F_1$ is a sum of $F_0 : \RRR^2 \to [0,\infty)$ taking the form \eqref{log} or \eqref{obs}, while $F_1 \in C^{1}(\mathcal{K})$.
\item \label{ass:ini} The initial conditions satisfy $\phi_0 \in H^1(\Omega)$, $\psi_0 \in {H^2_{\bnn}(\Omega)}$, with $(\phi_0, \psi_0) \in \mathcal{K}$ for a.e.~$x \in \Omega$ and $\inn{\phi_0}_\Omega \in (-1,1)$.
\end{enumerate}

In order to introduce an appropriate notion of solution to \eqref{CHSH} with singular potentials, we first make the following definition.
\begin{defn}[Admissible function pair]\label{defn:add}
\
\begin{itemize}
\item For the logarithmic potential \eqref{log}, we say that a pair of functions $(\zeta, \eta)$ is log-admissible if
\[
\pi(\eta) {\in L^1(Q)}, \quad \pi \big ( \tfrac{1}{2}(1+\zeta - \eta) \big ){\in L^1(Q)}, \quad \pi \big ( \tfrac{1}{2}(1-\zeta - \eta ) \big ) \in L^1(Q),
\]
where $\pi(s) = 1 + \ln s$.
\item For the obstacle potential \eqref{obs}, we say that a pair of functions $(\zeta, \eta)$ is obstacle-admissible if
\[
(\zeta, \eta) \in \mathcal{K} \quad \text{ for a.e.~} (x,t) \in Q.
\]
\end{itemize}
\end{defn}

\begin{defn}[Variational solution]\label{defn:var}
A quadruple of functions $(\phi, \psi, {\mu}, z)$ is a variational solution to \eqref{CHSH} if
\begin{enumerate}
\item[(i)] they satisfy the regularities
\begin{align*}
\phi & \in L^\infty(0,T;H^1(\Omega)) \cap L^2(0,T;H^2_{\bnn}(\Omega)) \cap H^1(0,T;(H^1(\Omega))^*), \\
\psi & \in L^\infty(0,T;H^2_{\bnn}(\Omega)) \cap H^1(0,T;L^2(\Omega)), \\
\Delta \psi & \in L^2(0,T;H^1(\Omega)), \\
\mu & \in L^2(0,T;H^1(\Omega)), \\
z& \in L^2(0,T;L^2(\Omega)).
\end{align*}
\item[(ii)] The initial conditions are attained: $\phi(0) = \phi_0$ and $\psi(0) = \psi_0$ a.e.~in $\Omega$.
\item[(iii)] For a.e.~$t \in (0,T)$, arbitrary $v \in L^2(Q)$ and $u \in L^2(0,T;H^1(\Omega))$, it holds that
\begin{subequations}
\begin{alignat}{2}
0 & =  \inn{\pd_t \phi,u}_{H^1} + \int_\Omega \nabla \mu \cdot \nabla u \, dx, \label{w:1} \\
0 & = \int_\Omega \pd_t \psi v + z v \, dx. \label{w:2} 
\end{alignat}
\end{subequations}
\item[(iv-a)] If $F_0$ is the logarithmic potential \eqref{log}, then $(\phi, \psi)$ is log-admissible in the sense of Definition \ref{defn:add} and for a.e.~$t \in (0,T)$ and arbitrary log-admissible pair $(\zeta, \eta) \in (L^2(0,T;H^1(\Omega)))^2$:
\begin{equation}\label{w:3}
\begin{aligned}
0 & \leq \int_\Omega \big (F_{0,\phi}(\phi, \psi) + F_{1,\phi}(\phi, \psi) - \mu \big ) (\zeta - \phi) + \nabla ( \eps \phi - \sigma \psi)\cdot \nabla (\zeta - \phi) \, dx  \\
 & \quad + \int_\Omega  \big (F_{0,\psi}(\phi, \psi) + F_{1,\psi}(\phi, \psi) - z  \big ) (\eta - \psi) - \sigma \nabla \psi  \cdot \nabla  (\eta - \psi) \, dx \\
 & \quad + \int_\Omega \big (2 \lambda \omega^2 \Delta \psi + \lambda \omega^4 (\psi-\tfrac{1}{2}) \big ) (\eta - \psi) - \lambda \nabla \Delta \psi \cdot \nabla (\eta - \psi) \, dx.
\end{aligned}
\end{equation}
\item[(iv-b)] If $F_0$ is the obstacle potential \eqref{obs}, then $(\phi, \psi)$ is obstacle-admissible in the sense of Definition \ref{defn:add} and for a.e.~$t \in (0,T)$ and arbitrary obstacle-admissible pair $(\zeta, \eta) \in (L^2(0,T;H^1(\Omega))^2$:
\begin{equation}\label{w:3:obs}
\begin{aligned}
0 & \leq \int_\Omega \big (F_{1,\phi}(\phi, \psi) - \mu \big ) (\zeta - \phi) 
+ \nabla (\eps \phi - \sigma \psi)\cdot \nabla (\zeta - \phi)
\, dx \\
 & \quad + \int_\Omega  \big (F_{1,\psi}(\phi, \psi) - z \big ) (\eta - \psi) 
- \sigma \nabla \psi  \cdot \nabla  (\eta - \psi) \, dx \\
 & \quad + \int_\Omega \big (2 \lambda \omega^2 \Delta \psi + \lambda \omega^4 (\psi-\tfrac{1}{2}) \big ) (\eta - \psi) - \lambda \nabla \Delta \psi \cdot \nabla (\eta - \psi) \, dx.
\end{aligned}
\end{equation}
\end{enumerate}
\end{defn}

\begin{remark}\label{rem:varineq}
The equations \eqref{CHSH:w} and \eqref{CHSH:z} are formulated together as a variational inequality. For the logarithmic potential \eqref{log}, it turns out that we can show $F_{0,\phi}(\phi, \psi) \in L^2(Q)$ and hence item (iv-a) in Definition \ref{defn:var} can be replaced by the requirement that $(\phi, \psi)$ is log-admissible and satisfies
\begin{align}\label{mu:equ:ae}
& \mu = - \eps \Delta \phi + F_{0,\phi}(\phi, \psi) + F_{1,\phi}(\phi, \psi) + \sigma \Delta \psi \quad \text{ a.e.~in } Q, 
\end{align}
and
\begin{equation}\label{log:VI:2}
\begin{aligned}
& 0 \leq \int_\Omega \big (F_{0,\psi}(\phi, \psi) + F_{1,\psi}(\phi, \psi) - z \big ) (\eta - \psi) - \sigma \nabla \psi \cdot \nabla (\eta - \psi) \, dx \\
& \quad \quad + \int_\Omega \big ( 2 \lambda \omega^2 \Delta \psi + \lambda \omega^4 (\psi - \tfrac{1}{2}) \big ) (\eta - \psi) - \lambda \nabla \Delta \psi \cdot \nabla (\eta - \psi) \, dx
\end{aligned}
\end{equation}
holding for a.e.~$t \in (0,T)$ and arbitrary $\eta \in L^2(0,T;H^1(\Omega))$ such that $(\phi, \eta)$ is log-admissible, i.e., $\pi(\tfrac{1}{2}(1+\phi - \eta)) \in L^1(Q)$ and $\pi(\tfrac{1}{2}(1-\phi - \eta)) \in L^1(Q)$.
\end{remark}

%\begin{remark}\label{rem:varineq}
%The equations \eqref{CHSH:w} and \eqref{CHSH:z} are formulated together as a variational inequality. In the proof of the existence of variational solutions for the logarithmic potential we also show that an alternate variational inequality to \eqref{w:3} is satisfied (cf.~\cite{CMP}):
%\begin{align*}
%0 & \leq \int_\Omega  \big (F_{0,\phi}(\zeta, \eta) + F_{1,\phi}(\phi, \psi) - \mu
%\big ) (\zeta - \phi)
%+ \nabla (\eps \phi - \sigma \psi)\cdot \nabla (\zeta - \phi) \, dx  \\
% & \quad + \int_\Omega  \big (F_{0,\psi}(\zeta, \eta) + F_{1,\psi}(\phi, \psi) - z \big ) (\eta - \psi) 
%- \sigma \nabla \psi  \cdot \nabla  (\eta - \psi) \, dx
% \\
% & \quad + \int_\Omega \big (2 \lambda \omega^2 \Delta \psi + \lambda \omega^4 (\psi-\tfrac{1}{2}) \big ) (\eta - \psi) - \lambda \nabla \Delta \psi \cdot \nabla (\eta - \psi) \, dx
%\end{align*}
%holding for arbitrary log-admissible test function pair $(\zeta, \eta) \in (L^2(0,T;H^1(\Omega))^2$. Notably, the singular terms $F_{0,\phi}$ and $F_{0,\psi}$ are applied to the log-admissible test function pair $(\zeta, \eta)$. Both variational inequalities are needed to establish uniqueness of variational solutions to the case with logarithmic potential.
%\end{remark}

Our main result is formulated as follows.

\begin{thm}[Well-posedness of variational solutions]\label{thm:wellposed}
Under assumptions \eqref{ass:dom}-\eqref{ass:ini}, there exists a unique variational solution $(\phi, \psi, {\mu}, z)$ to the system \eqref{CHSH} in the sense of Definition \ref{defn:var}, and for any pair of variational solutions $\{(\phi_i, \psi_i, {\mu}_i, z_i)\}_{i=1,2}$ with initial data $\{(\phi_{0,i}, \psi_{0,i})\}_{i=1,2}$ fulfilling \eqref{ass:ini}, there exist a positive constant $C$ independent of their differences $\hat{\phi} := \phi_1 - \phi_2$, $\hat{\psi} := \psi_1 - \psi_2$, $\hat{\mu} := \mu_1 - \mu_2$, $\hat{z} = z_1 - z_2$, such that 
\begin{equation}\label{cts:dep}
\begin{aligned}
 & \sup_{t \in (0,T]} \Big ( \| \nabla \mathcal{N}\big(
\hat{\phi} - \inn{\hat{\phi}}_\Omega \big )(t) \|^2 + \| \hat{\psi}(t) \|^2 \Big ) + \int_0^T \| \hat{\phi} \|_{H^1}^2 + \| \hat{\psi} \|_{H^2}^2 \, dt \\
& \quad \leq C \Big ( \| \nabla \mathcal{N}\big( \hat{\phi}_0 - \inn{\hat{\phi}_0}_\Omega \big) \|^2 + \| \hat{\psi}_0 \|^2 \Big ).
\end{aligned}
\end{equation}
\end{thm}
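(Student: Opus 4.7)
The plan is to establish existence by a two-level approximation: first regularize the singular potential by a smooth convex family $F_0^{(\delta)}$ (Moreau--Yosida regularization in the obstacle case; truncated logarithms away from the singularities in the log case), and then solve the regularized PDE via a Faedo--Galerkin scheme based on the eigenfunctions of the Neumann--Laplacian, which are compatible with all the boundary conditions in \eqref{CHSH:bc}. Existence of the finite-dimensional approximants follows from standard Carath\'eodory theory.

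The core analytic step is the derivation of uniform (in both the Galerkin index and in $\delta$) a priori estimates. Testing the regularized chemical potential equation \eqref{CHSH:w} against $\pd_t \phi = \Delta \mu$ (equivalently, $\mu$ against $\pd_t \phi$) and the Swift--Hohenberg relation \eqref{CHSH:z} against $\pd_t \psi$, and adding the two identities, yields the formal energy balance
\begin{equation*}
\tfrac{d}{dt}E^{(\delta)}(\phi, \psi) + \iO |\nabla \mu|^2 \dx + \iO z^2 \dx = 0.
\end{equation*}
Together with mass conservation $\inn{\phi(t)}_\Omega = \inn{\phi_0}_\Omega \in (-1,1)$, this gives uniform bounds on $\phi$ in $L^\infty(0,T;H^1)$, on $\psi$ in $L^\infty(0,T;H^2_{\bnn})$ (extracted from the $|(\Delta+\omega^2)(\psi-\tfrac12)|^2$ term), on $\partial_t \psi$ and $z$ in $L^2(Q)$, on $\nabla\mu$ in $L^2(Q)$, and then on $\mu$ in $L^2(0,T;H^1)$ via Poincar\'e and the mean-value control obtained by testing \eqref{CHSH:w} with $1$. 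A further estimate, obtained by testing the (smooth) equation \eqref{CHSH:w} against $F^{(\delta)}_{0,\phi}(\phi,\psi)$ and exploiting its $\phi$-monotonicity, produces an $L^2(Q)$ bound on $F^{(\delta)}_{0,\phi}(\phi,\psi)$; elliptic regularity then promotes $\phi$ to $L^2(0,T;H^2_{\bnn})$, and rewriting the SH equation as $\lambda\Delta^2\psi = z - 2\lambda\omega^2\Delta\psi - \lambda\omega^4(\psi-\tfrac12) - F^{(\delta)}_{,\psi} - \sigma\Delta\phi$ yields $\nabla\Delta\psi \in L^2(Q)$ modulo the problematic $F^{(\delta)}_{0,\psi}$. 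For the log case the analogous argument for $F^{(\delta)}_{0,\psi}$ still works; for the obstacle case one bypasses the $L^2$ bound by keeping the variational inequality formulation.

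Passing to the limit first in the Galerkin index by Aubin--Lions compactness and then in $\delta\to 0$, the linear and the smoothly nonlinear contributions converge in the obvious senses. The singular part is treated by a standard convexity/$\liminf$ argument, written against admissible test pairs: for the log case the $L^2$-bound on $F_{0,\phi}^{(\delta)}$ allows one to pass to the limit in the $\phi$-equation \eqref{mu:equ:ae} pointwise and to obtain the variational inequality \eqref{log:VI:2} from the $\psi$-component using $F_0^{(\delta)}(\phi,\eta) - F_0^{(\delta)}(\phi,\psi) \geq F_{0,\psi}^{(\delta)}(\phi,\psi)(\eta-\psi)$ and the uniform $L^1$-bound on $F_0^{(\delta)}(\phi,\psi)$; the obstacle case is analogous, with $\liminf F_0^{(\delta)}(\phi,\psi) = \II_\mathcal{K}(\phi,\psi)$ forcing $(\phi,\psi)\in\mathcal{K}$ almost everywhere.

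For uniqueness and continuous dependence, I subtract the two variational inequalities/equations satisfied by the solutions $i=1,2$, test the CH part for $\hat\phi$ with $\mathcal{N}(\hat\phi-\inn{\hat\phi}_\Omega)$ to exploit the $H^{-1}$-gradient-flow structure, and test the SH part with $\hat\psi$. The key cancellation of the singular contributions is achieved by cross-inserting $(\phi_2,\psi_2)$ into the VI for solution $1$ and $(\phi_1,\psi_1)$ into the VI for solution $2$ and summing: convexity of the joint potential $F_0$ (equivalently, monotonicity of $\nabla F_0$) guarantees
\begin{equation*}
\bigl(F_{0,\phi}(\phi_1,\psi_1) - F_{0,\phi}(\phi_2,\psi_2)\bigr)\hat\phi + \bigl(F_{0,\psi}(\phi_1,\psi_1) - F_{0,\psi}(\phi_2,\psi_2)\bigr)\hat\psi \geq 0.
\end{equation*}
The remaining contributions involve the locally Lipschitz $F_1$ on $\mathcal{K}$, the elastic terms in $\eps\nabla\hat\phi$, $\lambda\nabla\Delta\hat\psi$, and the coupling $\sigma$-terms, all of which are controlled by Cauchy--Schwarz, interpolation (Ehrling's lemma of the form $\|\hat\phi\|_{H^1}\le C\|\hat\phi\|_{H^2}^{1/2}\|\hat\phi\|_{(H^1)^*}^{1/2}$ to absorb higher-order derivatives into the dissipation), and Gronwall. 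This yields the stability estimate \eqref{cts:dep} and in particular uniqueness.

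The hard part will be two-fold: first, legitimizing the cross-test $(\phi_j,\psi_k)$, $j\neq k$, in the log variational inequality, where the required log-admissibility of the mixed pair is not automatic and has to be obtained by a density argument (e.g.\ first test with convex combinations $(1-\tau)(\phi_i,\psi_i) + \tau(\phi_j,\psi_k)$ for $\tau\in(0,1)$ and send $\tau\to 1$ using dominated convergence); second, carefully balancing the cross-coupling $\sigma$-terms and the $\omega$-terms in the SH regularity chain so that the $\liminf$ argument over $\delta$ does not lose the gradient information $\nabla\Delta\psi\in L^2$ needed for the VI \eqref{w:3}--\eqref{w:3:obs}.
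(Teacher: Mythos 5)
Your overall architecture (regularization of the singular part, Galerkin approximation, energy estimate, a Minty-type convexity argument to pass to the limit in the variational inequality, and cross-testing plus monotonicity for uniqueness) matches the paper's proof, but there is a genuine gap in the chain of a priori estimates, at precisely the point you dispatch in one clause. Testing \eqref{CHSH:w} with $1$ only reduces the control of $\inn{\mu}_\Omega$ to an $L^1$ bound on $F_{0,\phi}^{(\delta)}(\phi,\psi)$, and nothing in the energy estimate provides one: the energy controls $F_0^{(\delta)}$ itself, not its derivatives. The paper obtains the required bounds on both $F_{0,\phi}^{(\delta)}$ and $F_{0,\psi}^{(\delta)}$ in $L^2(0,T;L^1(\Omega))$ by testing the two stationary relations with $\phi-\inn{\phi_0}_\Omega$ and $\psi-\nu$, where $\nu=\tfrac12(1-|\inn{\phi_0}_\Omega|)$ is chosen so that the reference pair lies in the \emph{interior} of $\mathcal K$, and then invoking a Miranville--Zelik-type inequality of the form \eqref{MZ:ineq}, proved uniformly in the regularization parameter for each of the three logarithmic branches. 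This is where the hypothesis $\inn{\phi_0}_\Omega\in(-1,1)$ from \eqref{ass:ini} is consumed; without this step the bound on $\mu$ in $L^2(0,T;H^1(\Omega))$ and everything downstream of it is unsupported.

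The route you propose to the $L^2(Q)$ bound on $F_{0,\phi}^{(\delta)}$ is also flawed. Testing the $\phi$-equation with $F_{0,\phi}^{(\delta)}(\phi,\psi)$ produces, after integration by parts, $\eps\iO F_{0,\phi\phi}^{(\delta)}|\nabla\phi|^2 + F_{0,\phi\psi}^{(\delta)}\,\nabla\phi\cdot\nabla\psi \,dx$; since $F_0$ is singular \emph{jointly} in $(\phi,\psi)$, the cross term has no sign and is not controlled by $\phi$-monotonicity alone, and the term $\iO \mu\, F_{0,\phi}^{(\delta)}\,dx$ presupposes the $L^2$ bound on $\mu$ you have not yet established, so the ordering is circular. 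The paper instead tests \eqref{app:2} with $-\Delta\phi_N$ and \eqref{app:4} with $-\Delta\psi_N$ simultaneously, so that the full nonnegative Hessian quadratic form of the jointly convex $F_0^N$ appears; this yields $\phi\in L^2(0,T;H^2(\Omega))$ and $\nabla\Delta\psi\in L^2(Q)$, and the $L^2(Q)$ bound on $F_{0,\phi}^N$ then follows by comparison in \eqref{app:2}. Two smaller remarks: the interpolation $\no{\hat\phi}_{H^1}\le C\no{\hat\phi}_{H^2}^{1/2}\no{\hat\phi}_{(H^1)^*}^{1/2}$ is false ($H^1$ is not the midpoint of $H^{-1}$ and $H^2$); the correct and elementary absorption is $\no{\hat\phi-\inn{\hat\phi}_\Omega}^2\le\no{\nabla\mathcal N(\hat\phi-\inn{\hat\phi}_\Omega)}\,\no{\nabla\hat\phi}$. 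And in the uniqueness step no mixed pair $(\phi_j,\psi_k)$ with $j\ne k$ arises---each solution pair is admissible by definition and is inserted as a whole into the other solution's inequality---so the density argument you flag there is unnecessary; the convex-combination limit $\kappa\to0$ is needed instead to upgrade the limiting inequality with $F_0$ evaluated at the test pair to the form \eqref{w:3} with $F_0$ evaluated at the solution.
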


We have the following connection between the logarithmic  potential and the obstacle potential.

\begin{thm}[Deep quench limit]
For $\theta \in (0,1]$ we denote by $(\phi_\theta, \psi_\theta, {\mu}_\theta, z_\theta)$ to be a variational solution to \eqref{CHSH} for the logarithmic potential \eqref{log} originating from the initial conditions $(\phi_0, \psi_0)$ fulfilling \eqref{ass:ini}. Then, as $\theta \to 0$, 
\begin{align*}
\phi_\theta \to \phi_* & \quad \text{ weakly* in } L^\infty(0,T;H^1(\Omega))  \cap H^1(0,T;(H^1(\Omega))^*), \\
\psi_\theta \to \psi_* & \quad \text{ weakly* in } L^\infty(0,T;{H^2_{\bnn}(\Omega)}) \cap H^1(0,T;L^2(\Omega)), \\
{\mu}_\theta \to {\mu}_* &  \quad \text{ weakly in } L^2(0,T;H^1(\Omega)), \\
z_\theta \to z_* & \quad \text{ weakly in } L^2(0,T;L^2(\Omega)),
\end{align*}
where $(\phi_*, \psi_*, {\mu}_*, z_*)$ is the unique variational solution to \eqref{CHSH} with obstacle potential \eqref{obs} originating from the same initial data. Furthermore, there exists a positive constant $C$, independent of $\theta$, such that 
\begin{equation}\label{rate}
\begin{aligned}
&  \sup_{t \in (0,T]} \Big ( \| \nabla \mathcal{N}(\phi_*(t) - \phi_\theta(t)) \|^2 + \| \psi_*(t) -\psi_\theta(t) \|^2 \Big ) \\
& \quad + \int_0^T \| \phi_* - \phi_\theta \|_{H^1}^2 + \| \psi_* - \psi_\theta \|_{H^2}^2 \, dt \leq C \theta.
\end{aligned}
\end{equation}
\end{thm}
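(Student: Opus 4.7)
The plan is to argue in three phases: uniform-in-$\theta$ a priori estimates, identification of the limit with the unique obstacle solution, and derivation of the quantitative rate. Since $F_0^\theta \leq C\theta$ pointwise on $\mathcal{K}$ and $F_1$ is bounded on $\mathcal{K}$, the gradient-flow energy identity
\[
E(\phi_\theta(t), \psi_\theta(t)) + \int_0^t \bigl(\|\nabla\mu_\theta\|^2 + \|z_\theta\|^2\bigr)\, ds = E(\phi_0, \psi_0)
\]
yields $\theta$-uniform bounds on $\phi_\theta$ in $L^\infty(0,T;H^1(\Omega))$, on $\psi_\theta$ in $L^\infty(0,T;H^2_{\bnn}(\Omega))$ (after elliptic regularity absorbs the indefinite $\omega^2$ contribution), and on $\nabla\mu_\theta, z_\theta$ in $L^2(0,T;L^2(\Omega))$. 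The mean $\inn{\mu_\theta}_\Omega$ and the time derivatives $\partial_t\phi_\theta, \partial_t\psi_\theta$ are controlled exactly as in the existence proof. Weak-$*$/weak compactness and Aubin--Lions then deliver, along a subsequence, convergence to limits $(\phi_*, \psi_*, \mu_*, z_*)$ of the claimed regularities, with in particular $\phi_\theta \to \phi_*$ and $\psi_\theta \to \psi_*$ strongly in $L^2(Q)$ and $L^2(0,T;H^1(\Omega))$ respectively.

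To identify the limit, note that log-admissibility forces $(\phi_\theta, \psi_\theta) \in \mathcal{K}$ a.e., hence by pointwise convergence $(\phi_*, \psi_*) \in \mathcal{K}$. Given any obstacle-admissible $(\zeta, \eta) \in (L^2(0,T;H^1(\Omega)))^2$, form the strict-interior perturbation $(\zeta^\delta, \eta^\delta) := (1-\delta)(\zeta, \eta) + \delta(0, \tfrac{1}{3})$, whose three log-arguments lie in $[\delta/3, 1 - 2\delta/3]$, making it log-admissible. Inserted into \eqref{w:3}, the convexity of $F_0^\theta$ bounds the singular contribution by $\int_\Omega F_0^\theta(\zeta^\delta, \eta^\delta)\, dx \leq C\theta$ since $\Pi$ is bounded on $[0,1]$; all remaining terms pass to the limit using the compactness from phase one. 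Letting first $\theta \to 0$ and then $\delta \to 0$ via dominated convergence recovers \eqref{w:3:obs} for $(\phi_*, \psi_*, \mu_*, z_*)$, and Theorem~\ref{thm:wellposed} upgrades subsequential convergence to convergence of the whole family.

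For the rate \eqref{rate}, I apply the strategy symmetrically: use $(\zeta^\delta, \eta^\delta) := (1-\delta)(\phi_*, \psi_*) + \delta(0, \tfrac{1}{3})$ as test in the log-VI satisfied by $(\phi_\theta, \psi_\theta, \mu_\theta, z_\theta)$, and use the automatically obstacle-admissible pair $(\phi_\theta, \psi_\theta)$ as test in the obstacle-VI satisfied by $(\phi_*, \psi_*, \mu_*, z_*)$. Summing the two inequalities and passing $\delta \to 0$ (via $L^2(Q)$-integrability of $F_{0,\phi}^\theta(\phi_\theta,\psi_\theta), F_{0,\psi}^\theta(\phi_\theta,\psi_\theta)$ from Remark~\ref{rem:varineq}), convexity delivers
\[
\int_\Omega F_{0,\phi}^\theta(\phi_\theta,\psi_\theta)(\phi_* - \phi_\theta) + F_{0,\psi}^\theta(\phi_\theta,\psi_\theta)(\psi_* - \psi_\theta)\, dx \leq \int_\Omega F_0^\theta(\phi_*, \psi_*)\, dx \leq C\theta.
\]
The remaining surviving terms, after integrations by parts exploiting $\dn\hat\psi = \dn\Delta\hat\psi = 0$, collapse to $\int_\Omega(\hat\mu\hat\phi + \hat z\hat\psi)\, dx$, a quadratic form of the shape $-\eps\|\nabla\hat\phi\|^2 - \lambda\|\Delta\hat\psi\|^2 + (\text{lower order})$, and a Lipschitz-controllable $F_1$-difference on the compact set $\mathcal{K}$. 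Conservation of mass and identical initial means guarantee $\inn{\hat\phi}_\Omega = 0$, so the evolution equations yield
\[
\int_\Omega \hat\mu\,\hat\phi\, dx = -\tfrac{1}{2}\tfrac{d}{dt}\|\nabla\mathcal{N}(\hat\phi)\|^2, \qquad \int_\Omega \hat z\,\hat\psi\, dx = -\tfrac{1}{2}\tfrac{d}{dt}\|\hat\psi\|^2.
\]
Combining with Young's inequality, the interpolation $\|\hat\phi\|^2 \leq C\|\nabla\mathcal{N}(\hat\phi)\|\,\|\nabla\hat\phi\|$, and $\|\nabla\hat\psi\|^2 \leq C\|\hat\psi\|\,\|\Delta\hat\psi\|$ produces the Gronwall-type inequality
\[
\tfrac{d}{dt}\bigl(\|\nabla\mathcal{N}(\hat\phi)\|^2 + \|\hat\psi\|^2\bigr) + c\bigl(\|\nabla\hat\phi\|^2 + \|\Delta\hat\psi\|^2\bigr) \leq C\bigl(\|\nabla\mathcal{N}(\hat\phi)\|^2 + \|\hat\psi\|^2\bigr) + C\theta,
\]
which together with the vanishing initial differences and Neumann elliptic regularity delivers \eqref{rate}.

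The principal technical obstacle throughout will be the admissibility of test functions in the log-VI: neither $(\phi_*, \psi_*)$ nor a generic obstacle-admissible $(\zeta, \eta)$ need be log-admissible, as the arguments of the logarithm may vanish on $\partial\mathcal{K}$. The strict-interior perturbation $(\zeta^\delta, \eta^\delta)$, together with the uniform bound $F_0^\theta \leq C\theta$ on $\mathcal{K}$ coming from the convexity trick and the $L^2$-integrability of the singular derivatives guaranteed by Remark~\ref{rem:varineq}, is what makes dominated convergence as $\delta \to 0$ valid and, in particular, converts the qualitative limit into the sharp $O(\theta)$ rate.
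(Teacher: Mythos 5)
Your proposal follows essentially the same three-phase strategy as the paper: $\theta$-uniform energy and mean-value estimates, identification of the limit through a variational inequality in which the singular terms are eliminated by convexity together with the bound $0\le F_0\le C\theta$ on $\mathcal K$, and the rate via cross-testing the two variational inequalities followed by a Gronwall argument in the $\|\nabla\mathcal N(\cdot)\|$- and $L^2$-norms. The argument is correct in substance.

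The one place where your bookkeeping differs from the paper, and where one step as written would fail, is the handling of test functions in the logarithmic variational inequality. The paper sidesteps admissibility issues by having already derived, during the existence proof, the weaker inequality \eqref{w:3:alt:weak}, which accepts any test pair with $F_{\log}(\zeta,\eta)\in L^1(Q)$ (in particular any obstacle-admissible pair and $(\phi_*,\psi_*)$ itself); your strict-interior perturbation $(\zeta^\delta,\eta^\delta)=(1-\delta)(\zeta,\eta)+\delta(0,\tfrac13)$ achieves the same effect and is fine. However, your stated justification for the passage $\delta\to0$ in the rate derivation --- the ``$L^2(Q)$-integrability of $F_{0,\phi}^\theta(\phi_\theta,\psi_\theta)$, $F_{0,\psi}^\theta(\phi_\theta,\psi_\theta)$ from Remark~\ref{rem:varineq}'' --- is not available: that remark (via \eqref{est:6}) gives $F_{0,\phi}(\phi_\theta,\psi_\theta)\in L^2(Q)$ only, while for $F_{0,\psi}(\phi_\theta,\psi_\theta)$ nothing better than an $L^2(0,T;L^1(\Omega))$ bound is ever obtained --- this is precisely why the solution concept is a variational inequality in the $\psi$-component. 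The repair is to reverse the order of operations: for each fixed $\delta$ apply the convexity bound
$\int_\Omega F^\theta_{0,\phi}(\phi_\theta,\psi_\theta)(\zeta^\delta-\phi_\theta)+F^\theta_{0,\psi}(\phi_\theta,\psi_\theta)(\eta^\delta-\psi_\theta)\,dx\le\int_\Omega F_0^\theta(\zeta^\delta,\eta^\delta)\,dx\le C\theta$ with $C$ independent of $\delta$ \emph{first}; only terms linear in $(\zeta^\delta,\eta^\delta)$ then remain and $\delta\to0$ is immediate. A second, smaller point: when passing $\theta\to0$ in the $\psi$-part you implicitly use a $\theta$-uniform bound on $\nabla\Delta\psi_\theta$; the paper avoids claiming this by rewriting that term as $\Delta\psi_\theta\,\Delta(\eta-\psi_\theta)$ for $\eta\in H^2_{\bnn}(\Omega)$ and recovering general $H^1$ test pairs by density at the end, and you should do likewise (or verify the uniform $L^2(0,T;H^1)$ bound on $\Delta\psi_\theta$ explicitly).
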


\section{Mathematical analysis}\label{sec:analysis}
\subsection{Existence of variational solutions}
We first treat the case of the logarithmic potential \eqref{log} and defer the case of the obstacle potential \eqref{obs} to Section \ref{sec:obs}. The first step is to regularize the nonlinearity by formulating an appropriate approximation scheme.

\subsubsection{Approximation scheme}
For $N \in \mathbb{N}$ we consider the fourth-order Taylor approximation $\Pi_N$ of $\Pi$ given by:
\begin{align}\label{Pi}
\Pi_N(s) = \begin{cases}
\Pi(s) = s \ln(s) & \text{ for } s \geq \frac{1}{N}, \\
\sum_{j=0}^4 \frac{1}{j!} \Pi^{(j)}(\frac{1}{N}) (s - \frac{1}{N})^j & \text{ for } s \leq \frac{1}{N}.
\end{cases}
\end{align}
We note that there exist a constant $d_1 \geq 0$ such that 
\begin{align}\label{Pi:lb}
\Pi_N(s) \geq - d_1 \quad \forall s \in \RRR.
\end{align}
Let us now show another useful property that will be used later on.
\begin{lem}
Setting $\pi_N(s) = \Pi_N'(s)$, for any $u \in (0,1)$ there exist constants $d_2>0$, $d_3 \geq 0$ dependent on $u$ but independent of $N$ such that for $N$ sufficiently large,
\begin{align}\label{MZ:ineq}
|\pi_N(s)| \leq d_2 \pi_N(s)(s - u) + d_3 \quad \forall s \in \RRR.
\end{align}
\end{lem}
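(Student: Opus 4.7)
Inequality \eqref{MZ:ineq} is of Miranville--Zelik type and I would prove it by a three-region analysis in $s$, relying on two structural facts. First, $\Pi_N$ is convex on $\RRR$, so $\pi_N$ is monotone non-decreasing. Second, on $\{s \geq 1/N\}$ one has the $N$-independent formula $\pi_N(s) = \ln s + 1$. Convexity on $\{s \leq 1/N\}$ follows by computing $\Pi_N''(s) = N(1 - \tau + \tau^2)$ with $\tau := N(s - 1/N)$ and noting that the quadratic $1 - \tau + \tau^2$ has negative discriminant and is therefore everywhere positive; on $\{s\ge 1/N\}$ the convexity is trivial since $\Pi_N''(s)=1/s>0$.

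Fix $u \in (0,1)$, choose $N_0 \geq 3$ with $1/N_0 < \min(u, 1/e)$, and set $\delta := \tfrac12 \min(u, 1/e)$ and $K := \max(u, 1/e) + 1$. By construction, for all $N \geq N_0$ one has $\pi_N(s) = \ln s + 1$ on $[\delta, +\infty)$. I then split $\RRR$ into three pieces. On the lower tail $s \leq \delta$, monotonicity together with $\pi_N(\delta) \leq \ln(1/e)+1 = 0$ gives $\pi_N(s) \leq 0$, and since $s - u \leq \delta-u \leq -u/2$, one obtains $\pi_N(s)(s-u) = |\pi_N(s)|(u-s) \geq (u/2)|\pi_N(s)|$, so \eqref{MZ:ineq} holds here with $d_2 = 2/u$ and $d_3 = 0$. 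On the upper tail $s \geq K$, $\pi_N(s) = \ln s + 1 > 0$ and $s-u \geq 1$, so $\pi_N(s)(s-u) \geq |\pi_N(s)|$. On the bounded middle $s \in [\delta, K]$, $\pi_N(s) = \ln s + 1$ is uniformly bounded by some $M = M(u)$, and any (possibly negative) contribution of $\pi_N(s)(s-u)$ can be absorbed into a sufficiently large $d_3$. Taking $d_2 = 2/u$ and $d_3$ depending only on $u$ (through $\delta, K, M$) yields \eqref{MZ:ineq} for all $s\in\RRR$ and all $N \geq N_0$.

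\textbf{Main obstacle.} The apparent difficulty is $N$-uniformity on $s < 0$, where $\pi_N$ is the $N$-dependent cubic extension of $\pi$ and $|\pi_N(s)|$ blows up like $\tfrac{N^3}{3}|s|^3$ as $s \to -\infty$. This is in fact harmless: on that region $\pi_N(s) \leq 0$ and $u - s \geq u > 0$, so $\pi_N(s)(s-u) \geq u|\pi_N(s)|$ with the constant $1/u$ independent of $N$; the $N$-dependent regime is thus the easiest part of the argument, with the rapid growth of $\pi_N$ being exactly cancelled by the corresponding growth of $(u-s)$. $N$-uniform boundedness of $\pi_N$ is genuinely needed only on the bounded middle interval $[\delta, K]$, and there the choice $1/N_0 < \delta$ ensures $\pi_N = \ln s + 1$ throughout, which is obviously bounded independently of $N$.
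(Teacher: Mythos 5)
Your proof is correct and follows essentially the same strategy as the paper's: a three-region case split in $s$ exploiting the signs of $\pi_N(s)$ and $s-u$, with the far-left region handled by the observation that $\pi_N\leq 0$ there while $u-s$ is bounded below by a positive multiple of $u$, so that $d_2=2/u$ works with no additive constant. The only difference is cosmetic: you place the cuts at the $N$-independent points $\delta$ and $K$ and dispose of the bounded middle interval by uniform boundedness of $\ln s +1$, whereas the paper cuts at $1/N$ and $e^{-1}$ and controls the middle and right regions by explicitly minimizing the functions $\tfrac{2}{u}(1+\ln s)(s-u)\mp(1+\ln s)$.
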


\begin{proof}
For fixed $u \in (0,1)$ and for all $N > \frac{2}{u}$, we can choose $d_2 = \frac{2}{u}$ so that  $d_2(s-u) \leq -1$ for all $s \leq \frac{1}{N}$ and
\begin{align}\label{MZ:1}
d_2 \pi_N(s)(s-u) \geq |\pi_N(s)|.
\end{align}
For the remaining case we first note that the solvability of the nonlinear equation $s (2 + \ln(s)) = y$ for any $y \in (0, {\frac 32})$ holds with positive solutions. Then, setting $d_2 = \frac{2}{u}$ we notice that for $\frac{1}{N} \leq s \leq e^{-1}$ we have $|1+\ln(s)| = -1 - \ln(s)$ and the function
$f(s) = \frac{2}{u}(1+\ln(s))(s-u) - |1+\ln(s)|$ admits a minimum at $s_* \in (0,1)$ which also solves $s_*(2+\ln(s_*)) = \frac{u}{2}$. Hence, there exists a constant $d \geq 0$ such that 
\begin{align}\label{MZ:2}
\frac{2}{u}\pi_N(s)(s-u) - |\pi_N(s)| \geq -d .
\end{align}
Lastly for $s \geq e^{-1}$, we have $|1+\ln(s)| = 1+\ln(s)$ and analogously the function $g(s) = \frac{2}{u}(1+\ln(s))(s-u) - 1 - \ln(s)$ admits a minimum at $s^* \in (0,1)$ which also solves $s^* (2+\ln(s^*)) = \frac{3u}{2}$. Likewise, we can find a constant $d \geq 0$ such that \eqref{MZ:2} is fulfilled. Hence, \eqref{MZ:ineq} holds for the approximation $\pi_N$. 
\end{proof}

We then define
\[
F_0^N(r,s) := \theta \Big [ \Pi_N \big ( \tfrac{1}{2}(1+r-s) \big ) + \Pi_N \big ( \tfrac{1}{2}(1-r-s) \big) + \Pi_N(s) \Big ]
\quad \forall r,s \in \RRR,
\]
and from the above definition of $\Pi_N$ we deduce via Young's inequality and the fact $\Pi^{(4)}(\frac{1}{N}) = 2N^3 > 0$ that there exist constants $d_4 > 0$ and $d_5 \geq 0$ independent of $N \in \mathbb{N}$ such that 
\begin{align}\label{log:app} 
F_0^N(r,s) \geq d_4 (|r|^4 + |s|^4) - d_5 \quad {\forall r,s \in \RRR}.
\end{align}
In Theorem~\ref{thm:wellposed}, the solution $(\phi, \psi)$ only take values in the admissible set $\mathcal{K}$. Hence, only $F_{1,\phi}$ and $F_{1,\psi}$ restricted to $\mathcal{K}$ enter into the definition of a variational solution.  We can therefore extend $F_1$ from $\mathcal{K}$ to the whole of $\RRR^2$ such that $F_1$, $F_{1,\phi}$ and $F_{1,\psi}$ are bounded. Then, replacing $F_0$ with $F_0^N$ leads to the following approximate problem expressed in strong form:
\begin{subequations}\label{CHSH:approx}
\begin{alignat}{2}
\pd_t \phi_N & = \Delta {\mu}_N
\quad && \text{ in } Q, \label{app:1} \\
{\mu}_N & = - \eps \Delta \phi_N + F^{N}_{0,\phi}(\phi_N, \psi_N) + F_{1,\phi}(\phi_N, \psi_N) + \sigma \Delta \psi_N
\quad && \text{ in } Q, \label{app:2}  \\
\pd_t \psi_N & = - z_N \quad && \text{ in } Q,  \label{app:3} \\
z_N & = \lambda (\Delta + \omega^2)^2 (\psi_N-\tfrac{1}{2})  +  F^N_{0,\psi}(\phi_N, \psi_N) + F_{1,\psi}(\phi_N, \psi_N) + \sigma \Delta \phi_N
\quad && \text{ in } Q,  \label{app:4}
\\ \label{app:5}
{\dn} \phi_N &  = {\dn} {\mu}_N =  {\dn} \psi_N = {\dn} \Delta \psi_N = 0 \quad && \text{ on } \Sigma, \\ \label{app:6}
\phi_N(0) &  =\phi_0, \quad \psi_N(0)= \psi_0 \quad && \text{ in } \Omega.
\end{alignat}
\end{subequations}
The existence of a weak solution tuple $(\phi_N, \psi_N, {\mu}_N, z_N)$ can be established by a standard Galerkin approximation. We omit the details here as in the next section we will derive uniform estimates that can also be used as a foundation for the existence proof of \eqref{CHSH:approx} via a Galerkin approximation.

\subsubsection{Uniform estimates}
In the sequel the symbol $C$ denotes nonnegative constants independent of $N$ whose value may change from line to line and also within the same line. We test \eqref{app:1} with ${\mu}_N$, \eqref{app:2} with $\pd_t \phi_N$, \eqref{app:3} with $z_N$ and \eqref{app:4} with $\pd_t \psi_N$, respectively. Without entering the details, let us highlight that all the mentioned testing procedures can be justified within a rigorous framework as mentioned above. Then, upon summing, we arrive at the energy identity
\begin{align*}
\frac{d}{dt} E_N(\phi_N, \psi_N) + \| \nabla {\mu}_N \|^2 + \| z_N \|^2 = 0,
\end{align*}
where $E_N$ is the energy functional 
\begin{align*}
E_N(\phi_N, \psi_N) & = \int_\Omega \frac{\eps}{2} |\nabla \phi_N|^2 + \frac{\lambda}{2} |(\Delta + \omega^2) (\psi_N-\tfrac{1}{2})|^2  \, dx \\
& \quad +  \int_\Omega F^N_0(\phi_N, \psi_N) + F_{1}(\phi_N, \psi_N) - \sigma \nabla \phi_N \cdot \nabla \psi_N \, dx.
\end{align*}
Upon integrating in time, for arbitrary $t \in (0,T]$, the above identity gives
\begin{align}\label{est:1}
E_N(\phi_N(t), \psi_N(t)) + \int_0^t \| \nabla {\mu}_N \|^2 + \| z_N \|^2 \, dt = E_N(\phi_0, \psi_0).
\end{align}
Note that by assumption \eqref{ass:ini} for the initial conditions in \eqref{app:5}, it holds that 
\[
E_N(\phi_0, \psi_0) \leq C \big ( \| \phi_0 \|_{H^1}, \| \psi_0 \|_{H^2} \big ).
\]
Thanks to the Neumann boundary conditions, we have the interpolation inequality and elliptic regularity estimates, see, e.g.,~ \cite[Thm.~2.4.2.7]{Gris} for $C^{1,1}$-domains or \cite[Thm.~3.2.1.3]{Gris} for convex domains:
\begin{align}\label{inter:1}
\| \nabla \psi_N \|^2 \leq \| \Delta \psi_N \| \| \psi_N \|, \quad \| \psi_N \|_{H^2} \leq C(\Omega) \big ( \| \Delta \psi_N \| + \| \psi_N \| \big ),
\end{align}
which leads to the lower bound 
\begin{align*}
\|(\Delta + \omega^2) (\psi_N - \tfrac{1}{2}) \|^2 & \geq \| \Delta \psi_N \|^2 + \omega^4 \| \psi_N -\tfrac{1}{2} \|^2 - 2 \omega^2 \| \Delta \psi_N \| \| \psi_N - \tfrac{1}{2} \| \\
& \geq \frac{1}{4} \| \Delta \psi_N \|^2 - 3\omega^4 \| \psi_N -\tfrac{1}{2} \|^2.
\end{align*}
Furthermore, for the term with indefinite sign $-\sigma \nabla \phi_N \cdot \nabla \psi_N$ in $E_N$, we compute a lower bound:
\begin{equation}\label{curvature:term}
\begin{aligned}
- \sigma \int_\Omega  \nabla \phi_N \cdot \nabla (\psi_N -\tfrac{1}{2}) \, dx & \geq - |\sigma| \| \nabla \phi_N \| \| \Delta \psi_N \|^{1/2} \| \psi_N -\tfrac{1}{2} \|^{1/2} \\
& \geq - \frac{\eps}{4} \| \nabla \phi_N \|^2 - \frac{\lambda}{4} \| \Delta \psi_N \|^2 - \frac{|\sigma|^4}{\lambda \eps^2} \| \psi_N -\tfrac{1}{2} \|^2.
\end{aligned}
\end{equation}
Using the lower bound in \eqref{log:app} for $F_0^N$, by Young's inequality we see that 
\begin{align*}
\frac{1}{2} \int_\Omega F_0^N(\phi_N, \psi_N) \, dx & \geq \frac{d_4}{4} \| \psi_N - \tfrac{1}{2} \|_{L^4}^4 - C \geq \Big ( \frac{|\sigma|^4}{\lambda \eps^2} + 3 \omega^4 \Big ) \| \psi_N - \tfrac{1}{2} \|^2 - C,
\end{align*}
and hence we deduce the lower bound for $E_N$:
\begin{equation}\label{lowerb}
\begin{aligned}
E_N(\phi_N, \psi_N) & \geq \frac{\eps}{4} \| \nabla \phi_N \|^2 + \frac{\lambda}{4} \| \Delta \psi_N \|^2  \\
& +   \frac{1}{2}\int_\Omega F_0^N(\phi_N, \psi_N) \, dx +  \int_\Omega F_1(\phi_N, \psi_N) \, dx - C,
\end{aligned}
\end{equation}
where $C$ is a positive constant independent of $N$, $\phi_N$ and $\psi_N$. Next, we test \eqref{app:1} with $1/|\Omega|$ and \eqref{app:3} with $\psi_N$ to obtain
\begin{align}\label{mass:cons}
\inn{\phi_N(t)}_\Omega = \inn{\phi_0}_\Omega \quad \forall t \in (0,T],
\end{align}
and
\begin{align}\label{psi:L2}
\frac{1}{2} \| \psi_N(t) \|^2 \leq \frac{1}{2} \|\psi_0 \|^2 + \frac{1}{2} \int_0^t \| z_N \|^2 \, dt + \frac{1}{2} \int_0^t \| \psi_N \|^2 \, dt.
\end{align}
Adding this to \eqref{est:1}, applying Gronwall's inequality and the lower bound \eqref{lowerb} in conjunction with Poincar\'e's inequality for $\phi_N$ and the elliptic regularity estimate \eqref{inter:1} for $\psi_N$, as well as comparison argument in \eqref{app:3}, we deduce the uniform estimates
\begin{equation}\label{est:2}
\begin{aligned}
& \| \phi_N \|_{L^\infty(0,T;H^1)} + \| \psi_N \|_{L^\infty(0,T;H^2)} + \| \pd_t \psi_N \|_{L^2(Q)} \\
& \quad + \| \nabla {\mu}_N \|_{L^2(Q)} + \| z_N \|_{L^2(Q)} + {\norma{F_0^N(\phi_N, \psi_N)}_{\L\infty {\Lx1}}} \leq C.
\end{aligned}
\end{equation}
Since $\inn{\phi_N}_\Omega = \inn{\phi_0}_\Omega \in (-1,1)$, we see that the constant $\nu := \frac{1}{2}(1- |\inn{\phi_0}_\Omega|)$ satisfies the properties
\[
\nu \in (0,1) \quad \text{ and } \quad \frac{1}{2} (1 - {\nu} \pm \inn{\phi_0}_\Omega) \in (0,1).
\]
Then, we consider testing \eqref{app:2} with $\phi_N - \inn{\phi_N}_\Omega$ and \eqref{app:4} with $\psi_N - {\nu} $, which leads to
\begin{align*}
& \int_\Omega \eps | \nabla \phi_N|^2 + F_{0,\phi}^N(\phi_N, \psi_N) (\phi_N - \inn{\phi_N}_\Omega) \, dx \\
& \quad = \int_\Omega ({\mu}_N - \inn{{\mu}_N}_\Omega) (\phi_N - \inn{\phi_N}_\Omega) - \big ( F_{1,\phi}(\phi_N, \psi_N) + \sigma \Delta \psi_N)  (\phi_N - \inn{\phi_N}_\Omega) \, dx \\
& \quad \leq C \big ( \| \nabla {\mu}_N \| +  \| F_{1,\phi}(\phi_N, \psi_N) \| + \sigma \| \Delta \psi_N \| \big )  \| \nabla \phi_N \| \\
& \quad \leq C(1 + \| \nabla {\mu}_N \|),
\end{align*}
and
\begin{align*}
& \int_\Omega  \lambda |(\Delta + \omega^2) \psi_N|^2 + F_{0,\psi}^N (\phi_N, \psi_N)( \psi_N- {\nu} ) \, dx \\
& \quad = \int_\Omega \sigma \nabla \phi_N \cdot \nabla \psi_N + (z_N - F_{1,\psi}(\phi_N, \psi_N))(\psi_N - {\nu} ) \, dx \\
& \qquad   + \int_\Omega \tfrac{1}{2} \lambda \omega^2 (\Delta + \omega^2)(\psi_N - \nu) + \lambda \nu(\Delta + \omega^2) \psi_N + \tfrac{1}{2} \lambda \nu \, dx  \\
& \quad \leq C( 1 + \| z_N \|),
\end{align*}
where we have used the uniform estimates \eqref{est:2}, as well as the boundedness of $F_1$, $F_{1,\phi}$ and $F_{1,\psi}$ to deduce that $F_{1,\phi}(\phi_N, \psi_N)$ and $F_{1,\psi}(\phi_N, \psi_N)$ are uniformly bounded in $L^\infty(0,T;L^2(\Omega))$. Using \eqref{mass:cons} and adding these inequalities, we find that 
\begin{equation}\label{F0_der}
\begin{aligned}
& \int_\Omega F_{0,\phi}^N(\phi_N, \psi_N) (\phi_N - \inn{\phi_0}_\Omega) + F_{0,\psi}^N(\phi_N, \psi_N)(\psi_N - {\nu} ) \, dx \\
& \quad \leq C ( 1 + \| \nabla {\mu}_N \| + \| z_N \| ).
\end{aligned}
\end{equation}
A direct computation shows the left-hand side can be expressed as
\begin{align*}
& \int_\Omega \frac{\theta}{2} \big (\pi_N \big ( \tfrac{1}{2} (1-\psi_N + \phi_N) \big ) - \pi_N \big ( \tfrac{1}{2} (1-\psi_N - \phi_N) \big ) \big )(\phi_N - \inn{\phi_0}_\Omega) \, dx \\
& \quad + \int_\Omega \frac{\theta}{2} (\pi_N \big ( \tfrac{1}{2} (1-\psi_N + \phi_N) \big ) + \pi_N \big (\tfrac{1}{2} (1-\psi_N - \phi_N) \big ) - \pi_N(\psi_N) ) \\
& \qquad \times ((1-\psi_N) - (1- {\nu} )) \, dx \\
& = \int_\Omega \theta (\pi_N \big ( \tfrac{1}{2}(1-\psi_N + \phi_N) \big ) [\tfrac{1}{2}(1-\psi_N + \phi_N) -\tfrac{1}{2} (1-{\nu}  + \inn{\phi_0}_\Omega)] \, dx \\
& \quad + \int_\Omega \theta \pi_N \big ( \tfrac{1}{2}(1-\psi_N - \phi_N) \big ) [ \tfrac{1}{2} (1-\psi_N - \phi_N) - \tfrac{1}{2}(1- {\nu} -\inn{\phi_0}_\Omega)] \, dx \\
& \quad + \int_\Omega \theta \pi_N(\psi_N) (\psi_N - {\nu} )\, dx.
\end{align*}
As $\tfrac{1}{2}(1-\nu \pm \inn{\phi_0}_\Omega) \in (0,1)$ and $\nu \in (0,1)$, by invoking \eqref{MZ:ineq} we deduce that 
\begin{align*}
& \| \pi_N \big ( \tfrac{1}{2}(1+\phi_N - \psi_N) \big ) \|_{L^1} + \| \pi_N \big ( \tfrac{1}{2}(1-\phi_N - \psi_N) \big ) \|_{L^1} + \| \pi_N(\psi_N) \|_{L^1} \\
& \quad \leq C( 1 + \| \nabla {\mu}_N \| + \| z_N \|),
\end{align*}
which implies
\begin{align}\label{est:3}
\| F_{0,\phi}^N(\phi_N, \psi_N) \|_{L^2(0,T;L^1)} + \| F_{0,\psi}^N(\phi_N, \psi_N) \|_{L^2(0,T;L^1)} \leq C.
\end{align}
Consequently, integrating \eqref{app:2} yields the estimate on the mean value of ${\mu}_N$:
\[
\|\inn{{\mu}_N}_\Omega \|_{L^2(0,T)} \leq \| F_{0,\phi}^N(\phi_N, \psi_N) \|_{L^2(0,T;L^1)} + \| F_{1,\phi}(\phi_N, \psi_N) \|_{L^2(0,T;L^1)} \leq C,
\]
and by the Poincar\'e inequality we obtain 
\begin{align}\label{est:4}
\| {\mu}_N \|_{L^2(Q)} \leq C.
\end{align}
Then, testing \eqref{app:2} with $- \Delta \phi_N$ and \eqref{app:4} with $- \Delta \psi_N$, upon summing we obtain
\begin{align*}
& \int_\Omega F_{0,\phi \phi}^N (\phi_N, \psi_N) \nabla \phi_N \cdot \nabla \phi_N + \nabla F_{0,\phi \psi}^N( \phi_N, \psi_N) \nabla \psi_N \cdot \nabla \phi_N \, dx \\
& \qquad + \int_\Omega  F_{0,\psi \phi}^N(\phi_N, \psi_N) \nabla \phi_N \cdot \nabla \psi_N + F_{0,\psi \psi}^N(\phi_N, \psi_N) \nabla \psi_N \cdot \nabla \psi_N \, dx \\
& \qquad + \eps \| \Delta \phi_N \|^2 + \lambda \| \nabla \Delta \psi_N \|^2 \\
& \quad = \int_\Omega \Big (F_{1,\phi}(\phi_N, \psi_N) - {\mu}_N + \sigma \Delta \psi_N \Big ) \Delta \phi_N \, dx \\
& \qquad + \int_\Omega \Big ( F_{1,\psi}(\phi_N, \psi_N) + \sigma \Delta \phi_N - z_N + 2 \lambda \omega^2 \Delta \psi_N + \lambda \omega^4 (\psi_N-\tfrac{1}{2}) \Big ) \Delta \psi_N \, dx \\
& \quad \leq \frac{1}{2} \| \Delta \phi_N \|^2 + C \big ( 1 + \| {\mu}_N \|^2 + \| z_N \|^2 + \| \nabla F_{1}(\phi_N, \psi_N) \|^2 \big ).
\end{align*}
Using the convexity of $F_0^N$ the sum of the first and second lines on the left-hand side is non-negative. Recalling that $F_{1,\phi}(\phi_N, \psi_N)$ and $F_{1,\psi}(\phi_N, \psi_N)$ are bounded in $L^\infty(0,T;L^2(\Omega))$, we then infer the estimate
\begin{align*}
\| \Delta \phi_N \|_{L^2(0,T;L^2)}^2 + \| \nabla \Delta \psi_N \|_{L^2(0,T;L^2)}^2 \leq C,
\end{align*}
and by invoking elliptic regularity estimate \eqref{inter:1} we obtain
\begin{align}\label{est:5}
\| \phi_N \|_{L^2(0,T;H^2)}  \leq C.
\end{align}
Through a comparison of terms in \eqref{app:2} we deduce also that
\begin{align}\label{est:6}
\| F_{0,\phi}^N (\phi_N, \psi_N) \|_{L^2(Q)} \leq C.
\end{align}
Lastly, testing \eqref{app:1} with an arbitrary test function $\zeta \in L^2(0,T;H^1(\Omega))$ yields
\begin{align}\label{est:7}
\| \pd_t \phi_N \|_{L^2(0,T;(H^1)^*)} \leq C,
\end{align}
while from the uniform estimate \eqref{est:2} for $\pd_t \psi_N$ we readily see that 
\begin{align}\label{est:8}
\|  \pd_t \Delta \psi_N \|_{L^2(0,T;H^2_{\bnn}(\Omega)^*)} \leq \| \pd_t \psi_N \|_{L^2(Q)} \leq C.
\end{align}

\begin{remark}
With a more regular domain boundary $\pd \Omega$, it is possible to obtain a uniform boundedness of $\psi_N$ in $L^2(0,T;H^3(\Omega))$.
\end{remark}

\subsubsection{Passing to the limit}\label{sec:lim}
From the uniform estimates \eqref{est:2}, \eqref{est:3}--\eqref{est:7} and \eqref{est:8}, we deduce the existence of limit functions $\phi$, $\psi$, ${\mu}$ and $z$, as well as a non-relabelled subsequence $N \to \infty$, such that as $N \to \infty$,
\begin{equation}\label{compact}
\begin{aligned}
\phi_N \to \phi &\quad  \text{ weakly* in } L^\infty(0,T;H^1(\Omega)) \cap L^2(0,T;H^2_{\bnn}(\Omega)) \cap H^1(0,T;H^1(\Omega)^*), \\
\phi_N \to \phi & \quad \text{ strongly in } C^0([0,T];L^s(\Omega)) \cap L^2(0,T;W^{1,s}(\Omega)) \text{ and a.e.~in } Q, \\
\psi_N \to \psi & \quad \text{ weakly* in } L^\infty(0,T;H^2_{\bnn}(\Omega)) \cap H^1(0,T;L^2(\Omega)), \\
\psi_N \to \psi & \quad \text{ strongly in } C^0([0,T];W^{1,s}(\Omega)) \text{ and a.e.~in }Q, \\
\Delta \psi_N \to \Delta \psi &\quad  \text{ weakly in } L^2(0,T;H^1(\Omega)) \cap H^1(0,T;H^2_{\bnn}(\Omega)^*), \\
\Delta \psi_N \to \Delta \psi &\quad  \text{ strongly in }  L^2(0,T;L^s(\Omega)), \\
%\nabla \Delta \psi_N \to \nabla \Delta \psi  &\quad  \text{ weakly in } L^2(0,T;L^2(\Omega)), \\
\mu_N \to \mu &\quad  \text{ weakly in } L^2(0,T;H^1(\Omega)), \\
z_N \to z &\quad  \text{ weakly in } L^2(Q),
\end{aligned}
\end{equation}
for any $s < \infty$ in two dimensions and any $s \in [2,6)$ in three dimensions. Note that the initial conditions are attained by virtue of the continuity properties $\phi \in C^0([0,T];L^2(\Omega))$ and $\psi \in C^0([0,T];H^1(\Omega))$. By the generalized dominated convergence theorem and the boundedness of $F_1$, $F_{1,\phi}$ and $F_{1,\psi}$  we deduce as $N \to \infty$
\[
F_{1,\phi}(\phi_N, \psi_N) \to F_{1,\phi}(\phi, \psi) \,\text{ and }\, F_{1,\psi}(\phi_N, \psi_N) \to F_{1,\psi}({\phi, \psi}) \,\, \text{ strongly in } L^2(Q).
\]
From the a.e.~convergence of $\phi_N$ and $\psi_N$, we deduce that $F_{0,\phi}^N(\phi_N, \psi_N) \to F_{0,\phi}(\phi, \psi)$ a.e.~in $Q$.  Together with the uniform estimate \eqref{est:6}, invoking Vitali's convergence theorem yields the strong convergence of $F_{0,\phi}^N(\phi_N, \psi_N)$ to $F_{0,\phi}(\phi, \psi)$ in $L^q(Q)$ for $q \in [1,2)$.  This allows us to identify the weak limit of $F_{0,\phi}^N(\phi_N, \psi_N)$ in $L^2(Q)$ as $F_{0,\phi}(\phi,\psi)$ and obtain 
\begin{align}\label{F0phi:weak}
F_{0,\phi}^N(\phi_N, \psi_N) \to F_{0,\phi}(\phi, \psi) \text{ weakly in } L^2(Q).
\end{align}
However, as we only have a uniform estimate of $F_{0,\psi}^N(\phi_N, \psi_N)$ in $L^2(0,T;L^1(\Omega))$, this is insufficient to identify the limit of $F_{0,\psi}^N(\phi_N, \psi_N)$ as $N \to \infty$. This primarily explains why, in the limit, we can just achieve a variational inequality rather than an equality. Nevertheless, we can show that the limit functions satisfy the physical property $(\phi, \psi) \in \mathcal{K}$ for a.e.~$(x,t) \in Q$. Owning to the explicit expression for $\Pi_N$, recall that $d_1\geq0$, we have for $s \leq \frac{1}{N}$,
\begin{align*}
\Pi_N(s) = \frac{2N^3}{4!}\Big (s-\frac{1}{N}\Big )^4 - \frac{N^2}{3!} \Big (s-\frac{1}{N}\Big )^3 + \frac{N}{2}\Big (s-\frac{1}{N}\Big )^2 + \Big (\ln \frac{1}{N} + 1 \Big ) \Big (s - \frac{1}{N}\Big ) + \frac{1}{N} \ln \frac{1}{N}.
\end{align*}
By Young's inequality we deduce that 
\[
\frac{2N^3}{4!}\Big (s-\frac{1}{N}\Big )^4 - \frac{N^2}{3!}\Big (s-\frac{1}{N}\Big )^3 + \frac{N}{2}\Big (s-\frac{1}{N}\Big )^2 \geq 0 \quad \forall s \in \RRR,
\]
and hence, due to the lower bound in \eqref{Pi:lb}, for $N$ sufficiently large we obtain
\begin{align*}
& \int_Q (\Pi_N(\psi_N) + d_1) \, dx \, dt \geq \int_{\{(x,t) \in Q \, : \, \psi_N(x,t) < 0 \}} (\Pi_N(\psi_N) + d_1) \, dx \, dt \\
& \quad \geq \Big ( \ln \frac{1}{N} + 1\Big ) \int_{\{(x,t) \in Q \, : \,  \psi_N(x,t) < 0 \}} \psi_N \, dx \, dt - \frac{1}{N} |\Omega| T \\
& \quad =  \left | \ln \frac{1}{N} + 1 \right | \int_Q (-\psi_N)_+ \, dx \, dt - \frac{1}{N} |\Omega |T,
\end{align*}
where, for a given scalar function $f$, $(f)_+ = \max(f, 0)$ denotes the positive part of $f$, and $(-f)_+ = \max(-f, 0) = - \min(f, 0)$ is the negative part of $f$. Combining this with the uniform estimate \eqref{est:2} for $F_0^N$, we infer that 
\begin{align*}
& C \geq \int_Q F_0^N(\phi_N, \psi_N) \, dx\, dt \\
& \quad \geq \theta \left | \ln \frac{1}{N} + 1 \right | \int_Q (-\psi_N)_+ + \big (- \tfrac{1}{2}(1+\phi_N - \psi_N) \big)_+ + (- \tfrac{1}{2}(1-\phi_N - \psi_N) \big)_+ \, dx \, dt \\
& \qquad - 3 \theta \frac{1}{N} |\Omega |T.
\end{align*}
Since $3 \theta \frac{1}{N} |\Omega |T \leq C$, we obtain
\begin{equation}\label{est:log}
\begin{aligned}
& \| (-\psi_N)_+ \|_{L^1(0,T;L^1)} + \| (- \tfrac{1}{2}(1+\phi_N - \psi_N) \big)_+ \|_{L^1(0,T;L^1)} \\
& \quad + \|(- \tfrac{1}{2}(1-\phi_N - \psi_N) \big)_+ \|_{L^1(0,T;L^1)} \leq \frac{C}{\theta |\ln \tfrac{1}{N} + 1|},
\end{aligned}
\end{equation}
where the right-hand side vanishes as $N \to \infty$. By Fatou's lemma we deduce that the limit functions $\phi$ and $\psi$ satisfy
\[
(-\psi)_+ = 0, \quad (- \tfrac{1}{2}(1+\phi - \psi) \big)_+ = 0, \quad (- \tfrac{1}{2}(1-\phi - \psi) \big)_+ = 0 \quad \text{ a.e.~in } Q,
\]
which in turn implies 
\begin{align}\label{K:sat}
\psi \geq 0, \quad \frac{1}{2}(1+ \phi - \psi) \geq 0, \quad \frac{1}{2}(1-\phi -\psi) \geq 0 \quad \text{ a.e.~in } Q,
\end{align}
meaning that $(\phi, \psi) \in \mathcal{K}$ a.e.~in $Q$ as claimed.

Lastly, for arbitrary test functions $v \in L^2(Q)$, $u \in L^2(0,T;H^1(\Omega))$ and arbitrary log-admissible test function pair $(\zeta, \eta) \in (L^2(0,T;H^1(\Omega))^2$, we test \eqref{app:1} with $u$, \eqref{app:3} with $v$, \eqref{app:2} with $\zeta - \phi$ and \eqref{app:4} with $\eta - \psi$. Then, integrating by parts and upon adding the resulting equalities involving \eqref{app:2} and \eqref{app:4} we obtain
\begin{subequations}
\begin{alignat}{2}
0 & = \int_Q \pd_t \phi_N u + \nabla {\mu}_N \cdot \nabla u \, dx \, dt, \label{lim:1} \\
0 & = \int_Q \pd_t \psi_N v + z_N v \, dx \, dt, \label{lim:2} \\
0 & = \int_Q  \big (F_{0,\phi}^N(\phi_N, \psi_N) + F_{1,\phi}(\phi_N, \psi_N) - \mu_N  \big ) (\zeta - \phi_N) \, dx \, dt \label{lim:3}  \\
\notag & \quad + \int_Q \nabla (\eps \phi_N + \sigma \psi_N) \cdot \nabla (\zeta - \phi_N) \, dx \, dt \\
\notag & \quad + \int_Q  - (\lambda \nabla \Delta \psi_N + \sigma \nabla \phi_N) \cdot \nabla (\eta - \psi_N) + \big (F_{1,\psi}(\phi_N, \psi_N) - z_N \big ) (\eta - \psi_N)  \, dx \, dt \\
\notag & \quad + \int_Q \big (2 \lambda \omega^2 \Delta \psi_N + \lambda \omega^4 (\psi_N-\tfrac{1}{2}) + F_{0,\psi}^N(\phi_N, \psi_N) \big ) (\eta - \psi_N)  \, dx \, dt.
\end{alignat}
\end{subequations}
Monotonicity of $\pi_N$ yields
\[
(\pi_N(r) - \pi_N(s))(r - s) \geq 0 \quad \forall r, s \in \RRR,
\]
and a short calculation reveals that for log-admissible test function pair $(\zeta, \eta)$ it holds that
\begin{align}\label{F0N:mono}
(F_{0,\phi}^N(\zeta, \eta) - F_{0,\phi}^N(\phi_N, \psi_N)) (\zeta - \phi_N) + (F_{0,\psi}^N(\zeta, \eta) - F_{0,\psi}^N(\phi_N, \psi_N)(\eta - \psi_N) \geq 0.
\end{align}
Hence, we replace \eqref{lim:3} with the inequality
\begin{equation}\label{lim:3:alt}
\begin{aligned}
0 & \leq \int_Q  \big (F_{0,\phi}^N(\zeta, \eta) + F_{1,\phi}(\phi_N, \psi_N) 
- {\mu}_N  \big ) (\zeta - \phi_N) \, dx \, dt \\
& \quad + \int_Q \nabla (\eps \phi_N - \sigma \psi_N) \cdot \nabla (\zeta - \phi_N) \, dx \, dt \\
 & \quad + \int_Q  -(\lambda \nabla \Delta \psi_N + \sigma \nabla \phi_N)  \cdot \nabla (\eta - \psi_N) + \big (F_{1,\psi}(\phi_N, \psi_N) - z_N \big ) (\eta - \psi_N) \, dx \, dt \\
 & \quad + \int_Q \big (2 \lambda \omega^2 \Delta \psi_N + \lambda \omega^4 (\psi_N-\tfrac{1}{2}) + F_{0,\psi}^N(\zeta, \eta) \big ) (\eta - \psi_N)  \, dx \, dt.
\end{aligned}
\end{equation}
Passing to the limit $N \to \infty$ with the compactness assertions \eqref{compact} yields that the limit functions $\phi$, $\psi$, ${\mu}$ and $z$ satisfy
\begin{subequations}
\begin{alignat}{2}
0 & = \int_0^T  \inn{\pd_t \phi,u}_{H^1} \, dt + \int_Q\nabla {\mu} \cdot \nabla u \, dx \, dt, \label{wl:1} \\
0 & = \int_Q \pd_t \psi v + z v \, dx \, dt, \label{wl:2} \\
0 & \leq \int_Q  \big (F_{0,\phi}(\zeta, \eta) + F_{1,\phi}(\phi, \psi)  -\mu
\big ) (\zeta - \phi) 
+ \nabla (\eps \phi- \sigma \psi) \cdot \nabla  (\zeta - \phi) \, dx \, dt \label{wl:3} \\
\notag & \quad + \int_Q  -(\lambda \nabla \Delta \psi + \sigma \nabla \phi) \cdot \nabla (\eta - \psi) + \big (F_{1,\psi}(\phi, \psi) 
- z \big ) (\eta - \psi)  \, dx \, dt \\
\notag & \quad + \int_Q \big (2 \lambda \omega^2 \Delta \psi + \lambda \omega^4 (\psi-\tfrac{1}{2}) + F_{0,\psi}(\zeta, \eta) \big ) (\eta - \psi) \, dx \, dt,
\end{alignat}
\end{subequations}
for arbitrary $v \in L^2(Q)$, $u \in L^2(0,T;H^1(\Omega))$ and log-admissible test function pair $(\zeta, \eta) \in (L^2(0,T;H^1(\Omega))^2$. In the above we also used that
\[
|\pi_N(s)| \leq |\pi(s)| + 1 \quad \text{ for } s \in (0,1),
\]
so that for an arbitrary log-admissible test function pair $(\zeta, \eta)$ we have
\begin{align*}
|F_{0,\phi}^N(\zeta, \eta)| & \leq 2 + |\pi \big ( \tfrac{1}{2}(1+\zeta - \eta) \big )| + |\pi \big ( \tfrac{1}{2}(1-\zeta - \eta) \big )|, \\
|F_{0,\psi}^N(\zeta, \eta)| & \leq 3 + |\pi(\eta)| +  |\pi \big ( \tfrac{1}{2}(1+\zeta - \eta) \big )| + |\pi \big ( \tfrac{1}{2}(1-\zeta - \eta) \big )|,
\end{align*}
whence by the generalized Lebesgue dominated convergence theorem we obtain
\[
F_{0,\phi}^N(\zeta, \eta) \to F_{0,\phi}(\zeta,\eta) \quad \text{ and } \quad F_{0,\psi}^N(\zeta, \eta) \to F_{0,\psi}(\zeta, \eta) \quad \text{ strongly in } L^1(Q).
\]
Note that \eqref{wl:3} is an alternate variational inequality where we evaluated $F_{0,\phi}$ and $F_{0,\psi}$ at the log-admissible test function pair $(\zeta, \eta)$ instead at the solution pair $(\phi,\psi)$.  To recover \eqref{w:3} we argue similar to the ideas of \cite{CMP}. Let $(\alpha, \beta) \in (L^2(0,T;H^1(\Omega))^2$ be an arbitrary log-admissible test function pair, and we consider, for $\kappa \in (0,1]$, 
\[
\zeta_\kappa = (1-\kappa) \phi + \kappa \alpha \quad \text{ and } \quad \eta_\kappa = (1-\kappa) \psi + \kappa \beta.
\]
Then, it is clear that $(\zeta_\kappa, \eta_\kappa) \in \mathcal{K}$ a.e.~in $Q$, and by the convexity of $h: s \mapsto |\ln(s)|$ we can deduce that 
\begin{equation}\label{uniq:pre:1}
\begin{aligned}
& \big | \pi \big (\tfrac{1}{2} ((1-\kappa)(1\pm \phi - \psi) + \kappa (1 \pm \alpha - \beta)) \big) \big |  \\
& \quad  \leq 1 + h\big (\tfrac{1}{2} ((1-\kappa)(1\pm \phi - \psi) + \kappa (1 \pm \alpha - \beta)) \big) \\
& \quad \leq 1 + (1-\kappa) h \big ( \tfrac{1}{2} (1\pm \phi - \psi) \big) + \kappa h \big (\tfrac{1}{2} (1 \pm \alpha - \beta) \big)  \\
& \quad \leq 2 + |\pi \big (\tfrac{1}{2}(1\pm \phi - \psi) \big )| + |\pi\big ( \tfrac{1}{2} (1\pm \alpha - \beta) \big )| \in L^1(Q),
\end{aligned}
\end{equation} 
so that by a similar argument,
\begin{align}\label{uniq:pre:2}
|\pi ((1-\kappa) \psi + \kappa \beta)| \leq 2 + |\pi(\psi)| + |\pi(\beta)| \in L^1(Q).
\end{align}
Hence, $(\zeta_\kappa, \eta_\kappa)$ is a log-admissible test function pair in the sense of Definition \ref{defn:add}.  Substituting this choice of $\zeta_\kappa$ and $\eta_\kappa$ into \eqref{w:3} and dividing by $\kappa$ we find that 
\begin{equation}\label{uniq:pre:3}
\begin{aligned}
0 & \leq \int_Q  \big (F_{0,\phi}(\zeta_\kappa, \eta_\kappa) + F_{1,\phi}(\phi, \psi)  -\mu \big ) (\alpha - \phi) 
+ \nabla (\eps \phi - \sigma \psi) \cdot \nabla (\alpha - \phi) 
\, dx \, dt  \\
 & \quad + \int_Q \big (- \lambda \nabla \Delta \psi - \sigma \nabla \phi \big ) \cdot \nabla (\eta - \psi) + \big (F_{1,\psi}(\phi, \psi) - z \big ) (\beta - \psi) \, dx \, dt \\
 & \quad + \int_Q \big (2 \lambda \omega^2 \Delta \psi + \lambda \omega^4 (\psi-\tfrac{1}{2}) + F_{0,\psi}(\zeta_\kappa, \eta_\kappa) \big ) (\alpha - \psi) \, dx \, dt.
\end{aligned}
\end{equation}
By virtue of \eqref{uniq:pre:1} and \eqref{uniq:pre:2}{,} we infer that
\begin{align*}
& |F_{0,\phi}(\zeta_\kappa, \eta_\kappa)| + |F_{0,\psi}(\zeta_\kappa, \eta_\kappa)| \\
& \quad \leq C \big ( 1 + |\pi(\psi)| + |\pi(\beta)| +  |\pi \big (\tfrac{1}{2}(1\pm \phi - \psi) \big )| + |\pi\big ( \tfrac{1}{2} (1\pm \alpha - \beta) \big )| \big )
\end{align*}
uniformly in $\kappa \in (0,1]$. Hence, by the dominated convergence theorem we obtain, as $\kappa\to 0$,
\[
F_{0,\phi}(\zeta_\kappa, \eta_\kappa) \to F_{0,\phi}(\phi, \psi) \quad \text{ and } \quad F_{0,\psi}(\zeta_\kappa, \eta_\kappa) \to F_{0,\psi}(\phi, \psi) \quad \text{ strongly in } L^1(Q),
\]
and by passing to the limit $\kappa \to 0$ in \eqref{uniq:pre:3} we obtain \eqref{w:3}. This shows that $(\phi, \psi, {\mu}, z)$ is a variational solution to \eqref{CHSH} with logarithmic potential in the sense of Definition \ref{defn:var}.

\begin{remark}
By utilizing \eqref{compact} and \eqref{F0phi:weak}, we can pass to the limit in \eqref{app:2} to deduce that the limit functions $(\phi, \psi, \mu)$ satisfy \eqref{mu:equ:ae}. On the other hand, \eqref{log:VI:2} can be derived by choosing $\zeta = \phi$ in \eqref{w:3} whilst keeping $\eta$ arbitrary.
\end{remark}

\begin{remark}
We mention that a weaker variational inequality than \eqref{w:3} or \eqref{lim:3:alt} can also be derived. Let us use the notation $F_{\log} = F_0$ as there is no ambiguity. We start with \eqref{lim:3} with arbitrary test function $(\zeta, \eta) \in (L^2(0,T;H^1(\Omega))^2$ such that $F_{\log}(\zeta, \eta) \in L^1(Q)$. For instance, an obstacle-admission test function pair satisfies the requirement due to the continuity of $F_{\log}$ over $\mathcal{K}$. Using the convexity of $F_{\log}^N$ we have instead of \eqref{F0N:mono} the following relation
\[
\int_\Omega F_{\log}^N(\zeta, \eta) - F_{\log}^N(\phi_N, \psi_N) \, dx \geq \int_\Omega (F_{\log,\phi}^N(\phi_N, \psi_N)(\zeta - \phi_N) + F_{\log,\psi}^N(\phi_N, \psi_N)(\eta - \psi_N) \, dx.
\]
Then, instead of \eqref{lim:3:alt} we obtain the variational inequality
\begin{equation}\label{lim:3:alt:weak}
\begin{aligned}
0 & \leq \int_Q  \big ( F_{1,\phi}(\phi_N, \psi_N) - {\mu}_N  \big ) (\zeta - \phi_N) + \nabla (\eps \phi_N - \sigma \psi_N) \cdot \nabla (\zeta - \phi_N) \, dx \, dt \\
 & \quad + \int_Q  -(\lambda \nabla \Delta \psi_N + \sigma \nabla \phi_N)  \cdot \nabla (\eta - \psi_N) + \big (F_{1,\psi}(\phi_N, \psi_N) - z_N \big ) (\eta - \psi_N) \, dx \, dt \\
 & \quad + \int_Q \big (2 \lambda \omega^2 \Delta \psi_N + \lambda \omega^4 (\psi_N-\tfrac{1}{2}) \big ) (\eta - \psi_N) + F_{\log}^N(\zeta, \eta) - F_{\log}^N(\phi_N, \psi_N) \, dx \, dt,
\end{aligned}
\end{equation}
holding for arbitrary $(\zeta, \eta) \in (L^2(0,T;H^1(\Omega))^2$ such that $F_{\log}(\zeta, \eta) \in L^1(Q)$.  Passing to the limit $N \to \infty$ yields 
\begin{equation}\label{w:3:alt:weak}
\begin{aligned}
0 & \leq \int_\Omega  \big ( F_{1,\phi}(\phi, \psi) - \mu  \big ) (\zeta - \phi) + \nabla (\eps \phi - \sigma \psi) \cdot \nabla (\zeta - \phi) \, dx \\
 & \quad + \int_\Omega  -(\lambda \nabla \Delta \psi + \sigma \nabla \phi)  \cdot \nabla (\eta - \psi) + \big (F_{1,\psi}(\phi, \psi) - z \big ) (\eta - \psi) \, dx \\
 & \quad + \int_\Omega \big (2 \lambda \omega^2 \Delta \psi + \lambda \omega^4 (\psi-\tfrac{1}{2}) \big ) (\eta - \psi)  + F_{\log}(\zeta, \eta) - F_{\log}(\phi, \psi) \, dx,
\end{aligned}
\end{equation}
holding for a.e.~$t \in (0,T)$ and arbitrary $(\zeta, \eta) \in (L^2(0,T;H^1(\Omega))^2$ such that $F_{\log}(\zeta, \eta) \in L^1(Q)$.  
\end{remark}

\subsection{Continuous dependence and uniqueness}\label{sec:cts}
Let now $(\phi_1, \psi_1, {\mu}_1, z_1)$ and $(\phi_2, \psi_2, {\mu}_2, z_2)$ be two variational solutions to \eqref{CHSH} with logarithmic potential corresponding to initial data $(\phi_{0,1}, \psi_{0,1})$ and $(\phi_{0,2}, \psi_{0,2})$, respectively. Consider \eqref{w:3} for $(\phi_1, \psi_1)$ with $\zeta = \phi_2$ and $\eta = \psi_2$, and likewise with the alternate variational inequality \eqref{wl:3} for $(\phi_2, \psi_2)$ with $\zeta = \phi_1$ and $\eta = \psi_1$. Upon summing the resulting inequalities we obtain for the differences $\hat{\phi} := \phi_1 - \phi_2$, $\hat{\psi} := \psi_1 - \psi_2$, $\hat{{\mu}} := {\mu}_1 - {\mu}_2$ and $\hat{z} := z_1 - z_2$ that
\begin{equation}\label{uniq:1}
\begin{aligned}
0 & \geq \int_\Omega \big (F_{1,\phi}(\phi_1, \psi_1) - F_{1,\phi}(\phi_2, \psi_2) - \hat{{\mu}}  \big ) \hat{\phi}
+ \nabla (\eps \hat{\phi} - \sigma \hat{\psi}) \cdot \nabla \hat{\phi}  \, dx \\
& \quad + \int_\Omega - \lambda \nabla \Delta \hat{\psi} \cdot \nabla \hat{\psi} + \big ( F_{1,\psi}(\phi_1, \psi_1) - F_{1,\psi}(\phi_2, \psi_2)  
- \hat{z} \big )\hat{\psi} - \sigma \nabla \hat{\phi} \cdot \nabla  \hat{\psi} \, dx \\
& \quad + \int_\Omega (2 \lambda \omega^2 \Delta \hat{\psi} + \lambda \omega^4 \hat{\psi} \big ) \hat{\psi} \, dx,
\end{aligned}
\end{equation}
where we had a cancellation of terms involving $F_0$.

Next, we consider the difference between \eqref{w:1} and \eqref{w:2} for the two solutions $(\phi_1, \psi_1, {\mu}_1, z_1)$ and $(\phi_2, \psi_2, {\mu}_2, z_2)$ to derive that 
\begin{align*}
0  = \inn{\pd_t \hat{\phi}, u}_{H^1} + \int_\Omega \nabla \hat{{\mu}} \cdot \nabla  u\, dx,
\quad 
0  = 
\iO \pd_t \hat{\psi} v
+  \hat{z} {v} \, dx,
\end{align*}
for any $u \in \Hx1,$ and $v \in \Lx2$. From the first equality, it readily follows that, for every $t \in [0,T]$, $\inn{\hat{\phi}(t)}_\Omega = \<\phi_{0,1}-\phi_{0,2}>_\Omega$. Next, we can consider the choices $u = \mathcal{N}(\hat{\phi} -\hat \phi_\Omega )$ and $v = \hat{\psi}$ to infer
\begin{align*}
0 & = \inn{\pd_t \hat{\phi}, \mathcal{N}(\hat{\phi}-\hat \phi_\Omega )}_{H^1} + \int_\Omega \hat{{\mu}} (\hat{\phi}-\hat \phi_\Omega ) \, dx = {\frac{1}{2}\frac{d}{dt} } \| \nabla \mathcal{N} (\hat{\phi}-\hat \phi_\Omega ) \|^2 + \int_\Omega \hat{{\mu}} (\hat{\phi}-\hat \phi_\Omega ) \, dx , \\
0 & = {\frac{1}{2}\frac{d}{dt} } \| \hat{\psi} \|^2 + \int_\Omega \hat{z} {\hat{\psi}} \, dx.
\end{align*}
Adding these to \eqref{uniq:1} and using the local Lipschitz continuity of $F_{1,\phi}$, $F_{1,\psi}$, as well as the boundedness of $\phi_i$ and $\psi_i$, $i = 1,2$, leads to 
\begin{equation}\label{uniq:2}
\begin{aligned}
& \frac{1}{2}\frac{d}{dt}  \Big ( \| \nabla \mathcal{N}( \hat \phi -\hat \phi_\Omega ) \|^2 + \| \hat{\psi} \|^2 \Big ) + \eps \| \nabla \hat{\phi} \|^2 + \lambda \| \Delta \hat{\psi} \|^2 + \lambda \omega^4 \| \hat{\psi} \|^2 \\
& \quad \leq \int_\Omega |F_{1,\phi}(\phi_1, \psi_1) - F_{1,\phi}(\phi_2, \psi_2)| |\hat{\phi} -\hat \phi_\Omega | + |F_{1,\psi}(\phi_1, \psi_1) - F_{1,\psi}(\phi_2, \psi_2)| |\hat{\psi}| \, dx \\
& \qquad + 2 |\sigma| \| \nabla \hat{\psi} \| \| \nabla \hat{\phi} \| + 2 \lambda \omega^2 \| \nabla \hat{\psi} \|^2 \\
& \quad \leq C \big ( \| \hat{\phi} -\hat \phi_\Omega \|^2 + \|\hat{\psi}\|^2 \big )
+ \frac{\lambda}{2} \| \Delta \hat{\psi} \|^2 + \frac{\eps}{4} \| \nabla \hat{\phi} \|^2 + C \| \hat{\psi} \|^2 \\
& \quad \leq C \Big (\| \nabla \mathcal{N}(\hat{\phi} -\hat \phi_\Omega)  \|^2 + \| \widehat{\psi} \|^2 \Big ) + \frac{\lambda}{2} \| \Delta \widehat{\psi} \|^2 + \frac{\eps}{2} \| \nabla \widehat{\phi} \|^2,
\end{aligned} 
\end{equation}
where we have used Young's inequality and the following:
\[
\| \hat{\phi} - \hat{\phi}_\Omega \|^2 = \int_\Omega \nabla \mathcal{N}(\hat{\phi} -\hat \phi_\Omega) \cdot \nabla \hat{\phi} \, dx \leq 
\|\nabla \mathcal{N}(\hat{\phi} -\hat \phi_\Omega)  \|\| \nabla \hat{\phi} \|.
\]
Applying the elliptic regularity estimate \eqref{inter:1} and the Gronwall inequality leads to \eqref{cts:dep}.

\subsection{Obstacle potential}\label{sec:obs}
The well-posedness of \eqref{CHSH} with the obstacle potential \eqref{obs} follows along similar lines of argument as in the proof for the logarithmic potential \eqref{log}.  {Thus, let us just} outline the essential modifications. In place of \eqref{Pi}{,} we set 
\begin{align}\label{Pi:obs}
\Pi_N(s) = \begin{cases}
0 & \text{ if } s \geq 0, \\
\tfrac{N}{{4!}} s^4 & \text{ if } s \leq 0.
\end{cases}
\end{align}
Then, it is clear that the lower bound \eqref{Pi:lb} is fulfilled, and by setting
\[
F_0^N(r,s) = \Pi_N \big ( \tfrac{1}{2}(1+r-s) \big ) + \Pi_N \big ( \tfrac{1}{2}(1-r-s) \big) + \Pi_N(s) \quad {\forall r,s \in \erre},
\] 
we see that \eqref{log:app} is also fulfilled with constants independent of $N \in \mathbb{N}$. Furthermore, for fixed $u \in (0,1)$ we can find a constant $C_u > 0$ such that $C_u u > 1$. Then, for any $s \leq 0$ we deduce that the function $f(s) = C_u s^3 (s-u) - |s|^3$ is non-negative and consequently we find an analogue to \eqref{MZ:ineq} 
\begin{align}\label{MZ:ineq:obs}
|\pi_N(s)| \leq C\pi_N(s)(s-u) \quad \forall s \in \RRR,
\end{align}
where the positive constant $C$ is independent of $N$.

Analogous to the proof for the logarithmic potential, we obtain from the approximate system \eqref{CHSH:approx}, now with $F_0^N$ defined as above, the uniform estimates \eqref{mass:cons} and \eqref{est:2}. Then, testing \eqref{app:2} with $\phi_N - \inn{\phi_0}_\Omega$ and \eqref{app:4} with $\psi_N - {\nu}$ we obtain from \eqref{F0_der} and \eqref{MZ:ineq:obs} that 
\begin{align*}
& \| \pi_N \big ( \tfrac{1}{2}(1+\phi_N - \psi_N) \big ) \|_{L^1} + \| \pi_N \big ( \tfrac{1}{2}(1-\phi_N - \psi_N) \big )\|_{L^1} + \| \pi_N \big ( \psi_N \big ) \|_{L^1} \\
& \quad \leq C \int_\Omega F_{0,\phi}^N(\phi_N, \psi_N)(\phi_N - \inn{\phi_0}_\Omega) + F_{0,\psi}^N(\phi_N, \psi_N)(\psi_N - {\nu}) \, dx \\
& \quad \leq C \big ( 1 + \| \nabla {\mu}_N \| + \| z_N \| \big ),
\end{align*}
which in turn leads to the uniform estimates \eqref{est:3} and \eqref{est:4}. Similarly, by the convexity of $F_0^N$ we obtain the regularity estimate \eqref{est:5}, as well as the remaining uniform estimates \eqref{est:6} and \eqref{est:7}.  It remains to show that the limit $(\phi, \psi)$ of $(\phi_N, \psi_N)$, along a non-relabelled subsequence, satisfies $(\phi, \psi) \in \mathcal{K}$ for a.e.~$(x,t) \in Q$. From the explicit formula in \eqref{Pi:obs} we infer that
\begin{align*}
\int_Q \Pi_N(\psi_N) \, dx \, dt \geq \int_{\{(x,t) \in Q \, : \, \psi_N(x,t) < 0 \}} \frac{N}{4!} |\psi_N|^4 \, dx \, dt = \frac{N}{4!} \int_{Q} (-\psi_N)_{+}^4 \, dx \,dt,
\end{align*}
and so by \eqref{est:2} we have, similar to \eqref{est:log},
\begin{align*}
&  \| (-\psi_N)_+ \|_{L^4(Q)} + \| (- \tfrac{1}{2}(1+\phi_N - \psi_N) \big)_+ \|_{L^4(Q)} + \| (- \tfrac{1}{2}(1-\phi_N - \psi_N) \big)_+ \|_{L^4(Q)} \\
& \quad \leq C N^{-\frac{1}{4}},
\end{align*}
where the right-hand side vanishes as $N \to \infty$. Fatou's lemma then implies that the limit functions $\phi$ and $\psi$ satisfy \eqref{K:sat}, that is $(\phi, \psi) \in \mathcal{K}$ a.e.~in $Q$.

Lastly, for an obstacle-admissible test function pair $(\zeta, \eta) \in L^2(0,T;H^1(\Omega))^2$, notice that from the definition \eqref{Pi:obs} it holds that 
\[
\pi_N(\eta) = 0, \quad \pi_N \big ( \tfrac{1}{2}(1+\zeta - \eta) \big ) = 0, \quad \pi_N \big ( \tfrac{1}{2}(1-\zeta - \eta) \big ) = 0,
\]
and hence
\[
F_{0,\phi}^N(\zeta, \eta) = 0, \quad F_{0,\psi}^N(\zeta, \eta) = 0.
\]
This leads the following analogue of \eqref{lim:3:alt}:
\begin{align*}
0 & \leq \int_Q  \big (F_{1,\phi}(\phi_N, \psi_N) - {\mu}_N  \big ) (\zeta - \phi_N) 
+ \nabla (\eps \phi_N - \sigma \psi_N) \cdot \nabla (\zeta - \phi_N)\, dx \, dt \\
 & \quad + \int_Q  -\lambda \nabla \Delta \psi_N  \cdot \nabla (\eta - \psi_N) + \big (F_{1,\psi}(\phi_N, \psi_N)  - z_N \big ) (\eta - \psi_N)  \, dx \, dt \\
 & \quad + \int_Q   - \sigma \nabla \psi_N \cdot \nabla (\eta - \psi_N)  + \big (2 \lambda \omega^2 \Delta \psi_N + \lambda \omega^4 (\psi_N-\tfrac{1}{2}) \big ) (\eta - \psi_N)  \, dx \, dt,
\end{align*}
and with the compactness assertions \eqref{compact} we find that in the limit $N \to \infty$ the limit solution pair $(\phi, \psi)$ satisfy the variational inequality \eqref{w:3:obs}.  This completes the proof of existence. For continuous dependence and uniqueness of variational solution, the proof proceeds exactly as in Section \ref{sec:cts} and so we omit the details.

\subsection{Deep quench limit}
\subsubsection{Weak convergence}
For $\theta \in (0,1]$, we denote by $(\phi_\theta, \psi_\theta, \mu_\theta, z_\theta)$ as the variational solution to \eqref{CHSH} with logarithmic potential $F_0 = F_{\log}$ obtained through Theorem \ref{thm:wellposed}. From \eqref{compact} we see that $\phi_N \Delta \psi_N \to \phi_\theta \Delta \psi_\theta$ strongly in $L^1(0,T;L^1(\Omega))$, and hence it holds that for a.e.~$t \in (0,T)$,
\[
\int_\Omega \phi_N(t) \Delta \psi_N(t) \, dx \to \int_\Omega \phi_\theta(t) \Delta \psi_\theta(t) \, dx.
\]
Then, we revisit \eqref{est:1} and find that after neglecting the non-negative $F_{\log}^N$, employing the boundedness of $F_1$, and performing an integration by parts, where for arbitrary $t \in (0,T]$,  
\begin{equation}\label{E:lb:unif}
\begin{aligned}
& \int_\Omega \frac{\eps}{2} |\nabla \phi_N(t)|^2 + \frac{\lambda}{2} |(\Delta + \omega^2) (\psi_N(t) - \tfrac{1}{2})|^2  + \sigma \phi_N(t) \Delta \psi_N(t) \, dx \\
& \qquad + \int_0^t \| \nabla \mu_N \|^2 + \| z_N \|^2 \, dt  \leq C \big ( \| \phi_0 \|_{H^1} , \| \psi_0 \|_{H^2} \big )
\end{aligned}
\end{equation} 
with a positive constant $C$ independent of $\theta \in (0,1]$. Invoking the compactness properties listed in \eqref{compact} and the weak lower semicontinuity of the norms we deduce from \eqref{E:lb:unif} that for a.e.~$t \in (0,T)$,
\begin{equation}\label{Deep:est}
\begin{aligned}
& \int_\Omega \frac{\eps}{2} |\nabla \phi_\theta(t)|^2 + \frac{\lambda}{2} |(\Delta + \omega^2) (\psi_\theta(t) - \tfrac{1}{2})|^2  + \sigma \phi_\theta(t) \Delta \psi_\theta(t) \, dx \\
& \qquad + \int_0^t \| \nabla \mu_\theta \|^2 + \| z_\theta \|^2 \, dt  \leq C \big ( \| \phi_0 \|_{H^1} , \| \psi_0 \|_{H^2} \big ).
\end{aligned}
\end{equation}
Together with Young's inequality and the property $(\phi_\theta, \psi_\theta) \in \mathcal{K}$ a.e.~in $Q$, we infer from \eqref{Deep:est} that
\begin{equation}\label{Deep:est:2}
\begin{aligned}
& \| \phi_\theta \|_{L^\infty(0,T;H^1)}^2 + \|  \psi_\theta \|_{L^\infty(0,T;H^2)}^2 + \| \nabla \mu_\theta \|_{L^2(Q)}^2 \\
& \quad + \| z_\theta \|_{L^2(Q)}^2 + \| \pd_t \psi_\theta \|_{L^2(Q)}^2 + \| \pd_t \phi_\theta \|_{L^2(0,T;(H^1)^*)}^2 \leq C,
\end{aligned}
\end{equation}
with a positive constant $C$ independent of $\theta \in (0,1]$, where the uniform estimates on the time derivatives are inferred from a comparison of terms in \eqref{w:1} and \eqref{w:2}.

Next, in \eqref{w:3} for $(\phi_\theta, \psi_\theta, \mu_\theta, z_\theta)$ we consider $\zeta = \inn{\phi_\theta}_\Omega = \inn{\phi_0}_\Omega$ and $\eta = \nu = \frac{1}{2}(1 - |\inn{\phi_0}_\Omega|)$, integrating by parts and employing the boundedness property for $(\phi_\theta, \psi_\theta)$, $F_{1,\phi}(\phi_\theta, \psi_\theta)$ and $F_{1,\psi}(\phi_\theta, \psi_\theta)$ leads to 
\begin{equation}\label{Deep:est:3}
\begin{aligned}
& \int_\Omega F_{\log,\phi}(\phi_\theta, \psi_\theta)(\phi_\theta - \inn{\phi_\theta}_\Omega)  + F_{\log,\psi}(\phi_\theta, \psi_\theta) (\psi_\theta - \nu) \, dx \\
& \quad \leq C \big (1 +  \| \nabla \mu_\theta \| \| \nabla \phi_\theta \| + \| z_\theta \| + \| \nabla \psi_\theta \|^2 + \| \Delta \psi_\theta \|^2 + \| \nabla \phi_\theta \| \| \nabla \psi_\theta \| \big ) \\
& \quad \leq C \big ( 1 + \| \nabla \mu_\theta \| + \| z_\theta \| \big ).
\end{aligned}
\end{equation}
Invoking the analogue of \eqref{MZ:ineq} for $\Pi(s) = s \ln s$ and $\pi(s) = \Pi'(s) = 1 + \ln(s)$, cf.~\cite[Prop.~A.1]{Miran}: there exist constants $C_1 > 0$ and $C_2 \geq 0$ depending on $u \in (0,1)$ such that
\[
|\Pi(s)| + |\pi(s)| \leq C_1 \pi(s)(s- u) + C_2 \quad \forall s \in (0,1),
\]
we may deduce from \eqref{Deep:est:3} that
\begin{equation}\label{Deep:est:5}
\begin{aligned}
& \| F_{\log}(\phi_\theta, \psi_\theta) \|_{L^2(0,T;L^1)}  \\
& \quad + \| F_{\log,\phi}(\phi_\theta, \psi_\theta) \|_{L^2(0,T;L^1)} + \| F_{\log,\psi}(\phi_\theta, \psi_\theta) \|_{L^2(0,T;L^1)} \leq C.
\end{aligned}
\end{equation}
Then, integrating \eqref{mu:equ:ae} over $\Omega$ yields
\[
| \inn{\mu_\theta}_\Omega| \leq C \| F_{\log,\phi}(\phi_\theta, \psi_\theta) \|_{L^1} + C\| F_{1,\phi}(\phi_\theta, \psi_\theta) \|_{L^1},
\]
and by \eqref{Deep:est:5} we deduce that $\inn{\mu_\theta}_\Omega$ is uniformly bounded in $L^2(0,T)$.  Hence, by the Poincar\'e inequality we obtain
\begin{align}\label{Deep:est:6}
\| \mu_\theta \|_{L^2(Q)} \leq C.
\end{align}
The uniform estimates \eqref{Deep:est:2} and \eqref{Deep:est:6} allows us to deduce, along a non-relabelled subsequence $\theta \to 0$, the existence of limit functions $(\phi_*, \psi_*, \mu_*, z_*)$ such that
\begin{equation}\label{deep:compact}
\begin{aligned}
\phi_\theta \to \phi_* & \text{ weakly* in } L^\infty(0,T;H^1(\Omega)) \cap H^1(0,T;H^1(\Omega)^*), \\
\phi_\theta \to \phi_* & \text{ strongly in } C^0([0,T];L^s(\Omega))  \text{ and a.e.~in } Q, \\
\psi_\theta \to \psi_* & \text{ weakly* in } L^\infty(0,T;H^2_{\bnn}(\Omega)) \cap H^1(0,T;L^2(\Omega)), \\
\psi_\theta \to \psi_* & \text{ strongly in } C^0([0,T];W^{1,s}(\Omega)) \text{ and a.e.~in }Q, \\
\mu_\theta \to \mu_* & \text{ weakly in } L^2(0,T;H^1(\Omega)), \\
z_\theta \to z_* & \text{ weakly in } L^2(Q),
\end{aligned}
\end{equation}
for any $s < \infty$ in two dimensions and any $s \in [2,6)$ in three dimensions, along with $(\phi_*, \psi_*) \in \mathcal{K}$ for a.e.~$(x,t) \in Q$ as well as attainment of the initial conditions $\phi_*(0) = \phi_0$ and $\psi_*(0) = \psi_0$. Passing to the limit in \eqref{w:1}-\eqref{w:2} for $(\phi_\theta, \psi_\theta, \mu_\theta, z_\theta)$ yields the analogous identities for $(\phi_*, \psi_*, \mu_*, z_*)$. Next, we consider an arbitrary obstacle-admissible test function pair $(\zeta, \eta) \in L^2(0,T;H^1(\Omega) \times H^2_{\bnn}(\Omega))$ in \eqref{w:3:alt:weak} for $(\phi_\theta, \psi_\theta, \mu_\theta, z_\theta)$ expressed as
\begin{align*}
0 & \leq \int_Q  \big ( F_{1,\phi}(\phi_\theta, \psi_\theta) - \mu_\theta  \big ) (\zeta - \phi_\theta) + \nabla (\eps \phi_\theta - \sigma \psi_\theta) \cdot \nabla (\zeta - \phi_\theta) \, dx \, dt \\
 & \quad + \int_Q   \Delta \psi_\theta \Delta (\eta - \psi_\theta)  - \sigma \nabla \phi_\theta  \cdot \nabla (\eta - \psi_\theta) + \big (F_{1,\psi}(\phi_\theta, \psi_\theta) - z_\theta \big ) (\eta - \psi_\theta) \, dx \, dt \\
 & \quad + \int_Q \big (2 \lambda \omega^2 \Delta \psi_\theta + \lambda \omega^4 (\psi_\theta-\tfrac{1}{2}) \big ) (\eta - \psi_\theta)  + F_{\log}(\zeta, \eta) - F_{\log}(\phi_\theta, \psi_\theta) \, dx \, dt.
\end{align*}
Due to the definition of $F_0$ in \eqref{log} and the continuity of $F_{\log}$ over $\mathcal{K}$ we see that 
\[
\Big |\int_Q F_{\log}(\zeta, \eta) - F_{\log}(\phi_\theta, \psi_\theta) \, dx \, dt \Big | \leq C \theta \to 0
\]
as $\theta \to 0$. Employing the compactness assertions in \eqref{deep:compact} and weak lower semicontinuity of the Bochner norms, we obtain as $\theta \to 0$
\begin{align*}
0 & \leq \int_Q  \big ( F_{1,\phi}(\phi_*, \psi_*) - \mu_*  \big ) (\zeta - \phi_*) + \nabla (\eps \phi_* - \sigma \psi_*) \cdot \nabla (\zeta - \phi_*) \, dx \, dt \\
 & \quad + \int_Q   \Delta \psi_* \Delta (\eta - \psi_*)  - \sigma \nabla \phi_*  \cdot \nabla (\eta - \psi_*) + \big (F_{1,\psi}(\phi_*, \psi_*) - z_*\big ) (\eta - \psi_*) \, dx \, dt \\
 & \quad + \int_Q \big (2 \lambda \omega^2 \Delta \psi_* + \lambda \omega^4 (\psi_* -\tfrac{1}{2}) \big ) (\eta - \psi_*)  \, dx \, dt.
\end{align*}
Via a similar calculation to the proof of uniqueness in Section \ref{sec:cts} we find that $(\phi_*, \psi_*, \mu_*, z_*)$ is the unique variational solution to \eqref{CHSH} with the obstacle potential \eqref{obs}, whence in fact $\Delta \psi_* \in L^2(0,T;H^1(\Omega))$. Furthermore, by uniqueness of variational solutions we infer that the whole sequence $\{(\phi_\theta, \psi_\theta, \mu_\theta, z_\theta)\}_{\theta \in (0,1]}$ converges, and by the density of $L^2(0,T;H^2_{\bnn}(\Omega))$ in $L^2(0,T;H^1(\Omega))$ we recover \eqref{w:3:obs} holding for arbitrary obstacle-admissible test function pair $(\zeta, \eta) \in L^2(0,T;H^1(\Omega))^2$.

\subsubsection{Convergence rate}
Let $(\phi_\theta, \psi_\theta, {\mu}_\theta, z_\theta)$ be the variational solution to \eqref{CHSH} with the logarithmic potential \eqref{log} associated with the initial conditions $(\phi_0, \psi_0)$, and let $(\phi_*, \psi_*, {\mu}_*, z_*)$ be the variational solution to \eqref{CHSH} with the obstacle potential \eqref{obs} associated with  the same initial conditions.  We denote by $\hat{\phi}_\theta := \phi_* - \phi_\theta$, $\hat{\psi}_\theta := \psi_* - \psi_\theta $, $\hat{{\mu}}_\theta := {\mu}_* - {\mu}_\theta$, and $\hat{z}_\theta := z_* - z_\theta$  the differences between variational solutions and incidentally remark that $\hat{\phi}_\theta $ is of zero mean value as $\inn{\hat{\phi}_\theta}_\Omega = \inn{\phi_* - \phi_\theta}_\Omega =\inn{\phi_0}_\Omega- \inn{\phi_0}_\Omega =0$. 

Similar to the proof of uniqueness, we consider the variational inequality \eqref{w:3:obs} for $(\phi_*, \psi_*, {\mu}_*, z_*)$ with test function pair $(\zeta, \eta) = (\phi_\theta, \psi_\theta)$ which we note is obstacle-admissible, as well as the alternate variational inequality \eqref{w:3:alt:weak} for $(\phi_\theta, \psi_\theta, {\mu}_\theta, z_\theta)$ with test function pair $(\zeta, \eta) = (\phi_*, \psi_*)$ which satisfies $F_{\log}(\phi_*, \psi_*) \in L^1(Q)$. Then, upon summing, we obtain
\begin{equation*}
\begin{aligned} 
& \int_\Omega \big (F_{1,\phi}(\phi_*, \psi_*) - F_{1,\phi}(\phi_\theta, \psi_\theta) - \hat{\mu}_\theta \big ) \hat{\phi}_\theta 
+ \nabla (\eps \hat{\phi}_\theta- \sigma \hat{\psi}_\theta) \cdot \nabla \hat{\phi}_\theta  \, dx \\
& \qquad + \int_\Omega - \lambda \nabla \Delta \hat{\psi}_\theta \cdot \nabla \hat{\psi}_\theta + \big ( F_{1,\psi}(\phi_*, \psi_*) - F_{1,\psi}(\phi_\theta, \psi_\theta)  - \hat{z}_\theta \big )\hat{\psi}_\theta 
- \sigma \nabla \hat{\phi}_\theta  \cdot \nabla \hat{\psi}_\theta \\
& \qquad + \int_\Omega (2 \lambda \omega^2 \Delta \hat{\psi}_\theta + \lambda \omega^4 \hat{\psi}_\theta \big ) \hat{\psi}_\theta  \, dx \\
& \quad \leq \int_\Omega F_{\log}(\phi_*, \psi_*) - F_{\log}(\phi_\theta, \psi_\theta) \, dx \leq C \theta,
\end{aligned}
\end{equation*}
where for the right-hand side we have used the definition \eqref{log} and the continuity of $F_{\log}$ over $\mathcal{K}$. Analogously, from the difference between \eqref{w:1} and \eqref{w:2} for $(\phi_*, \psi_*, {\mu}_*, z_*)$ and $(\phi_\theta, \psi_\theta, {\mu}_\theta, z_\theta)$, and choosing $u = \mathcal{N}(\hat{\phi}_\theta)$ and $v = \hat{\psi}_\theta$, we obtain
\begin{align*}
0 = {\frac{1}{2}\frac{d}{dt} } \| \nabla \mathcal{N} (\hat{\phi}_\theta) \|^2 + \int_\Omega \hat{{\mu}}_\theta \hat{\psi}_\theta \, dx , \quad  0= {\frac{1}{2}\frac{d}{dt} } \| \hat{\psi}_\theta \|^2 + \int_\Omega \hat{z}_\theta \hat{\phi}_\theta \, dx.
\end{align*}
Then, upon adding these inequalities we deduce similar to Section \ref{sec:cts}, 
\begin{align*}
& {\frac{1}{2}\frac{d}{dt} } \Big ( \| \nabla \mathcal{N}( \hat{\phi}_\theta) \|^2 + \| \hat{\psi}_\theta \|^2 \Big ) + \frac{1}{4} \| \nabla \hat{\phi}_\theta \|^2 + \frac{\lambda}{2} \| \Delta \hat{\psi}_\theta \|^2  \\ & \quad 
\leq C \Big ( \| \nabla \mathcal{N}( \hat{\phi}_\theta) \|^2 + \| \hat{\psi}_\theta \|^2 \Big ) + C \theta
\end{align*}
with constants independent of $\theta$. Application of the Gronwall inequality and the elliptic regularity estimate \eqref{inter:1} leads to \eqref{rate}.

\section{Numerical discretization}\label{sec:numerics}

In this section we introduce a finite element approximation of the system
\eqref{CHSH} with the obstacle potential \eqref{obs} based on a
suitable variational formulation that is then discretized with piecewise linear
finite elements. We establish an unconditional stability result, introduce an
iterative solution method for implementation and then present several numerical simulations, which
exhibit a wide range of complex pattern formations.

\subsection{Weak formulation}
For notational convenience, we let $(\cdot,\cdot)$ denote the $L^{2}$--inner 
product on $\Omega$, and define
\begin{equation*} %\label{eq:K}
K:=\{\,(\eta_1,\eta_2) \in [H^1(\Omega)]^2:
(\eta_1(x),\eta_2(x)) \in{\cal K}
\mbox{ a.e. in } \Omega \,\}.
\end{equation*}
Furthermore, we introduce an auxiliary variable $q = -\Delta \psi$ and consider the following variational formulation for \eqref{CHSH} with the obstacle 
potential \eqref{obs}. Find $(\phi,\psi) \in L^2(0,T;K) \cap (H^1(0,T;(H^1(\Omega))^*))^2$, $z \in L^2(0,T;L^2(\Omega))$ and $(\mu, q) \in (L^2(0,T;H^1(\Omega)))^2$ such that for almost all $t \in (0,T)$
\begin{subequations}\label{weak:CHSH:obs:V2}
\begin{alignat}{2}
0 & = \inn{\pd_t \phi, u} + (\nabla {\mu}, \nabla u), \\
0 & = \inn{\pd_t \psi, v} + (z, v), \\
 0 & \leq (\nabla \phi - \sigma \nabla \psi, \nabla (\eta - \phi)) +  ( F_{1,\phi}(\phi, \psi) - {\mu}, \eta - \phi  ) \\
\notag & \quad +  (\lambda \omega^4 (\psi-\tfrac{1}{2}) - z + F_{1,\psi}(\phi, \psi) , \zeta - \psi  ) \\ 
\notag & 
\quad + (\lambda \nabla q - 2 \lambda \omega^2 \nabla \psi  -\sigma \nabla \phi, \nabla (\zeta - \psi)), \\
0 & =  (\nabla \psi, \nabla \theta) - (q, \theta),
\end{alignat}
\end{subequations}
for all $(\eta, \zeta) \in K$ and 
$(u, v, \theta) \in H^1(\Omega) \times L^2(\Omega) \times H^1(\Omega)$. This weak formulation can be derived from Definition \ref{defn:var} with the help of the new variable $q = - \Delta \psi$. For the numerical approximation we prefer the formulation \eqref{weak:CHSH:obs:V2} because this weak formulation can be solved with piecewise linear continuous functions at the discrete level.

\subsection{Finite element approximation}
We assume that $\Omega$ is a polyhedral domain and
let $\mathcal{T}_{h}$ be a regular triangulation of $\Omega$ into disjoint open simplices. Associated with $\mathcal{T}_h$ is the piecewise linear finite element space
\begin{align*}
S^{h} = \left \{\zeta \in C^{0}(\overline\Omega) : \,  \zeta_{\vert_{o}} \in P_{1}(o) \, \forall o \in \mathcal{T}_{h} \right \},
\end{align*}
where we denote by $P_{1}(o)$ the set of all affine linear functions on $o$,
cf.\ \cite{Ciarlet78}. 
In addition, we define 
\[
K^h = K \cap S^h,
\]
and let $(\cdot,\cdot)^{h}$ be the usual mass lumped $L^{2}$--inner product on
$\Omega$ associated with $\mathcal{T}_{h}$.
Finally, $\tau$ denotes a chosen uniform time step size.

For what follows we assume that $F_1$ can be decomposed into
$F_1 = F_1^+ + F_1^-$, with $F_1^+$ being convex and $F_1^-$ being concave.
For example, in the case \eqref{poly} we set
\begin{subequations} \label{eq:F1pm}
\begin{align}
F_1^+(\phi, \psi) & = \frac{C_F}{2} (\phi^2 + \psi^2), \\
F_1^-(\phi, \psi) & = - \frac{\alpha}{2} \phi^2 - \frac{g}{3} (\psi-\tfrac{1}{2})^3 - \frac{\gamma}{2} (\psi-\tfrac{1}{2})^2 + \frac{\delta}{2} \phi^2 (\psi-\tfrac{1}{2}) - \frac{C_F}{2} (\phi^2 + \psi^2),
\end{align}
\end{subequations}
where $C_F \geq 0$ is a constant chosen sufficiently large. In fact, choosing 
\begin{equation} \label{eq:CF}
C_F = \max \left (0,  \frac{3}{2}|\delta| - \alpha, |\delta| + |g| - \gamma \right )
\end{equation}
ensures that the Hessian
\[
H = \begin{pmatrix} - \alpha - C_F + \delta (\psi - \tfrac{1}{2}) & \delta \phi \\ \delta \phi & - \gamma - 2 g(\psi - \tfrac{1}{2}) - C_F \end{pmatrix} = \begin{pmatrix} H_{11} & H_{12} \\ H_{12} & H_{22} \end{pmatrix}
\]
of $F_1^-$ satisfies 
$H_{11} \leq - |\delta|$, $H_{12} \leq - |\delta|$, 
$H_{11} H_{22} \geq \delta^2$,  
and thus $\tr H \leq 0$ and $\det H \geq 0$ in $\mathcal{K}$. It follows that
$F_1^-$ with the choice \eqref{eq:CF} is indeed concave in $\mathcal{K}$.

Then our finite element approximation of \eqref{weak:CHSH:obs:V2} is given as
follows. Let $(\phi^0_h,\psi^0_h) \in K^h$. Then, for $n \geq 0$ and
given $(\phi^n_h,\psi^n_h) \in K^h$, find
$(\phi^{n+1}_h,\psi^{n+1}_h) \in K^h$ and 
$(\mu^{n+1}_h, z^{n+1}_h, q^{n+1}_h) \in [S^h]^3$ such that
\begin{subequations}\label{eq:fea}
\begin{alignat}{2}
0 & = \tfrac{1}{\tau}(\phi^{n+1}_h - \phi^n_h, u_h)^h + (\nabla {\mu}^{n+1}_h, \nabla u_h), \label{d:1} \\
0 & = \tfrac{1}{\tau}(\psi^{n+1}_h - \psi^n_h, v_h)^h + (z^{n+1}_h, v_h)^h,  \label{d:2} \\
\label{d:3} 0 & \leq (\eps \nabla \phi^{n+1}_h - \tfrac{\sigma}{2} \nabla (\psi^{n+1}_h + \psi^n_h), \nabla (\eta_h - \phi^{n+1}_h)) \\
\notag & \quad +  (F_{1,\phi}^+(\phi^{n+1}_h, \psi^{n+1}_h) + F_{1,\phi}^-(\phi^n_h, \psi^n_h) - {\mu}^{n+1}_h, \eta_h - \phi^{n+1}_h  )^h \\
\notag & \quad +  ( \lambda \omega^4 (\psi^{n+1}_h - \tfrac{1}{2}) - z^{n+1}_h + F_{1,\psi}^+(\phi^{n+1}_h, \psi^{n+1}_h) + F_{1,\psi}^-(\phi^n_h, \psi^n_h), \zeta_h - \psi^{n+1}_h  )^h \\
\notag & \quad + (\lambda \nabla q^{n+1}_h - 2 \lambda \omega^2 \nabla \psi^{n+1}_h - \tfrac{\sigma}{2} \nabla (\phi^{n+1}_h + \phi^{n}_h), \nabla (\zeta_h - \psi^{n+1}_h)), \\
0 & = (\nabla \psi^{n+1}_h, \nabla \theta_h) - (q^{n+1}_h, \theta_h)^h,\label{d:4}
\end{alignat}
\end{subequations}
for all $(\eta_h, \zeta_h) \in K^h$ and $(u_h, v_h, \theta_h) \in [S^h]^3$.

The convex-concave splitting of $F_1$ allows for an unconditional
stability estimate for the discrete energy
\begin{align} \label{eq:En}
E^n = \frac{\eps}{2} \| \nabla \phi^n_h \|^2 + (F_{1}(\phi^n_h, \psi^n_h), 1)^h - \sigma (\phi^n_h, q^n_h)^h + \frac{\lambda}{2} \| \omega^2 (\psi^{n}_h - \tfrac{1}{2})  - q^n_h \|_h^2,
\end{align}
where $\|u_h\|_h = [(u, u)^h]^\frac12$, and where $q^0_h \in S^h$ is defined by
\eqref{d:4} with $n+1$ replaced by $0$.
Note that since $q^n_h$ approximates $q = -\Delta \psi$, 
the energy \eqref{eq:En} is a discrete analogue of \eqref{eq:Energy}. 

\begin{thm}
Let $(\phi^{n+1}_h,\psi^{n+1}_h,\mu^{n+1}_h, z^{n+1}_h, q^{n+1}_h)$ be a 
solution to \eqref{eq:fea}. Then it holds that
\begin{equation} \label{eq:thmstab}
E^{n+1} + \tau \| \nabla {\mu}^{n+1}_h \|^2 + \tau \| z^{n+1}_h \|^2_h \leq E^n.
\end{equation}
Moreover, there exists a constant $E_{\min} \in \mathbb R$ such that
\[
E^{n} \geq E_{\min}+  \frac{\lambda}{8} \| q_h^n \|_h^2 + \frac{\eps}{2} \| \nabla \phi_h^n \|^2 \quad\text{ for all } n\geq 0.
\]
\end{thm}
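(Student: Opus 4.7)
My approach is the standard discrete energy-dissipation argument: I will test each equation with a carefully chosen finite element function, combine the resulting identities and inequalities, and then recognise each contribution on the right-hand side as a telescoping piece of $E^n$ plus a controlled remainder. The key test functions are $u_h = \mu_h^{n+1}$ in \eqref{d:1}, $v_h = z_h^{n+1}$ in \eqref{d:2}, and $(\eta_h,\zeta_h) = (\phi_h^n,\psi_h^n)\in K^h$ in the variational inequality \eqref{d:3}, the latter choice being admissible precisely because $\phi_h^n,\psi_h^n\in K^h$. Multiplying the first two identities by $\tau$ yields the dissipation $\tau\|\nabla\mu_h^{n+1}\|^2 + \tau\|z_h^{n+1}\|_h^2$ on the left, together with the mass-lumped brackets $-(\mu_h^{n+1},\delta\phi)^h$ and $-(z_h^{n+1},\delta\psi)^h$, with $\delta\phi := \phi_h^{n+1}-\phi_h^n$, $\delta\psi := \psi_h^{n+1}-\psi_h^n$, which will cancel against corresponding terms appearing on the right of \eqref{d:3}.

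Once the three tests are combined, I will bound the remaining right-hand side from below by $E^{n+1} - E^n$ piece by piece. The $\eps$-term is handled by the elementary identity $(a,a-b)\ge\tfrac{1}{2}(\|a\|^2-\|b\|^2)$. For the nonlinear $F_1$ contribution, the convex/concave decomposition does the work: convexity of $F_1^+$ gives, pointwise, $F_1^+(\phi_h^{n+1},\psi_h^{n+1})-F_1^+(\phi_h^n,\psi_h^n)\le \nabla F_1^+(\phi_h^{n+1},\psi_h^{n+1})\cdot(\delta\phi,\delta\psi)$, while concavity of $F_1^-$ evaluated at the old time gives the analogous inequality with $\nabla F_1^-(\phi_h^n,\psi_h^n)$; summing produces the telescoping of $(F_1,1)^h$. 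The genuine obstacle, and what the convex-concave splitting and midpoint averaging were chosen to resolve, is the bookkeeping for the three couplings that assemble into $\frac{\lambda}{2}\|\omega^2(\psi_h^n-\tfrac{1}{2})-q_h^n\|_h^2$. Writing $G^n:=\omega^2(\psi_h^n-\tfrac{1}{2})-q_h^n$ and expanding $(G^{n+1},G^{n+1}-G^n)^h = (G^{n+1},\omega^2\delta\psi-\delta q)^h$ with $\delta q := q_h^{n+1}-q_h^n$, I invoke \eqref{d:4} at both time levels, using $\theta_h = q_h^{n+1}$ and $\theta_h = \delta\psi$, to convert the mass-lumped brackets involving $q$ into $H^1$ brackets involving $\nabla\psi$. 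The $\lambda\omega^4$, $-2\lambda\omega^2$ and $\lambda\nabla q_h^{n+1}$ contributions of \eqref{d:3} then combine to exactly $\frac{\lambda}{2}(\|G^{n+1}\|_h^2-\|G^n\|_h^2)$. In parallel, the symmetric midpoint averaging $\tfrac{\sigma}{2}\nabla(\psi_h^{n+1}+\psi_h^n)$ in the $\phi$-inequality and $\tfrac{\sigma}{2}\nabla(\phi_h^{n+1}+\phi_h^n)$ in the $\psi$-inequality causes the off-diagonal cross-terms to cancel antisymmetrically and leaves $\sigma[(\nabla\phi_h^{n+1},\nabla\psi_h^{n+1})-(\nabla\phi_h^n,\nabla\psi_h^n)]$, which, once \eqref{d:4} is applied with $\theta_h=\phi_h^{n+1}$ and $\theta_h=\phi_h^n$, becomes exactly the jump of $-\sigma(\phi_h^n,q_h^n)^h$ appearing in $E^n$.

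For the lower bound on $E^n$, I plan to exploit that $(\phi_h^n,\psi_h^n)\in K^h\subset\mathcal{K}$ is pointwise bounded, so that $\|\phi_h^n\|_h$, $\|\psi_h^n-\tfrac{1}{2}\|_h$ and $(F_1(\phi_h^n,\psi_h^n),1)^h$ are all controlled by constants depending only on $\mathcal{K}$ and $F_1$. The inequality $\|a-b\|_h^2\ge \tfrac{1}{2}\|b\|_h^2 - \|a\|_h^2$, with $a=\omega^2(\psi_h^n-\tfrac{1}{2})$ and $b=q_h^n$, yields $\frac{\lambda}{2}\|G^n\|_h^2\ge \frac{\lambda}{4}\|q_h^n\|_h^2 - \lambda\omega^4\|\psi_h^n-\tfrac{1}{2}\|_h^2$, while Young's inequality absorbs the cross term as $-\sigma(\phi_h^n,q_h^n)^h \ge -\frac{\lambda}{8}\|q_h^n\|_h^2 - \frac{2\sigma^2}{\lambda}\|\phi_h^n\|_h^2$. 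Adding the two $\|q_h^n\|_h^2$ contributions leaves exactly $\frac{\lambda}{8}\|q_h^n\|_h^2$, and the remaining $\mathcal{K}$-bounded terms, together with the already present $\frac{\eps}{2}\|\nabla\phi_h^n\|^2$, supply a uniform constant $E_{\min}\in\mathbb{R}$ independent of $n$.
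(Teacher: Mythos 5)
Your proposal is correct and follows essentially the same route as the paper's proof: the same test functions $u_h=\tau\mu_h^{n+1}$, $v_h=\tau z_h^{n+1}$, $(\eta_h,\zeta_h)=(\phi_h^n,\psi_h^n)$, the same convex--concave telescoping of $F_1$, the same use of \eqref{d:4} to convert the $\lambda$- and $\sigma$-couplings into the jump of $\frac{\lambda}{2}\|\omega^2(\psi_h^n-\tfrac12)-q_h^n\|_h^2$ and of $-\sigma(\phi_h^n,q_h^n)^h$, and the same Young-inequality bookkeeping ($\frac{\lambda}{4}-\frac{\lambda}{8}=\frac{\lambda}{8}$) for the lower bound. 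The only cosmetic caveat is that the three $\lambda$-terms combine to $\lambda(G^{n+1},G^{n+1}-G^n)^h$, which is bounded below by (rather than equal to) $\frac{\lambda}{2}(\|G^{n+1}\|_h^2-\|G^n\|_h^2)$, with the discarded remainder being nonnegative, so the argument goes through.
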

\begin{proof}
Choosing $u_h = \tau {\mu}^{n+1}_h$ in \eqref{d:1}, 
$v_h = \tau z^{n+1}_h$ in \eqref{d:2} and
$(\zeta_h, \eta_h) = (\phi^n_h, \psi^n_h)$ in \eqref{d:3}, 
we obtain upon summing
\begin{equation}\label{stab:1}
\begin{aligned}
0 & \geq (\eps \nabla \phi^{n+1}_h, \nabla (\phi^{n+1}_h - \phi^n_h)) +  (F_{1,\phi}^+(\phi^{n+1}_h, \psi^{n+1}_h)  + F_{1,\phi}^-(\phi^n_h, \psi^n_h) , \phi^{n+1}_h - \phi^n_h  )^h \\
& \quad + \tau  \| \nabla {\mu}^{n+1}_h \|^2 - \tfrac{\sigma}{2}( \nabla (\psi^{n+1}_h + \psi^n_h), \nabla (\phi^{n+1}_h - \phi^n_h)) \\
& \quad +  (\lambda \omega^4 (\psi^{n+1}_h - \tfrac{1}{2})  + F_{1,\psi}^+(\phi^{n+1}_h, \psi^{n+1}_h) + F_{1,\psi}^-(\phi^n_h, \psi^n_h), \psi^{n+1}_h - \psi^n_h )^h  \\
& \quad + \tau \| z^{n+1}_h \|_h^2+ (\lambda \nabla q^{n+1}_h - 2 \lambda \omega^2 \nabla \psi^{n+1}_h - \tfrac{\sigma}{2} \nabla (\phi^{n+1}_h + \phi^{n}_h), \nabla (\psi^{n+1}_h - \psi^n_h)).
\end{aligned}
\end{equation}
Furthermore, we see that 
\begin{equation} \label{eq:stab1a}
\begin{aligned}
&  \tfrac{\sigma}{2} (\nabla (\psi^{n+1}_h + \psi^n_h), \nabla (\phi^{n+1}_h - \phi^n_h)) + \tfrac{\sigma}{2} (\nabla (\phi^{n+1}_h+ \phi^n_h), \nabla (\psi^{n+1}_h - \psi^n_h))  \\
 & \quad = \sigma (\nabla \psi^{n+1}_h, \nabla \phi^{n+1}_h) - \sigma (\nabla \psi^n_h, \nabla \phi^n_h) = 
\sigma (\phi^{n+1}_h, q^{n+1}_h)^h - \sigma (\phi^n_h, q^n_h)^h,
\end{aligned}
\end{equation}
while by the convexity of $F_1^+$ and concavity of $F_1^-$ we have that
\begin{equation} \label{eq:stab1b}
\begin{aligned}
&  (F_{1,\phi}^+(\phi^{n+1}_h, \psi^{n+1}_h), \phi^{n+1}_h - \phi^n_h  )^h  +   (F_{1,\psi}^+(\phi^{n+1}_h, \psi^{n+1}_h), \psi^{n+1}_h- \psi^n_h  )^h  \\
& \qquad +  (F_{1,\phi}^-(\phi^{n}_h, \psi^{n}_h), \phi^{n+1}_h - \phi^n_h  )^h  +   (F_{1,\psi}^-(\phi^{n}_h, \psi^{n}_h), \psi^{n+1}_h - \psi^n_h  )^h  \\
& \quad \geq (F_{1}^+(\phi^{n+1}_h, \psi^{n+1}_h) , 1)^h - (F_1^+(\phi^n_h, \psi^n_h), 1)^h \\
& \qquad + (F_{1}^-(\phi^{n+1}_h, \psi^{n+1}_h) , 1)^h - (F_1^-(\phi^n_h, \psi^n_h), 1)^h  \\
& \quad 
=  (F_{1}(\phi^{n+1}_h, \psi^{n+1}_h) - F_1(\phi^n_h, \psi^n_h), 1 )^h.
\end{aligned}
\end{equation}
Combining \eqref{stab:1}, \eqref{eq:stab1a} and \eqref{eq:stab1b} yields
\begin{equation}\label{eq:stab1c}
\begin{aligned}
0 & \geq \frac{\eps}{2} \| \nabla \phi^{n+1}_h \|^2 - \frac{\eps}{2} \| \nabla \phi^n_h \|^2 + \frac{\eps}{2} \| \nabla ( \phi^{n+1}_h - \phi^n_h) \|^2  
\\
& \quad +   (F_{1}(\phi^{n+1}_h, \psi^{n+1}_h) - F_1(\phi^n_h, \psi^n_h), 1 )^h + \tau \| \nabla {\mu}^{n+1}_h \|^2 + \tau \| z^{n+1}_h \|_h^2 \\
& \quad  
- \sigma (\phi^{n+1}_h, q^{n+1}_h)^h + \sigma (\phi^n_h, q^n_h)^h +  (\lambda \omega^4 (\psi^{n+1}_h - \tfrac{1}{2})  , \psi^{n+1}_h - \psi^n_h )^h \\ & \quad 
+ (\lambda \nabla q^{n+1}_h - 2 \lambda \omega^2 \nabla \psi^{n+1}_h , \nabla (\psi^{n+1}_h - \psi^n_h)).
\end{aligned}
\end{equation}
Moreover, taking the difference between \eqref{d:4} at instance $n+1$ and $n$, and choosing $\theta = - \lambda q^{n+1}_h$ in the subsequent difference yields
\begin{align*}\label{stab:2}
(\nabla (\psi^{n+1}_h - \psi^n_h), \lambda \nabla q^{n+1}_h) = 
\lambda (q^{n+1}_h - q^n_h, q^{n+1}_h)^h 
= \frac\lambda2 \left( \| q^{n+1}_h \|_h^2 - \| q^n_h \|_h^2 
+ \| q^{n+1}_h - q^n_h \|_h^2\right).
\end{align*}
Hence, together with \eqref{eq:stab1c}, we obtain
\begin{equation}\label{uncond:stab:1}
\begin{aligned}
0 & \geq \frac{\eps}{2} \| \nabla \phi^{n+1}_h \|^2 - \frac{\eps}{2} \| \nabla \phi^n_h \|^2 + \frac{\eps}{2} \| \nabla ( \phi^{n+1}_h - \phi^n_h) \|^2
\\
& \quad+  (F_{1}(\phi^{n+1}_h, \psi^{n+1}_h) - F_1(\phi^n_h, \psi^n_h), 1 )^h \\
& \quad + \tau \| \nabla {\mu}^{n+1}_h \|^2 + \tau \| z^{n+1}_h \|_h^2 
- \sigma (\phi^{n+1}_h, q^{n+1}_h)^h + \sigma (\phi^n_h, q^n_h)^h \\
& \quad + \frac{\lambda}{2} \Big (\| q^{n+1}_h \|_h^2 - 2 \omega^2 \| \nabla \psi^{n+1}_h \|^2 +  \omega^4 \| \psi^{n+1}_h-\tfrac{1}{2} \|_h^2 \Big )\\
& \quad - \frac{\lambda}{2} \Big (\| q^n_h \|_h^2 - 2 \omega^2 \| \nabla \psi^n_h \|^2 +  \omega^4 \| \psi^{n}_h - \tfrac{1}{2} \|_h^2 \Big ) \\
& \quad + \frac{\lambda}{2} \Big ( \| q^{n+1}_h - q^n_h \|_h^2 - 2 \omega^2 \| \nabla (\psi^{n+1}_h - \psi^n_h ) \|^2 + \omega^4 \| \psi^{n+1}_h - \psi^n_h \|_h^2 \Big ).
\end{aligned}
\end{equation}
Using the relation 
$\| \nabla \psi^k_h \|^2 = (q^k_h, \psi^{k}_h - \tfrac{1}{2} )^h$
for $k=n$ and $k=n+1$, recall \eqref{d:4}, we see that 
\begin{align*}
 \| q^k_h \|_h^2 - 2 \omega^2 \| \nabla \psi^k_h \|^2 
+ \omega^4 \| \psi^k_h - \tfrac12 \|_h^2 & = 
( |q^k_h|^2 - 2\omega^2 q^k_h (\psi^{k}_h - \tfrac{1}{2}) 
+ \omega^4 |\psi^{k}_h - \tfrac{1}{2} |^2, 1 )^h \\
& = \| \omega^2 (\psi^{k}_h - \tfrac{1}{2})  - q^k_h \|_h^2 
\end{align*}
for $k \in \{n, n+1\}$, and similarly
\begin{align*}
& \| q^{n+1}_h - q^n_h \|_h^2 - 2 \omega^2 \| \nabla (\psi^{n+1}_h - \psi^n_h ) \|^2 + \omega^4 \| \psi^{n+1}_h - \psi^n_h \|_h^2 \\
& \quad = \| q^{n+1}_h - q^n_h \|_h^2 - 2 \omega^2 (q^{n+1}_h - q^{n}_h, \psi^{n+1}_h - \psi^{n}_h)^h + \omega^4 \| \psi^{n+1}_h - \psi^n_h \|_h^2 \\
& \quad = \| \omega^2 (\psi^{n+1}_h - \psi^n_h)  - (q^{n+1}_h - q^n_h) \|_h^2.
\end{align*}
This allows us to express \eqref{uncond:stab:1} as
\begin{align*}
0 & \geq E^{n+1} - E^n + \frac{\eps}2 \| \nabla (\phi^{n+1}_h - \phi^n_h) \|^2 
+ \tau \| \nabla \mu^{n+1}_h \|^2 + \tau \| z^{n+1}_h \|^2_h \\
& \quad + \frac{\lambda}{2}  \| \omega^2 (\psi^{n+1}_h - \psi^n_h)  - (q^{n+1}_h - q^n_h) \|_h^2,
\end{align*}
which proves the desired result \eqref{eq:thmstab}. 
The lower bound follows from the fact that $(\phi^n_h, \psi^n_h) \in K^h$, since then \begin{align*}
E^n & \geq |\Omega| \min_{\mathcal K} F_1 - \frac{2\sigma^2}\lambda |\Omega|
- \frac\lambda8\|q^n_h\|_h^2
+ \frac{\lambda}{2} \| \omega^2 (\psi^{n}_h - \tfrac{1}{2})  - q^n_h \|_h^2 + \frac{\eps}{2} \| \nabla \phi_h^n \|^2 \\ 
&
\geq |\Omega| \min_{\mathcal K} F_1 - \frac{2\sigma^2}\lambda |\Omega|
- \frac\lambda8\|q^n_h\|_h^2 + \frac{\lambda}{2} \left(
\omega^2 \| \psi^{n}_h - \tfrac{1}{2} \|_h - \| q^n_h \|_h \right)^2 + \frac{\eps}{2} \| \nabla \phi_h^n \|^2 \\ 
&
\geq |\Omega| \min_{\mathcal K} F_1 - \frac{2\sigma^2}\lambda |\Omega| 
- \frac\lambda8\|q^n_h\|_h^2 + \frac{\lambda}{2} \left(
\tfrac12 \| q^n_h \|_h^2 - 2 \omega^4 \| \psi^{n}_h - \tfrac{1}{2} \|_h^2 
\right) + \frac{\eps}{2} \| \nabla \phi_h^n \|^2 \\ & 
\geq |\Omega| \min_{\mathcal K} F_1 - \frac{2\sigma^2}\lambda |\Omega| - \frac{\lambda\omega^4}{4} |\Omega| + \frac{\lambda}{8} \| q_h^n \|_h^2 + \frac{\eps}{2} \| \nabla \phi_h^n \|^2.
\end{align*}
This completes the proof.
\end{proof}

\subsection{Iterative solution method}
Following \cite{grain} we now discuss a possible algorithm to solve the
resulting system of algebraic equations for 
$(\phi^{n+1}_h,\psi^{n+1}_h,\mu^{n+1}_h, z^{n+1}_h, q^{n+1}_h)$
arising at each time level from the finite element approximation 
\eqref{eq:fea}. To this end, let $\mathcal J$ denotes the number of nodes of
$\mathcal T^h$. Then, on 
introducing the obvious notations, the system \eqref{eq:fea} can be written
in matrix-vector form as follows. Find 
$(\Phi^{n+1}, \Psi^{n+1}) \in \mathcal K^{\mathcal J}$ and 
$W^{n+1}, Z^{n+1}, Q^{n+1} \in \mathcal R^\mathcal J$ such that
\begin{subequations} \label{eq:blockVI}
\begin{align} 
& M \Phi^{n+1} + \tau A W^{n+1} = M \Phi^n, \\
& M \Psi^{n+1} + \tau M Z^{n+1} = M \Psi^n, \\
& (\eta_h - \Phi^{n+1})^\top (\eps A \Phi^{n+1} - \tfrac\sigma2 A \Psi^{n+1} 
+  M F^+_{1,\phi}(\Phi^{n+1}, \Psi^{n+1}) - M W^{n+1})  \\ & \quad
+ (\zeta_h - \Psi^{n+1})^\top ( \lambda \omega^4 M \Psi^{n+1} 
- \tfrac\sigma2 A \Phi^{n+1} - M Z^{n+1}
+  M F^+_{1,\psi}(\Phi^{n+1}, \Psi^{n+1}) 
\nonumber \\ & \hspace{4cm}
+ \lambda A Q^{n+1} 
- 2 \lambda \omega^2 A \Psi^{n+1}) \nonumber \\ & \ \geq
(\eta_h - \Psi^{n+1})^\top R^n + (\zeta_h - \Psi^{n+1})^\top S^n \quad
\forall \ (\eta_h, \zeta_h) \in \mathcal K^{\mathcal J}, \nonumber \\
& A \Psi^{n+1} - M Q^{n+1} = 0, 
\end{align}
\end{subequations}
where $M$ and $A$ denote the lumped mass matrix and stiffness matrix, 
respectively, $R^n = \frac\sigma2 A \Psi^n -  M F_{1,\phi}^-(\Phi^n, \Psi^n)$ and 
$S^n = \frac12 \lambda \omega^4 M \underline{1} - M F_{1,\psi}^-(\Phi^n, \Psi^n) + \frac\sigma2 A \Phi^n$.

Let $A = A_D - A_L - A_L^\top$ and recall that $M$ is a diagonal matrix. 
Then we can formulate a ``Gauss--Seidel type'' iterative scheme as follows.
Given $(\Phi^{n+1,0}, \Psi^{n+1,0}) \in \mathcal K^{\mathcal J}$ and 
$W^{n+1,0}, Z^{n+1,0}, Q^{n+1,0} \in \mathcal R^\mathcal J$, for $k\geq 0$
find $(\Phi^{n+1,k+1}, \Psi^{n+1,k+1}) \in \mathcal K^{\mathcal J}$ and 
$W^{n+1,k+1}, Z^{n+1,k+1}, Q^{n+1,k+1} \in \mathcal R^\mathcal J$ such that
\begin{subequations} \label{eq:GS}
\begin{align} 
& M \Phi^{n+1,k+1} + \tau (A_D - A_L) W^{n+1,k+1} 
= M \Phi^n + \tau A_L^\top W^{n+1,k}, \\
& M \Psi^{n+1,k+1} + \tau M Z^{n+1,k+1} = M \Psi^n, \\
& (\eta_h - \Phi^{n+1,k+1})^\top (\eps (A_D - A_L) \Phi^{n+1,k+1} 
- \tfrac\sigma2 (A_D - A_L) \Psi^{n+1,k+1}  \\ & \hspace{3cm}
+  M F^+_{1,\phi}(\Phi^{n+1,k+1}, \Psi^{n+1,k+1}) - M W^{n+1,k+1}) \nonumber \\ & \quad
+ (\zeta_h - \Psi^{n+1,k+1})^\top ( \lambda \omega^4 M \Psi^{n+1,k+1} 
- \tfrac\sigma2 (A_D - A_L) \Phi^{n+1,k+1}
- M Z^{n+1,k+1} \nonumber \\ & \hspace{3cm}
+  M F^+_{1,\psi}(\Phi^{n+1,k+1}, \Psi^{n+1,k+1}) 
\nonumber \\ & \quad
+ \lambda (A_D - A_L) Q^{n+1,k+1} 
- 2 \lambda \omega^2 (A_D - A_L) \Psi^{n+1,k+1}) \nonumber \\ & \ \geq
(\eta_h - \Phi^{n+1,k+1})^\top (R^n + \eps A_L^\top \Phi^{n+1,k} 
- \tfrac\sigma2 A_L^\top \Psi^{n+1,k}) \nonumber \\ & \qquad
+ (\zeta_h - \Psi^{n+1,k+1})^\top (S^n 
- \frac\sigma2 A_L^\top \Phi^{n+1,k}
+ \lambda A_L^\top Q^{n+1,k}
- 2 \lambda \omega^2 A_L^\top \Psi^{n+1,k}) \nonumber\\ & \hspace{8cm}
\forall\ (\eta_h, \zeta_h) \in \mathcal K^{\mathcal J}, \nonumber \\
& (A_D-A_L) \Psi^{n+1,k+1} - M Q^{n+1,k+1} = A_L^\top \Psi^{n+1,k}.
\end{align}
\end{subequations}

From now on we fix our discussion to the choice \eqref{eq:F1pm}. Then
\eqref{eq:GS} can be explicitly solved for $j=1,\ldots,{\cal J}$.
To this end let 
\begin{align*}
r_1 & = M \Phi^n + \tau (A_L W^{n+1,k+1} + A_L^\top W^{n+1,k}), \\
r_2&  = R^n + \eps A_L \Phi^{n+1,k+1} + \eps A_L^\top \Phi^{n+1,k} 
- \tfrac\sigma2 A_L \Psi^{n+1,k+1}
- \tfrac\sigma2 A_L^\top \Psi^{n+1,k}), \\
r_3 & = M \Psi^n, \\
r_4 & = S^n 
- \tfrac\sigma2 A_L \Phi^{n+1,k+1}
- \tfrac\sigma2 A_L^\top \Phi^{n+1,k}
+ \lambda (A_L Q^{n+1,k+1} + A_L^T Q^{n+1,k}) \\
& \quad - 2 \lambda \omega^2 (A_L \Psi^{n+1,k+1} + A_L^\top \Psi^{n+1,k}), \\
r_5 &= A_L \Psi^{n+1,k+1} + A_L^\top \Psi^{n+1,k}.
\end{align*}
Then $(\Phi^{n+1,k+1}_j, \Psi^{n+1,k+1}_j)$ is the solution of the following problem: 
Find $(\Phi_j, \Psi_j) \in \mathcal{K}$ such that, for every $(\eta_h,\zeta_h) \in \mathcal{K}$,
\begin{align} \label{eq:VIj}
(\eta_h - \Phi_j)^\top (\alpha_{11} \Phi_j - \alpha_{12}\Psi_j
- \beta_1) 
+
(\zeta_h - \Psi_j)^\top (\alpha_{22} \Psi_j -\alpha_{12}\Phi_j
- \beta_2) \geq 0,
%\quad \forall\ (\eta_h,\zeta_h) \in \mathcal{K},
\end{align}
where $\alpha_{12} = \frac\sigma2 A_{jj}$.
The values of $\alpha_{11}, \beta_1, \alpha_{22}, \beta_2$ can be identified 
from the above, on writing, for $j=1,...,{\cal J}$,
\begin{equation} \label{eq:WZQj}
\begin{aligned}
W^{n+1,k+1}_j & = ([r_1]_j - M_{jj} \Phi^{n+1,k+1}_j) / (\tau A_{jj}), \\
Z^{n+1,k+1}_j & = ([r_3]_j - M_{jj} \Psi^{n+1,k+1}_j) / (\tau M_{jj}), \\
Q^{n+1,k+1}_j & = - ([r_5]_j - A_{jj} \Psi^{n+1,k+1}_j) / M_{jj},
\end{aligned}
\end{equation}
and then substituting these into the variational inequality in \eqref{eq:GS}.
In fact, overall we obtain
\begin{align*} % \label{eq:alphabetaj}
\alpha_{11} & = \eps A_{jj} +  C_F M_{jj} + M_{jj}^2 / (\tau A_{jj}), \\
\beta_1 & = [r_2]_j + M_{jj} [r_1]_j / (\tau A_{jj}), \\
\alpha_{22} & 
%= \lambda \omega^4 M_{jj} +  C_F M_{jj} - 2 \lambda \omega^2 A_{jj}
%+ M_{jj} / \tau + \lambda A_{jj}^2 / M_{jj} \nonumber \\ &
= (\lambda^\frac12\omega^2M_{jj}^\frac12 - \lambda^\frac12 A_{jj}
M_{jj}^{-\frac12})^2
+ C_F M_{jj} + M_{jj} / \tau, \\
\beta_2 & = [r_4]_j + [r_3]_j / \tau + \lambda A_{jj} [r_5]_j / M_{jj}.
\end{align*}
We can rewrite \eqref{eq:VIj} as
\begin{equation} \label{eq:VIAj}
\binom{\eta_h - \Phi_j}{\zeta_h-\Psi_j}^T \mathfrak A \left[ 
\binom{\Phi_j}{\Psi_j} - \mathfrak A^{-1} \binom{\beta_1}{\beta_2} \right]
\geq 0
\quad \forall\ (\eta_h,\zeta_h) \in \mathcal{K},
\end{equation}
where $\mathfrak{A}=\binom{\alpha_{11}\ -\alpha_{12}}{-\alpha_{12}\ \alpha_{22}}$.
The matrix is symmetric positive definite 
if $\alpha_{12}^2 < \alpha_{11}\alpha_{22}$, which is guaranteed as long as the
time step size $\tau$ is chosen sufficiently small.
In that case, the unique solution to \eqref{eq:VIAj} is
\begin{equation*}
 (\Phi_j, \Psi_j) = {\rm P}_{\cal K}^{\mathfrak A} \left (
\mathfrak A^{-1} \binom{\beta_1}{\beta_2} \right ), \label{PCK}
\end{equation*}
where ${\rm P}_{\cal K}^{\mathfrak A}(x_1,x_2)$ is the orthogonal projection
of the point
$\underline{x}=(x_1,x_2)^\top \in {\mathbb R}^2$ onto ${\cal K}$
with respect to the inner product
$\langle \underline{p},\underline{q}\rangle_{\mathfrak{A}}
:= \underline{p}^T \mathfrak{A}\,\underline{q}$.
The projection $\underline{y}={\rm P}_{\cal K}^{\mathfrak A}(\underline{x})$
can be computed as follows.
\begin{enumerate}
\itemsep1mm
\item If $\underline{x} \in {\cal K}$,
then $\underline{y}=\underline{x}$,~else

\item \quad If $x_2 \leq 0$ then $\underline{y}:=(
      \max\{-1,\min\{x_1 - \frac\sigma{2\alpha_{11}} x_2,1\}\},0 )^\top$, ~else
\item \quad \quad If $x_1 \geq 0$ then $\underline{v}:=(1,-1)^\top$,
      else $\underline{v}:=-(1,1)^\top$.
\item \quad \quad $\gamma:=\displaystyle
             \frac{\langle \underline{x}-(0,1)^\top,\underline{v}\rangle_{\mathfrak A}}
             {\|\underline{v}\|_{\mathfrak A}^2}$.
\item \quad \quad $\underline{y} := (0,1)^\top + \min\{\max\{\gamma,0\},1\}\,
\underline{v}$.
\end{enumerate}

Having found $(\Phi^{n+1,k+1}_j, \Psi^{n+1,k+1}_j) = (\Phi_j, \Psi_j)$ 
from \eqref{eq:VIj}, we can then find $W^{n+1,k+1}_j$, $Z^{n+1,k+1}_j$ and
$Q^{n+1,k+1}_j$ via \eqref{eq:WZQj}. Repeating this procedure for
$j=1,\ldots,\mathcal{J}$ we obtain the solution to \eqref{eq:GS}.
In practice the iteration \eqref{eq:GS} is performed until a suitable stopping
criterion is met.

\subsection{Numerical simulations}
We implemented the scheme \eqref{eq:fea} with the help of the finite element 
toolbox ALBERTA, see \cite{Alberta}. 
To increase computational efficiency, we employ adaptive meshes, which have a 
finer mesh size $h_{f}$ within the regions $|\phi^n_h|<1$ and a coarser 
mesh size $h_{c}$ in the regions $|\phi^n_h| = 1$, see 
\cite{voids3d,grain,voids} for a more detailed description. 

In all our numerical simulations we make use of the splitting \eqref{eq:F1pm} 
for $F_1$ as defined in \eqref{poly}, together with the choice \eqref{eq:CF} for the value of $C_F$.

Throughout the numerical experiments we set 
$\Omega = (-\frac12,\frac12)^d$ and use $\eps=\frac1{16\pi}$ and 
$\lambda=10^{-5}$.
For the computations for Cahn--Hilliard (CH) and 
Cahn--Hilliard--Swift--Hohenberg (CHSH) we let $\alpha=100$,
unless stated otherwise.
For the computations for Swift--Hohenberg (SH) and CHSH, 
unless otherwise stated, we always set $\omega=100$ and $\delta = \sigma = 0$.
Of course, for CH we set $\lambda = g = \gamma = \delta = \sigma = 0$,
while for SH we use $\alpha=\delta=\sigma=0$.
Moreover, for the initial data for CH and CHSH we always choose a random $\phi_0$ with zero mean value and values inside $[-0.01,0.01]$, while
for SH we simply set $\phi_0=0$. Similarly, for SH we choose a random $\psi_0$ 
with mean 0.5 and values inside $[0.49,0.51]$, while for CHSH we set
$\psi_0 = \frac12 (1 - |\phi_0|)$, and for CH we use $\psi_0=0$.

For demonstrative purposes, we begin with a simulation for CH in two dimensions. Then, the usual spinodal decomposition can be observed in
Figure~\ref{fig:Fig32_CH}. The color map shown in Figure~\ref{fig:Fig32_CH} will
be used throughout for the visualizations of $\phi^n_h$.
\begin{figure}
% ../plotglns3; mv energy.png Fig32_CH_e.png; mv energies.png Fig32_CH_es.png; mv E1terms.png Fig32_CH_e1.png
% paraview --data=phiw_h..vtk
%          save animation
% cp Fig32_CH_e*.png Fig32_CH_0000.png Fig32_CH_short_0001.png Fig32_CH_short_0010.png Fig32_CH_0001.png Fig32_CH_0003.png Fig32_CH_0010.png Fig32_CH_0100.png ~/tex/glns/glns3/figures && scpp Fig32_CH_e*.png Fig32_CH_0000.png Fig32_CH_short_0001.png Fig32_CH_short_0010.png Fig32_CH_0001.png Fig32_CH_0003.png Fig32_CH_0010.png Fig32_CH_0100.png e23:tex/glns/glns3/figures
\center
\mbox{
\includegraphics[angle=-0,width=0.2\textwidth]{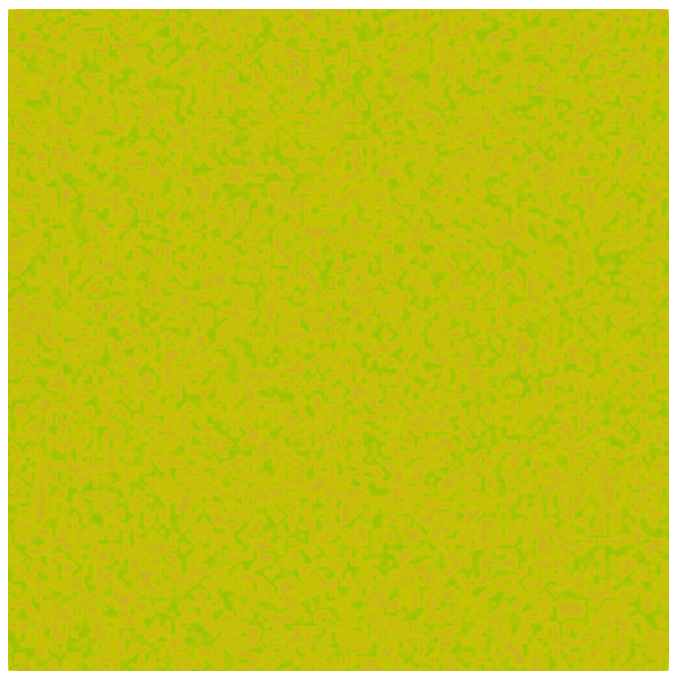}
\includegraphics[angle=-0,width=0.2\textwidth]{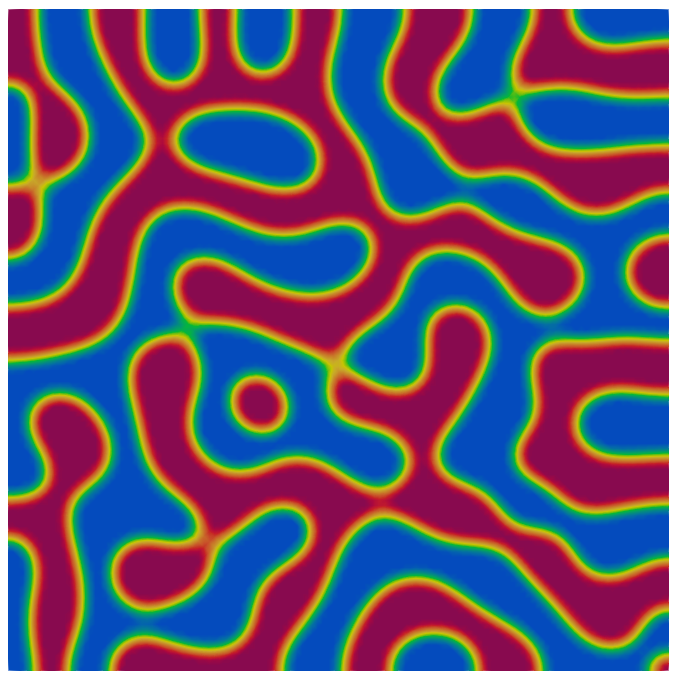}
\includegraphics[angle=-0,width=0.2\textwidth]{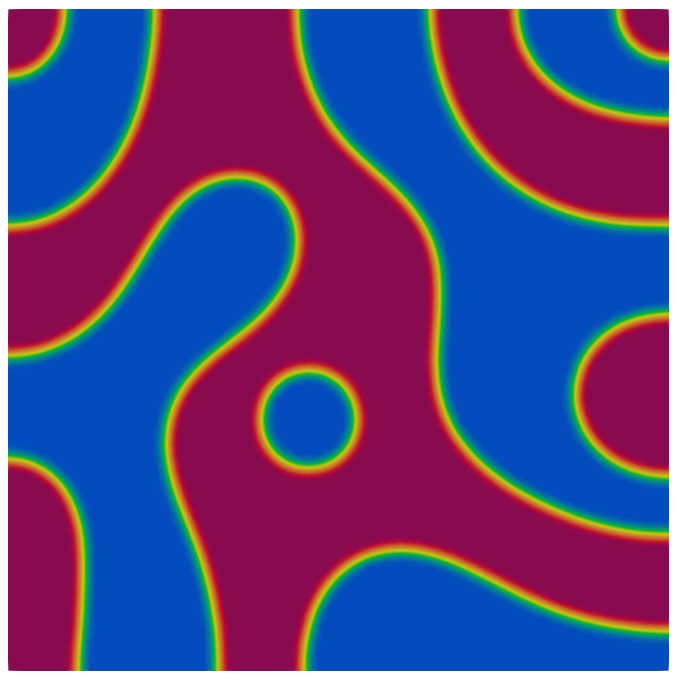}
\includegraphics[angle=-0,width=0.2\textwidth]{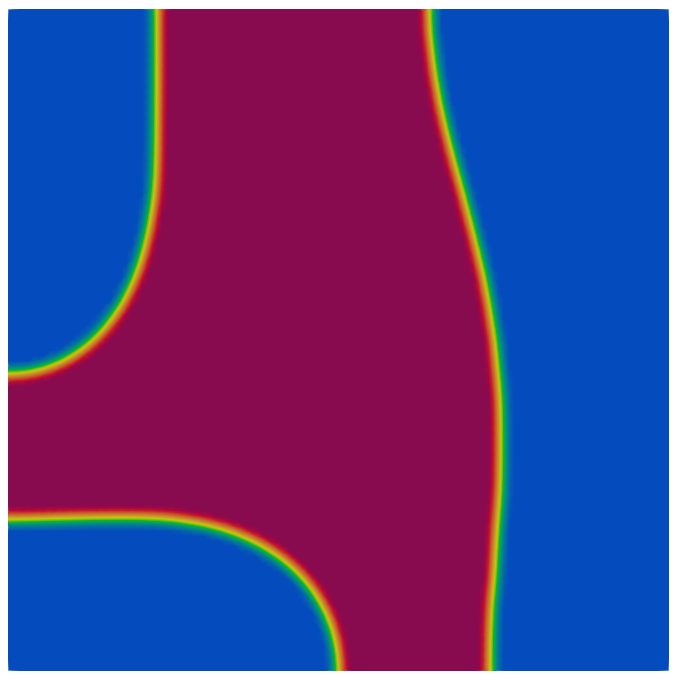}
\includegraphics[angle=-0,width=0.2\textwidth]{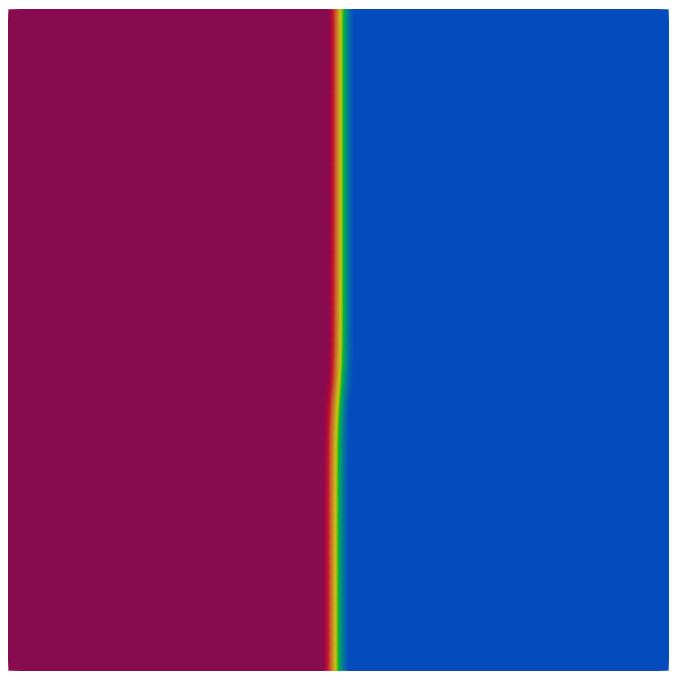}
} %\\
\includegraphics[angle=-0,width=0.3\textwidth]{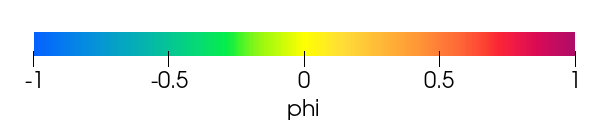}
\caption{Spinodal decomposition for CH.
We display $\phi^n_h$ at times $t=0$, $10^{-4}$, $0.001$, $0.01$, $1$.
%Below we show plots of the energy $E^k$, as well as of its four contributions
%from \eqref{eq:Ekterms},
%and the four contributions of $E_1^k$ from \eqref{eq:E1terms}.
}
\label{fig:Fig32_CH}
% ~/hpc_cluster/data/alberta/glns3/2d.Fig32_delta0_pureCH_adapt
% ~/hpc_cluster/data/alberta/glns3/2d.Fig32_delta0_pureCH_adapt_short
\end{figure}%
Next we consider some simulations for SH, in order to
obtain some insights into the role of the different parameters in the free
energy of the system.
Here we ran our finite element approximation for a very long time, 
until the numerical solutions $\psi^n_h$ have settled on a stable profile,
or changed only very little. These profiles, for different
parameters, are visualized in Figure~\ref{fig:CF_SH}.
The color map shown in Figure~\ref{fig:CF_SH} will
be used throughout for the visualizations of $\psi^n_h$.
In the first row of Figure~\ref{fig:CF_SH} we can see that increasing
the value of $\omega$ leads to a higher frequency of the observed
oscillations. In the second row we see that increasing $\gamma$, with
$\omega=100$ fixed, leads to more intricate patterns. Finally, the third row
demonstrates that increasing $|g|$, while keeping $\omega=100$ and $\gamma=10$
fixed, leads to the phase $\psi=1$ being preferred, so that small islands
of the phase $\psi=0$ are created. 
\begin{figure}
% ./get_SH_files.sh
% cd ~/hpc_cluster/data/alberta/glns3/2d.SH_figure && paraview --data=data..vtk
%     save animation
% cp SH_fig_data*.png ~/tex/glns/glns3/figures && scpp SH_fig_data*.png e23:tex/glns/glns3/figures
\center
% $g=0$, $\gamma = 10$ and $\omega=30, 100, 200, 300$
\mbox{
\includegraphics[angle=-0,width=0.2\textwidth]{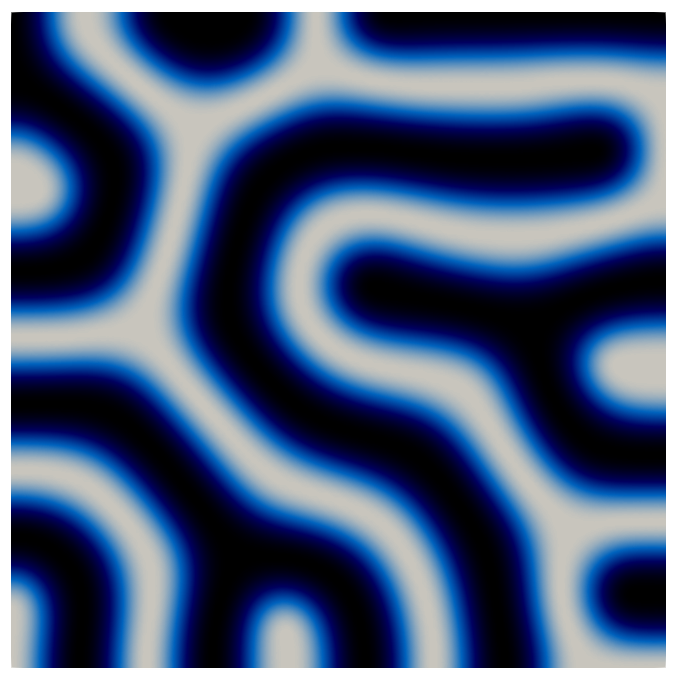}
\includegraphics[angle=-0,width=0.2\textwidth]{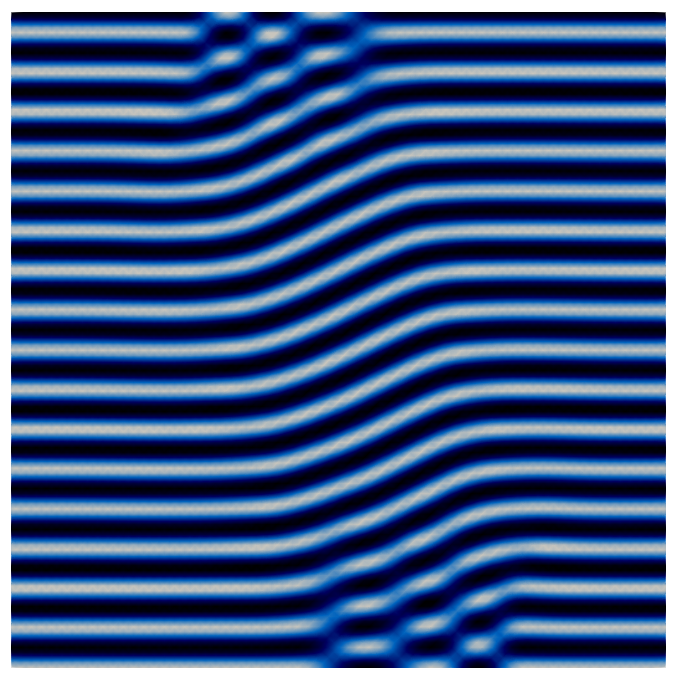}
\includegraphics[angle=-0,width=0.2\textwidth]{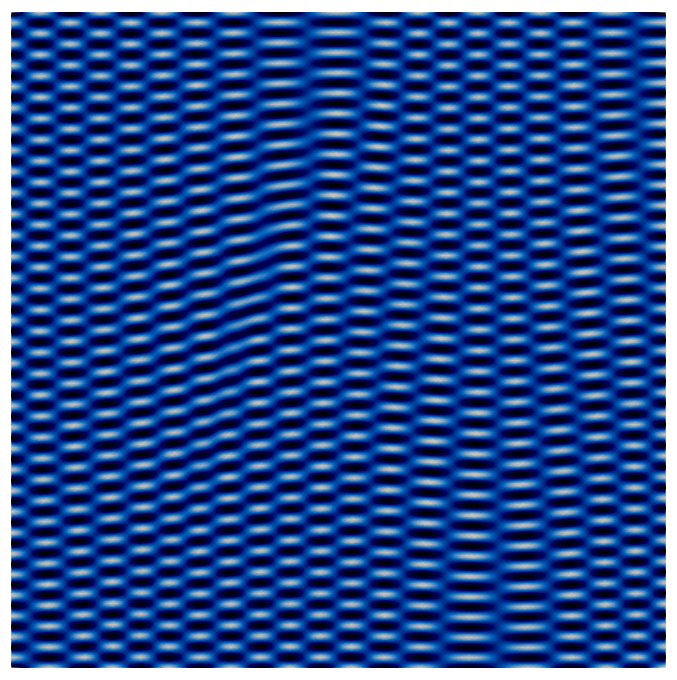}
\includegraphics[angle=-0,width=0.2\textwidth]{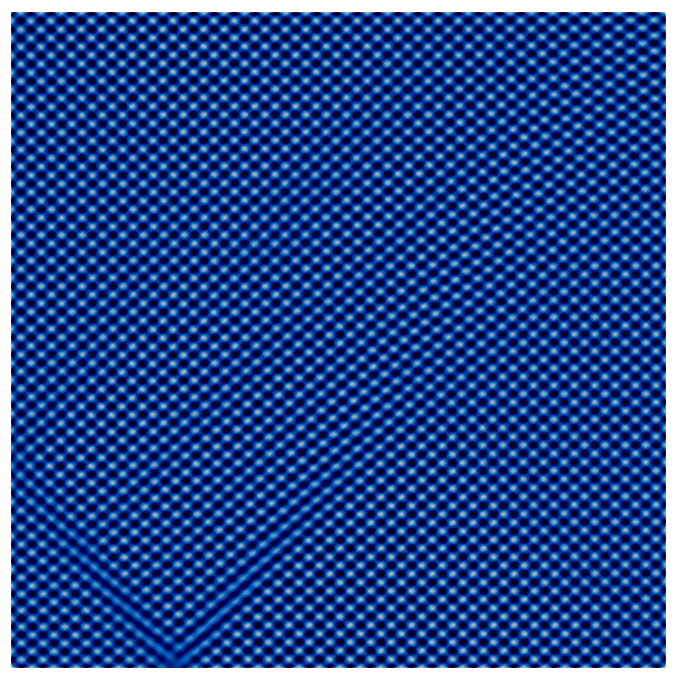}
}
% $\omega=100$, $g=0$ and $\gamma=100, 200, 500, 1000$.
\mbox{
\includegraphics[angle=-0,width=0.2\textwidth]{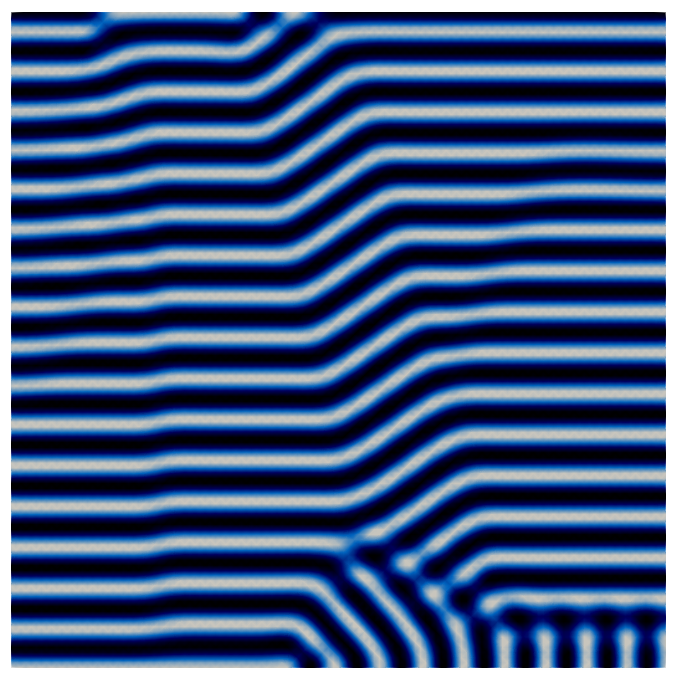}
\includegraphics[angle=-0,width=0.2\textwidth]{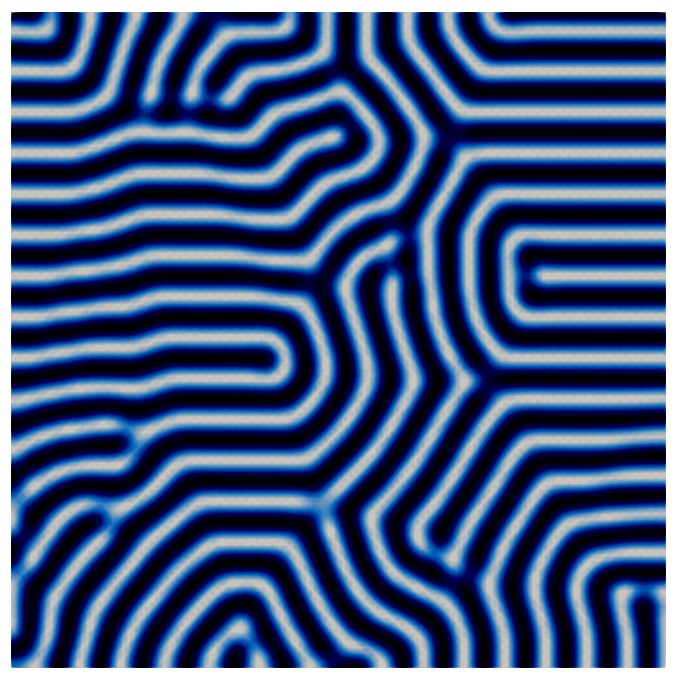}
\includegraphics[angle=-0,width=0.2\textwidth]{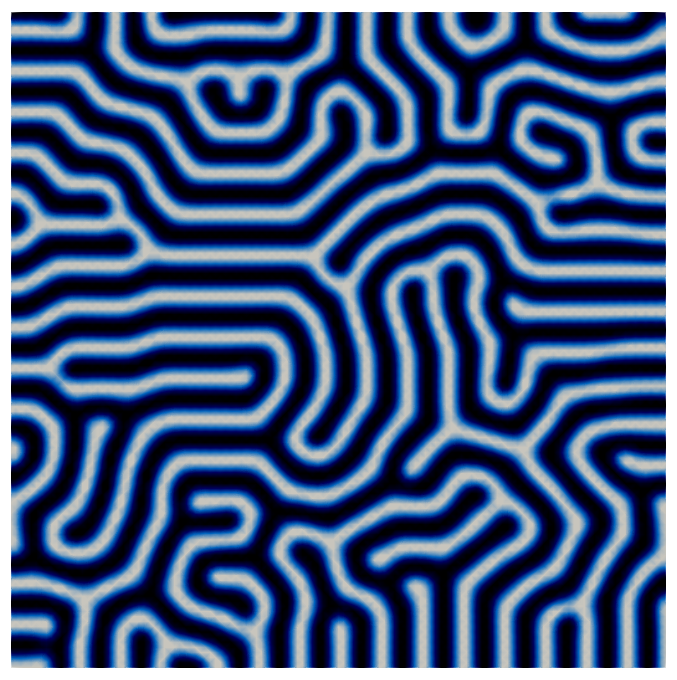}
\includegraphics[angle=-0,width=0.2\textwidth]{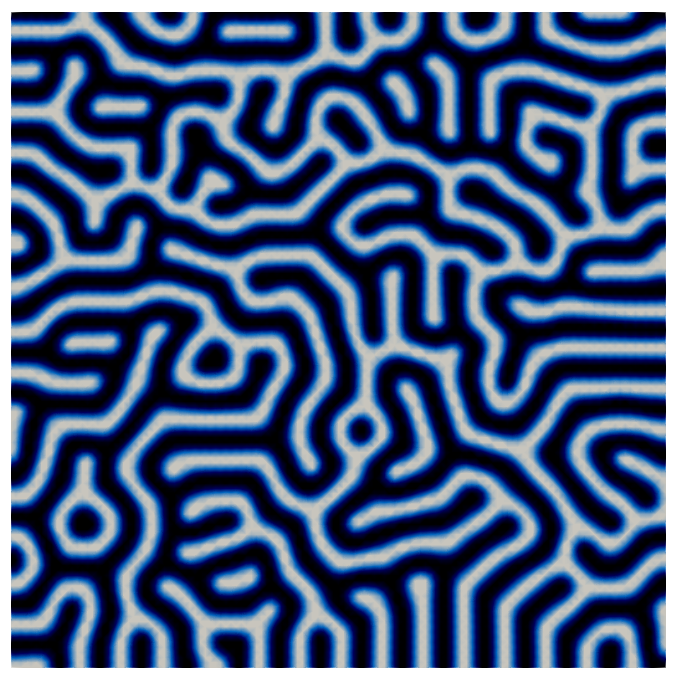}
}
% $\omega=100$, $\gamma=10$ and $g=-100,-150,-500,-1000$.
\mbox{
\includegraphics[angle=-0,width=0.2\textwidth]{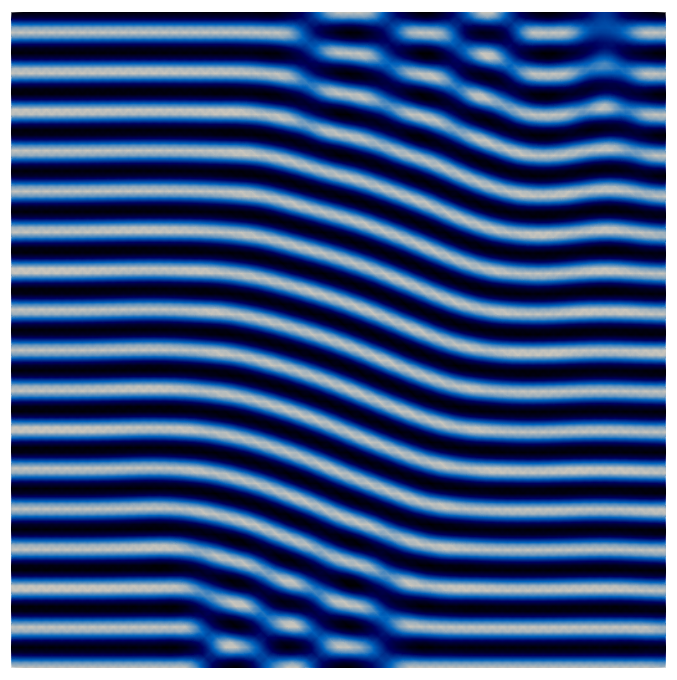}
\includegraphics[angle=-0,width=0.2\textwidth]{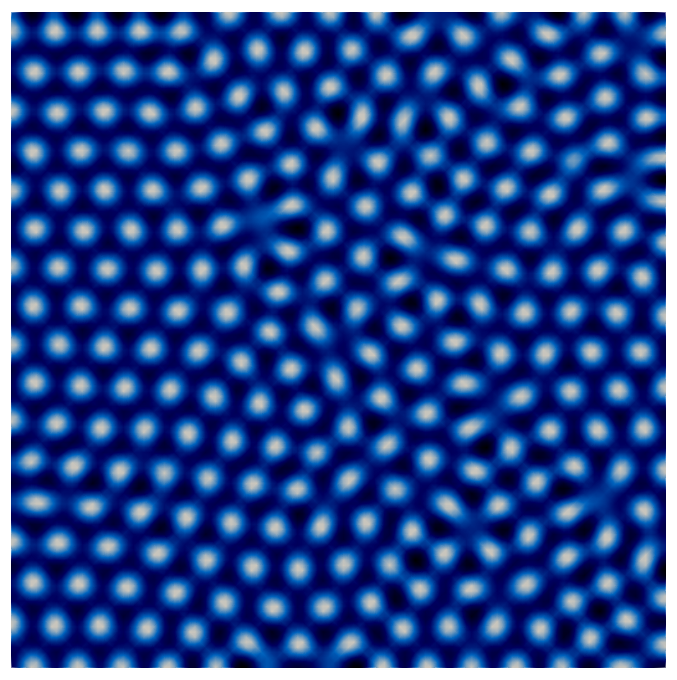}
\includegraphics[angle=-0,width=0.2\textwidth]{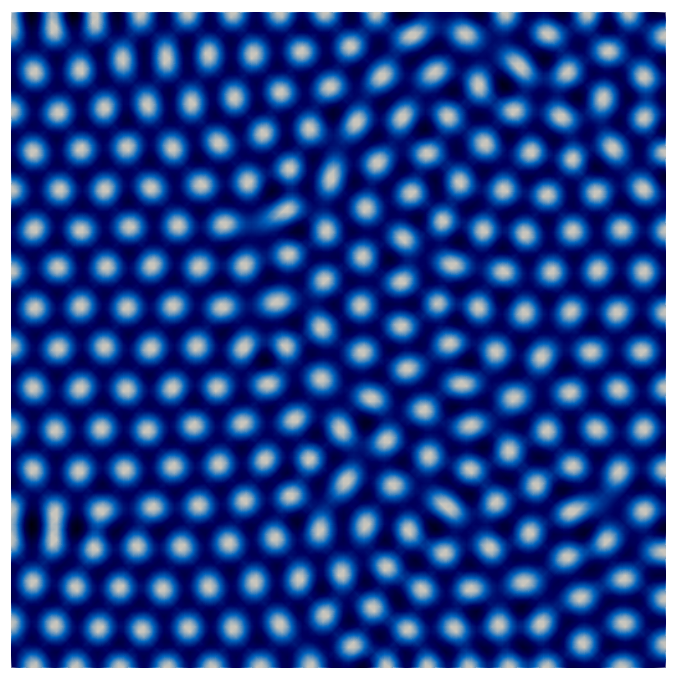}
\includegraphics[angle=-0,width=0.2\textwidth]{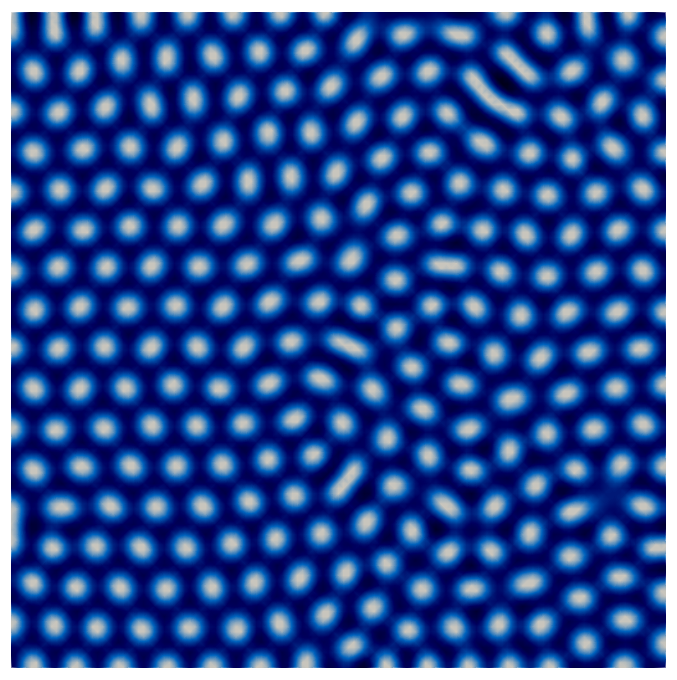}
}
%% $\omega=100$, $\gamma=500$ and $g=-50,-100,-200,-300$.
%\mbox{
%\includegraphics[angle=-0,width=0.2\textwidth]{figures/SH_fig_data_0042}
%\includegraphics[angle=-0,width=0.2\textwidth]{figures/SH_fig_data_0035}
%\includegraphics[angle=-0,width=0.2\textwidth]{figures/SH_fig_data_0039}
%\includegraphics[angle=-0,width=0.2\textwidth]{figures/SH_fig_data_0041}
%}
%% $\omega=100$, $\gamma=500$ and $g=-500,-1000,-2000,-10000$
%\includegraphics[angle=-0,width=0.2\textwidth]{figures/SH_fig_data_0043}
%\includegraphics[angle=-0,width=0.2\textwidth]{figures/SH_fig_data_0036}
%\includegraphics[angle=-0,width=0.2\textwidth]{figures/SH_fig_data_0040} 
%\includegraphics[angle=-0,width=0.2\textwidth]{figures/SH_fig_data_0037} 
\includegraphics[angle=-0,width=0.3\textwidth]{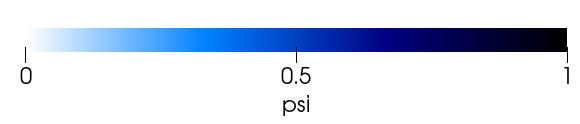}
\caption{Obtained steady states for SH.
First row: $g=0$, $\gamma = 10$ and $\omega=30, 100, 200, 300$.
Second row: $\omega=100$, $g=0$ and $\gamma=100, 200, 500, 1000$.
Third row: $\omega=100$, $\gamma=10$ and $g=-100,-150,-500,-1000$.
%Third row: $\omega=100$, $\gamma=500$ and $g=-50,-100,-200,-300$.
%Final row: $\omega=100$, $\gamma=500$ and $g=-500,-1000,-2000,-10000$.
}
\label{fig:CF_SH}
\end{figure}%
If we now combine the parameters from Figure~\ref{fig:Fig32_CH} and
the last image in the second row of Figure~\ref{fig:CF_SH} for the full CHSH
model, we see a dramatically different evolution.
We refer to Figure~\ref{fig:Fig32_delta0} for the numerical results, which can be compared to Figure 7f in \cite{Morales_JTB}. As a comparison we show the time evolution for SH on its own in
Figure~\ref{fig:Fig32_SH}. Comparing the evolving patterns in
Figure~\ref{fig:Fig32_delta0}, with the pure CH evolution
in Figure~\ref{fig:Fig32_CH} and the pure SH evolution in
Figure~\ref{fig:Fig32_SH}, we note that only by combining the two gives rise
to the kind of complex patterns that motivates our current study.
\begin{figure}
% ../plotglns3; mv energy.png Fig32d0_e.png; mv energies.png Fig32d0_es.png; mv E1terms.png Fig32d0_e1.png
% paraview --data=phiw_h..vtk
%          save animation
% cp Fig32d0_e*.png Fig32d0_short_0001.png Fig32d0_0000.png Fig32d0_0001.png Fig32d0_0010.png Fig32d0_0100.png Fig32d0_short_psi_0001.png Fig32d0_psi_0000.png Fig32d0_psi_0001.png Fig32d0_psi_0010.png Fig32d0_psi_0100.png ~/tex/glns/glns3/figures && scpp Fig32d0_e*.png Fig32d0_short_0001.png Fig32d0_0000.png Fig32d0_0001.png Fig32d0_0010.png Fig32d0_0100.png Fig32d0_short_psi_0001.png Fig32d0_psi_0000.png Fig32d0_psi_0001.png Fig32d0_psi_0010.png Fig32d0_psi_0100.png e23:tex/glns/glns3/figures
\center
\mbox{
\includegraphics[angle=-0,width=0.2\textwidth]{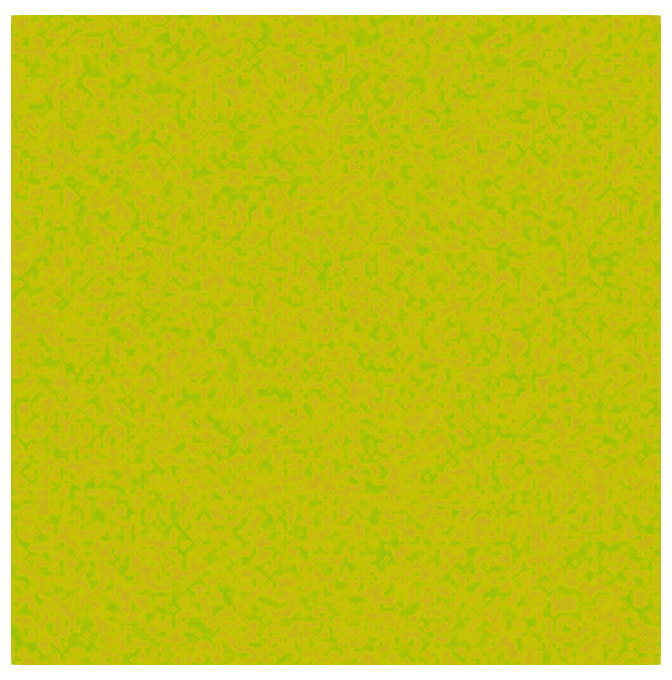}
\includegraphics[angle=-0,width=0.2\textwidth]{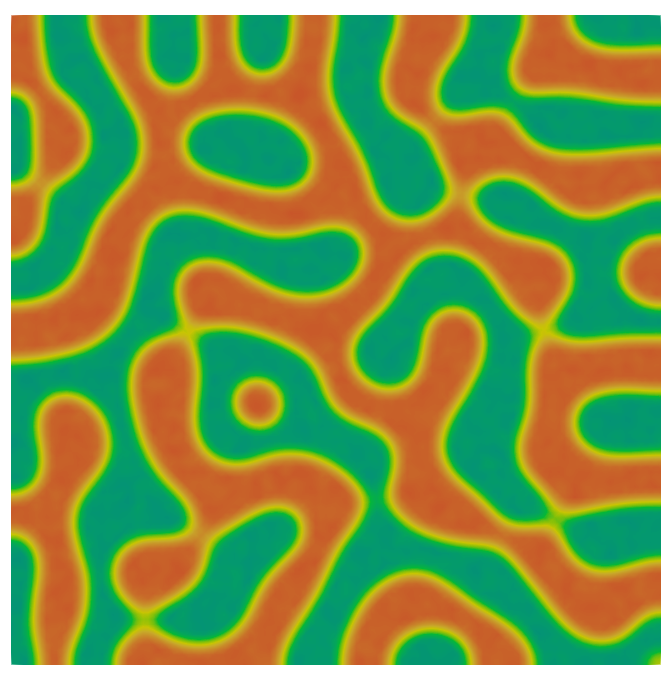}
\includegraphics[angle=-0,width=0.2\textwidth]{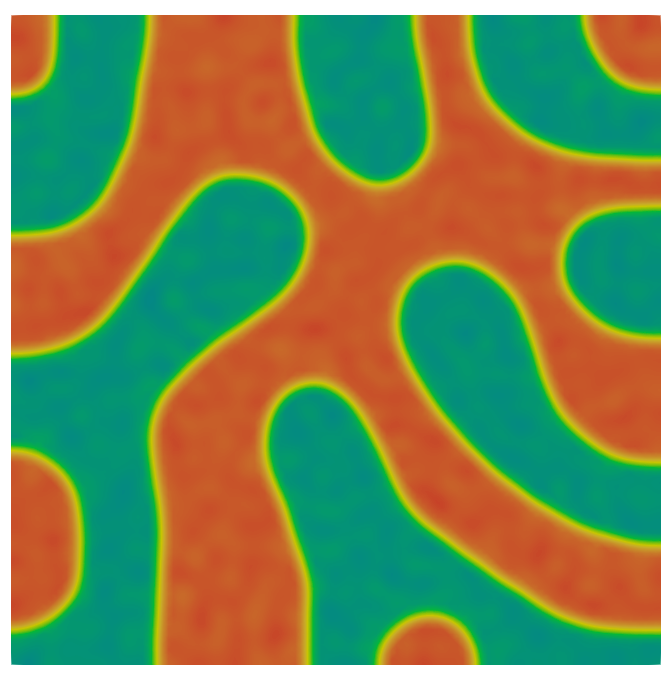}
\includegraphics[angle=-0,width=0.2\textwidth]{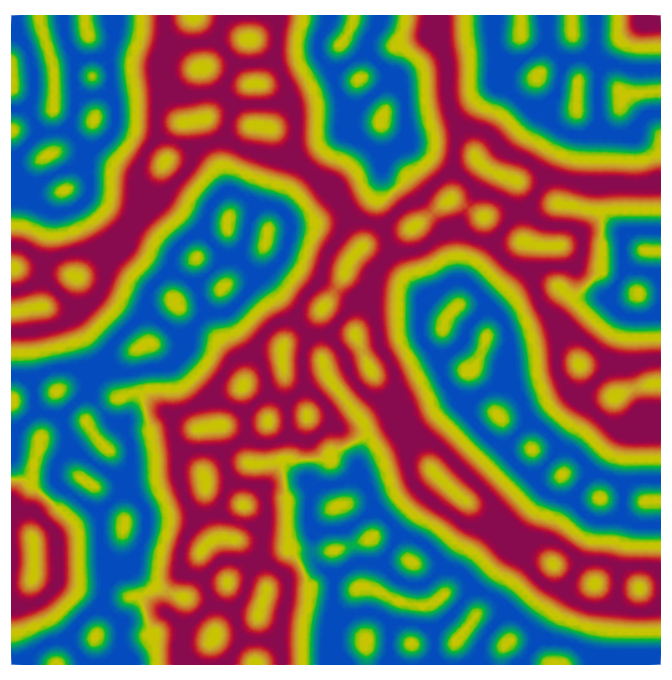}
\includegraphics[angle=-0,width=0.2\textwidth]{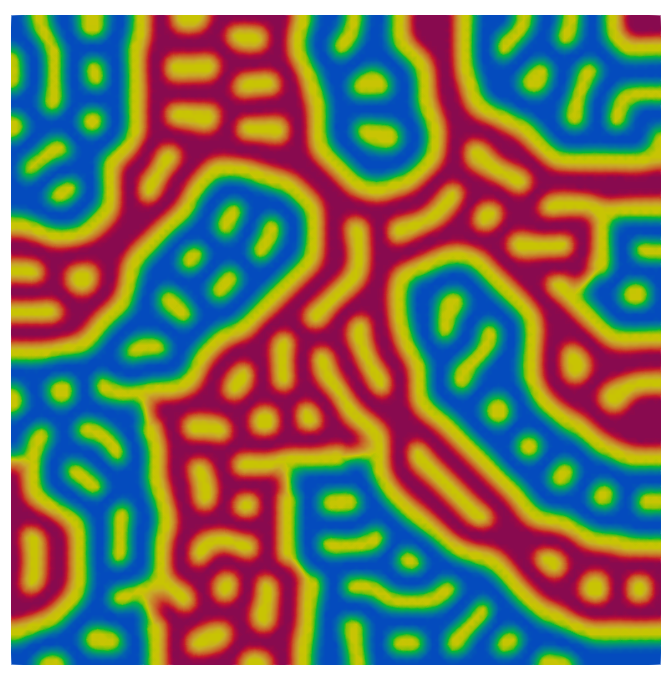}
} \\
\mbox{
\includegraphics[angle=-0,width=0.2\textwidth]{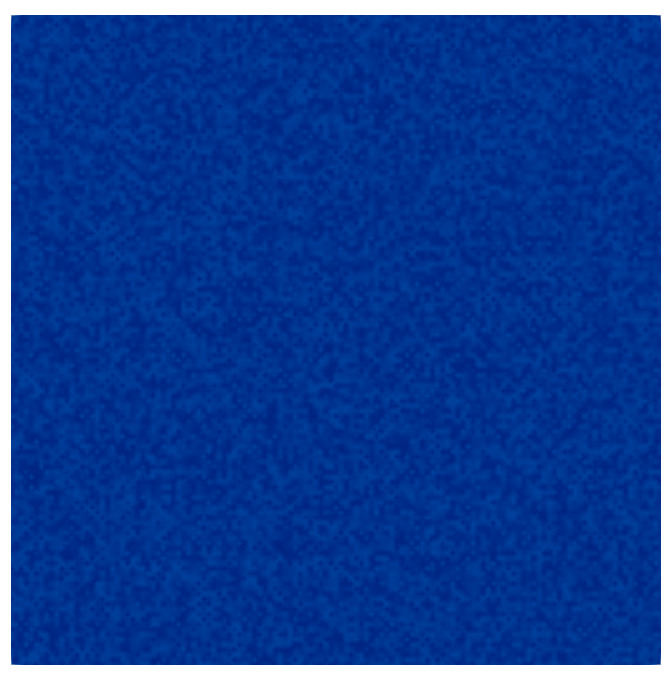}
\includegraphics[angle=-0,width=0.2\textwidth]{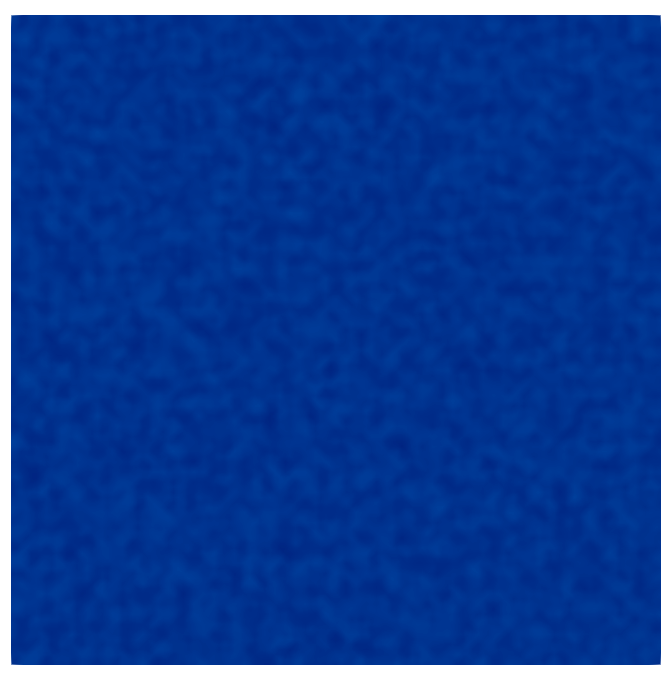}
\includegraphics[angle=-0,width=0.2\textwidth]{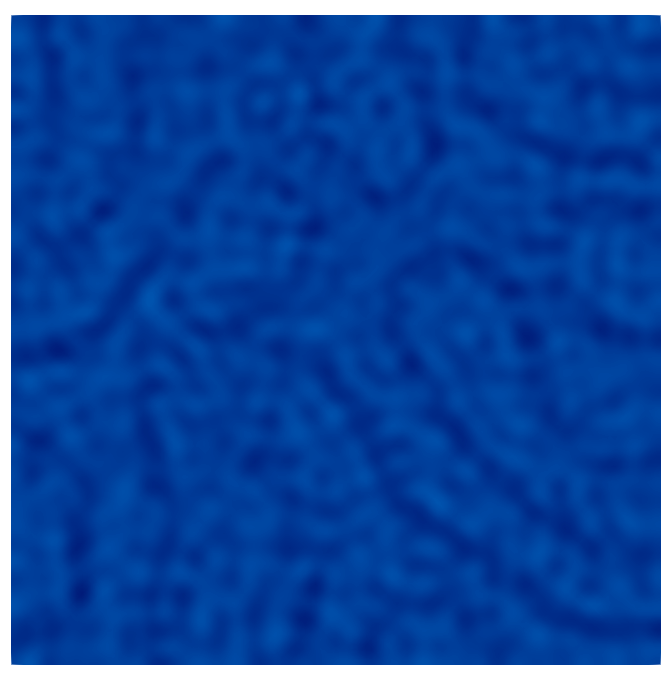}
\includegraphics[angle=-0,width=0.2\textwidth]{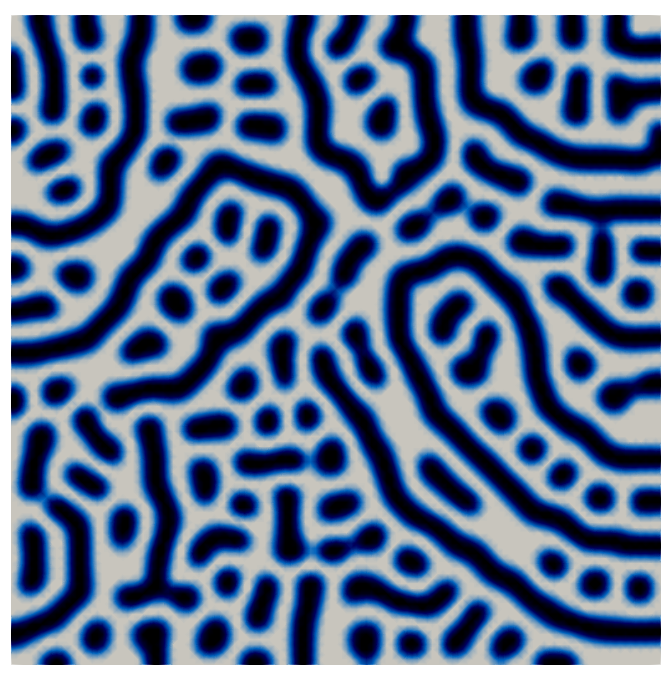}
\includegraphics[angle=-0,width=0.2\textwidth]{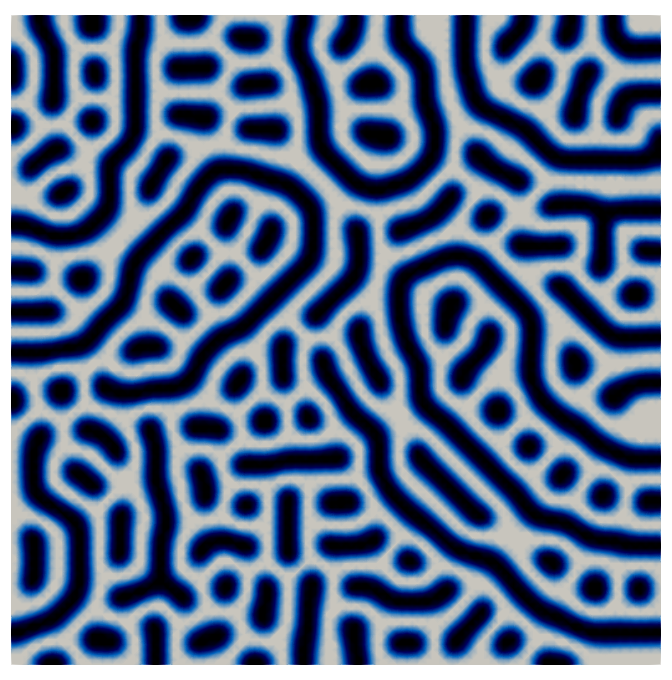}
} %\\
\caption{
Computation for CHSH with $g=0$, $\gamma=1000$.
We display $\phi_h^n$ at times $t=0$, $10^{-4}$, $0.001$, $0.01$, $0.1$.
Below we show $\psi_h^n$ at the same times.
%Below we show plots of the energy $E^k$, as well as of its four contributions
%from \eqref{eq:Ekterms},
%and the four contributions of $E_1^k$ from \eqref{eq:E1terms}.
}
\label{fig:Fig32_delta0}
% ~/hpc_cluster/data/alberta/glns3/2d.CF.Fig32_delta0
% ~/hpc_cluster/data/alberta/glns3/2d.CF.Fig32_delta0_short
\end{figure}%
\begin{figure}
% ../plotglns3; mv energy.png Fig32_SH_e.png; mv energies.png Fig32_SH_es.png; mv E1terms.png Fig32_SH_e1.png
% paraview --data=psiz_h..vtk
%          save animation
% cp Fig32_SH_e*.png Fig32_SH_psi_0000.png Fig32_SH_psi_0001.png Fig32_SH_psi_0005.png Fig32_SH_psi_0010.png Fig32_SH_psi_0100.png ~/tex/glns/glns3/figures && scpp Fig32_SH_e*.png Fig32_SH_psi_0000.png Fig32_SH_psi_0001.png Fig32_SH_psi_0005.png Fig32_SH_psi_0010.png Fig32_SH_psi_0100.png e23:tex/glns/glns3/figures
\center
\mbox{
\includegraphics[angle=-0,width=0.2\textwidth]{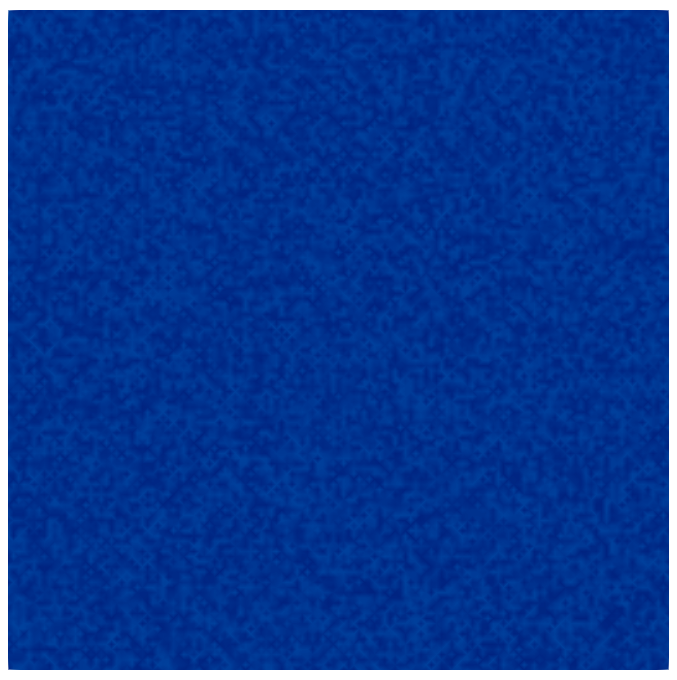}
\includegraphics[angle=-0,width=0.2\textwidth]{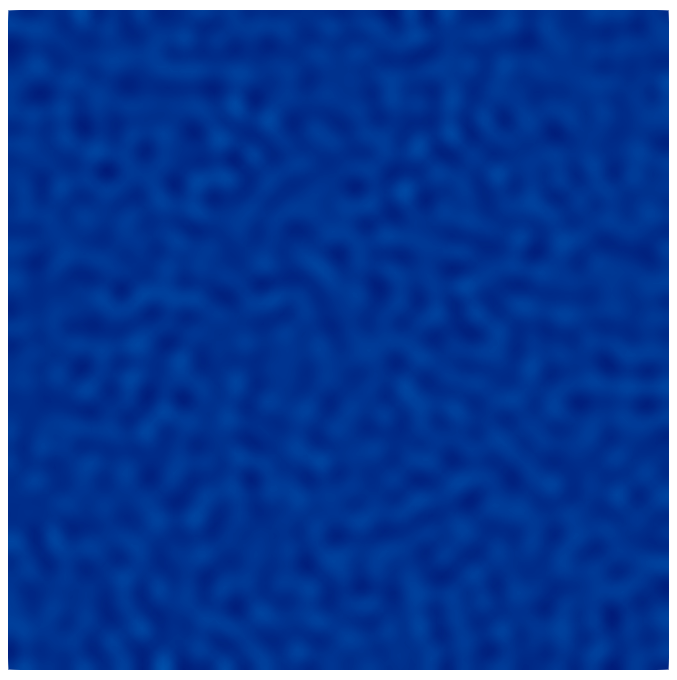}
\includegraphics[angle=-0,width=0.2\textwidth]{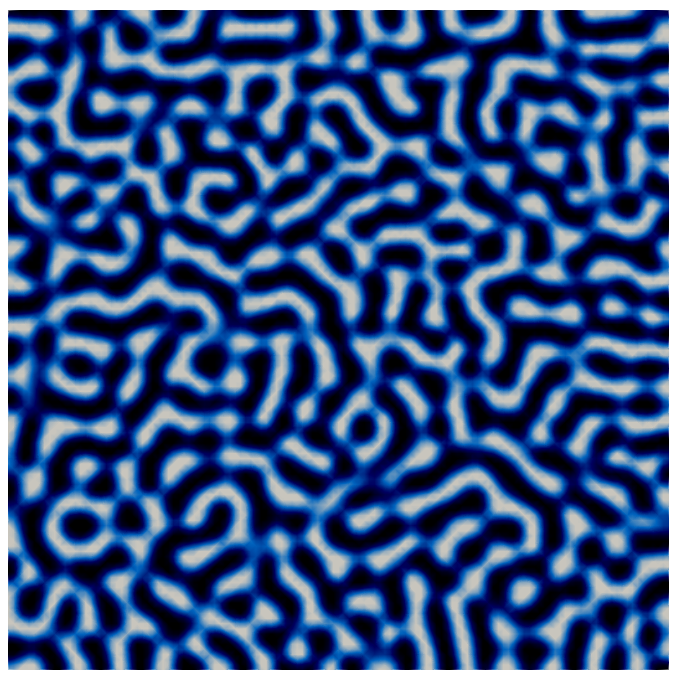}
\includegraphics[angle=-0,width=0.2\textwidth]{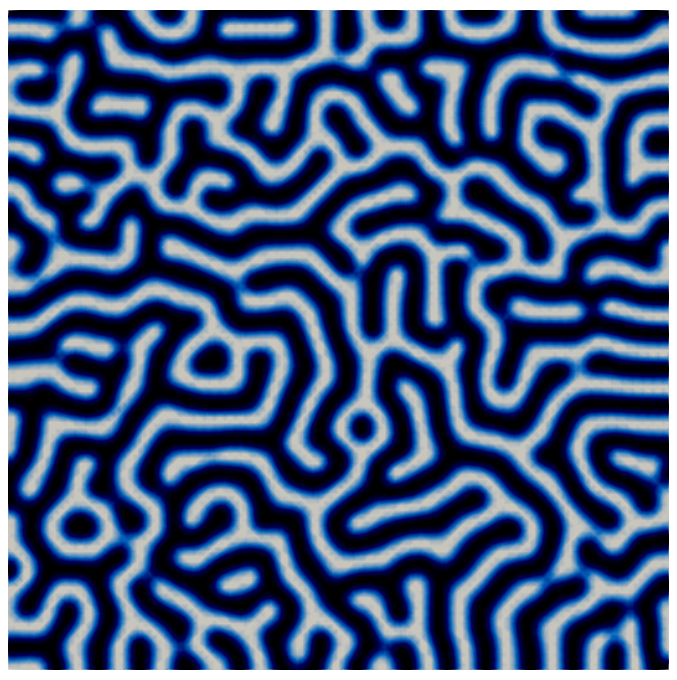}
\includegraphics[angle=-0,width=0.2\textwidth]{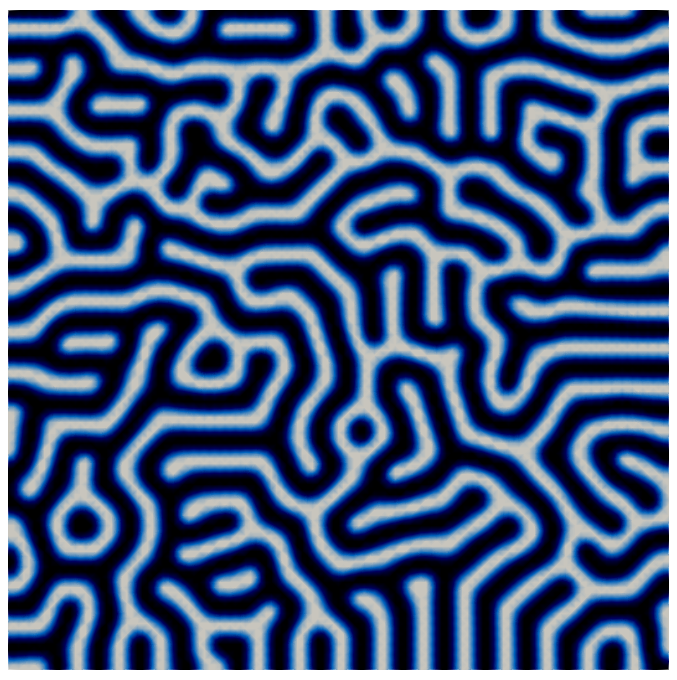}
} %\\
\caption{Computation for SH with $g=0$, $\gamma=1000$.
(Compare with Figure~\ref{fig:Fig32_delta0}.)
We display $\psi_h^n$ at times $t=0$, $0.001$, $0.005$, $0.01$, $0.1$.
%Below we show plots of the energy $E^k$, as well as of its four contributions
%from \eqref{eq:Ekterms},
%and the four contributions of $E_1^k$ from \eqref{eq:E1terms}.
}
\label{fig:Fig32_SH}
% ~/hpc_cluster/data/alberta/glns3/2d.CF.SH100gamma1000
\end{figure}%

If we repeat the CHSH simulation from Figure~\ref{fig:Fig32_delta0}
for a smaller value of $\gamma$, we obtain the results in 
Figure~\ref{fig:Fig31_delta0}. We observe that they show some resemblance
to the patterns in \cite{Morales_JTB}.
\begin{figure}
% ../plotglns3; mv energy.png Fig31d0_e.png; mv energies.png Fig31d0_es.png; mv E1terms.png Fig31d0_e1.png
% paraview --data=phiw_h..vtk
%          save animation
% cp Fig31d0_e*.png Fig31d0_0000.png Fig31d0_0001.png Fig31d0_0005.png Fig31d0_0050.png Fig31d0_0500.png Fig31d0_psi_0000.png Fig31d0_psi_0001.png Fig31d0_psi_0005.png Fig31d0_psi_0050.png Fig31d0_psi_0500.png ~/tex/glns/glns3/figures && scpp Fig31d0_e*.png Fig31d0_0000.png Fig31d0_0001.png Fig31d0_0005.png Fig31d0_0050.png Fig31d0_0500.png Fig31d0_psi_0000.png Fig31d0_psi_0001.png Fig31d0_psi_0005.png Fig31d0_psi_0050.png Fig31d0_psi_0500.png e23:tex/glns/glns3/figures
\center
\mbox{
\includegraphics[angle=-0,width=0.2\textwidth]{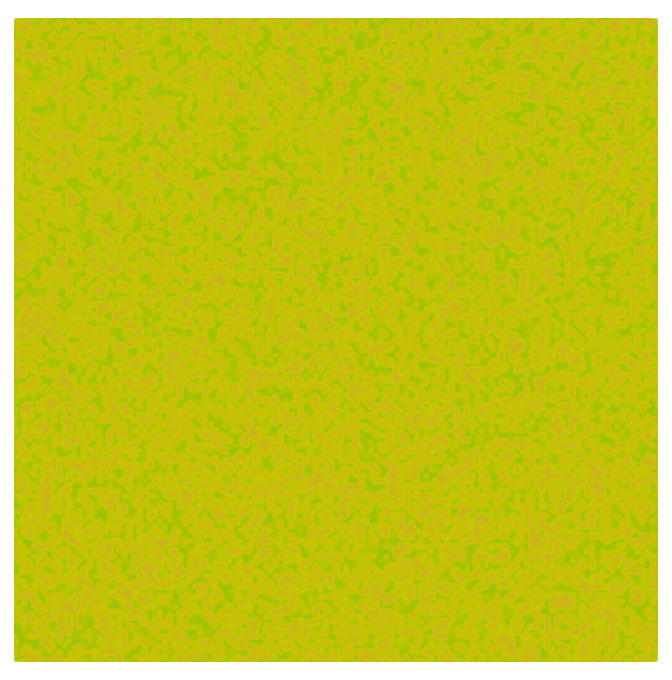}
\includegraphics[angle=-0,width=0.2\textwidth]{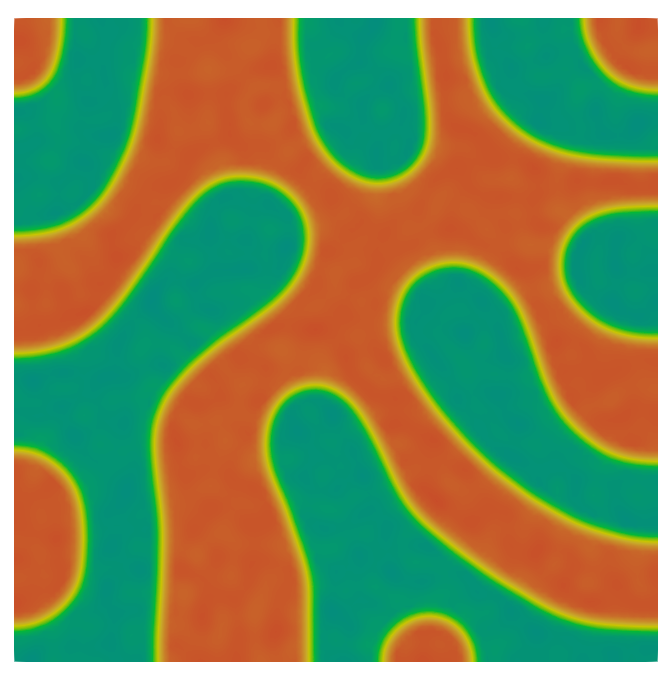}
\includegraphics[angle=-0,width=0.2\textwidth]{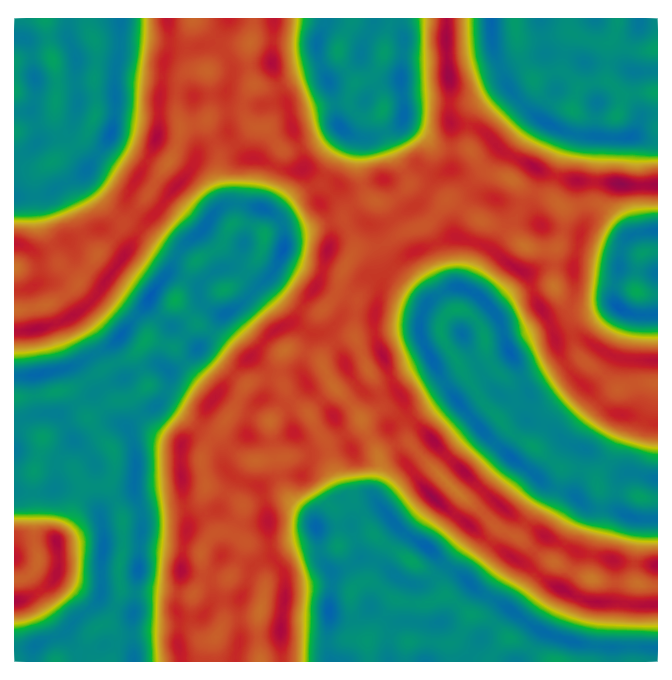}
\includegraphics[angle=-0,width=0.2\textwidth]{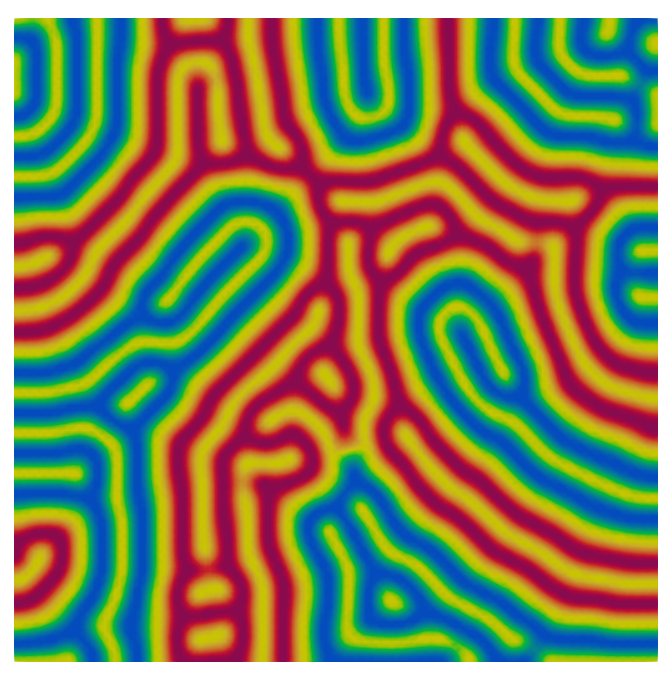}
\includegraphics[angle=-0,width=0.2\textwidth]{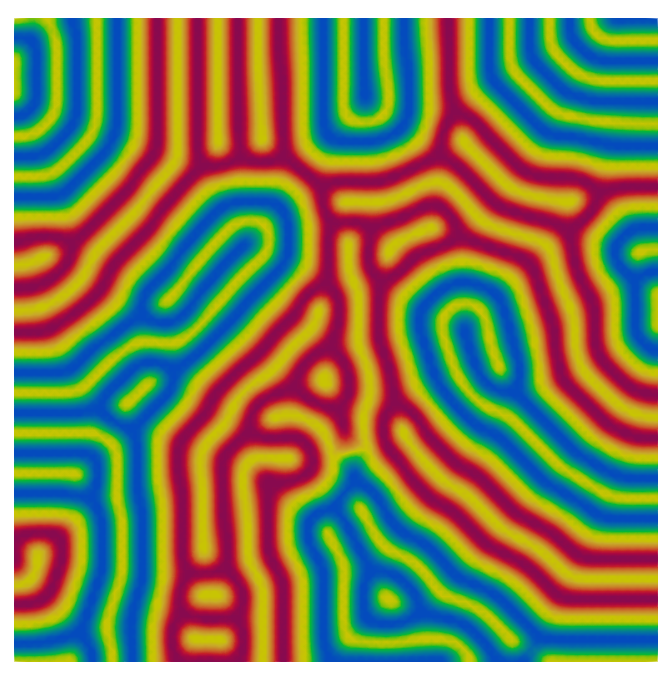}
} \\
\mbox{
\includegraphics[angle=-0,width=0.2\textwidth]{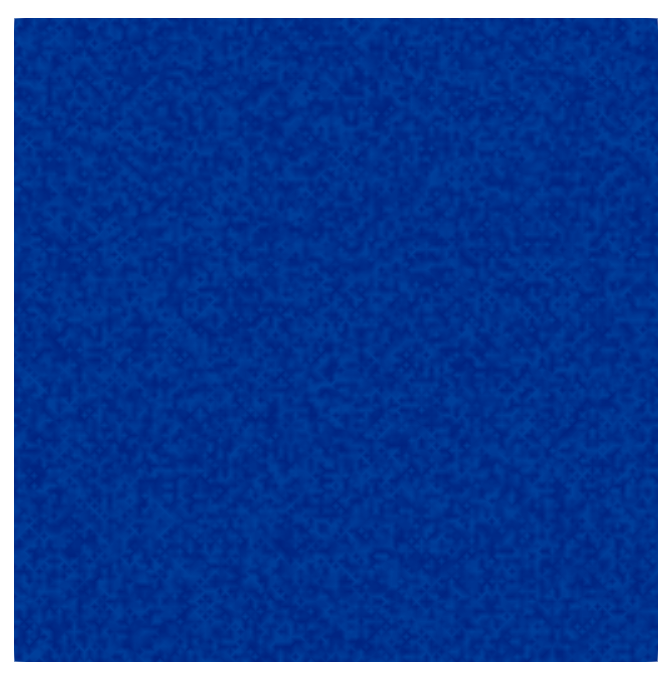}
\includegraphics[angle=-0,width=0.2\textwidth]{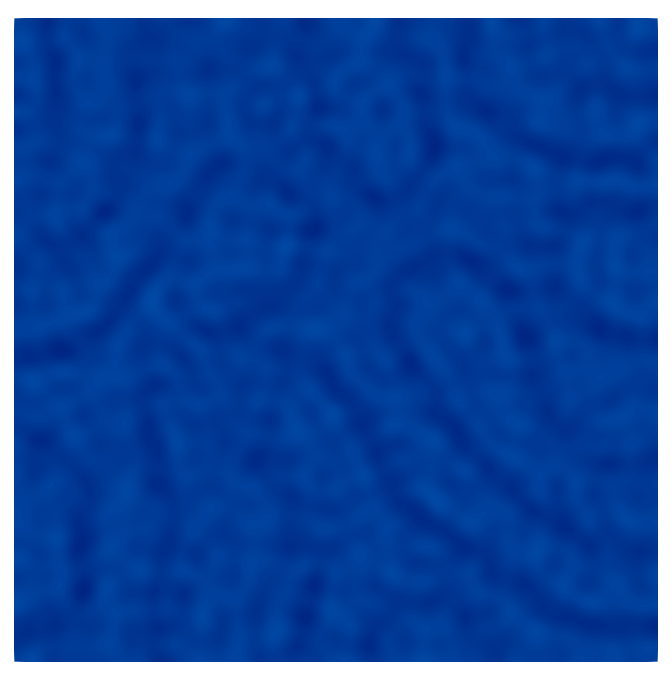}
\includegraphics[angle=-0,width=0.2\textwidth]{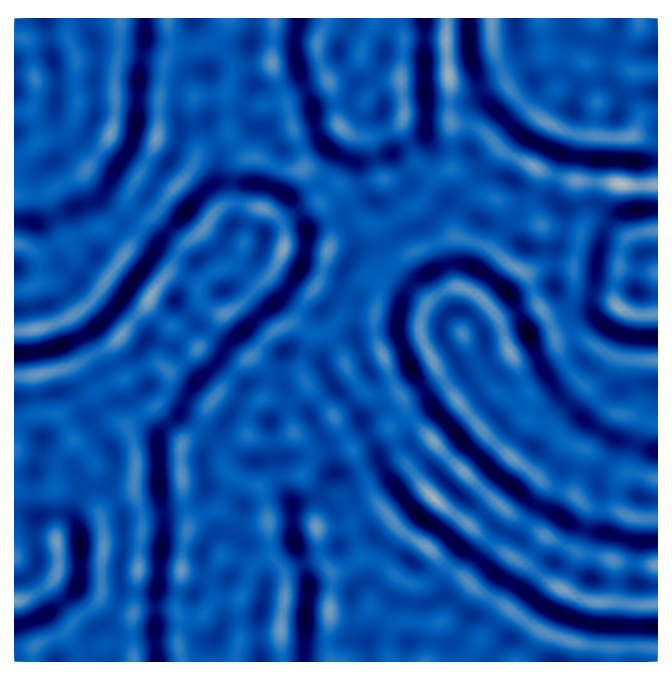}
\includegraphics[angle=-0,width=0.2\textwidth]{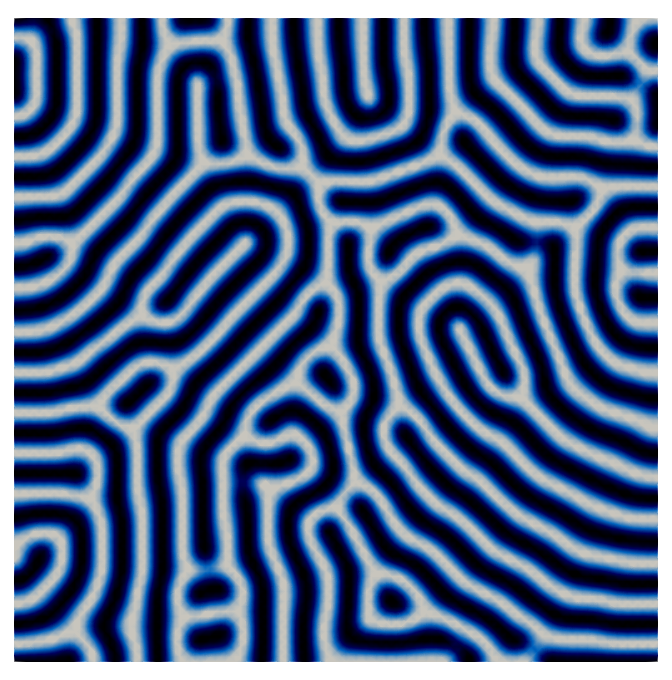}
\includegraphics[angle=-0,width=0.2\textwidth]{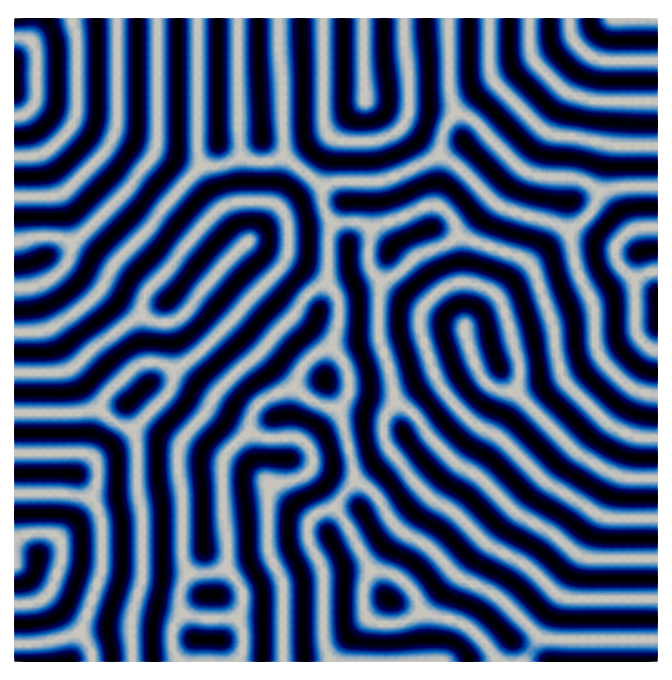}
} %\\
\caption{
Computation for CHSH with $g=0$, $\gamma=500$.
We display $\phi_h^n$ at times $t=0$, $0.001$, $0.005$, $0.05$, $0.5$.
Below we show $\psi_h^n$ at the same times.
%Below we show plots of the energy $E^k$, as well as of its four contributions
%from \eqref{eq:Ekterms},
%and the four contributions of $E_1^k$ from \eqref{eq:E1terms}.
}
\label{fig:Fig31_delta0}
% ~/hpc_cluster/data/alberta/glns3/2d.CF.Fig31_delta0
\end{figure}%

Our next simulations investigate the effect of the parameter $g$ on the CHSH
evolutions. An experiment with $g=2000$ can be seen in Figure \ref{fig:Fig31d0_g2000}.
Here the phase $\psi=1$ is preferred by the evolution, which in turn has an
effect on the pattern that develops for $\phi$.
Numerical simulations with $g\in\{-300,-1000,-2000\}$ can be seen in
Figures~\ref{fig:Fig31d0_g-300}, \ref{fig:Fig31d0_g-1000} and
\ref{fig:Fig31d0_g-2000}, respectively. In Figure~\ref{fig:Fig31d0_g-2000} we observe the formation of islands in $\phi$ and $\psi$, cf.~Figure~7d in \cite{Morales_JTB}.
\begin{figure}
% ../plotglns3; mv energy.png Fig31d0_g2000_e.png; mv energies.png Fig31d0_g2000_es.png; mv E1terms.png Fig31d0_g2000_e1.png
% paraview --data=phiw_h..vtk
%          save animation
% cp Fig31d0_g2000_e*.png Fig31d0_g2000_0000.png Fig31d0_g2000_0001.png Fig31d0_g2000_0005.png Fig31d0_g2000_0050.png Fig31d0_g2000_0500.png Fig31d0_g2000_psi_0000.png Fig31d0_g2000_psi_0001.png Fig31d0_g2000_psi_0005.png Fig31d0_g2000_psi_0050.png Fig31d0_g2000_psi_0500.png ~/tex/glns/glns3/figures && scpp Fig31d0_g2000_e*.png Fig31d0_g2000_0000.png Fig31d0_g2000_0001.png Fig31d0_g2000_0005.png Fig31d0_g2000_0050.png Fig31d0_g2000_0500.png Fig31d0_g2000_psi_0000.png Fig31d0_g2000_psi_0001.png Fig31d0_g2000_psi_0005.png Fig31d0_g2000_psi_0050.png Fig31d0_g2000_psi_0500.png e23:tex/glns/glns3/figures
\center
\mbox{
\includegraphics[angle=-0,width=0.2\textwidth]{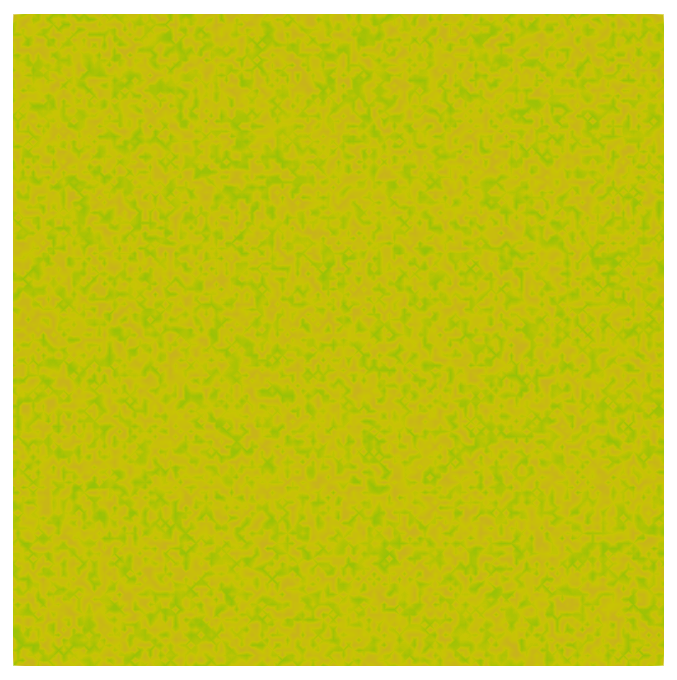}
\includegraphics[angle=-0,width=0.2\textwidth]{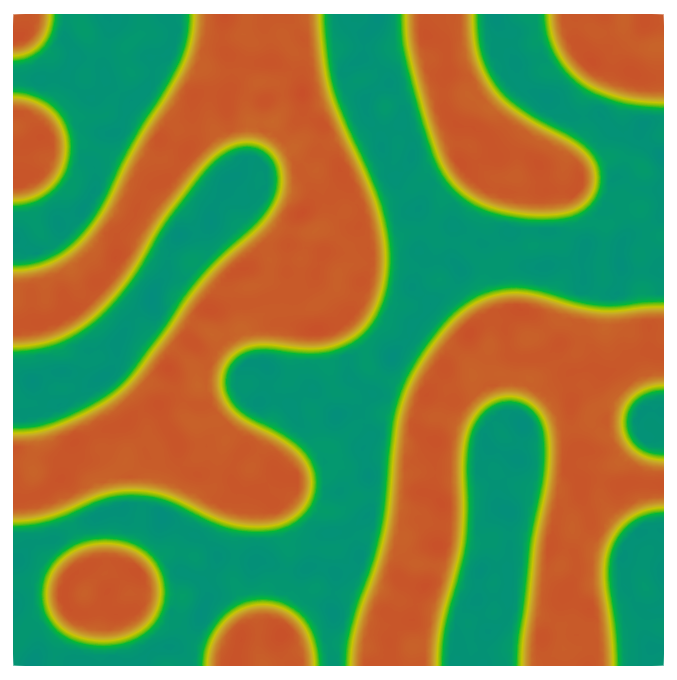}
\includegraphics[angle=-0,width=0.2\textwidth]{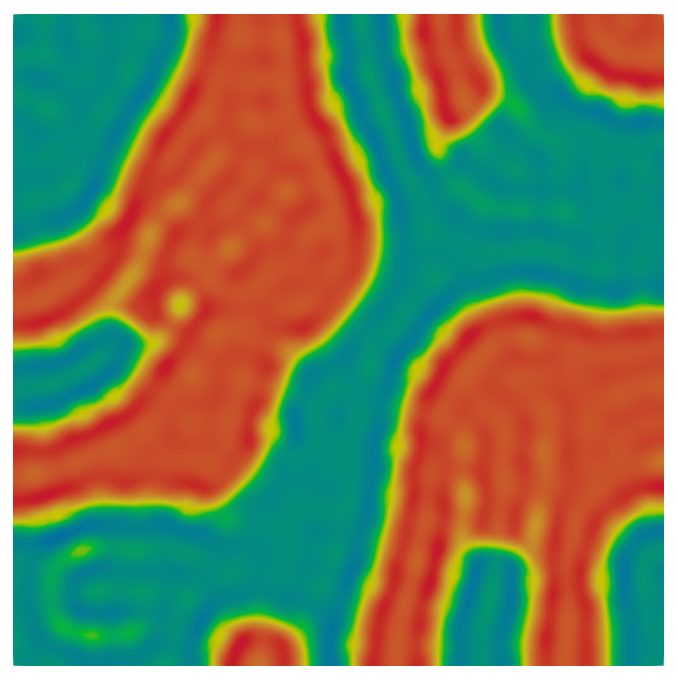}
\includegraphics[angle=-0,width=0.2\textwidth]{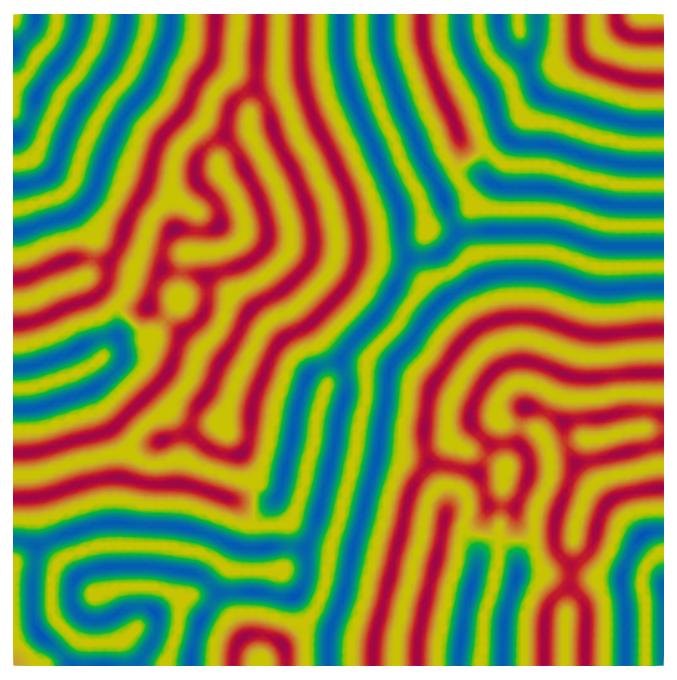}
\includegraphics[angle=-0,width=0.2\textwidth]{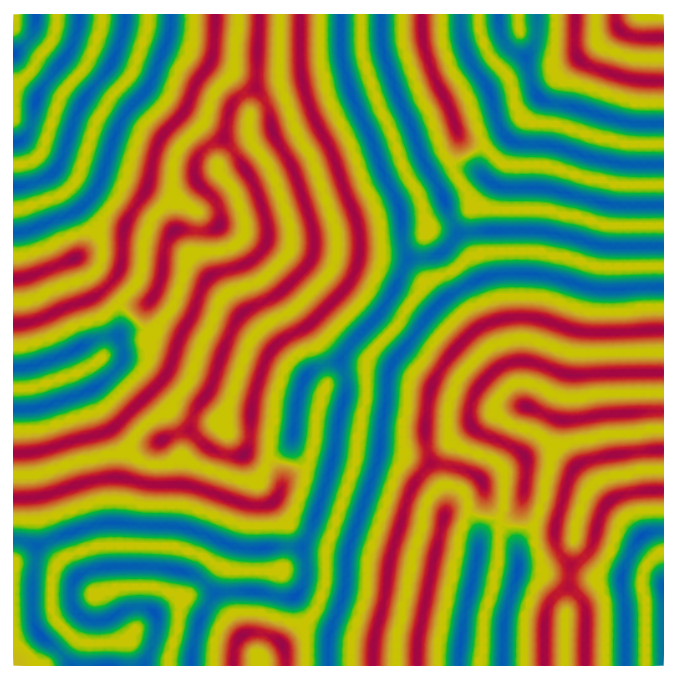}
} \\
\mbox{
\includegraphics[angle=-0,width=0.2\textwidth]{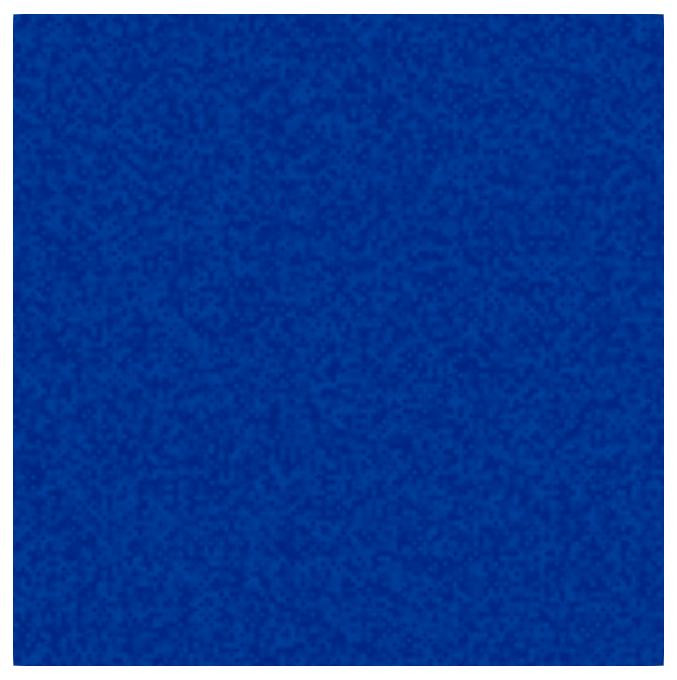}
\includegraphics[angle=-0,width=0.2\textwidth]{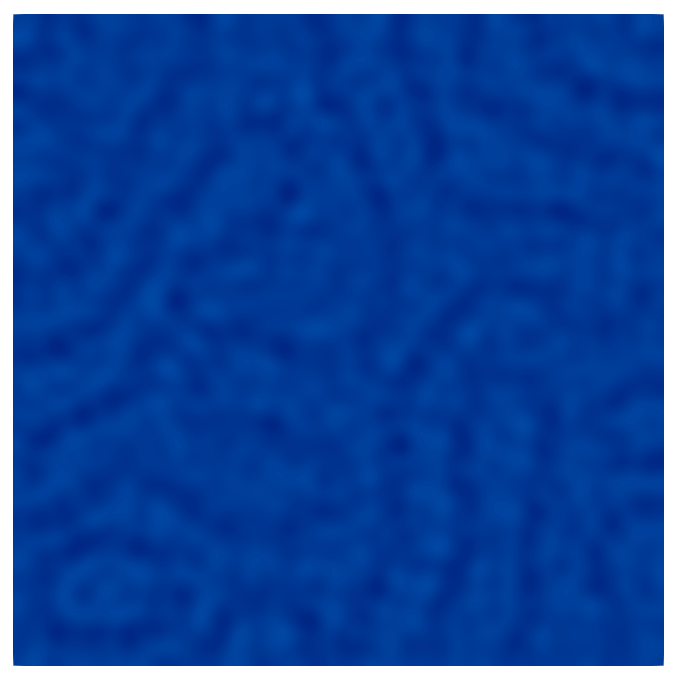}
\includegraphics[angle=-0,width=0.2\textwidth]{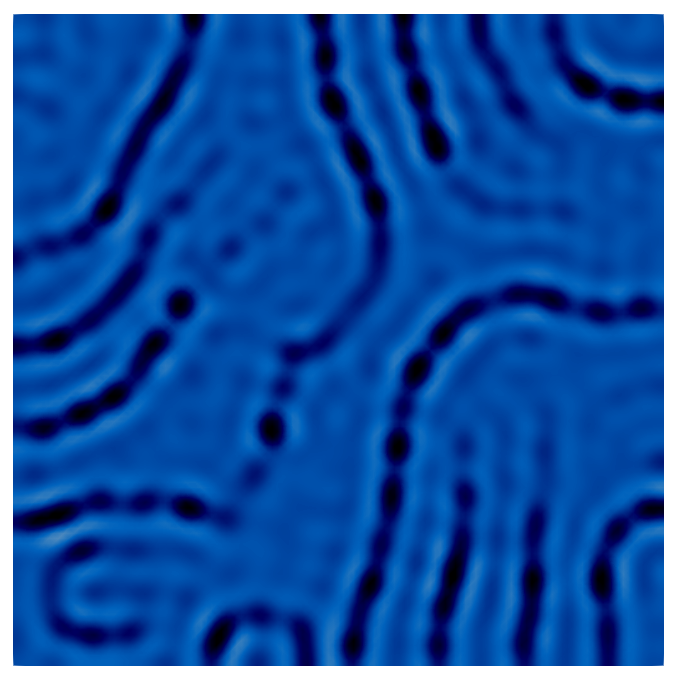}
\includegraphics[angle=-0,width=0.2\textwidth]{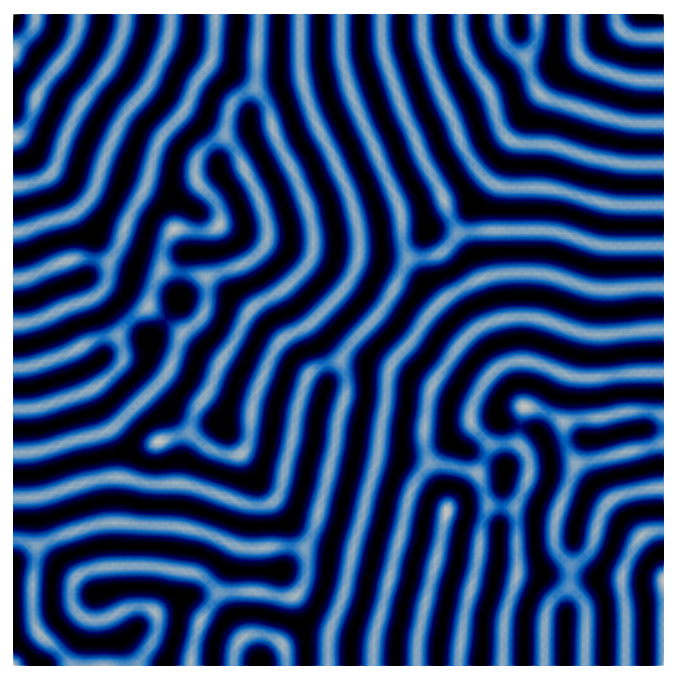}
\includegraphics[angle=-0,width=0.2\textwidth]{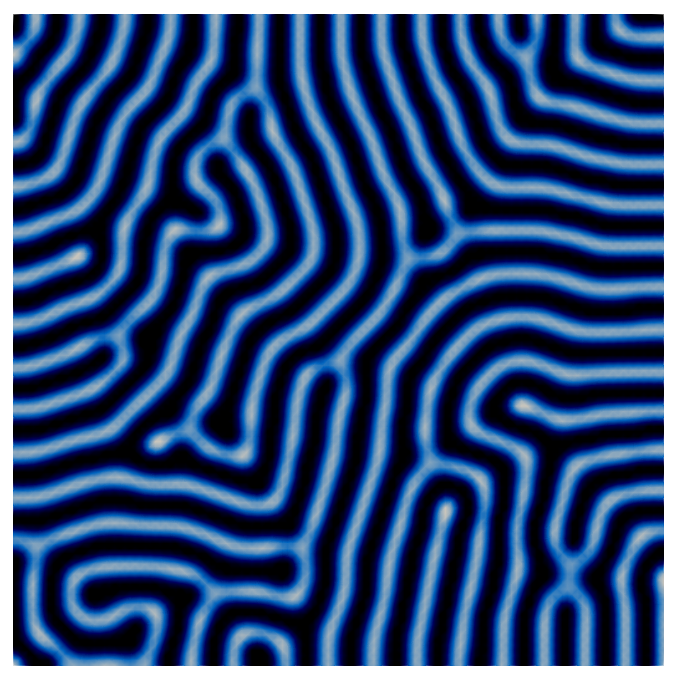}
} %\\
\caption{The same as Figure~\ref{fig:Fig31_delta0},
but with $g=2000$.
We display $\phi_h^n$ at times $t=0$, $0.001$, $0.005$, $0.05$, $0.5$.
Below we show $\psi_h^n$ at the same times.
%Below we show plots of the energy $E^k$, as well as of its four contributions
%from \eqref{eq:Ekterms},
%and the four contributions of $E_1^k$ from \eqref{eq:E1terms}.
}
\label{fig:Fig31d0_g2000}
% ~/hpc_cluster/data/alberta/glns3/2d.CF.Fig31d0_g2000
\end{figure}%
\begin{figure}
% ../plotglns3; mv energy.png Fig31d0_g-300_e.png; mv energies.png Fig31d0_g-300_es.png; mv E1terms.png Fig31d0_g-300_e1.png
% paraview --data=phiw_h..vtk
% color map: Cool to Warm and White, Blue, Black (manually inverted)
%          save animation
% cp Fig31d0_g-300_e*.png Fig31d0_g-300_0000.png Fig31d0_g-300_0001.png Fig31d0_g-300_0005.png Fig31d0_g-300_0050.png Fig31d0_g-300_0500.png Fig31d0_g-300_psi_0000.png Fig31d0_g-300_psi_0001.png Fig31d0_g-300_psi_0005.png Fig31d0_g-300_psi_0050.png Fig31d0_g-300_psi_0500.png ~/tex/glns/glns3/figures && scpp Fig31d0_g-300_e*.png Fig31d0_g-300_0000.png Fig31d0_g-300_0001.png Fig31d0_g-300_0005.png Fig31d0_g-300_0050.png Fig31d0_g-300_0500.png Fig31d0_g-300_psi_0000.png Fig31d0_g-300_psi_0001.png Fig31d0_g-300_psi_0005.png Fig31d0_g-300_psi_0050.png Fig31d0_g-300_psi_0500.png e23:tex/glns/glns3/figures
\center
\mbox{
\includegraphics[angle=-0,width=0.2\textwidth]{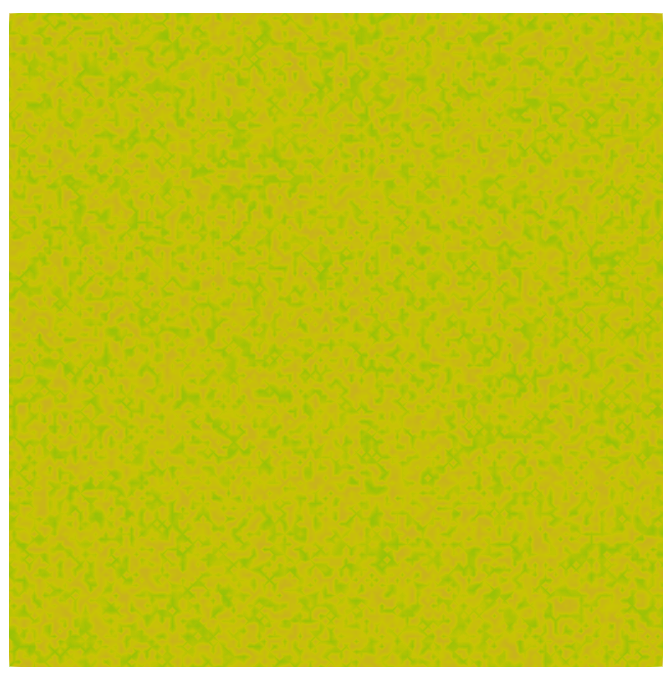}
\includegraphics[angle=-0,width=0.2\textwidth]{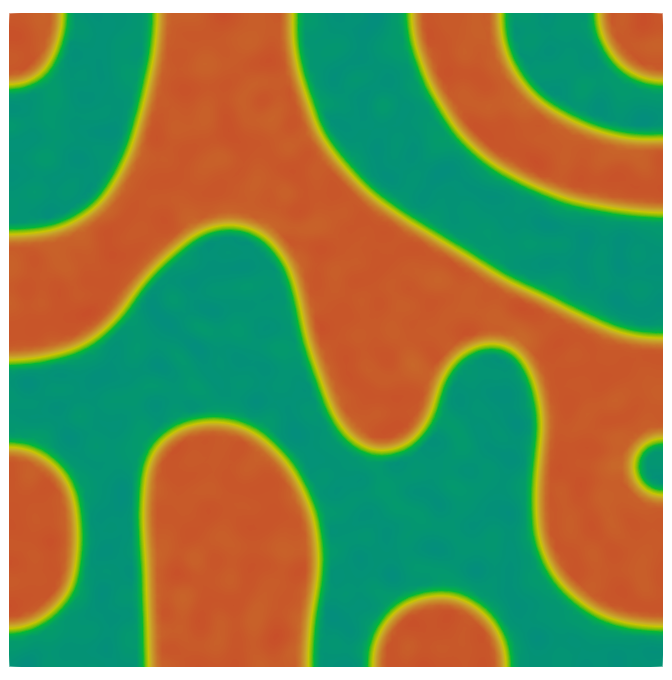}
\includegraphics[angle=-0,width=0.2\textwidth]{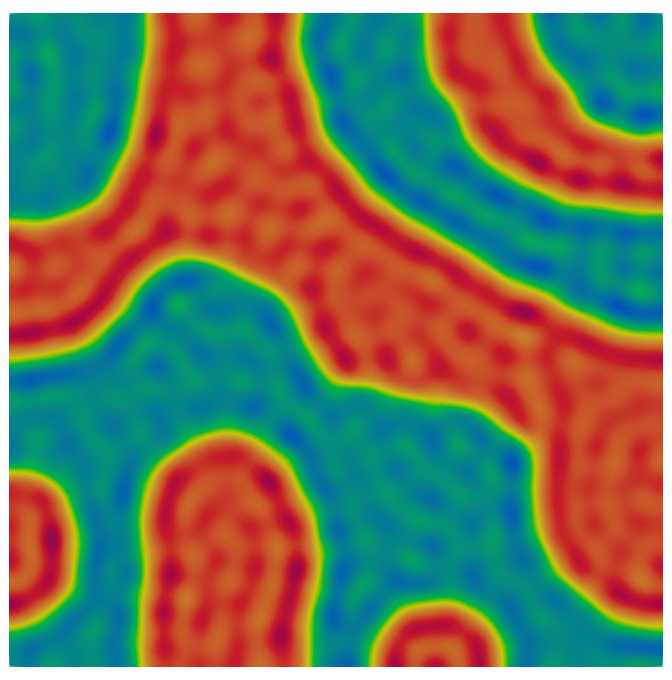}
\includegraphics[angle=-0,width=0.2\textwidth]{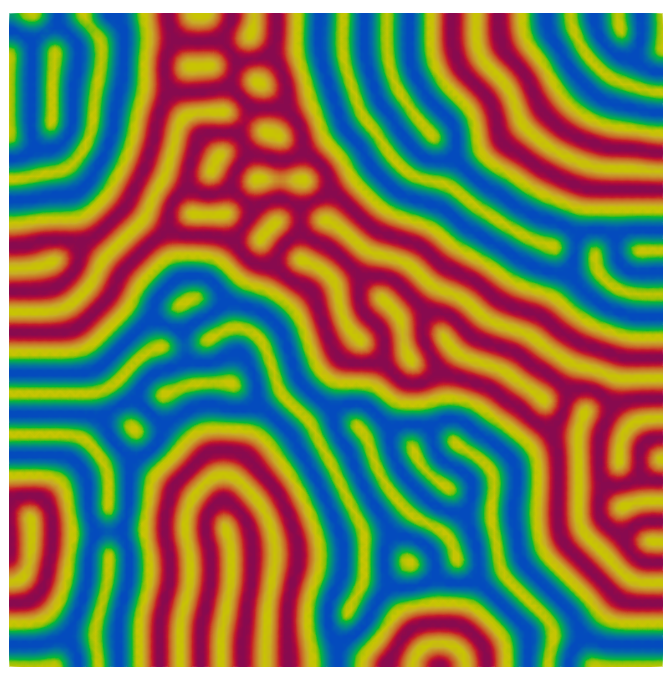}
\includegraphics[angle=-0,width=0.2\textwidth]{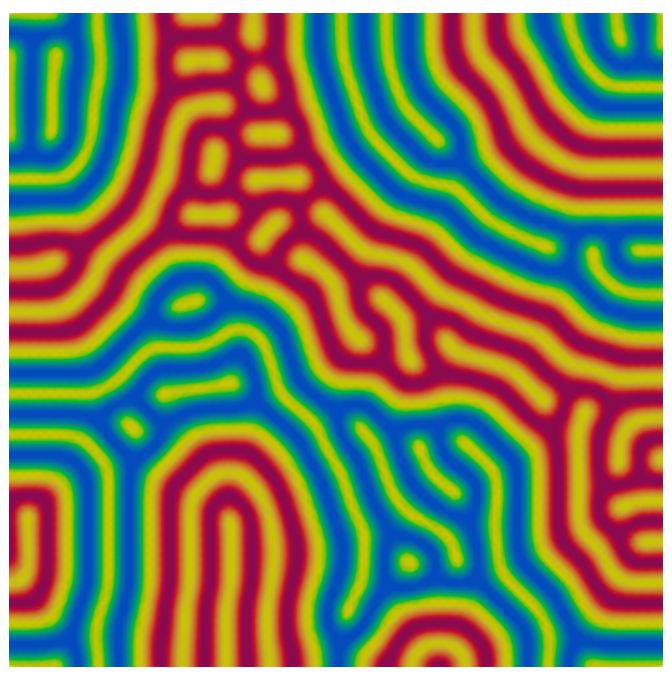}
} \\
\mbox{
\includegraphics[angle=-0,width=0.2\textwidth]{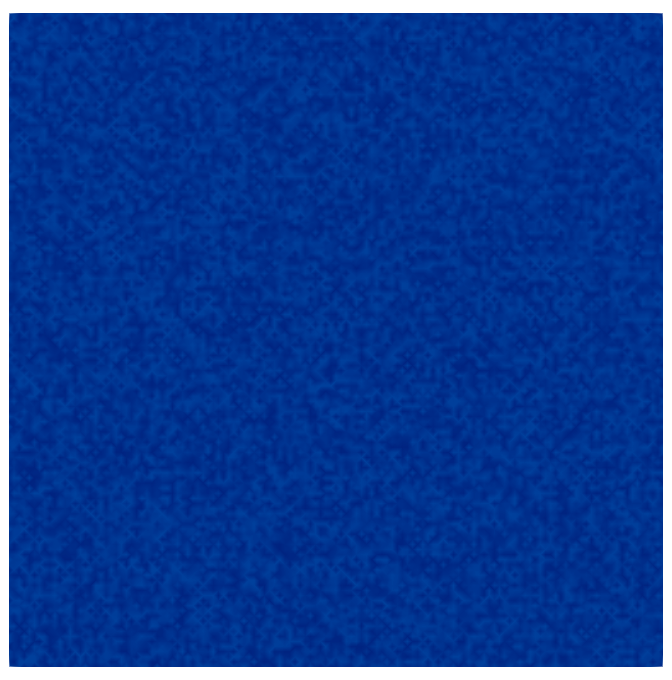}
\includegraphics[angle=-0,width=0.2\textwidth]{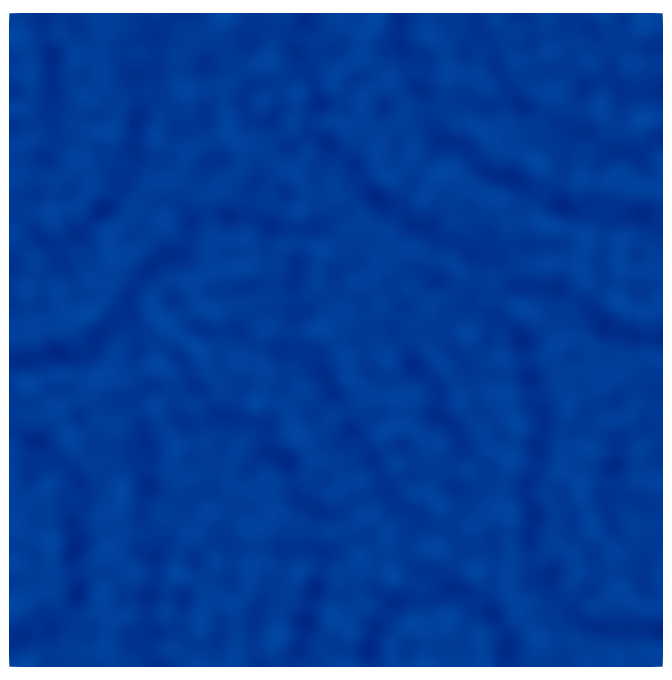}
\includegraphics[angle=-0,width=0.2\textwidth]{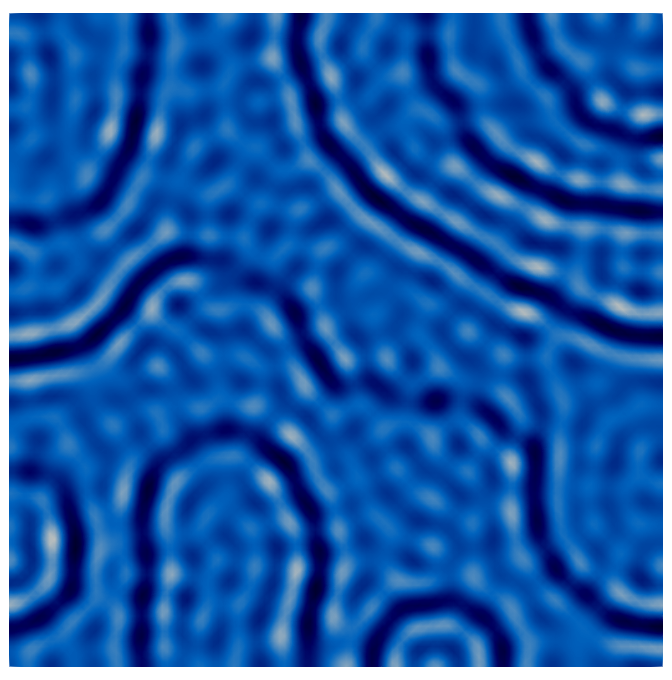}
\includegraphics[angle=-0,width=0.2\textwidth]{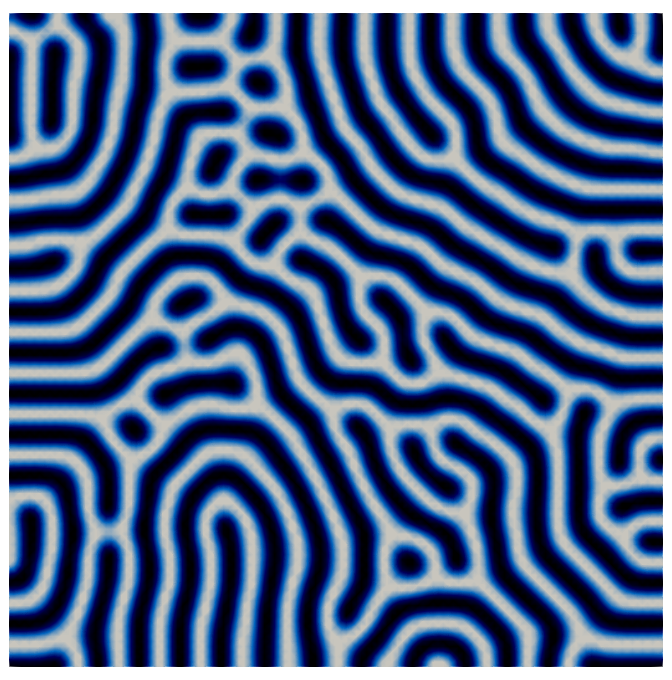}
\includegraphics[angle=-0,width=0.2\textwidth]{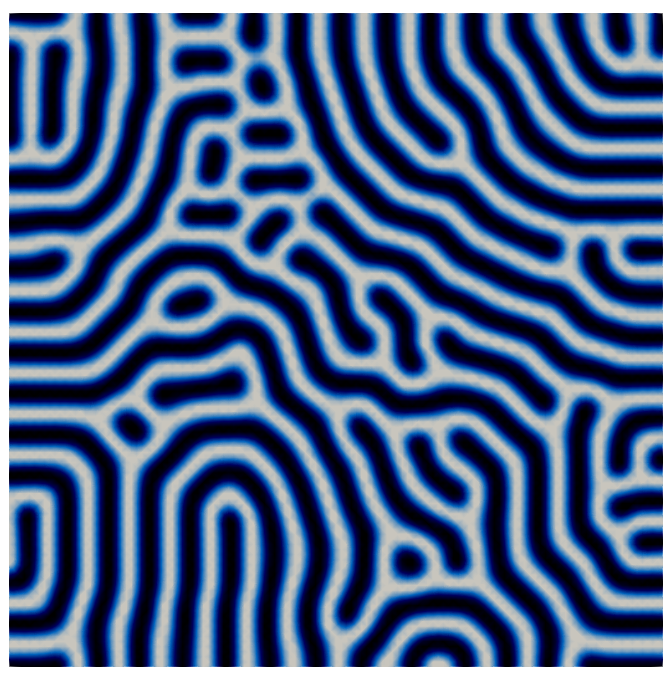}
} %\\
\caption{The same as Figure~\ref{fig:Fig31_delta0},
but with $g=-300$.
We display $\phi_h^n$ at times $t=0$, $0.001$, $0.005$, $0.05$, $0.5$.
Below we show $\psi_h^n$ at the same times.
%Below we show plots of the energy $E^k$, as well as of its four contributions
%from \eqref{eq:Ekterms},
%and the four contributions of $E_1^k$ from \eqref{eq:E1terms}.
}
\label{fig:Fig31d0_g-300}
% ~/hpc_cluster/data/alberta/glns3/2d.CF.Fig31d0_g-300
\end{figure}%
\begin{figure}
% ../plotglns3; mv energy.png Fig31d0_g-1000_e.png; mv energies.png Fig31d0_g-1000_es.png; mv E1terms.png Fig31d0_g-1000_e1.png
% paraview --data=phiw_h..vtk
% color map: rainbow uniform and White, Blue, Black (manually inverted)
%          save animation
% cp Fig31d0_g-1000_e*.png Fig31d0_g-1000_0000.png Fig31d0_g-1000_0001.png Fig31d0_g-1000_0005.png Fig31d0_g-1000_0050.png Fig31d0_g-1000_0500.png Fig31d0_g-1000_psi_0000.png Fig31d0_g-1000_psi_0001.png Fig31d0_g-1000_psi_0005.png Fig31d0_g-1000_psi_0050.png Fig31d0_g-1000_psi_0500.png ~/tex/glns/glns3/figures && scpp Fig31d0_g-1000_e*.png Fig31d0_g-1000_0000.png Fig31d0_g-1000_0001.png Fig31d0_g-1000_0005.png Fig31d0_g-1000_0050.png Fig31d0_g-1000_0500.png Fig31d0_g-1000_psi_0000.png Fig31d0_g-1000_psi_0001.png Fig31d0_g-1000_psi_0005.png Fig31d0_g-1000_psi_0050.png Fig31d0_g-1000_psi_0500.png e23:tex/glns/glns3/figures
\center
\mbox{
\includegraphics[angle=-0,width=0.2\textwidth]{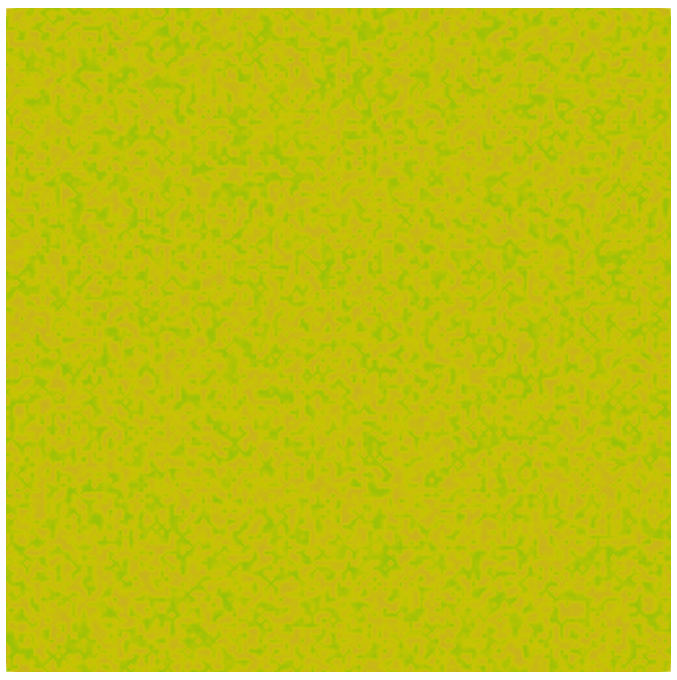}
\includegraphics[angle=-0,width=0.2\textwidth]{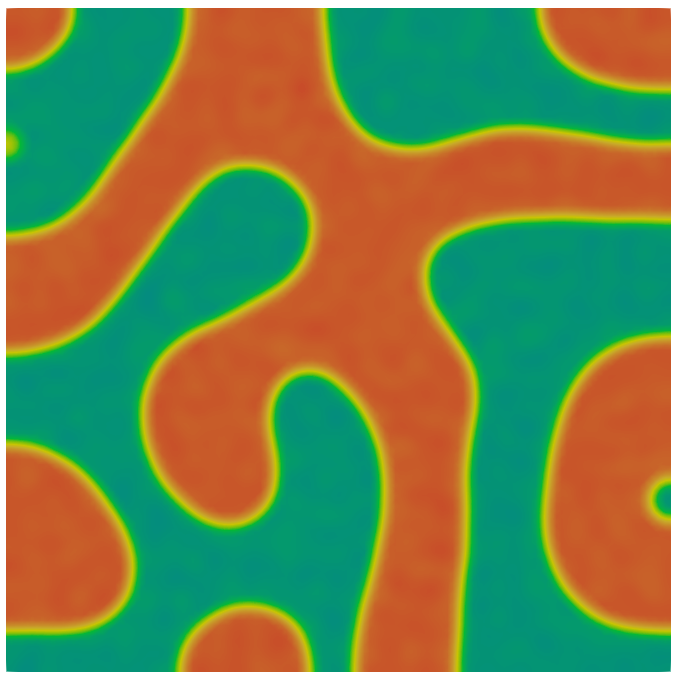}
\includegraphics[angle=-0,width=0.2\textwidth]{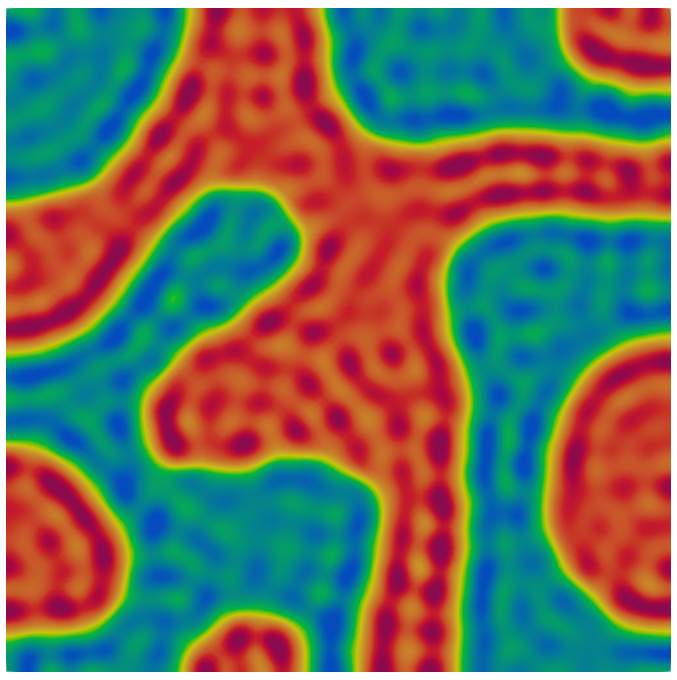}
\includegraphics[angle=-0,width=0.2\textwidth]{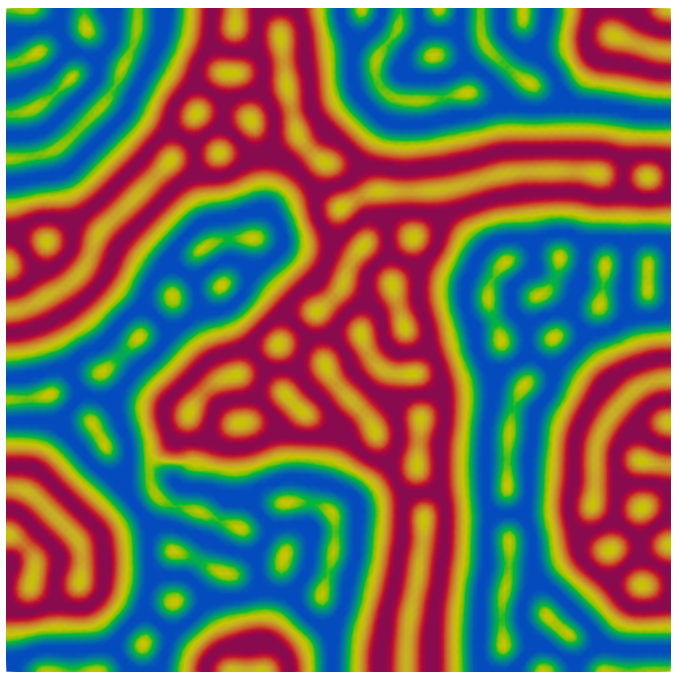}
\includegraphics[angle=-0,width=0.2\textwidth]{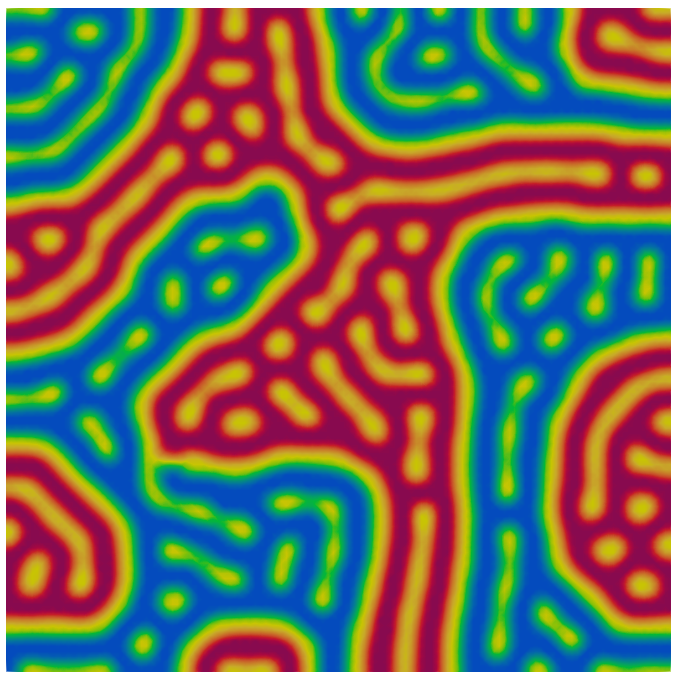}
} \\
\mbox{
\includegraphics[angle=-0,width=0.2\textwidth]{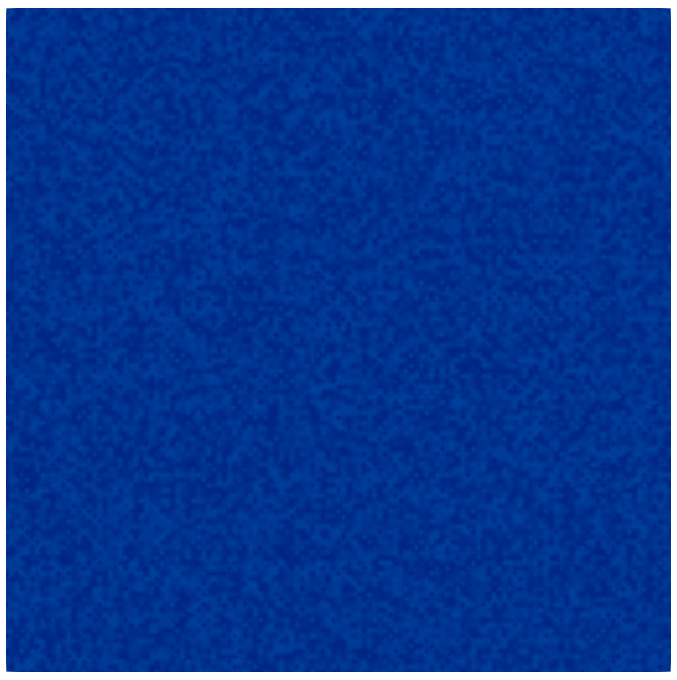}
\includegraphics[angle=-0,width=0.2\textwidth]{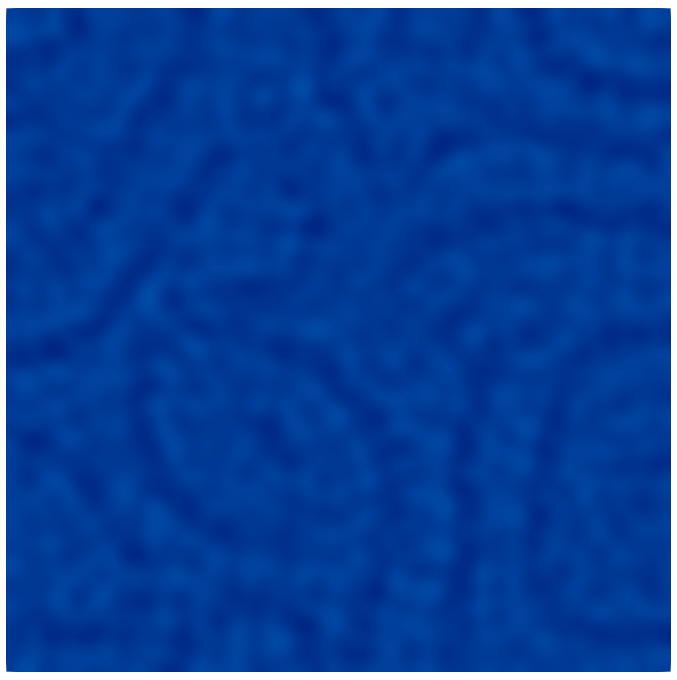}
\includegraphics[angle=-0,width=0.2\textwidth]{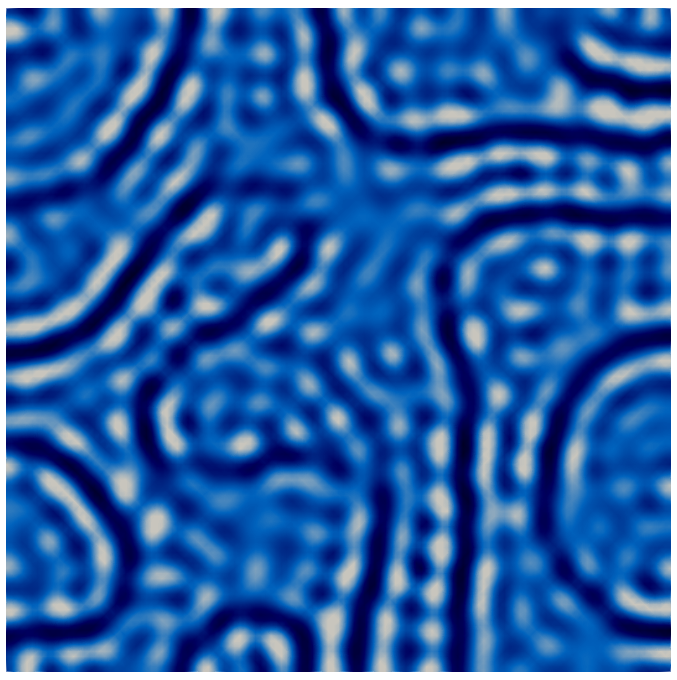}
\includegraphics[angle=-0,width=0.2\textwidth]{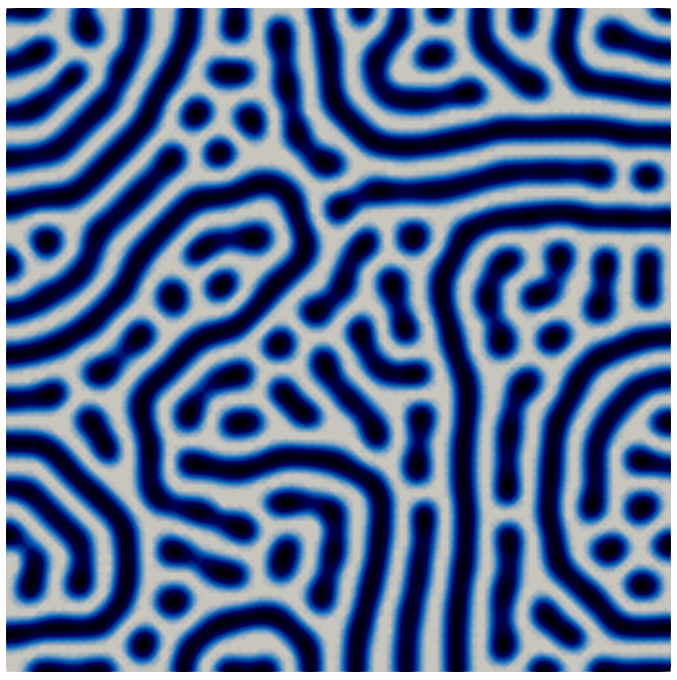}
\includegraphics[angle=-0,width=0.2\textwidth]{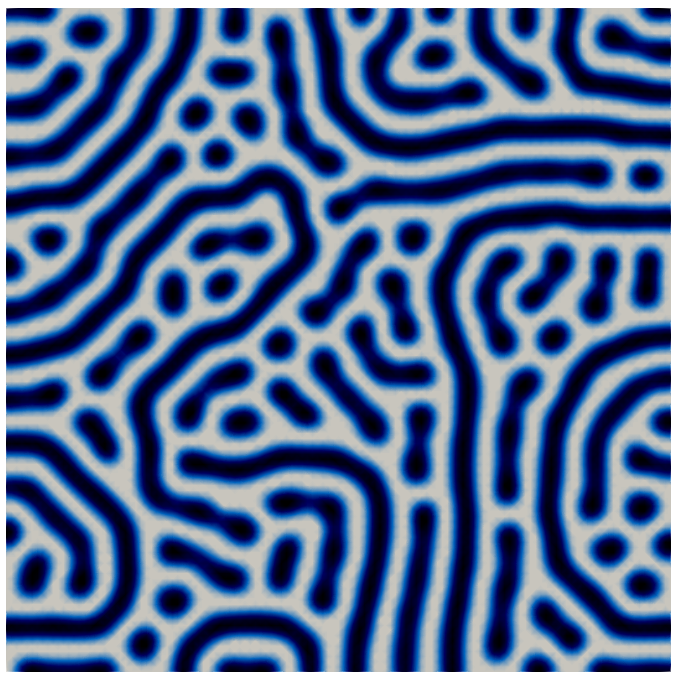}
} %\\
\caption{The same as Figure~\ref{fig:Fig31_delta0},
but with $g=-1000$.
We display $\phi_h^n$ at times $t=0$, $0.001$, $0.005$, $0.05$, $0.5$.
Below we show $\psi_h^n$ at the same times.
%Below we show plots of the energy $E^k$, as well as of its four contributions
%from \eqref{eq:Ekterms},
%and the four contributions of $E_1^k$ from \eqref{eq:E1terms}.
}
\label{fig:Fig31d0_g-1000}
% ~/hpc_cluster/data/alberta/glns3/2d.CF.Fig31d0_g-1000
\end{figure}%
\begin{figure}
% ../plotglns3; mv energy.png Fig31d0_g-2000_e.png; mv energies.png Fig31d0_g-2000_es.png; mv E1terms.png Fig31d0_g-2000_e1.png
% paraview --data=phiw_h..vtk
%          save animation
% cp Fig31d0_g-2000_e*.png Fig31d0_g-2000_0000.png Fig31d0_g-2000_0001.png Fig31d0_g-2000_0005.png Fig31d0_g-2000_0050.png Fig31d0_g-2000_0500.png Fig31d0_g-2000_psi_0000.png Fig31d0_g-2000_psi_0001.png Fig31d0_g-2000_psi_0005.png Fig31d0_g-2000_psi_0050.png Fig31d0_g-2000_psi_0500.png ~/tex/glns/glns3/figures && scpp Fig31d0_g-2000_e*.png Fig31d0_g-2000_0000.png Fig31d0_g-2000_0001.png Fig31d0_g-2000_0005.png Fig31d0_g-2000_0050.png Fig31d0_g-2000_0500.png Fig31d0_g-2000_psi_0000.png Fig31d0_g-2000_psi_0001.png Fig31d0_g-2000_psi_0005.png Fig31d0_g-2000_psi_0050.png Fig31d0_g-2000_psi_0500.png e23:tex/glns/glns3/figures
\center
\mbox{
\includegraphics[angle=-0,width=0.2\textwidth]{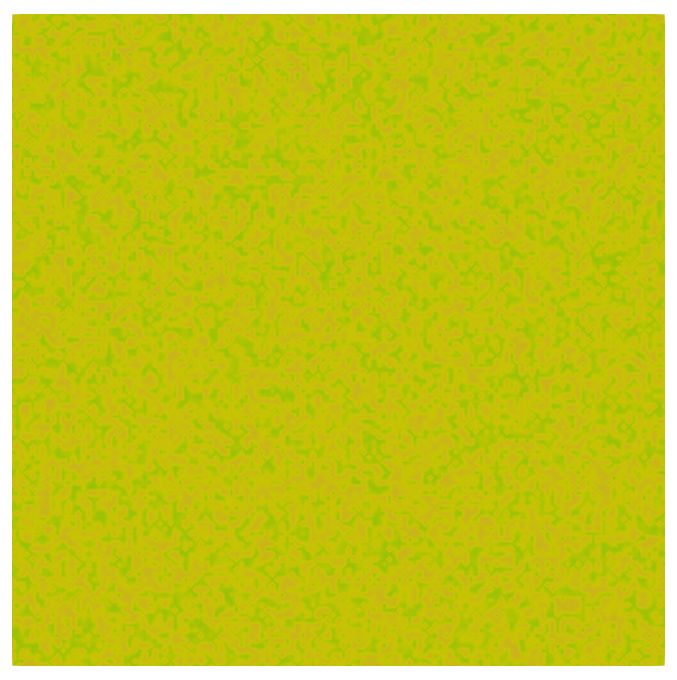}
\includegraphics[angle=-0,width=0.2\textwidth]{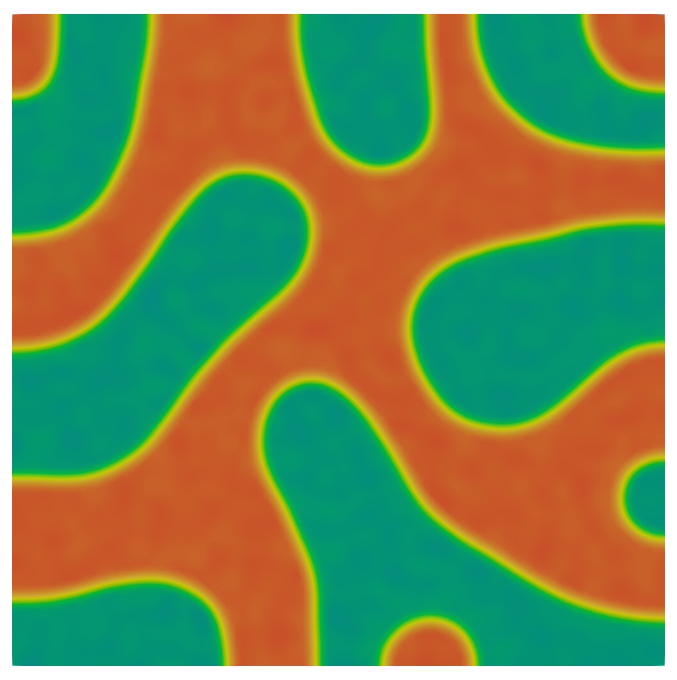}
\includegraphics[angle=-0,width=0.2\textwidth]{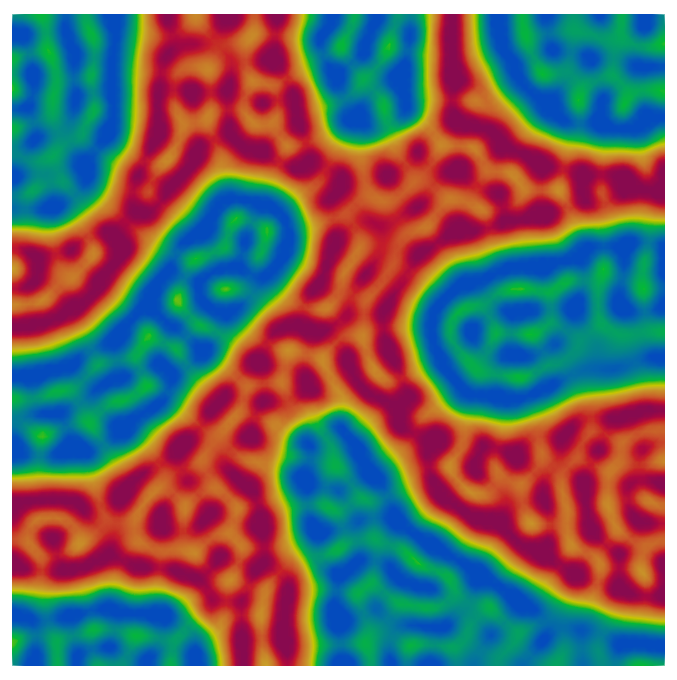}
\includegraphics[angle=-0,width=0.2\textwidth]{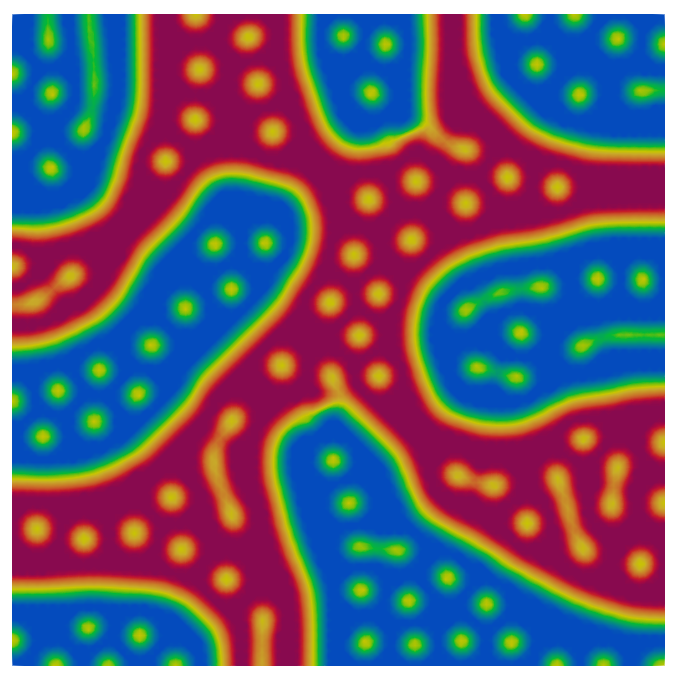}
\includegraphics[angle=-0,width=0.2\textwidth]{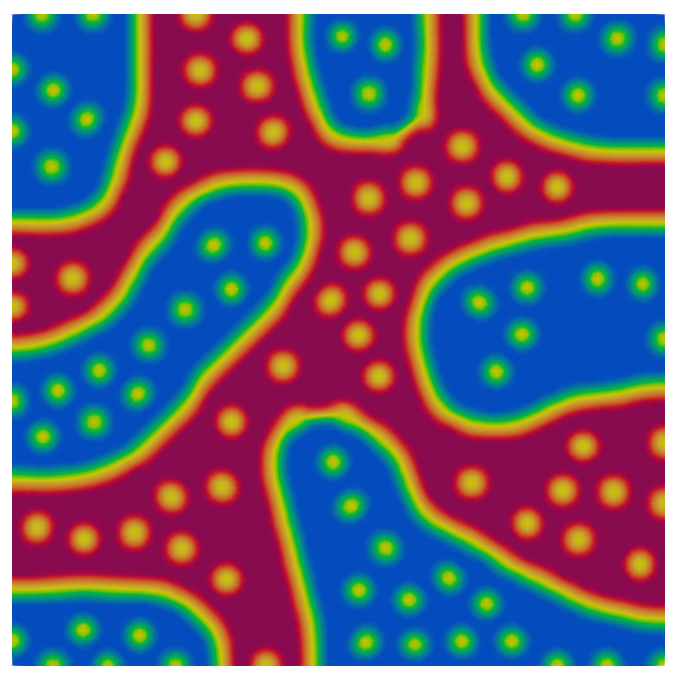}
} \\
\mbox{
\includegraphics[angle=-0,width=0.2\textwidth]{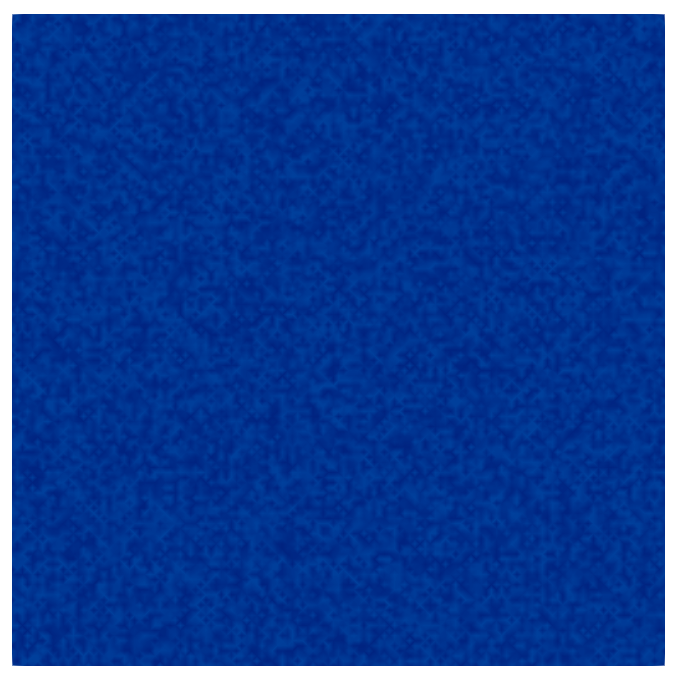}
\includegraphics[angle=-0,width=0.2\textwidth]{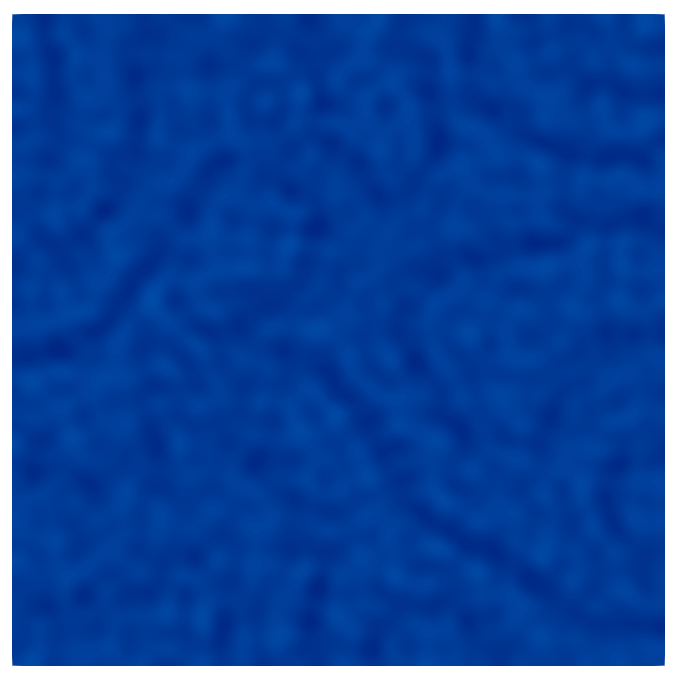}
\includegraphics[angle=-0,width=0.2\textwidth]{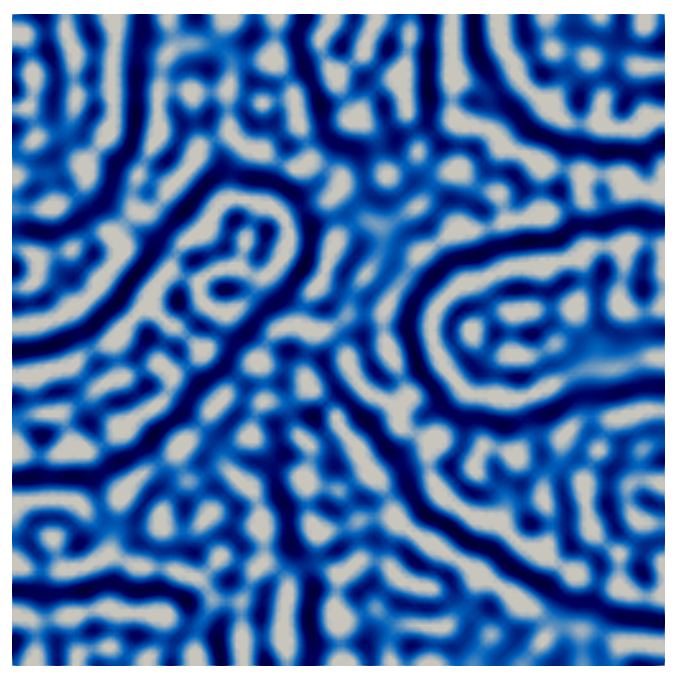}
\includegraphics[angle=-0,width=0.2\textwidth]{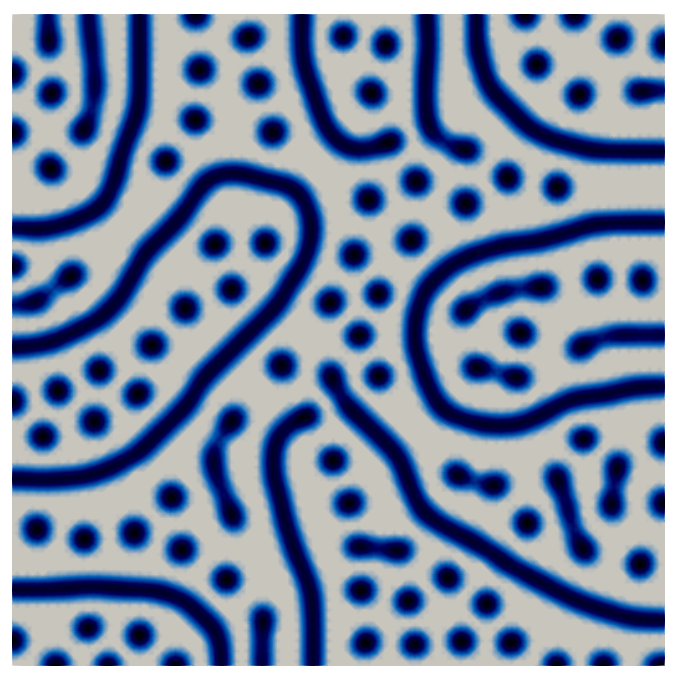}
\includegraphics[angle=-0,width=0.2\textwidth]{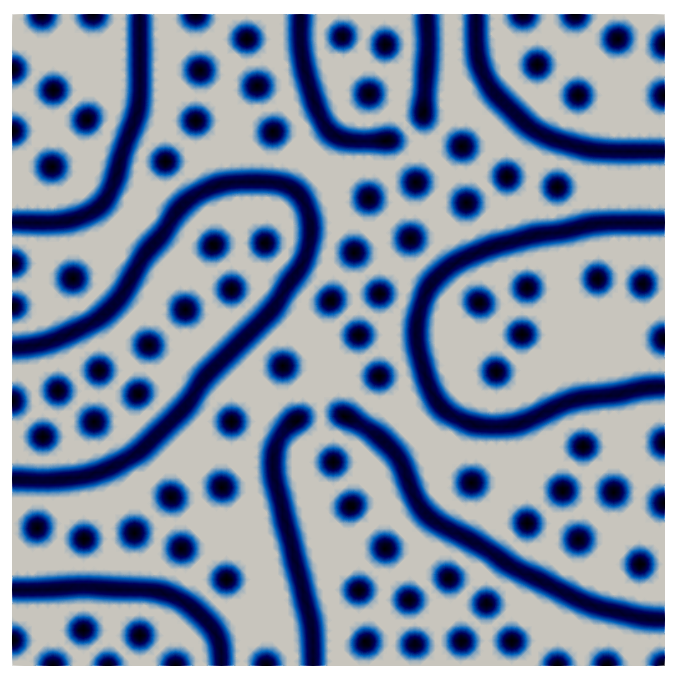}
} %\\
\caption{The same as Figure~\ref{fig:Fig31_delta0},
but with $g=-2000$.
We display $\phi_h^n$ at times $t=0$, $0.001$, $0.005$, $0.05,\,0.5$.
Below we show $\psi_h^n$ at the same times.
%Below we show plots of the energy $E^k$, as well as of its four contributions
%from \eqref{eq:Ekterms},
%and the four contributions of $E_1^k$ from \eqref{eq:E1terms}.
}
\label{fig:Fig31d0_g-2000}
% ~/hpc_cluster/data/alberta/glns3/2d.CF.Fig31d0_g-2000
\end{figure}%

Varying the value of $\delta$ leads to the evolution in
Figure~\ref{fig:Fig31d0_delta200} for $\delta=200$, and the evolution in
Figure~\ref{fig:Fig31d0_delta-100} for $\delta=-100$. It can be seen that in the two pure phases of $\phi$, the value of $\psi$ is very small when $\delta > 0$ while $\psi$ is close to $1$ when $\delta < 0$. This is the expected behavior attributed to the term $\frac{\delta}{2} \phi^2 (\psi - \tfrac{1}{2})$ in the total free energy.
% \footnote{Robert: Run with larger $\delta$ as well.}
% ~/hpc_cluster/data/alberta/glns3/2d.CF.Fig31d0_delta500
% ~/hpc_cluster/data/alberta/glns3/2d.CF.Fig31d0_delta1000
\begin{figure}
% ../plotglns3; mv energy.png Fig31d0_delta200_e.png; mv energies.png Fig31d0_delta200_es.png; mv E1terms.png Fig31d0_delta200_e1.png
% paraview --data=phiw_h..vtk
%          save animation
% cp Fig31d0_delta200_e*.png Fig31d0_delta200_0000.png Fig31d0_delta200_0001.png Fig31d0_delta200_0005.png Fig31d0_delta200_0050.png Fig31d0_delta200_0500.png Fig31d0_delta200_psi_0000.png Fig31d0_delta200_psi_0001.png Fig31d0_delta200_psi_0005.png Fig31d0_delta200_psi_0050.png Fig31d0_delta200_psi_0500.png ~/tex/glns/glns3/figures && scpp Fig31d0_delta200_e*.png Fig31d0_delta200_0000.png Fig31d0_delta200_0001.png Fig31d0_delta200_0005.png Fig31d0_delta200_0050.png Fig31d0_delta200_0500.png Fig31d0_delta200_psi_0000.png Fig31d0_delta200_psi_0001.png Fig31d0_delta200_psi_0005.png Fig31d0_delta200_psi_0050.png Fig31d0_delta200_psi_0500.png e23:tex/glns/glns3/figures
\center
\mbox{
\includegraphics[angle=-0,width=0.2\textwidth]{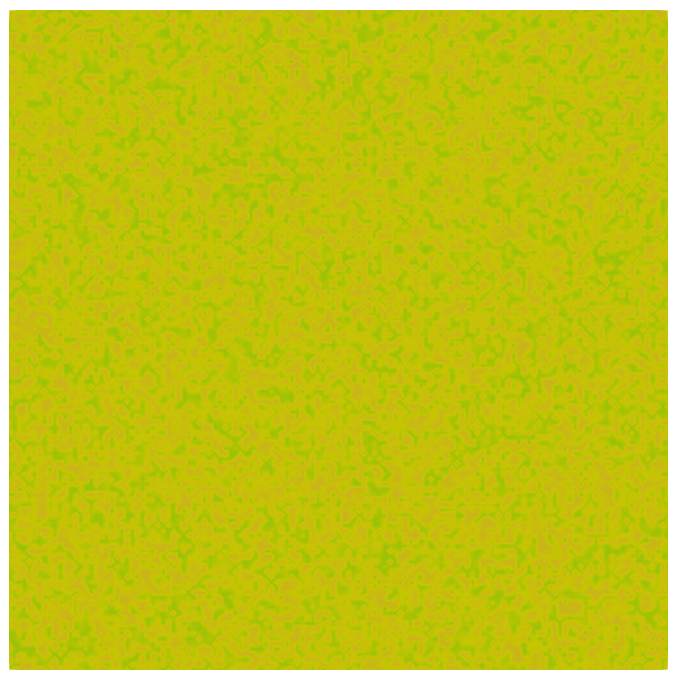}
\includegraphics[angle=-0,width=0.2\textwidth]{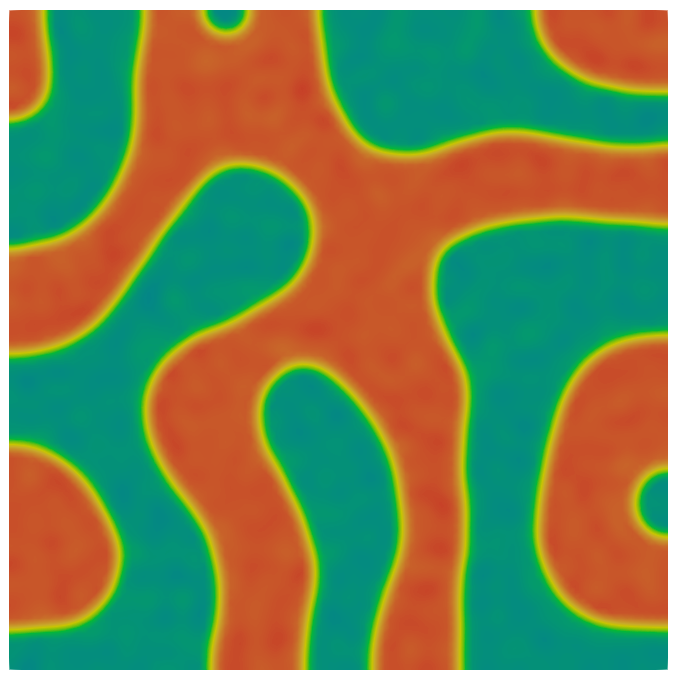}
\includegraphics[angle=-0,width=0.2\textwidth]{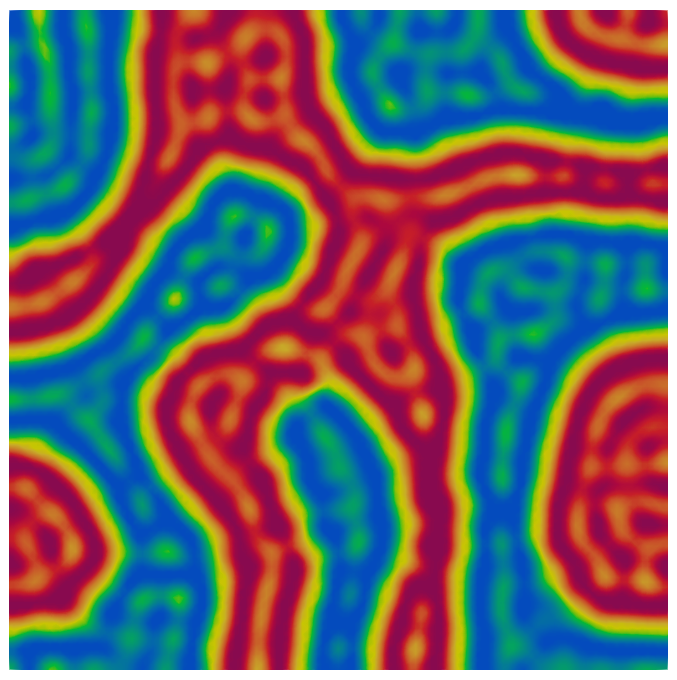}
\includegraphics[angle=-0,width=0.2\textwidth]{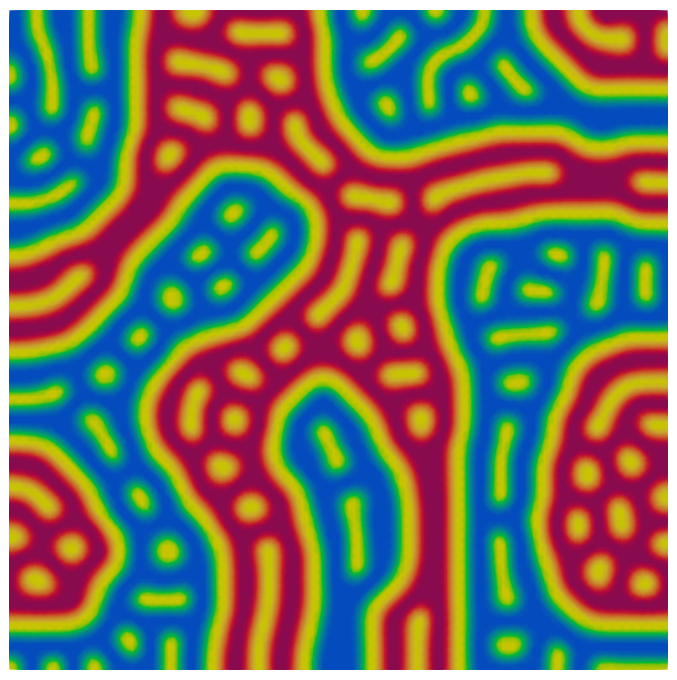}
\includegraphics[angle=-0,width=0.2\textwidth]{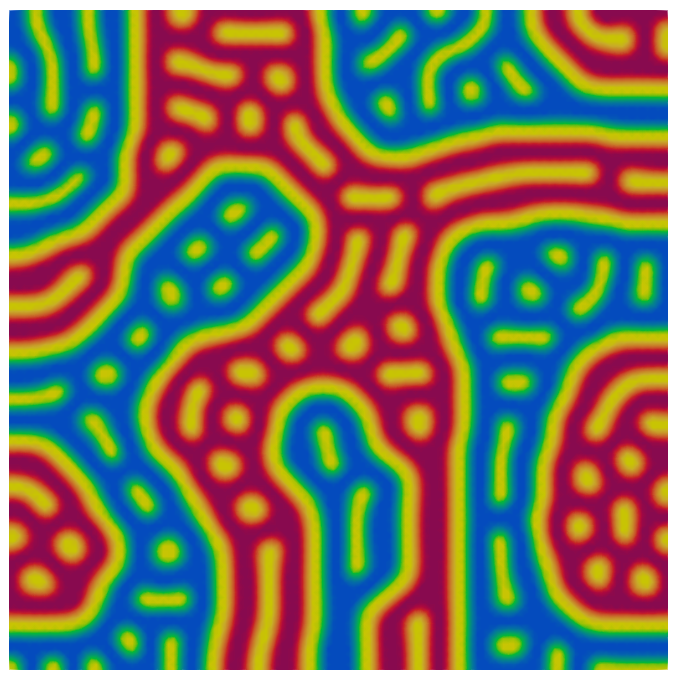}
} \\
\mbox{
\includegraphics[angle=-0,width=0.2\textwidth]{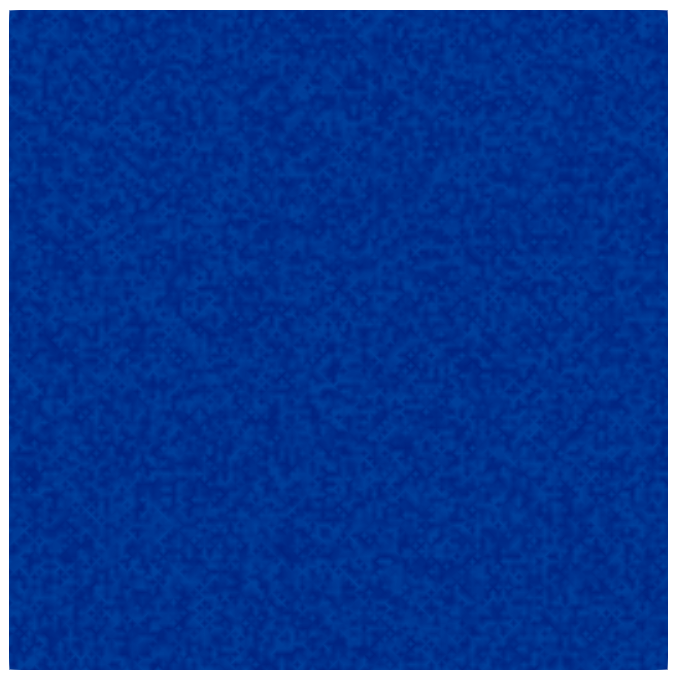}
\includegraphics[angle=-0,width=0.2\textwidth]{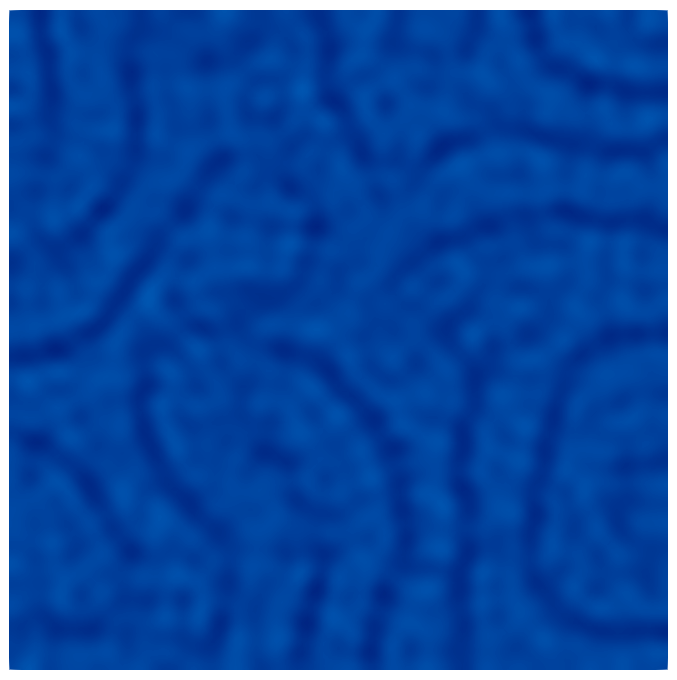}
\includegraphics[angle=-0,width=0.2\textwidth]{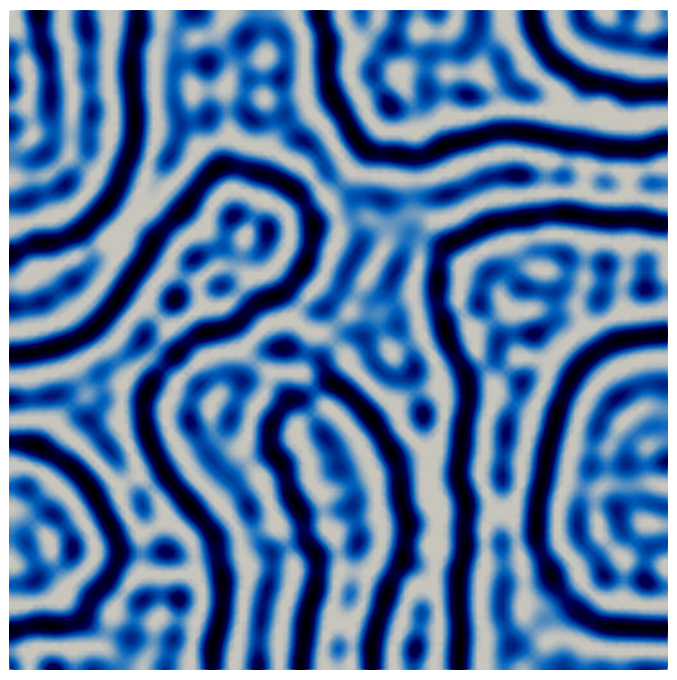}
\includegraphics[angle=-0,width=0.2\textwidth]{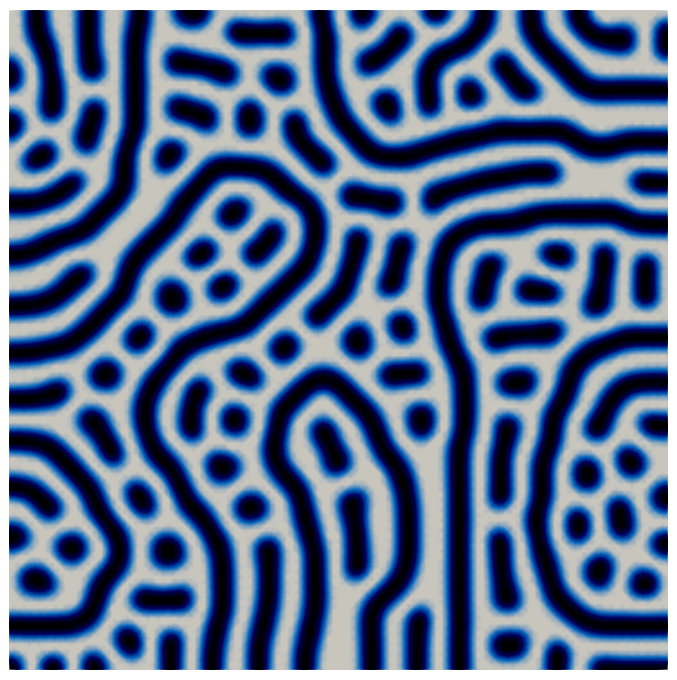}
\includegraphics[angle=-0,width=0.2\textwidth]{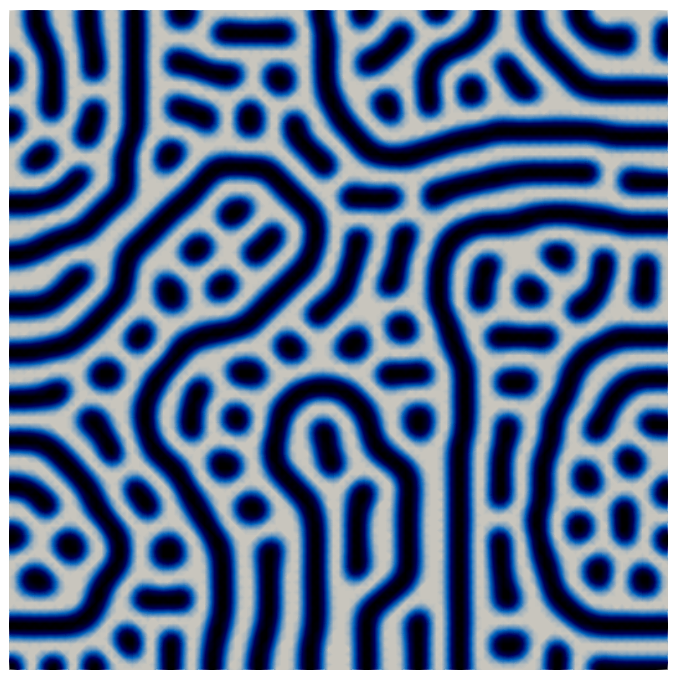}
} %\\
\caption{The same as Figure~\ref{fig:Fig31_delta0},
but with $\delta=200$.
We display $\phi_h^n$ at times $t=0$, $0.001$, $0.005$, $0.05$, $0.5$.
Below we show $\psi_h^n$ at the same times.
%Below we show plots of the energy $E^k$, as well as of its four contributions
%from \eqref{eq:Ekterms},
%and the four contributions of $E_1^k$ from \eqref{eq:E1terms}.
}
\label{fig:Fig31d0_delta200}
% ~/hpc_cluster/data/alberta/glns3/2d.CF.Fig31d0_delta200
\end{figure}%
\begin{figure}
% ../plotglns3; mv energy.png Fig31d0_delta-100_e.png; mv energies.png Fig31d0_delta-100_es.png; mv E1terms.png Fig31d0_delta-100_e1.png
% paraview --data=phiw_h..vtk
%          save animation
% cp Fig31d0_delta-100_e*.png Fig31d0_delta-100_0000.png Fig31d0_delta-100_0001.png Fig31d0_delta-100_0005.png Fig31d0_delta-100_0050.png Fig31d0_delta-100_0500.png Fig31d0_delta-100_psi_0000.png Fig31d0_delta-100_psi_0001.png Fig31d0_delta-100_psi_0005.png Fig31d0_delta-100_psi_0050.png Fig31d0_delta-100_psi_0500.png ~/tex/glns/glns3/figures && scpp Fig31d0_delta-100_e*.png Fig31d0_delta-100_0000.png Fig31d0_delta-100_0001.png Fig31d0_delta-100_0005.png Fig31d0_delta-100_0050.png Fig31d0_delta-100_0500.png Fig31d0_delta-100_psi_0000.png Fig31d0_delta-100_psi_0001.png Fig31d0_delta-100_psi_0005.png Fig31d0_delta-100_psi_0050.png Fig31d0_delta-100_psi_0500.png e23:tex/glns/glns3/figures
\center
\mbox{
\includegraphics[angle=-0,width=0.2\textwidth]{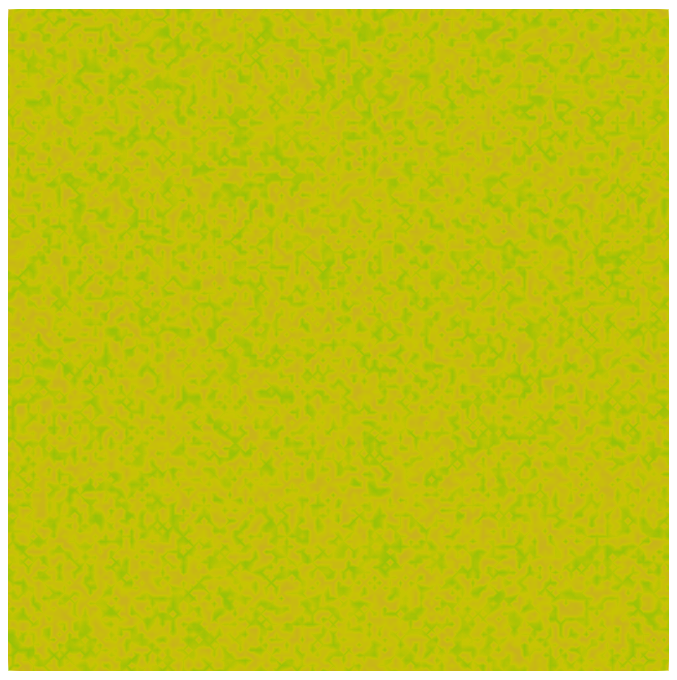}
\includegraphics[angle=-0,width=0.2\textwidth]{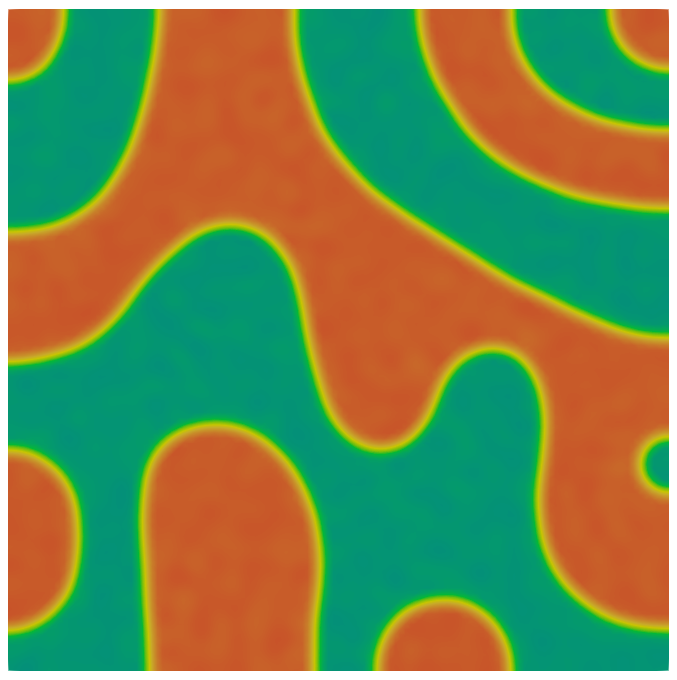}
\includegraphics[angle=-0,width=0.2\textwidth]{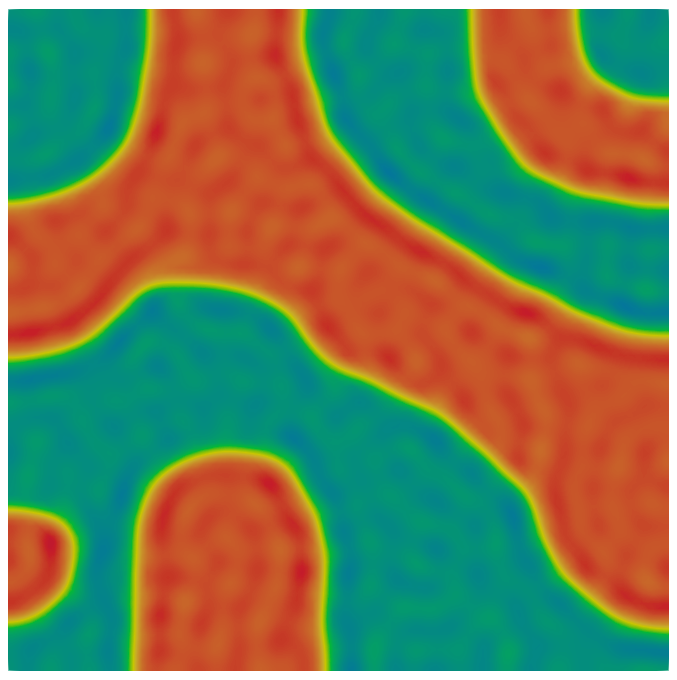}
\includegraphics[angle=-0,width=0.2\textwidth]{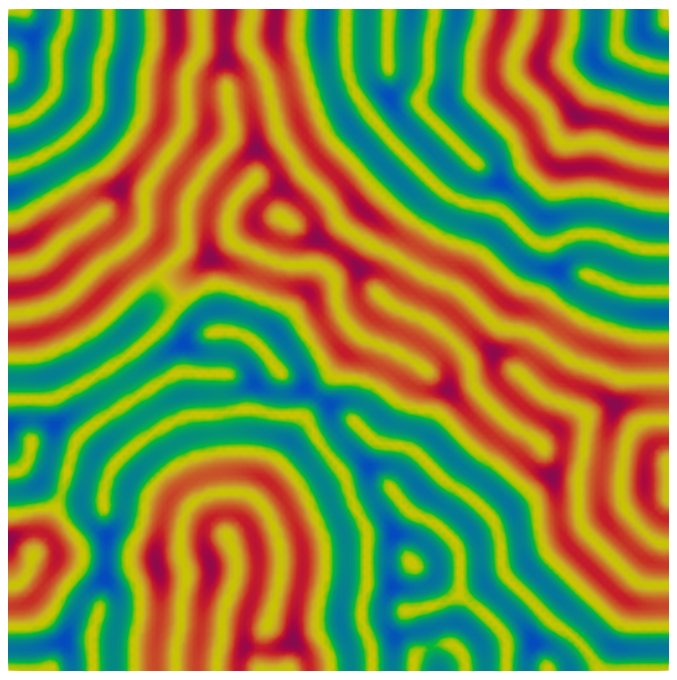}
\includegraphics[angle=-0,width=0.2\textwidth]{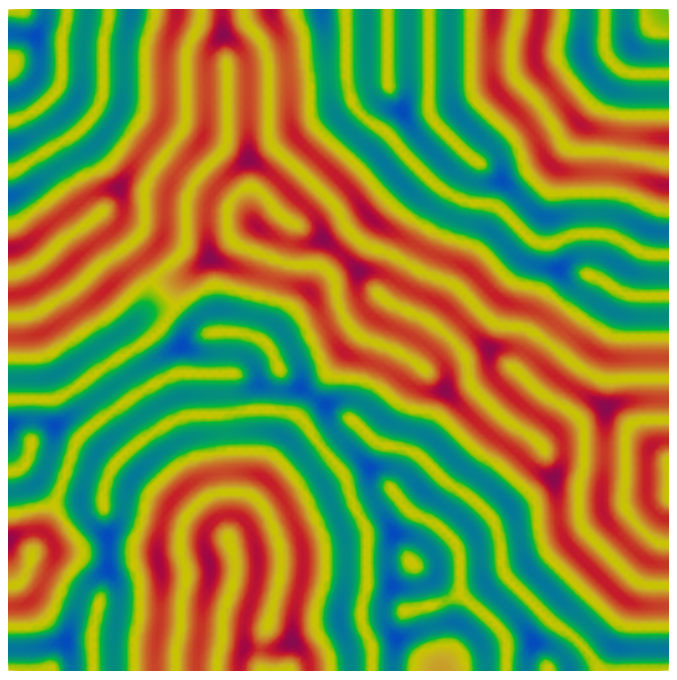}
} \\
\mbox{
\includegraphics[angle=-0,width=0.2\textwidth]{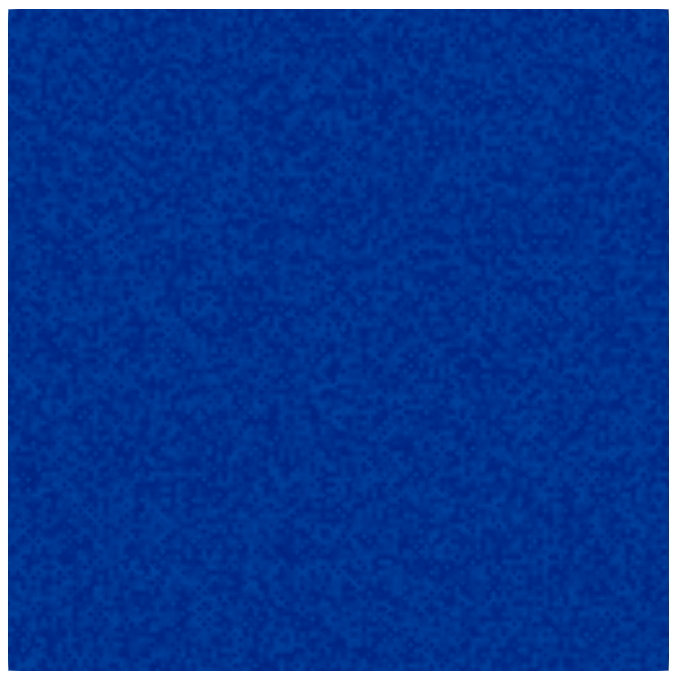}
\includegraphics[angle=-0,width=0.2\textwidth]{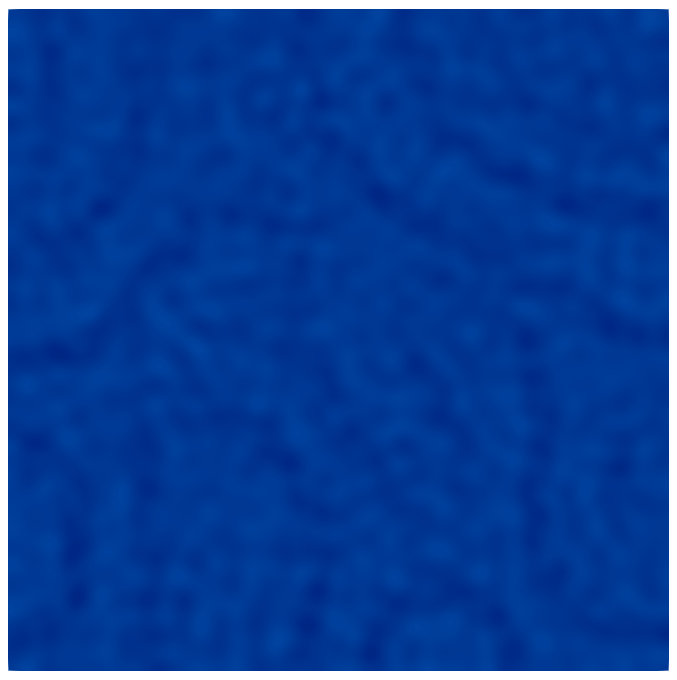}
\includegraphics[angle=-0,width=0.2\textwidth]{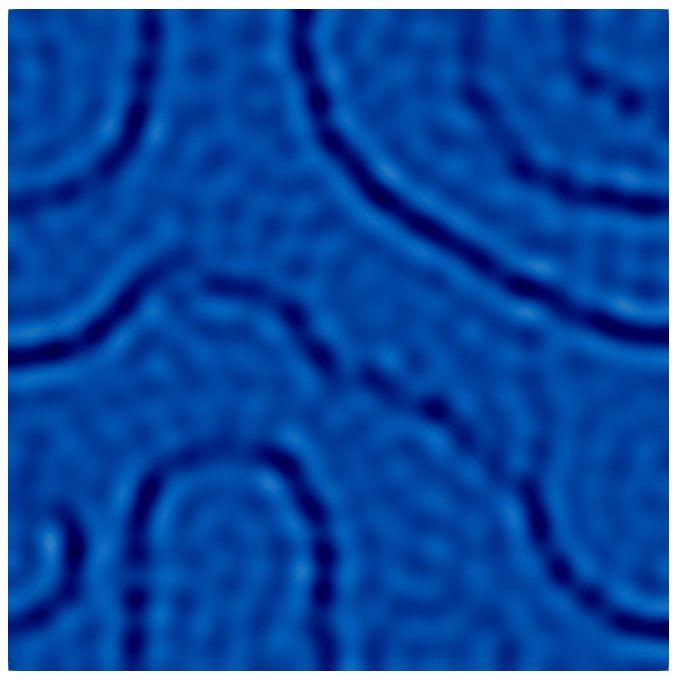}
\includegraphics[angle=-0,width=0.2\textwidth]{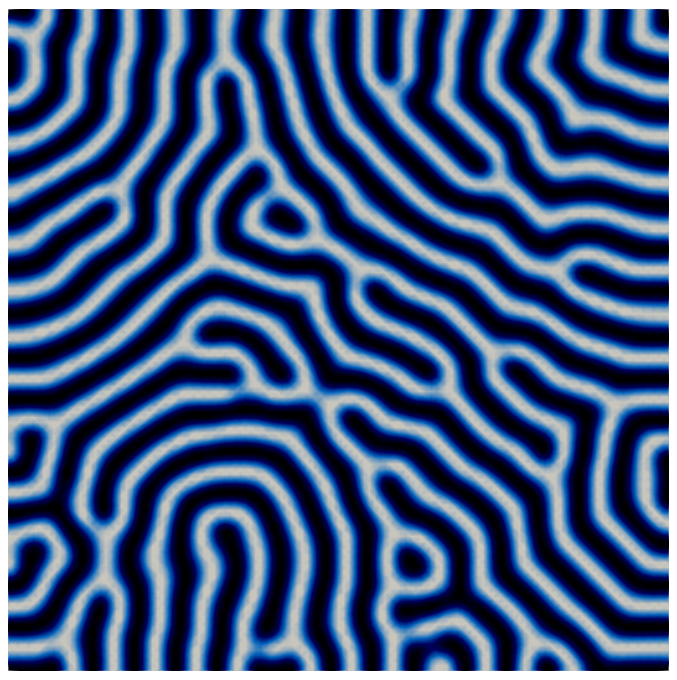}
\includegraphics[angle=-0,width=0.2\textwidth]{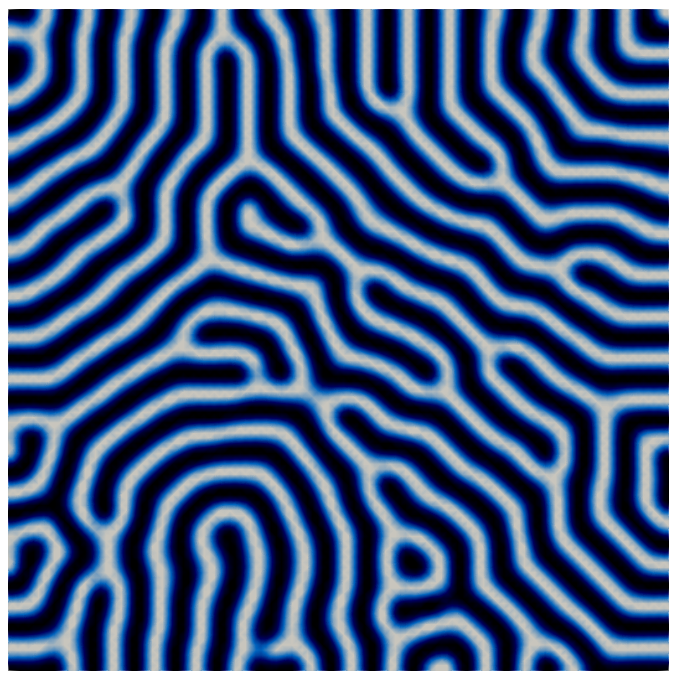}
} %\\
\caption{The same as Figure~\ref{fig:Fig31_delta0},
but with $\delta=-100$.
We display $\phi_h^n$ at times $t=0$, $0.001$, $0.005$, $0.05$, $0.5$.
Below we show $\psi_h^n$ at the same times.
%Below we show plots of the energy $E^k$, as well as of its four contributions
%from \eqref{eq:Ekterms},
%and the four contributions of $E_1^k$ from \eqref{eq:E1terms}.
}
\label{fig:Fig31d0_delta-100}
% ~/hpc_cluster/data/alberta/glns3/2d.CF.Fig31d0_delta-100
\end{figure}%

Finally, a computation for $\sigma=0.05$ can be seen in
Figure~\ref{fig:Fig31d0_sigma5e-2}. 
%As the iterative solver for our scheme
%requires very small time steps to converge, for this experiment we make use
%of an explicit treatment of the $\sigma$ terms.
The presence of the $\sigma$ term in the energy leads to the absence of
oscillations in the phase characterized by $\phi = 1$ and $\psi = 0$.
If we use the larger value $\alpha=200$, then this effects becomes even
more pronounced, see Figure~\ref{fig:Fig31d0_alpha200_sigma5e-2} and compare to Figure~7b in \cite{Morales_JTB}.
%\footnote{Robert: Show $\alpha=100$ and $\alpha=200$ until latest available
%time. Do not mention explicit scheme.}
% \footnote{Robert: Try $\sigma=0.05$ with implicit scheme.}
% ~/hpc_cluster/data/alberta/glns3/2d.CF.Fig31d0_sigma5e-2
% ~/hpc_cluster/data/alberta/glns3/2d.CF.Fig31d0_alpha200_sigma5e-2
%\footnote{Harald: Experiment with $\alpha=200$ did not yield 
%qualitatively very different results. Even larger $\alpha$ did not look nice.}
% ~/hpc_cluster/data/alberta/glns3/2d.CF.Fig31d0_alpha800_sigma5e-2_ex
% ~/hpc_cluster/data/alberta/glns3/2d.CF.Fig31d0_alpha400_sigma5e-2_ex
% ~/hpc_cluster/data/alberta/glns3/2d.CF.Fig31d0_alpha200_sigma5e-2_ex
\begin{figure}
% ../plotglns3; mv energy.png Fig31d0_sigma5e-2_e.png; mv energies.png Fig31d0_sigma5e-2_es.png; mv E1terms.png Fig31d0_sigma5e-2_e1.png
% paraview --data=phiw_h..vtk
%          save animation
% cp Fig31d0_sigma5e-2_e*.png Fig31d0_sigma5e-2_0000.png Fig31d0_sigma5e-2_0001.png Fig31d0_sigma5e-2_0005.png Fig31d0_sigma5e-2_0050.png Fig31d0_sigma5e-2_0500.png Fig31d0_sigma5e-2_psi_0000.png Fig31d0_sigma5e-2_psi_0001.png Fig31d0_sigma5e-2_psi_0005.png Fig31d0_sigma5e-2_psi_0050.png Fig31d0_sigma5e-2_psi_0500.png ~/tex/glns/glns3/figures ; scpp Fig31d0_sigma5e-2_e*.png Fig31d0_sigma5e-2_0000.png Fig31d0_sigma5e-2_0001.png Fig31d0_sigma5e-2_0005.png Fig31d0_sigma5e-2_0050.png Fig31d0_sigma5e-2_0500.png Fig31d0_sigma5e-2_psi_0000.png Fig31d0_sigma5e-2_psi_0001.png Fig31d0_sigma5e-2_psi_0005.png Fig31d0_sigma5e-2_psi_0050.png Fig31d0_sigma5e-2_psi_0500.png e23:tex/glns/glns3/figures
\center
\mbox{
\includegraphics[angle=-0,width=0.2\textwidth]{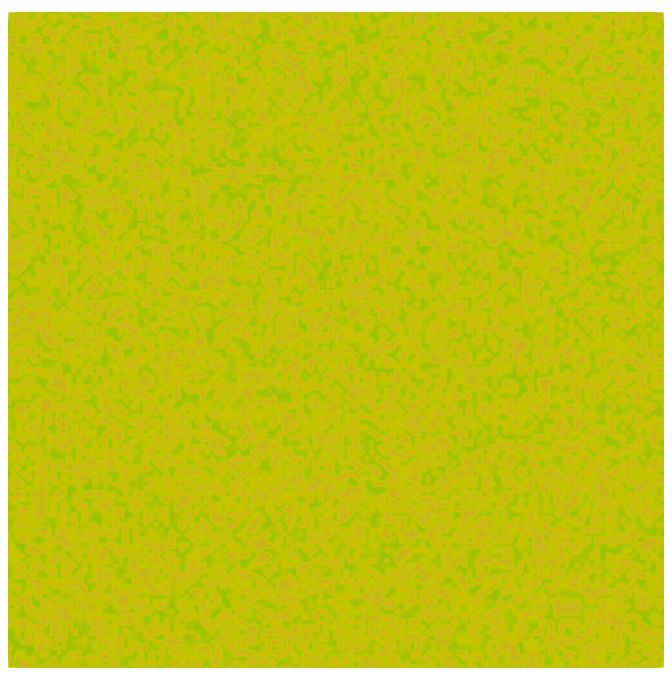}
\includegraphics[angle=-0,width=0.2\textwidth]{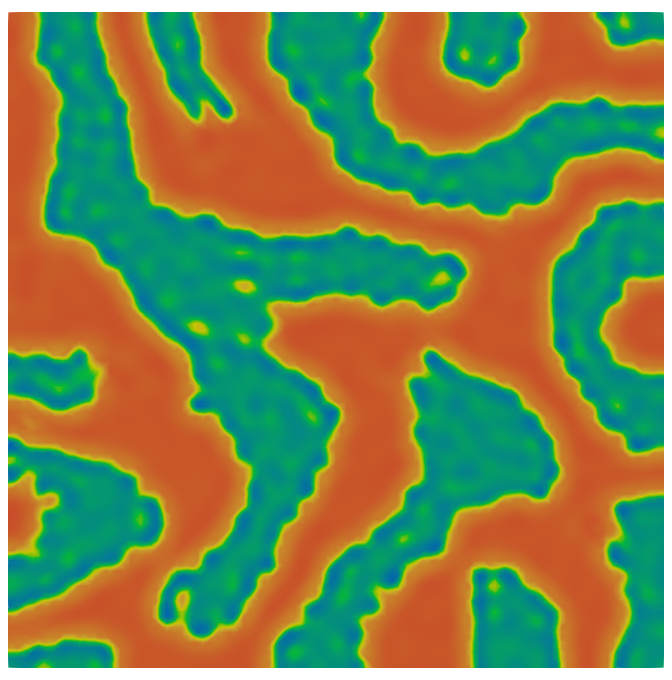}
\includegraphics[angle=-0,width=0.2\textwidth]{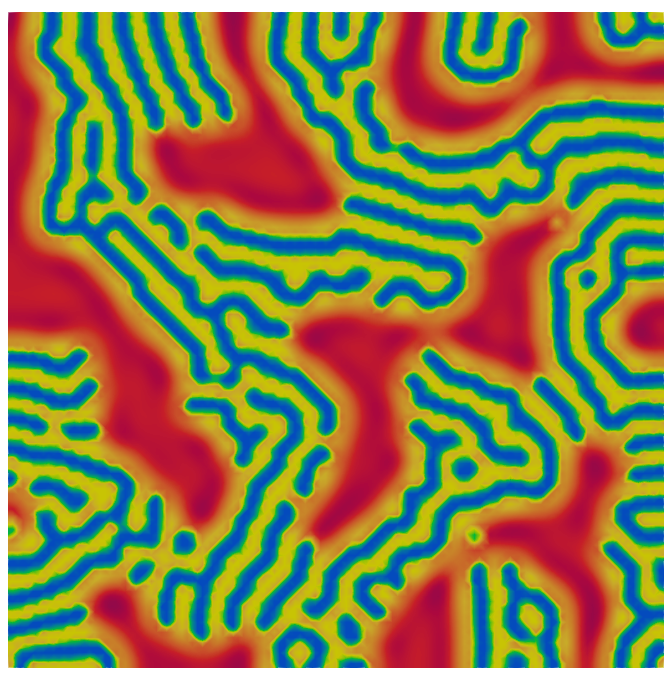}
\includegraphics[angle=-0,width=0.2\textwidth]{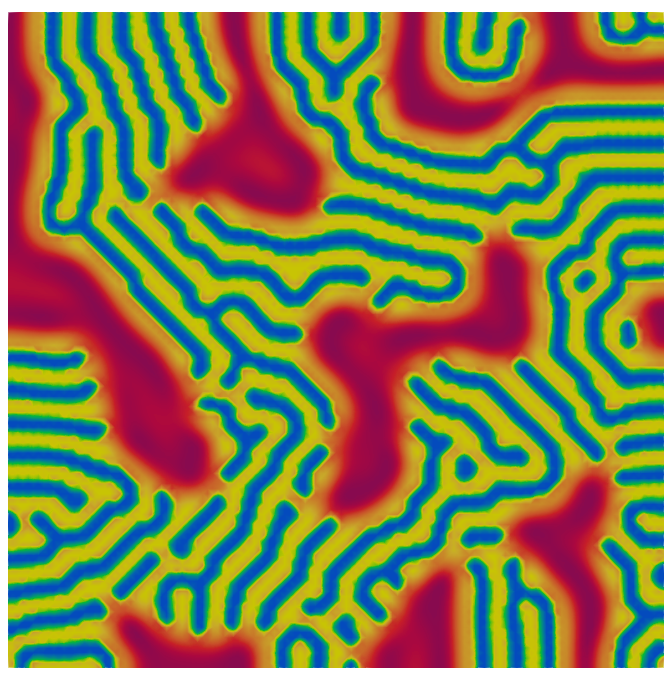}
\includegraphics[angle=-0,width=0.2\textwidth]{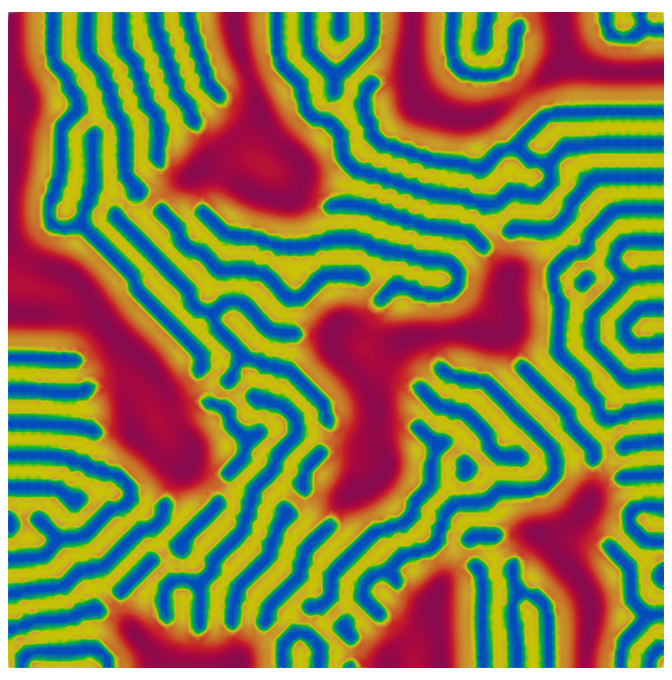}
} \\
\mbox{
\includegraphics[angle=-0,width=0.2\textwidth]{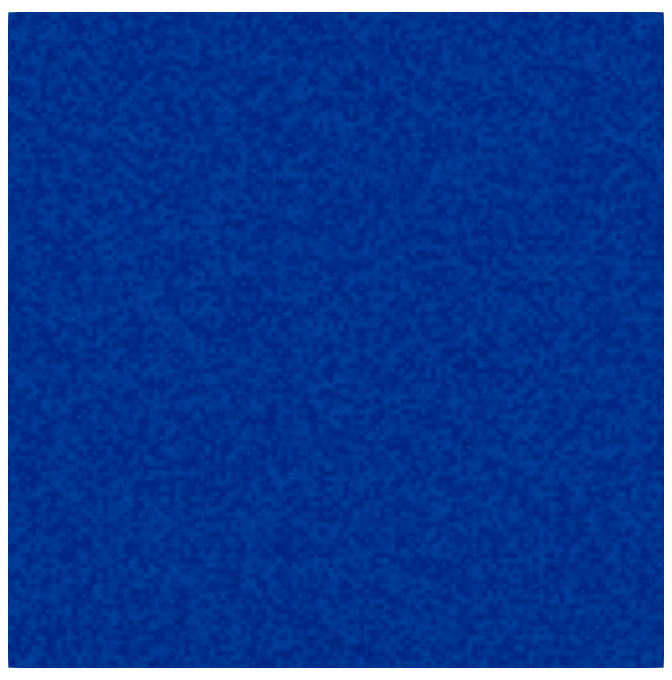}
\includegraphics[angle=-0,width=0.2\textwidth]{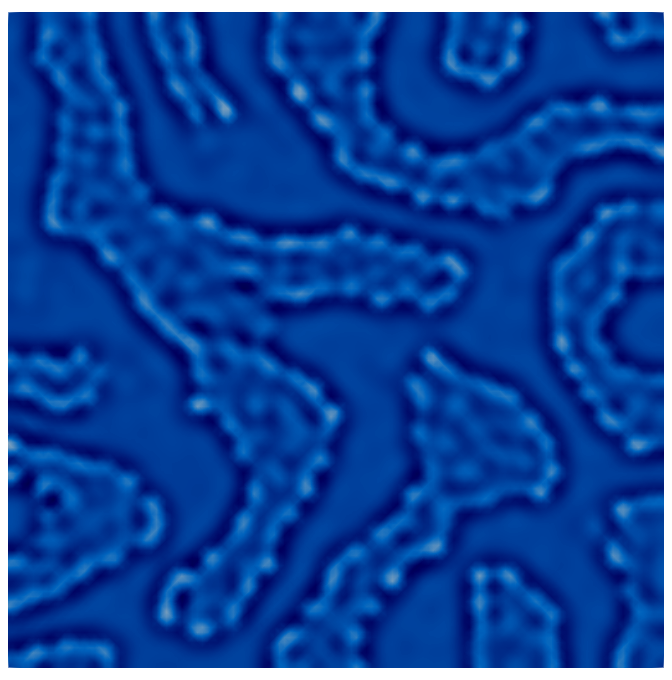}
\includegraphics[angle=-0,width=0.2\textwidth]{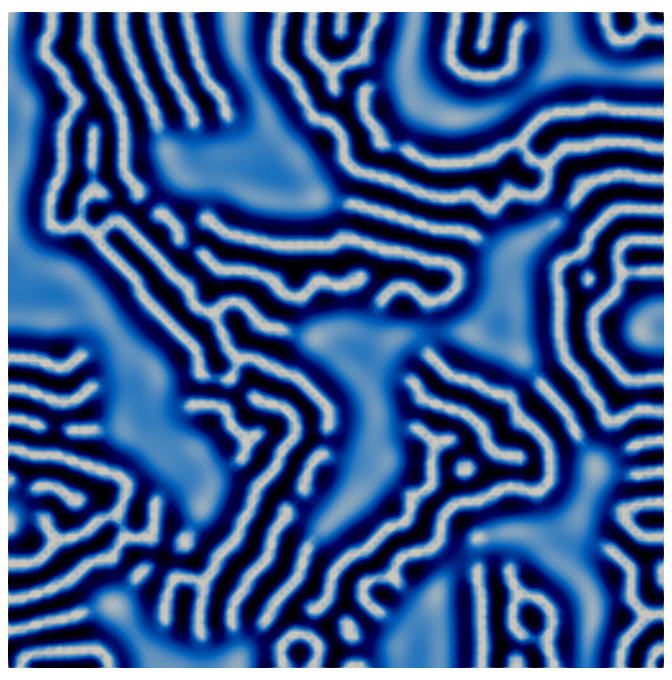}
\includegraphics[angle=-0,width=0.2\textwidth]{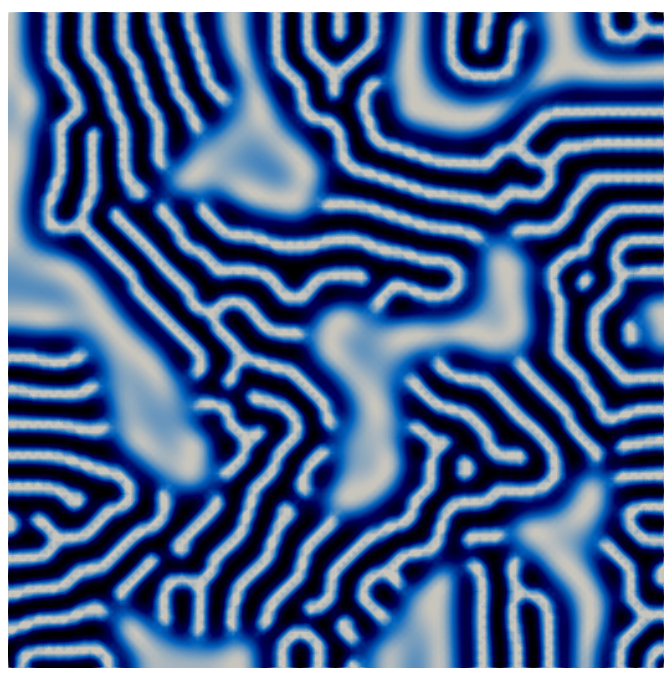}
\includegraphics[angle=-0,width=0.2\textwidth]{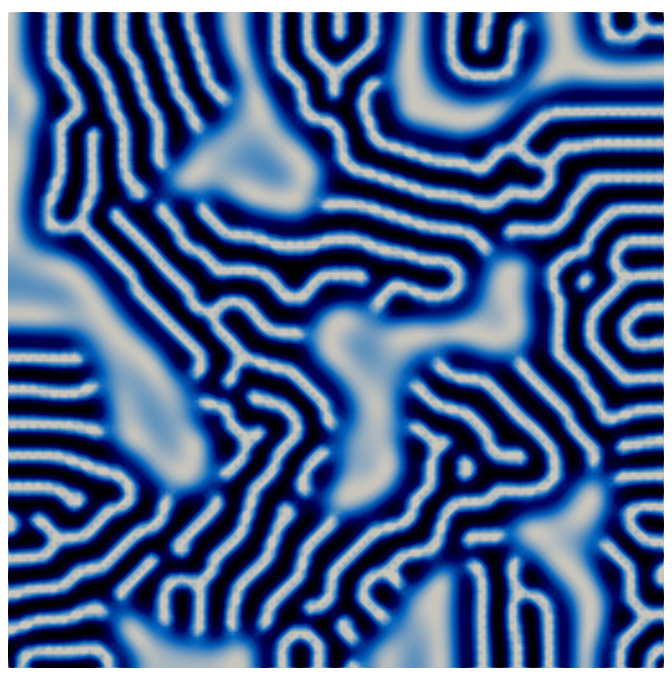}
} %\\
\caption{The same as Figure~\ref{fig:Fig31_delta0},
but with $\sigma=0.05$.
We display $\phi_h^n$ at times $t=0$, $0.001$, $0.005$, $0.05$, $0.5$.
Below we show $\psi_h^n$ at the same times.
%Below we show plots of the energy $E^k$, as well as of its four contributions
%from \eqref{eq:Ekterms},
%and the four contributions of $E_1^k$ from \eqref{eq:E1terms}.
}
\label{fig:Fig31d0_sigma5e-2}
% ~/hpc_cluster/data/alberta/glns3/2d.CF.Fig31d0_sigma5e-2
\end{figure}%
\begin{figure}
% ../plotglns3; mv energy.png Fig31d0_alpha200_sigma5e-2_e.png; mv energies.png Fig31d0_alpha200_sigma5e-2_es.png; mv E1terms.png Fig31d0_alpha200_sigma5e-2_e1.png
% paraview --data=phiw_h..vtk
%          save animation
% cp Fig31d0_alpha200_sigma5e-2_e*.png Fig31d0_alpha200_sigma5e-2_0000.png Fig31d0_alpha200_sigma5e-2_0001.png Fig31d0_alpha200_sigma5e-2_0005.png Fig31d0_alpha200_sigma5e-2_0050.png Fig31d0_alpha200_sigma5e-2_0500.png Fig31d0_alpha200_sigma5e-2_psi_0000.png Fig31d0_alpha200_sigma5e-2_psi_0001.png Fig31d0_alpha200_sigma5e-2_psi_0005.png Fig31d0_alpha200_sigma5e-2_psi_0050.png Fig31d0_alpha200_sigma5e-2_psi_0500.png ~/tex/glns/glns3/figures ; scpp Fig31d0_alpha200_sigma5e-2_e*.png Fig31d0_alpha200_sigma5e-2_0000.png Fig31d0_alpha200_sigma5e-2_0001.png Fig31d0_alpha200_sigma5e-2_0005.png Fig31d0_alpha200_sigma5e-2_0050.png Fig31d0_alpha200_sigma5e-2_0500.png Fig31d0_alpha200_sigma5e-2_psi_0000.png Fig31d0_alpha200_sigma5e-2_psi_0001.png Fig31d0_alpha200_sigma5e-2_psi_0005.png Fig31d0_alpha200_sigma5e-2_psi_0050.png Fig31d0_alpha200_sigma5e-2_psi_0500.png e23:tex/glns/glns3/figures
\center
\mbox{
\includegraphics[angle=-0,width=0.2\textwidth]{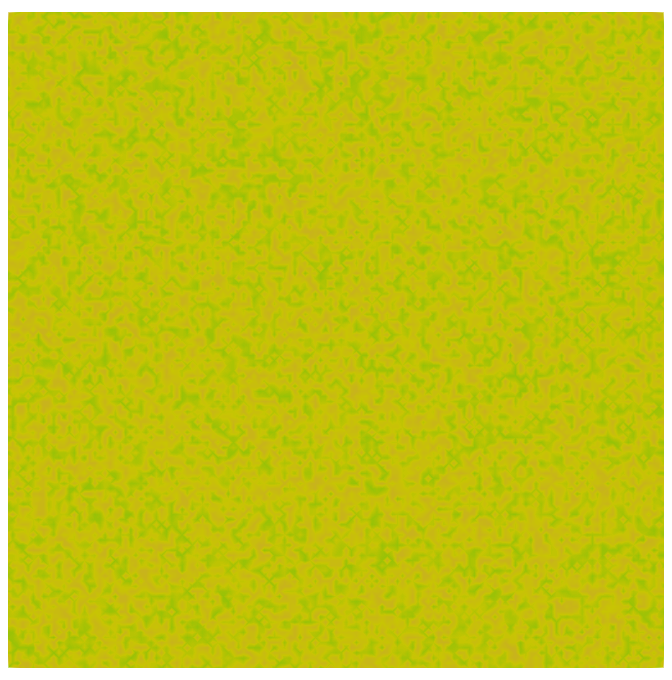}
\includegraphics[angle=-0,width=0.2\textwidth]{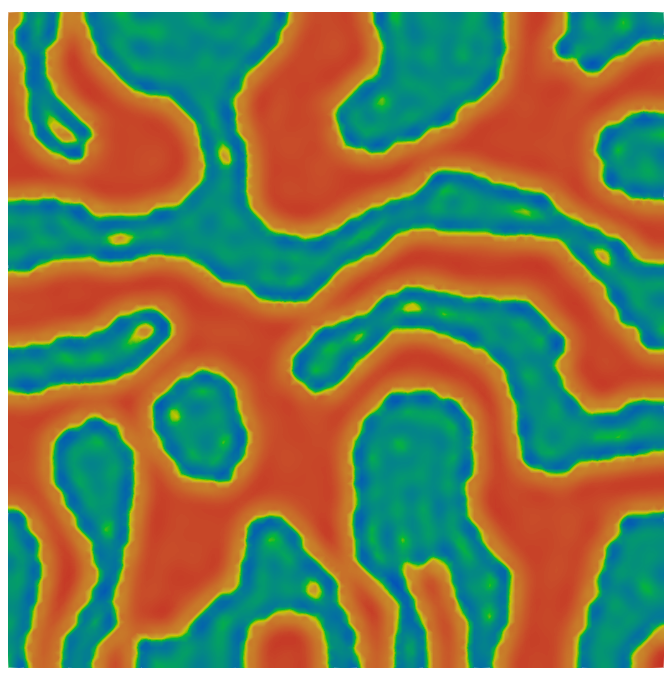}
\includegraphics[angle=-0,width=0.2\textwidth]{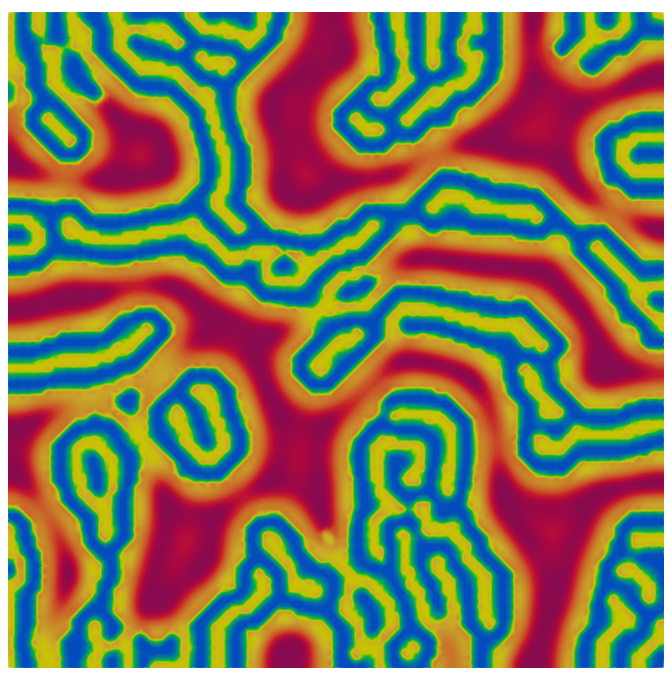}
\includegraphics[angle=-0,width=0.2\textwidth]{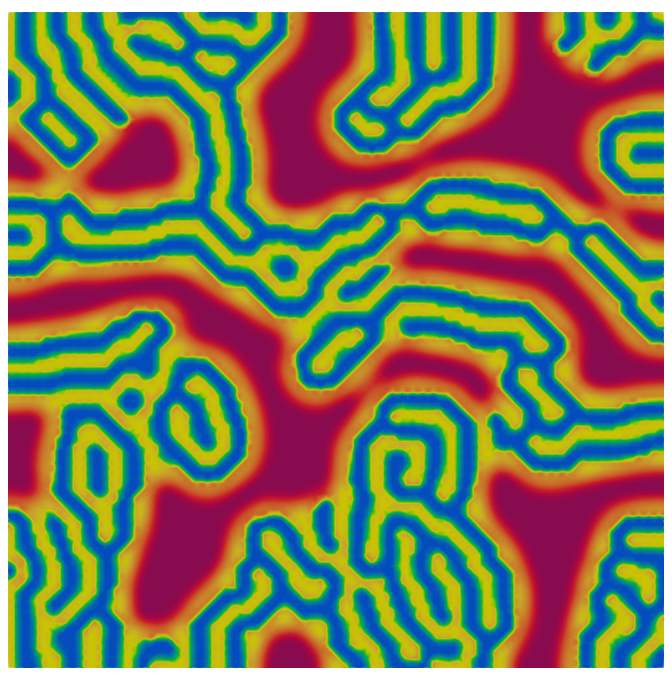}
\includegraphics[angle=-0,width=0.2\textwidth]{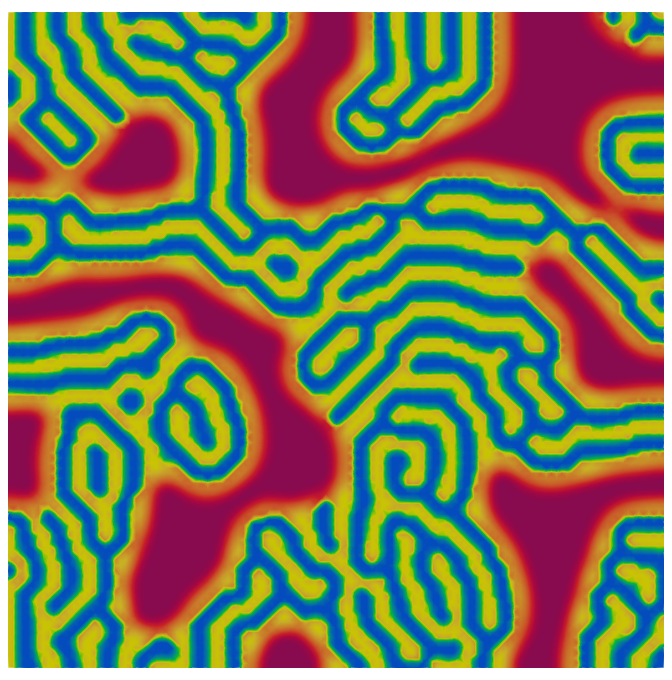}
} \\
\mbox{
\includegraphics[angle=-0,width=0.2\textwidth]{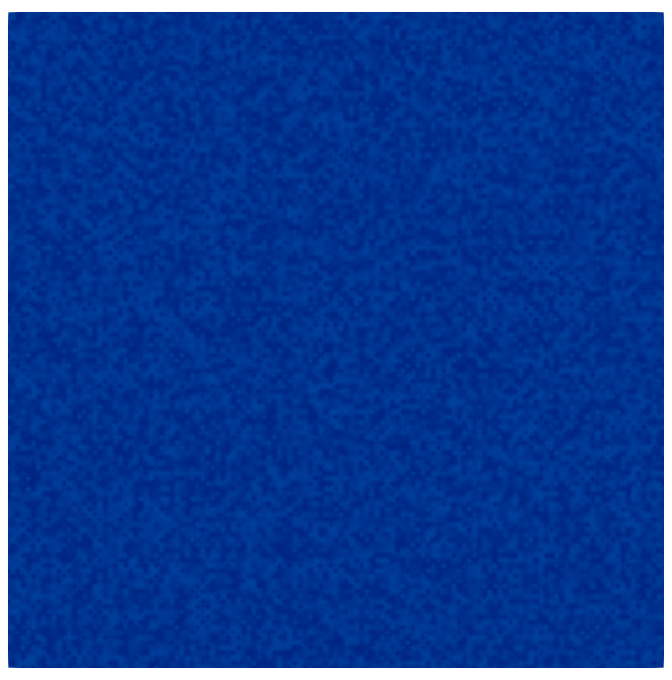}
\includegraphics[angle=-0,width=0.2\textwidth]{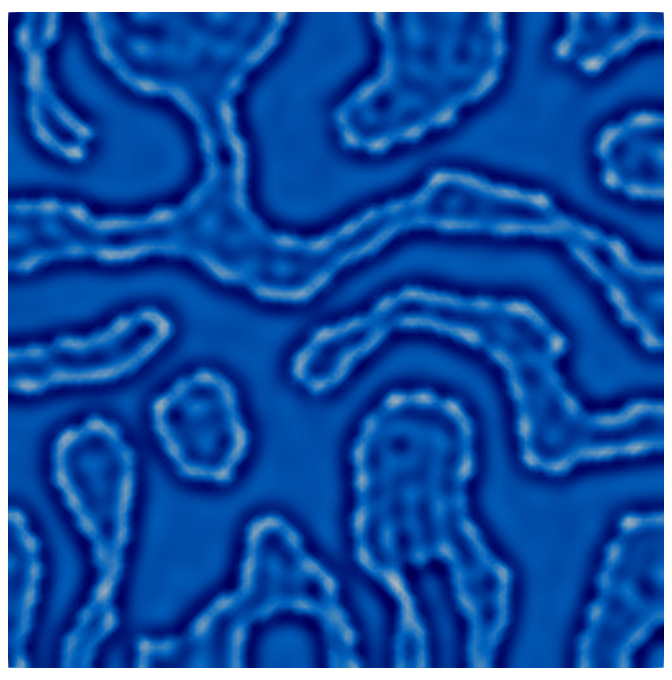}
\includegraphics[angle=-0,width=0.2\textwidth]{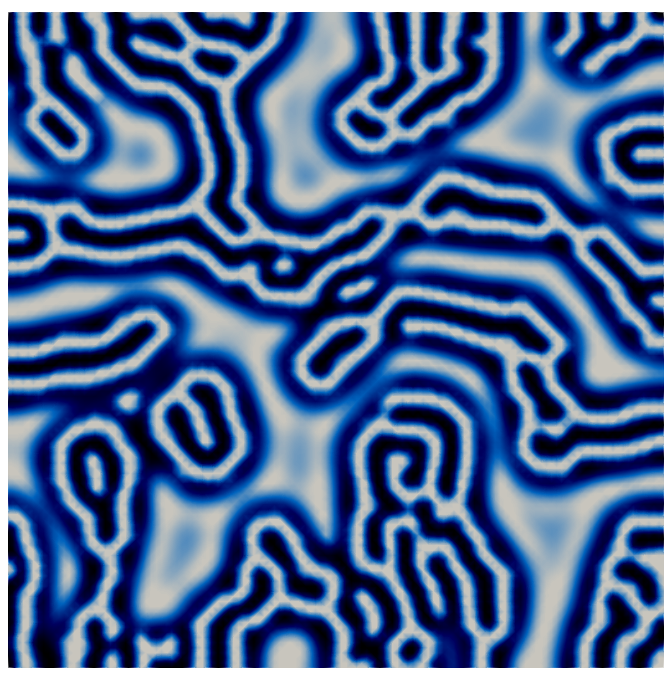}
\includegraphics[angle=-0,width=0.2\textwidth]{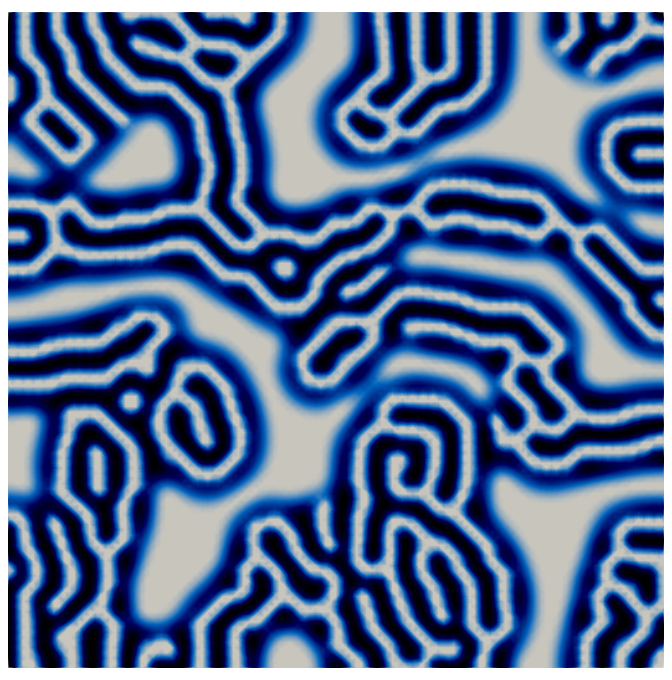}
\includegraphics[angle=-0,width=0.2\textwidth]{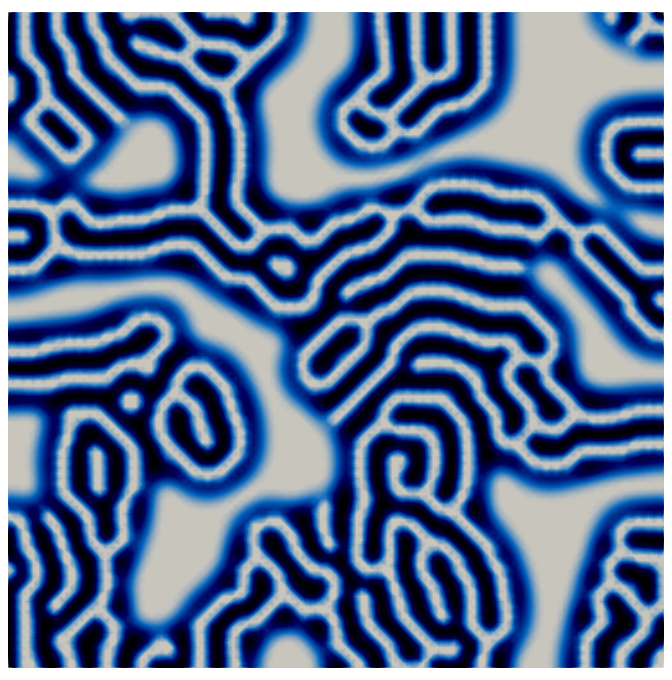}
} %\\
\caption{The same as Figure~\ref{fig:Fig31_delta0},
but with $\sigma=0.05$ and $\alpha=200$.
We display $\phi_h^n$ at times $t=0$, $0.001$, $0.005$, $0.05$, $0.5$.
Below we show $\psi_h^n$ at the same times.
%Below we show plots of the energy $E^k$, as well as of its four contributions
%from \eqref{eq:Ekterms},
%and the four contributions of $E_1^k$ from \eqref{eq:E1terms}.
}
\label{fig:Fig31d0_alpha200_sigma5e-2}
% ~/hpc_cluster/data/alberta/glns3/2d.CF.Fig31d0_alpha200_sigma5e-2
\end{figure}%

%\subsection{3d simulations}

We conclude this section with some numerical simulations in three dimensions shown in Figures~\ref{fig:Fig3d_CH}, \ref{fig:Fig3d_SH} and \ref{fig:Fig3d_delta0}. All the
parameters are chosen as in the corresponding two dimensional experiments in
Figures~\ref{fig:Fig32_CH}, \ref{fig:Fig32_delta0} and \ref{fig:Fig32_SH}.
\begin{figure}
% ../plotglns3; mv energy.png Fig3d_CH_e.png; mv energies.png Fig3d_CH_es.png; mv E1terms.png Fig3d_CH_e1.png
% paraview --data=phiw_h..vtk
%          save animation
% cp Fig3d_CH_e*.png Fig3d_CH_0000.png Fig3d_CH_short_0001.png Fig3d_CH_short_0010.png Fig3d_CH_0001.png Fig3d_CH_0010.png Fig3d_CH_0003.png ~/tex/glns/glns3/figures && scpp Fig3d_CH_e*.png Fig3d_CH_0000.png Fig3d_CH_short_0001.png Fig3d_CH_short_0010.png Fig3d_CH_0001.png Fig3d_CH_0003.png Fig3d_CH_0010.png Fig3d_CH_0003.png e23:tex/glns/glns3/figures
\center
\mbox{
\includegraphics[angle=-0,width=0.2\textwidth]{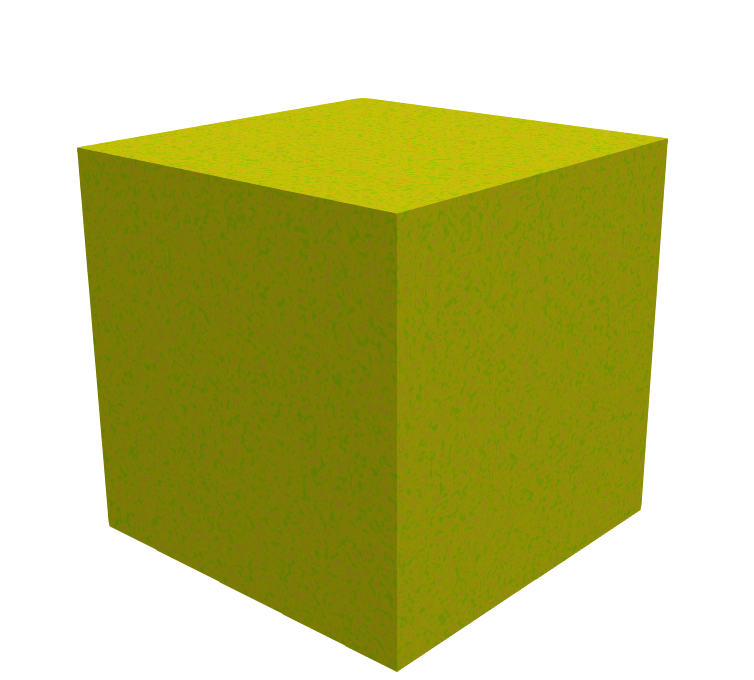}
\includegraphics[angle=-0,width=0.2\textwidth]{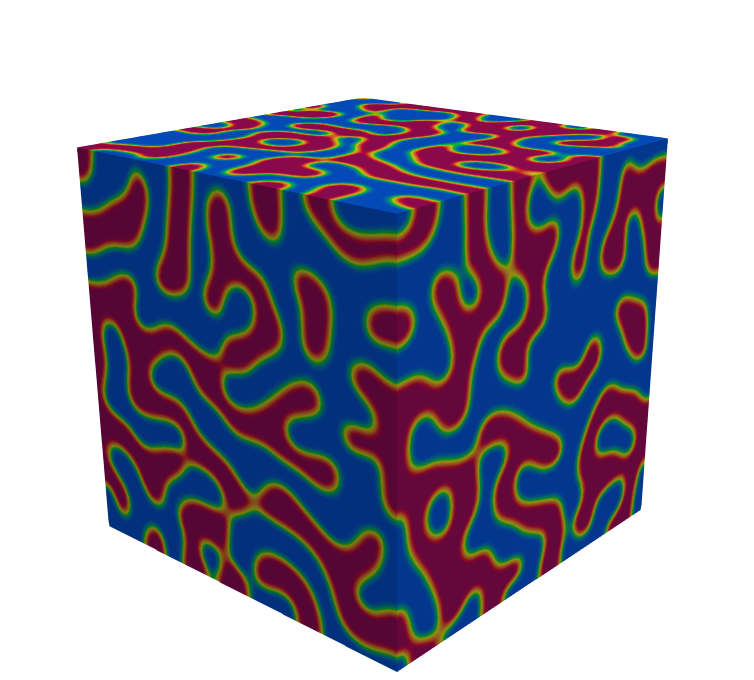}
\includegraphics[angle=-0,width=0.2\textwidth]{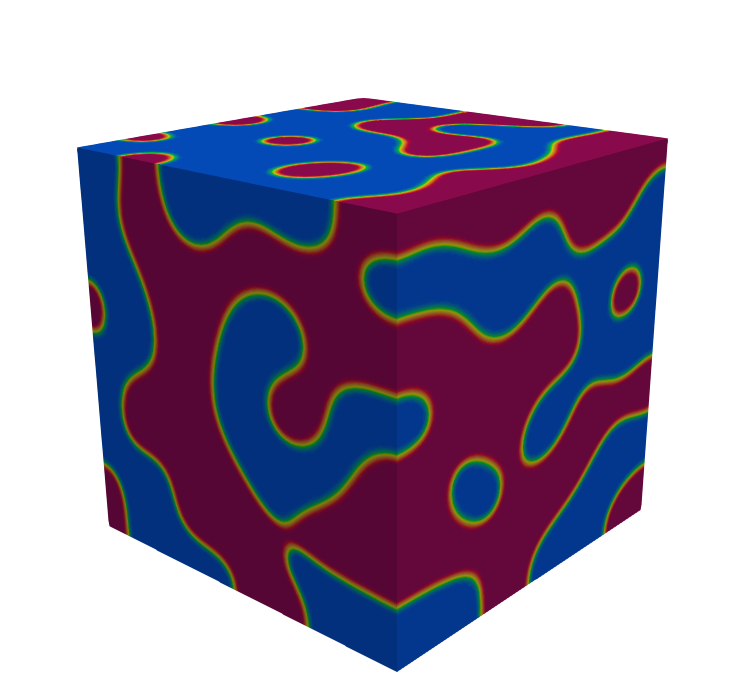}
\includegraphics[angle=-0,width=0.2\textwidth]{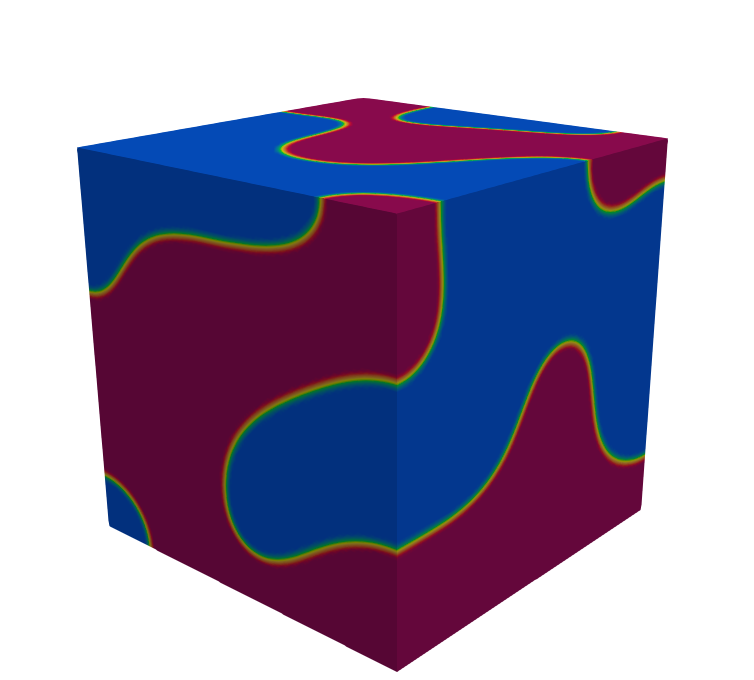}
\includegraphics[angle=-0,width=0.2\textwidth]{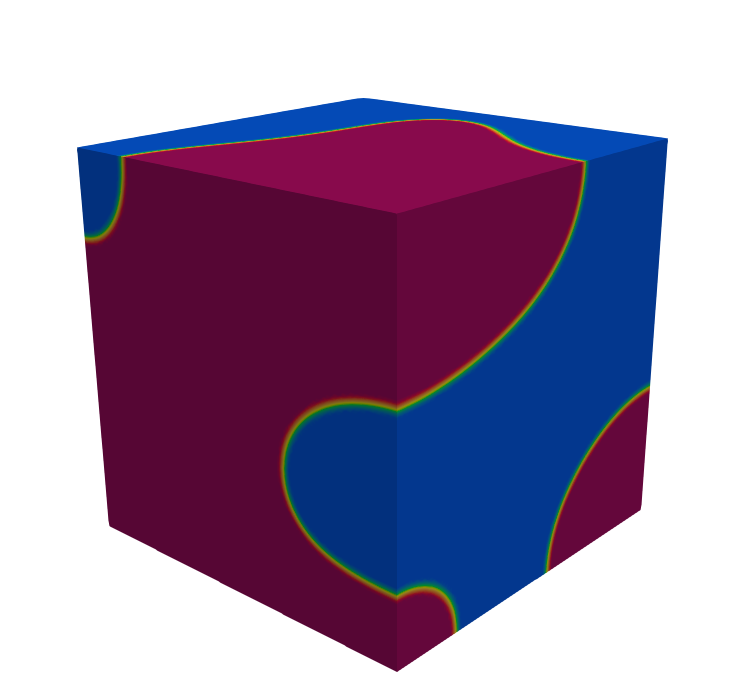}
} %\\
\caption{Spinodal decomposition for CH.
We display $\phi^n_h$ at times $t=0$, $10^{-4}$, $0.001$, $0.01$, $0.05$.
}
\label{fig:Fig3d_CH}
% ~/hpc_cluster/data/alberta/glns3/3d.Fig32_delta0_pureCH_adapt
% ~/hpc_cluster/data/alberta/glns3/3d.Fig32_delta0_pureCH_shortest
\end{figure}%
\begin{figure}
% ../plotglns3; mv energy.png Fig3d_SH_e.png; mv energies.png Fig3d_SH_es.png; mv E1terms.png Fig3d_SH_e1.png
% paraview --data=psiz_h..vtk
%          save animation
% cp Fig3d_SH_psi_0000.png Fig3d_SH_short_psi_0001.png Fig3d_SH_short_psi_0005.png Fig3d_SH_psi_0001.png Fig3d_SH_psi_0010.png ~/tex/glns/glns3/figures && scpp  Fig3d_SH_psi_0000.png Fig3d_SH_short_psi_0001.png Fig3d_SH_short_psi_0005.png Fig3d_SH_psi_0001.png Fig3d_SH_psi_0010.png e23:tex/glns/glns3/figures
\center
\mbox{
\includegraphics[angle=-0,width=0.2\textwidth]{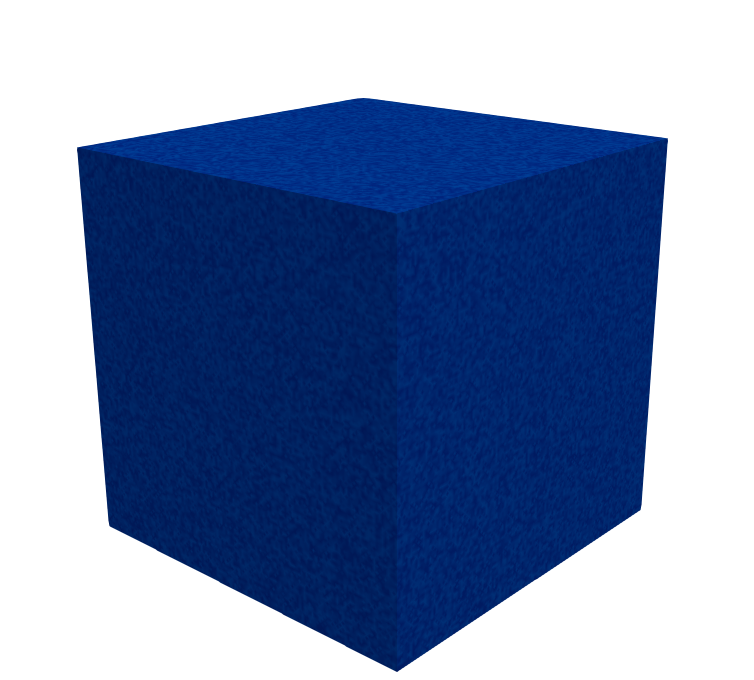}
\includegraphics[angle=-0,width=0.2\textwidth]{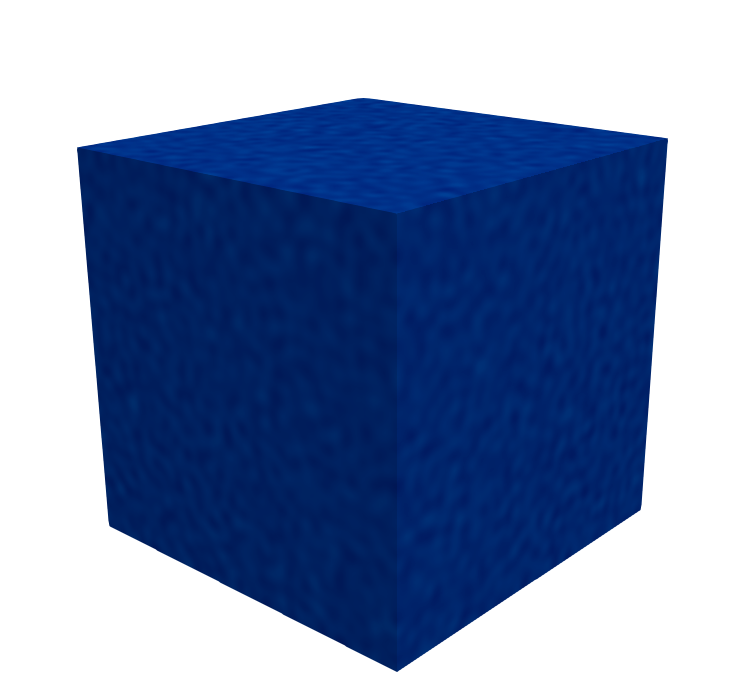}
\includegraphics[angle=-0,width=0.2\textwidth]{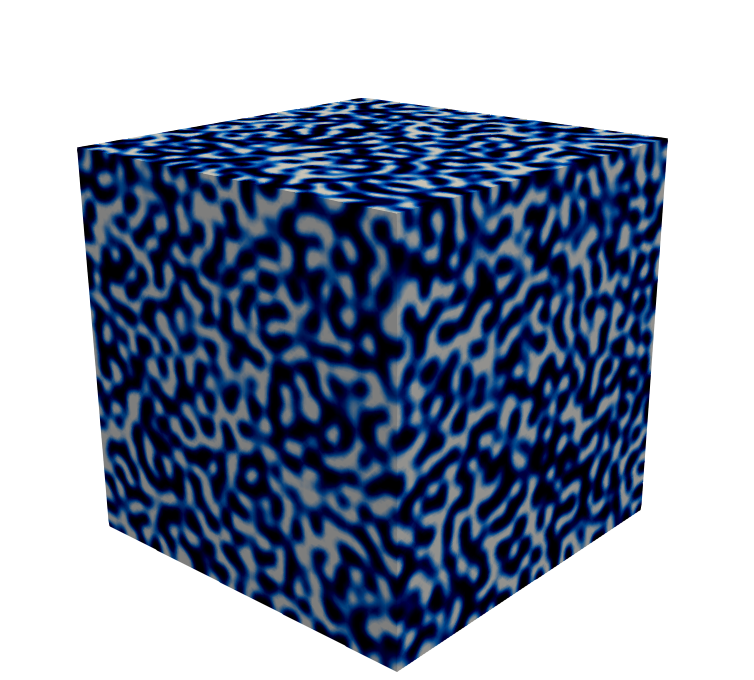}
\includegraphics[angle=-0,width=0.2\textwidth]{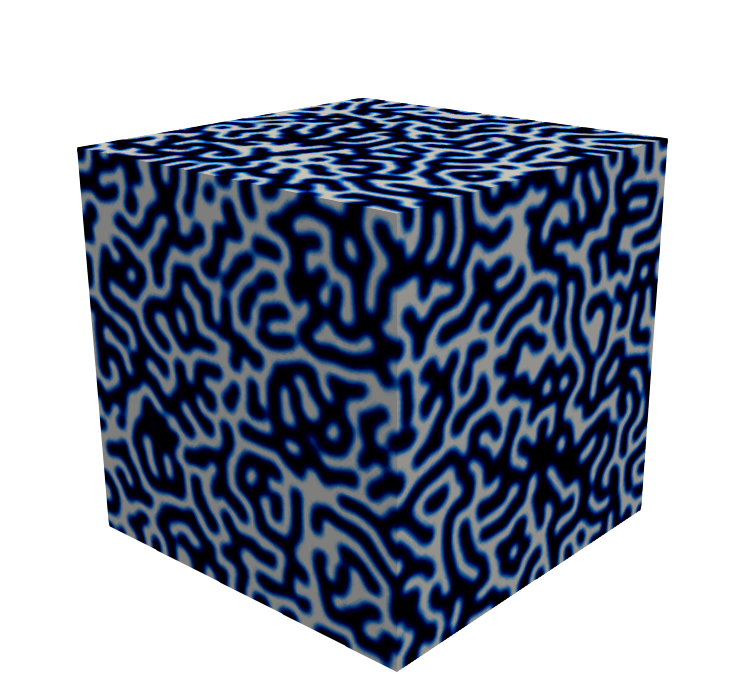}
\includegraphics[angle=-0,width=0.2\textwidth]{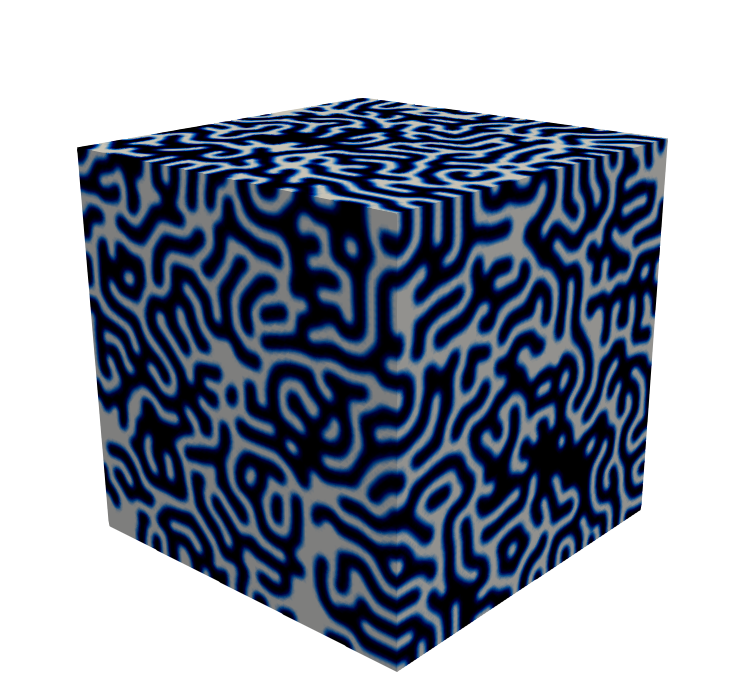}
}
\caption{Computation for SH with $g=0$, $\gamma=1000$.
We display $\psi_h^n$ at times $t=0$, $0.001$, $0.005$, $0.01$, $0.1$.
}
\label{fig:Fig3d_SH}
% ~/hpc_cluster/data/alberta/glns3/3d.Fig32_delta0_pureSH_adapt
% ~/hpc_cluster/data/alberta/glns3/3d.Fig32_delta0_pureSH_short
\end{figure}%
\begin{figure}
% ../plotglns3; mv energy.png Fig3dd0_e.png; mv energies.png Fig3dd0_es.png; mv E1terms.png Fig3dd0_e1.png
% paraview --data=phiw_h..vtk
%          save animation
% cp Fig3dd0_shortest_0001.png Fig3dd0_short_0000.png Fig3dd0_short_0001.png Fig3dd0_short_0002.png Fig3dd0_short_0003.png Fig3dd0_shortest_psi_0001.png Fig3dd0_short_psi_0000.png Fig3dd0_short_psi_0001.png Fig3dd0_short_psi_0002.png Fig3dd0_short_psi_0003.png ~/tex/glns/glns3/figures && scpp Fig3dd0_shortest_0001.png Fig3dd0_short_0000.png Fig3dd0_short_0001.png Fig3dd0_short_0002.png Fig3dd0_short_0003.png Fig3dd0_shortest_psi_0001.png Fig3dd0_short_psi_0000.png Fig3dd0_short_psi_0001.png Fig3dd0_short_psi_0002.png Fig3dd0_short_psi_0003.png e23:tex/glns/glns3/figures
\center
\mbox{
\includegraphics[angle=-0,width=0.2\textwidth]{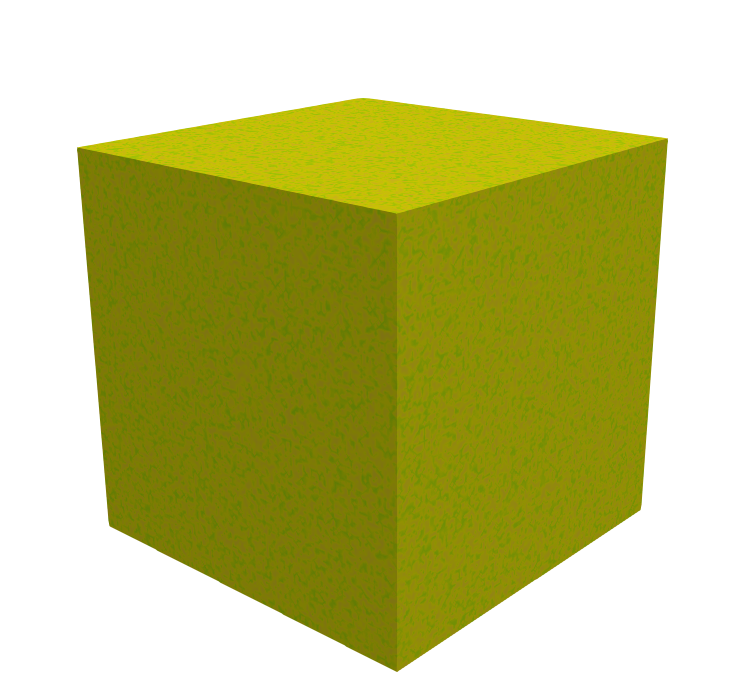}
\includegraphics[angle=-0,width=0.2\textwidth]{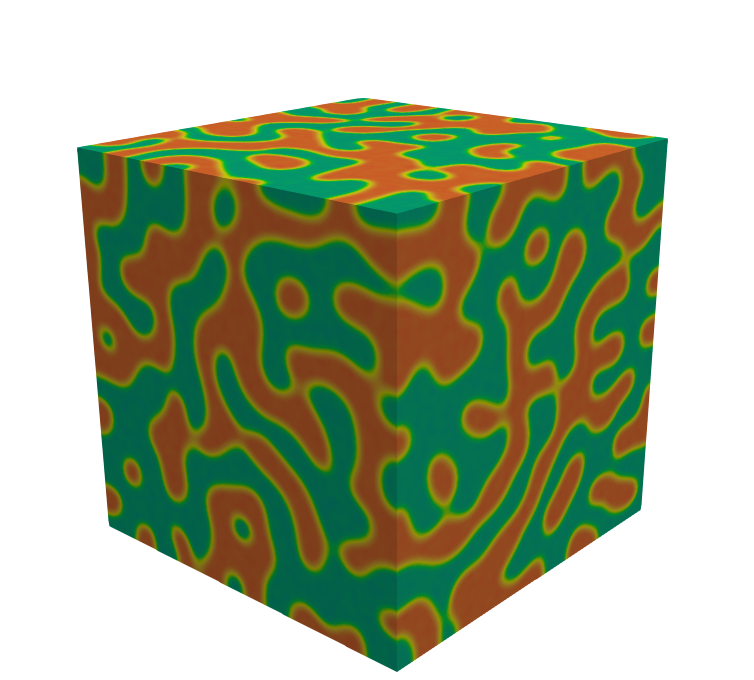}
\includegraphics[angle=-0,width=0.2\textwidth]{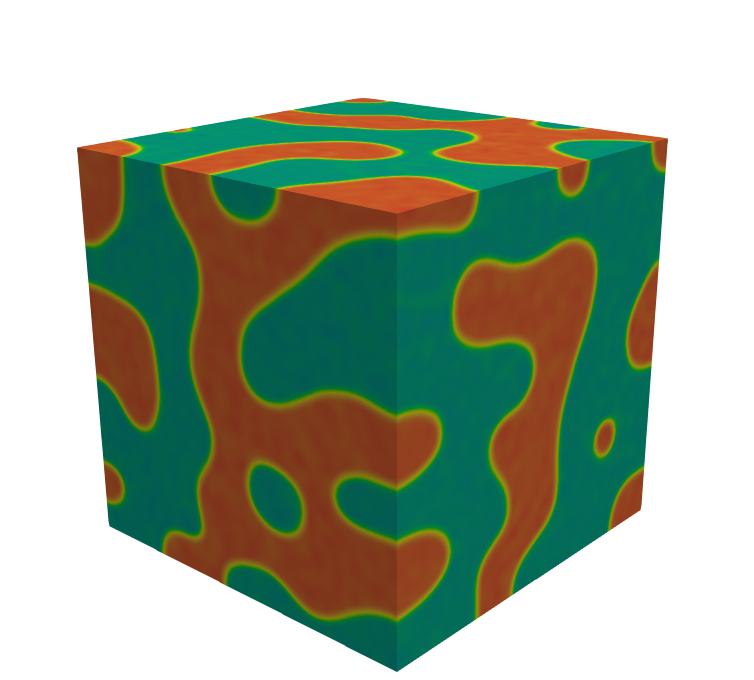}
\includegraphics[angle=-0,width=0.2\textwidth]{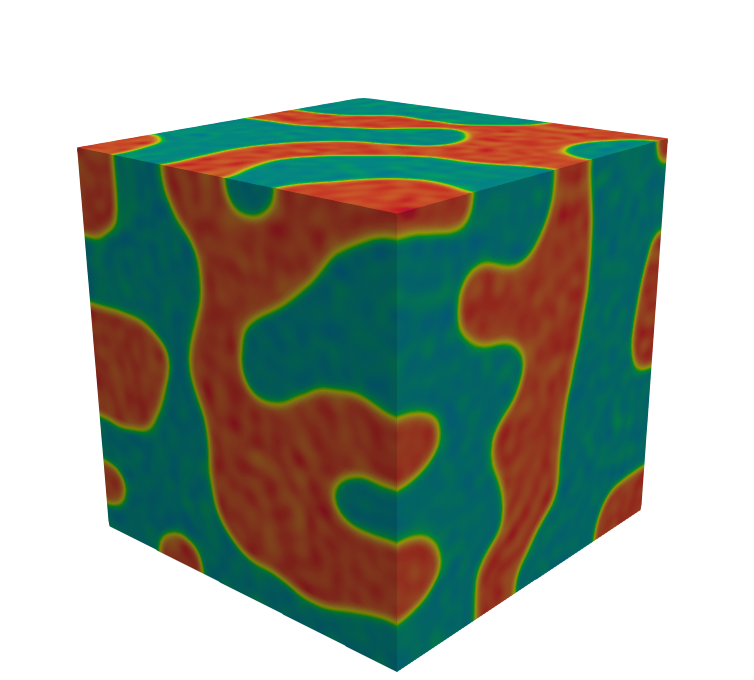}
\includegraphics[angle=-0,width=0.2\textwidth]{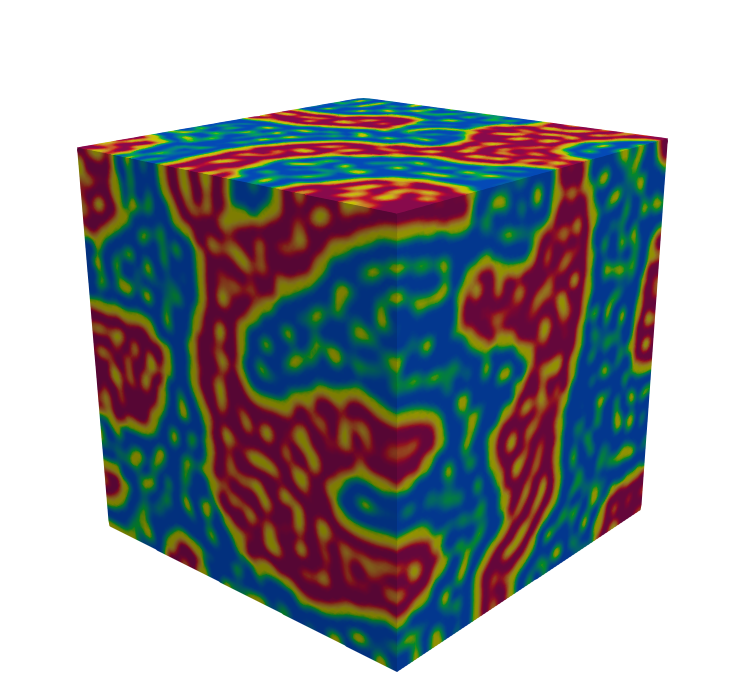}
} \\
\mbox{
\includegraphics[angle=-0,width=0.2\textwidth]{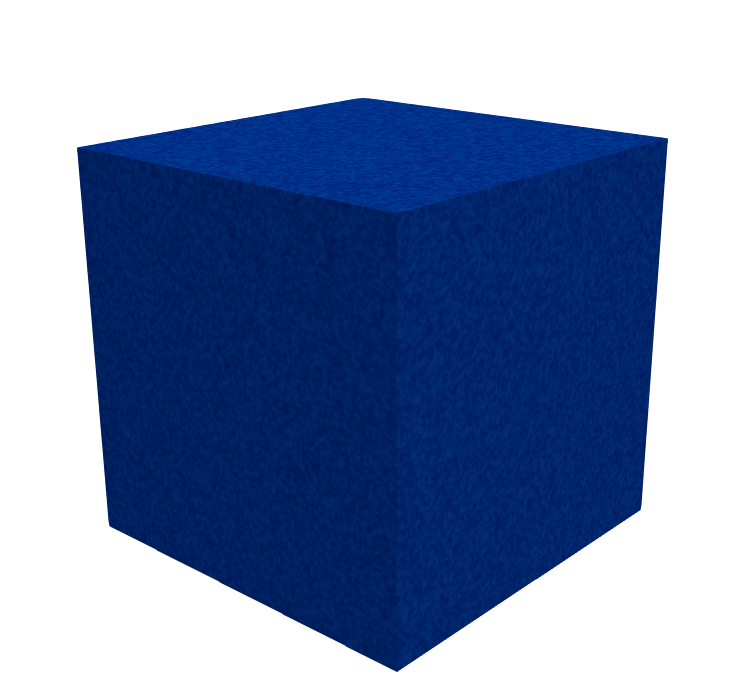}
\includegraphics[angle=-0,width=0.2\textwidth]{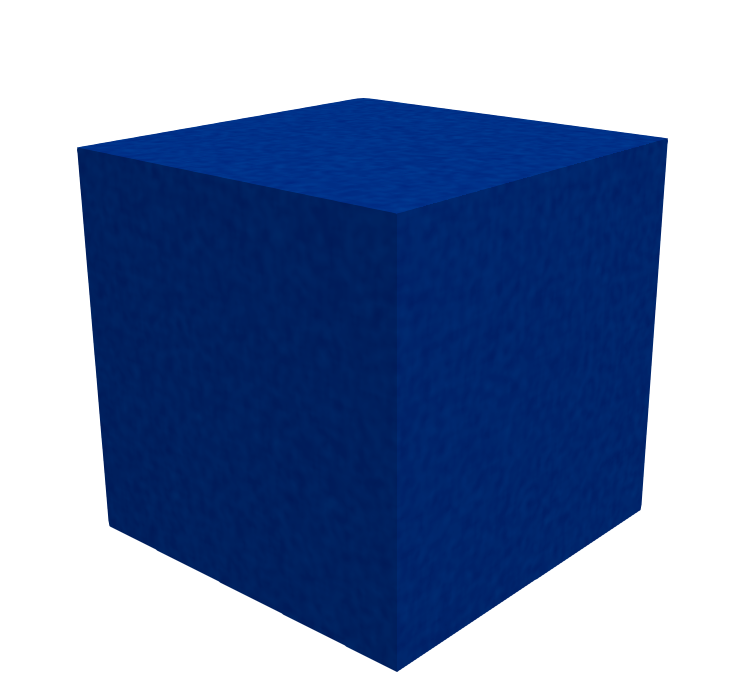}
\includegraphics[angle=-0,width=0.2\textwidth]{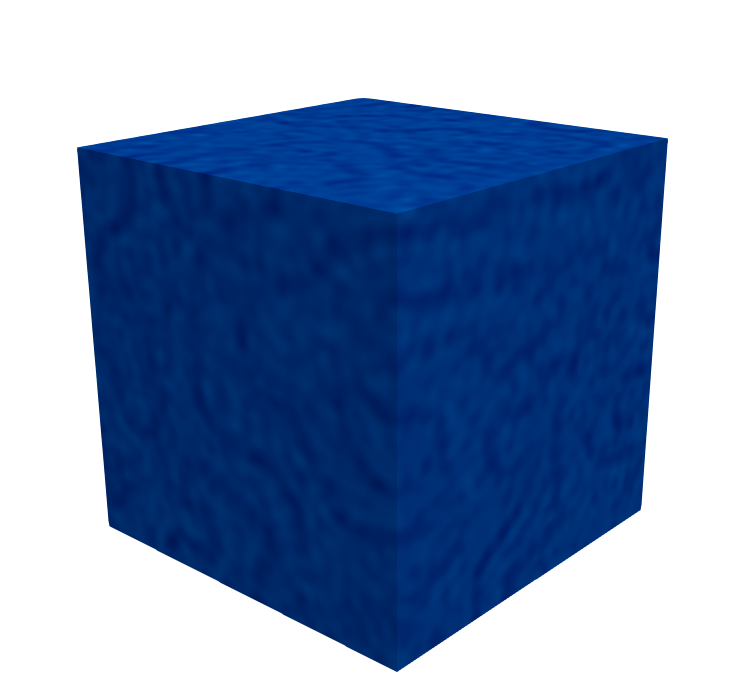}
\includegraphics[angle=-0,width=0.2\textwidth]{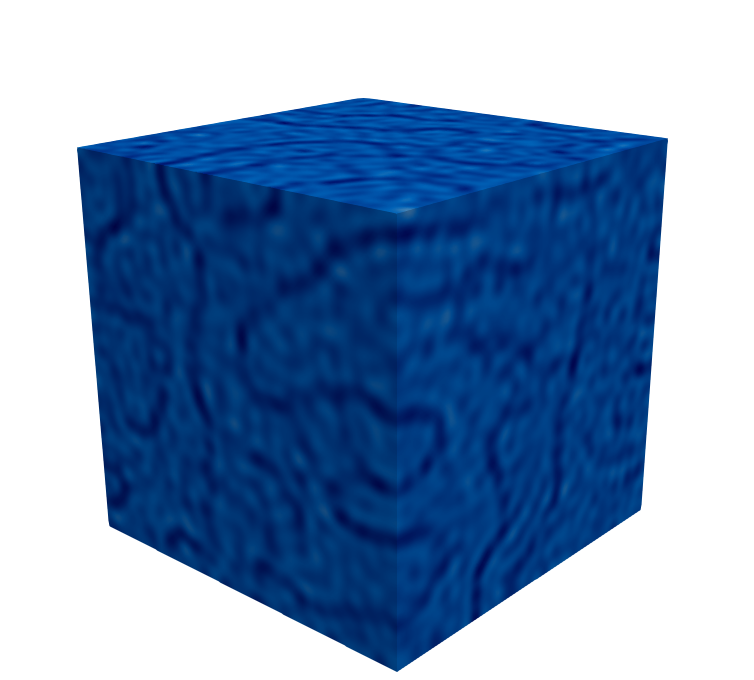}
\includegraphics[angle=-0,width=0.2\textwidth]{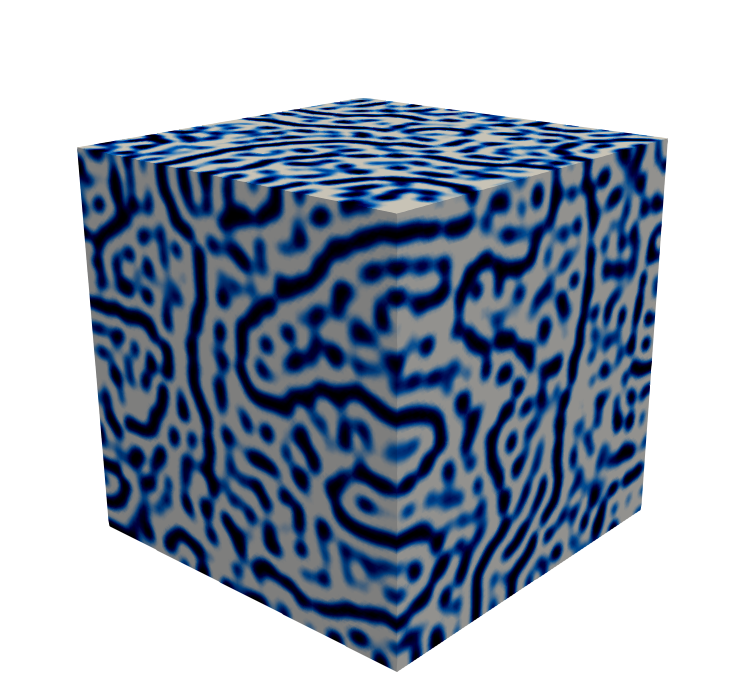}
}
\caption{
Computation for CHSH with $g=0$, $\gamma=1000$.
We display $\phi_h^n$ at times $t=0$, $10^{-4}$, $0.001$, $0.002$, $0.005$.
Below we show $\psi_h^n$ at the same times.
}
\label{fig:Fig3d_delta0}
% ~/hpc_cluster/data/alberta/glns3/3d.Fig32_delta0_adapt
% ~/hpc_cluster/data/alberta/glns3/3d.Fig32_delta0_short
% ~/hpc_cluster/data/alberta/glns3/3d.Fig32_delta0_shortest
\end{figure}%

% rsync -avz --include="p*.vtk" --include="energy.out" --include="uv*.out" --exclude="*" e23:`pwd`/ .

%\FloatBarrier

\section*{\bf Acknowledgements}
\noindent AS gratefully acknowledge some support 
from the MIUR-PRIN Grant 2020F3NCPX ``Mathematics for industry 4.0 (Math4I4)'', from ``MUR GRANT Dipartimento di Eccellenza'' 2023-2027 and from the Alexander von Humboldt Foundation.
Additionally, AS acknowledges affiliation with GNAMPA (Gruppo Nazionale per l'Analisi Matematica, la Probabilit\`a  e le loro Applicazioni) of INdAM (Istituto Nazionale di Alta Matematica). KFL gratefully acknowledges the support by the Research Grants Council of the Hong Kong Special Administrative Region, China [Project No.: HKBU 12300321, HKBU 22300522 and HKBU 12302023].

\footnotesize
\bibliographystyle{plain}

\end{document}